\newlength{\defbaselineskip}
\theoremstyle{plain}
\newtheorem{theorem}{Theorem}[section]
\newtheorem{proposition}[theorem]{Proposition}
\newtheorem{lemma}[theorem]{Lemma}
\theoremstyle{definition}
\newtheorem{definition}[theorem]{Definition}
\newtheorem{remark}[theorem]{Remark}
\newtheorem{example}[theorem]{Example}
\DeclareMathOperator{\Ant}{Ant}
\DeclareMathOperator{\area}{area}
\DeclareMathOperator{\arm}{arm}
\DeclareMathOperator{\Cat}{Cat}
\DeclareMathOperator{\Def}{Def}
\DeclareMathOperator{\Des}{Des}
\DeclareMathOperator{\defc}{defc}
\DeclareMathOperator{\dinv}{dinv}
\DeclareMathOperator{\ftype}{ftype}
\DeclareMathOperator{\leg}{leg}
\DeclareMathOperator{\len}{len}
\DeclareMathOperator{\tail}{\textsc{tail}}
\DeclareMathOperator{\TI}{TI}
\newcommand{\arNU}{\stackrel{\text{\sc nu}_1}{\rightarrow}}
\newcommand{\NU}{\text{\sc nu}}
\newcommand{\ND}{\text{\sc nd}}
\newcommand{\qdvmap}{\textsc{qdv}}
\newcommand{\mind}{\mathrm{min}_{\Delta}}
\newcommand{\CC}{\mathfrak{C}}
\newcommand{\C}{\mathcal{C}}
\newcommand{\mcP}{\mathcal{P}}
\newcommand{\calS}{\mathcal{S}}
\newcommand{\calT}{\mathcal{T}}
\newcommand{\ptn}[1]{\langle #1\rangle}
\numberwithin{equation}{section}
\begin{document}
\title{Chain decompositions of $q,t$-Catalan numbers: 
 \\ tail extensions and flagpole partitions}

\subjclass[2010]{05A19, 05A17, 05E05}
\date{\today} 

\author{Seongjune Han}
\address{Department of Mathematics \\
University of Alabama \\
Tuscaloosa, AL 35401}
\email{shan25@crimson.ua.edu}

\author{Kyungyong Lee}
\address{Department of Mathematics \\
University of Alabama \\
Tuscaloosa, AL 35401; 
and Korea Institute for Advanced Study \\
Seoul 02455, Republic of Korea}
\email{kyungyong.lee@ua.edu; klee1@kias.re.kr}
\thanks{
The second author was supported by NSF grant DMS 2042786, the Korea Institute 
for Advanced Study (KIAS), and the University of Alabama.}

\author{Li Li}
\address{Department of Mathematics and Statistics \\
 Oakland University \\ 
 Rochester, MI 48309}
\email{li2345@oakland.edu} 

\author{Nicholas A. Loehr}
\address{Department of Mathematics \\
 Virginia Tech \\
 Blacksburg, VA 24061-0123} %
\email{nloehr@vt.edu} 
\thanks{This work was supported by a grant
from the Simons Foundation/SFARI (Grant \#633564 to N.A.L.).}

\keywords{$q,t$-Catalan numbers, Dyck paths, dinv statistic, joint symmetry,
integer partitions, chain decompositions}

\begin{abstract} 
This article is part of an ongoing investigation of the combinatorics of
$q,t$-Catalan numbers $\Cat_n(q,t)$. We develop a structure theory
for integer partitions based on the partition statistics dinv, deficit,
and minimum triangle height. Our goal is to decompose the infinite set
of partitions of deficit $k$ into a disjoint union of chains $\C_{\mu}$ 
indexed by partitions of size $k$. Among other structural properties, these 
chains can be paired to give refinements of the famous symmetry property 
$\Cat_n(q,t)=\Cat_n(t,q)$. Previously, we introduced a map that
builds the tail part of each chain $\C_{\mu}$. Our first main contribution
here is to extend this map to construct larger second-order tails for each 
chain.  Second, we introduce new classes of partitions called
flagpole partitions and generalized flagpole partitions.
Third, we describe a recursive construction for building the
chain $\C_{\mu}$ for a (generalized) flagpole partition $\mu$, 
assuming that the chains indexed by certain specific
smaller partitions (depending on $\mu$) are already known.
We also give some enumerative and asymptotic results for
flagpole partitions and their generalized versions.
\end{abstract}

\maketitle

\section{Introduction}
\label{sec:intro}

This article is the third in a series of papers developing the combinatorics
of the $q,t$-Catalan numbers $\Cat_n(q,t)$. We refer readers to
Haglund's monograph~\cite{hag-book} for background and references 
on $q,t$-Catalan numbers. 
Our motivating problem~\cite{gill-blog} is to find
a purely combinatorial proof of the symmetry property
$\Cat_n(q,t)=\Cat_n(t,q)$. It turns out that this symmetry is just one facet
of an elaborate new structure theory for integer partitions. Each partition
$\gamma$ has a \emph{size} $|\gamma|$, a \emph{diagonal inversion count}
$\dinv(\gamma)$, a \emph{deficit} $\defc(\gamma)$, and a 
\emph{minimum triangle height} $\mind(\gamma)$.
Here, $\mind(\gamma)$ is the smallest $n$ such that the Ferrers
diagram of $\gamma$ is contained in the diagram of 
$\Delta_n=\ptn{n-1,n-2,\ldots,3,2,1,0}$, $\dinv(\gamma)$
counts certain boxes in the diagram of $\gamma$, and $\defc(\gamma)$
counts the remaining boxes (see~\S\ref{subsec:area-dinv-defc}
for details).  The $q,t$-Catalan numbers can be defined combinatorially as
\begin{equation}\label{eq:Cat_n}
\Cat_n(q,t)=\sum_{\gamma:\,\mind(\gamma)\leq n}
              q^{\binom{n}{2}-|\gamma|}t^{\dinv(\gamma)}.
\end{equation}

To explain the term ``deficit,'' note that
$|\gamma|=\dinv(\gamma)+\defc(\gamma)$ holds by definition.
So the monomials in~\eqref{eq:Cat_n} indexed by partitions $\gamma$ with
a given deficit $k$ are precisely the monomials of degree $\binom{n}{2}-k$ 
in $\Cat_n(q,t)$.  To prove symmetry of the full polynomial $\Cat_n(q,t)$, 
it suffices to prove the symmetry of each homogeneous component of degree 
$\binom{n}{2}-k$.  Fixing $k$ and letting $n$ vary, we are led to study the 
collection $\Def(k)$ of all integer partitions $\gamma$ with $\defc(\gamma)=k$.
Informally, the structural complexity of these collections 
(stratified by the dinv statistic) grows exponentially with $k$;
see Theorem~\ref{thm:defc-count} for a more precise version of this remark.

\subsection{Global Chains}
\label{subsec:intro-global}

The first paper in our series~\cite{LLL18} introduced the idea of
\emph{global chain decompositions} for the collections $\Def(k)$.
One might observe that each term of degree $\binom{n}{2}-k$
in $\Cat_n(q,t)$ has coefficient at most $p(k)$, the number of 
integer partitions of $k$ (cf. Theorem~1.3 of~\cite{LLL13}
and Theorem~\ref{thm:defc-count} below).  This suggests the 
possibility of decomposing each set $\Def(k)$ into a disjoint union of
\emph{global chains} $\C_{\mu}$ indexed by partitions $\mu$ with $|\mu|=k$,
where each $\C_{\mu}$ should be an infinite sequence of partitions having 
constant deficit $k$ and consecutive dinv values. In other words, 
each conjectural global chain should have the form
$\C_{\mu}=(c_{\mu}(i): i\geq i_0)$, where $\defc(c_{\mu}(i))=k=|\mu|$
and $\dinv(c_{\mu}(i))=i$ for all integers $i$ starting at
some value $i_0$ that depends on $\mu$. The sequence
of $\mind$-values $(\mind(c_{\mu}(i)): i\geq i_0)$, 
which we call the \emph{$\mind$-profile of the chain $\C_{\mu}$},
has intricate combinatorial structure that is crucial to understanding
the symmetry of $q,t$-Catalan numbers.

More specifically, we conjectured in~\cite{LLL18} that the chains
$\C_{\mu}$ (satisfying the above conditions) could be chosen to satisfy
the following \emph{opposite property}. Given a proposed chain
$\C_{\mu}$, define
\begin{equation}\label{eq:Cat_n,mu}
\Cat_{n,\mu}(q,t)=\sum_{\gamma\in\C_{\mu}:\,\mind(\gamma)\leq n}
              q^{\binom{n}{2}-|\gamma|}t^{\dinv(\gamma)},
\end{equation}
which is the sum of only those terms in~\eqref{eq:Cat_n}
arising from partitions $\gamma$ in the chain $\C_{\mu}$.
We conjecture there is a size-preserving involution $\mu\mapsto\mu^*$ 
(defined on the set of all integer partitions)
such that for all integers $n>0$,
$\Cat_{n,\mu^*}(q,t)=\Cat_{n,\mu}(t,q)$. Each pair $\{\mu,\mu^*\}$
yields new small slices of the full $q,t$-Catalan polynomials 
(namely $\Cat_{n,\mu}(q,t)+\Cat_{n,\mu^*}(q,t)$ if $\mu\neq\mu^*$,
or $\Cat_{n,\mu}(q,t)$ if $\mu=\mu^*$) that are symmetric in $q$ and $t$.


\begin{example}\label{ex:C61} 
We constructed the global chains for $\mu=\ptn{6,1}$
and $\mu^*=\ptn{3,3,1}$ in~\cite[Appendix 4.3]{HLLL20}.
All partitions in these chains have deficit $k=7$.
The $\mind$-profiles for $\C_{\mu}$ and $\C_{\mu^*}$ are shown here: 
\[ \left[\begin{array}{rrrrrrrrrrrrrrrrrrrrrrr}
\dinv:&3 & 4 & 5 & 6 & 7 & 8 & 9 & 10 & 11 & 12 & 13 & 14 & 15 & 16 & 17 & 18 
& \cdots & 25 & 26 & 27 & \cdots\\
\mind:&7 & 8 & 7 & 7 & 7 & 8 & 7 &  7 &  8 &  8 &  8 &  8 &  8 &  8 &  8 & 9
& \cdots & 9 & 10 & 9 & \cdots
\end{array}\right];  \] 
\[ \left[\begin{array}{rrrrrrrrrrrrrrrrrrr}
\dinv:&2&3&4 & 5 & 6 & 7 & 8 & 9 & 10 & 11 & 12 & 13 & 14 & 15 & 16 & 17 & 18 
&\cdots\\
\mind:&9&10&7 & 7 & 8 & 7 & 7 &  7 &  8 &  7 &  8 &  8 &  8 &  8 &  8 & 8 & 8
&\cdots
\end{array}\right].  \] 
In both cases, all values of $\mind$ not shown are at least $9$. Taking $n=7$, 
we find $\binom{n}{2}-k=14$ and
\begin{align*}
\Cat_{7,\mu}(q,t)& =q^{11}t^3+q^9t^5+q^8t^6+q^7t^7+q^5t^9+q^4t^{10}; \\
\Cat_{7,\mu^*}(q,t) &=q^{10}t^4+q^9t^5+q^7t^7+q^6t^8+q^5t^9+q^3t^{11}
=\Cat_{7,\mu}(t,q).
\end{align*} 
Despite the apparent irregularity of these profiles, the same opposite
property holds for all $n$. The value $n=9$ is especially striking:
here $\Cat_{9,\mu^*}(q,t)$ and $\Cat_{9,\mu}(t,q)$ both equal
$\sum_{d=2}^{26} q^{29-d}t^d$ with $q^{26}t^3$ omitted, due to the 
two displayed $10$s in the $\mind$-profiles.
\end{example}
 
\subsection{Local Chains}
\label{subsec:intro-local}

The second paper in our series~\cite{HLLL20} introduced important
technical tools called \emph{local chains}, which guide our construction
of global chains and greatly simplify the task of verifying the
opposite property for given $\C_{\mu}$ and $\C_{\mu^*}$. The main idea is
that each global chain should be the union of certain overlapping local chains
whose $\mind$-profiles have special relationships. In fact, we showed
that any proposed global chain $\C_{\mu}$ can be decomposed into local
chains in at most one way. The $\mind$-profiles of the global chain and
its local constituents can be distilled into lists of integers called
the \emph{$amh$-vectors}. We proved that chains $\C_{\mu}$ and $\C_{\mu^*}$
have the opposite property if the $amh$-vectors for these chains
satisfy three easily checkable conditions (illustrated in the next example
and fully explained in~\S\ref{sec:review-local}).
These ideas enabled us to build all global chains and verify the opposite 
property for all deficit values $k$ up to $11$. These 195 chains are listed 
in~\cite[Appendix]{LLL18} for $0\leq k\leq 6$,
in~\cite[Appendix 4.3]{HLLL20} for $k=7,8,9$, 
and in the extended appendix~\cite{HLLL20ext} for $k=10,11$.
Our first paper~\cite[\S2 and \S4]{LLL18} also constructs two infinite 
families of chains, namely $\C_{\ptn{k}}$ for every $k\geq 0$ 
and $\C_{\ptn{ab-a-1,a-1}}$ for every $a,b\geq 2$. We showed 
in~\cite[\S3 and \S5]{LLL18} that the opposite property holds
if we set $\ptn{k}^*=\ptn{k}$ and $\ptn{ab-a-1,a-1}^*=\ptn{ab-b-1,b-1}$.

\begin{example}
Continuing Example~\ref{ex:C61}, the $amh$-vectors for $\C_{\ptn{61}}$ 
(from~\cite[App. 4.3]{HLLL20}) are $a=(3,5,9,27)$, $m=(0,2,1,0)$, and
$h=(7,7,7,9)$. The $amh$-vectors for $\C_{\ptn{331}}$ are
$a^*=(2,4,7,11)$, $m^*=(0,1,2,0)$, and $h^*=(9,7,7,7)$.
The opposite property for $\C_{\ptn{61}}$ and $\C_{\ptn{331}}$ is
verified by noting that $m^*$ is the reversal of $m$, $h^*$ is the 
reversal of $h$, and $a_i+m_i+7+a^*_{5-i}=\binom{h_i}{2}$ for $i=1,2,3,4$,
where $7=|\ptn{61}|$ is the deficit value.
\end{example}

\subsection{The Successor Map}
\label{subsec:succ-map}

One important tool for building the chains $\C_{\mu}$ is
the \emph{successor map} (called $\NU_1$ in this paper,
and called $\nu$ in~\cite{HLLL20,LLL18}).
For each partition $\gamma$ with deficit $k$ and dinv $i$,
$\NU_1(\gamma)$ (if defined) is a partition with deficit $k$ 
and dinv $i+1$. We would like to build the entire global chain
$\C_{\mu}$ by repeatedly applying $\NU_1$ to some starting partition
$c_{\mu}(i_0)$. The trouble is that $\NU_1$ is not defined for all
partitions. There is a known set of \emph{$\NU_1$-initial objects}
where $\NU_1^{-1}$ is undefined, and there is a known set of
\emph{$\NU_1$-final objects} where $\NU_1$ is undefined
(see~\S\ref{subsec:review-NU} for details). Given
any $\NU_1$-initial partition $\gamma$ of deficit $k$, we obtain
the \emph{$\NU_1$-segment $\NU_1^*(\gamma)$} by starting at $\gamma$
and applying $\NU_1$ as many times as possible. Each $\NU_1$-segment is
either infinite or terminates after finitely many steps at a $\NU_1$-final 
object. (Note that a $\NU_1$-segment cannot cycle back on itself,
since dinv increases with each application of $\NU_1$.)

For each partition $\mu$ of size $k$, we have constructed
a specific $\NU_1$-initial object $\TI(\mu)$, called the \emph{tail-initiator
partition indexed by $\mu$}. The partition $\TI(\mu)$
has deficit $k$ and generates an infinite $\NU_1$-segment 
$\tail(\mu)$, called the \emph{$\NU_1$-tail indexed by $\mu$}.  
Section~\ref{subsec:TI-NU-tail} reviews
the definitions and properties of $\TI(\mu)$ and $\tail(\mu)$ in more detail.
The remaining $\NU_1$-segments consist of finite chains 
called \emph{$\NU_1$-fragments}. For each deficit value $k$,
the challenge is to assemble the huge number of $\NU_1$-fragments and
$\NU_1$-tails of deficit $k$ to produce $p(k)$ global chains $\C_{\mu}$ 
satisfying the opposite property.  

\begin{example}
For the partitions $\mu=\ptn{61}$ and $\mu^*=\ptn{331}$
from Example~\ref{ex:C61}, $\TI(\mu)=\ptn{77654311}$
and $\TI(\mu^*)=\ptn{544311}$. The chain $\C_{\mu}$
is the union of the fragments $\NU_1^*(\ptn{511111})$,
$\NU_1^*(\ptn{3333})$, $\NU_1^*(\ptn{44422})$,
$\NU_1^*(\ptn{554421})$, and $\tail(\ptn{61})=\NU_1^*(\ptn{77654311})$.
The chain $\C_{\mu^*}$ is
the union of fragments $\NU_1^*(\ptn{21111111})$, $\NU_1^*(\ptn{32222})$,
$\NU_1^*(\ptn{43331})$, and $\tail(\ptn{331})=\NU_1^*(\ptn{544311})$.
These chains come from~\cite[App. 4.3]{HLLL20}.
\end{example}

\subsection{Extending the Successor Map.}
\label{subsec:extend-succ}

In the first part of this paper, we extend the map $\NU_1$
by defining a new map $\NU_2$ that acts on certain $\NU_1$-final partitions.
This extension causes many $\NU_1$-fragments to coalesce, making
it easier to assemble global chains. In particular, each original
$\NU_1$-tail starting at $\TI(\mu)$ may now extend backwards to
a new starting object $\TI_2(\mu)$, called the \emph{second-order
tail-initiator indexed by $\mu$}. These generate longer 
\emph{second-order tails} called $\tail_2(\mu)$.  
We proceed to define and study \emph{flagpole partitions}, which are 
partitions $\mu$ satisfying $|\mu|\leq 2\mind(\TI_2(\mu))-8$.
We use the term ``flagpole'' because, informally,
flagpole partitions must end in many parts equal to $1$
(see Remark~\ref{rem:flag-parts} for a precise statement).
So the Ferrers diagram of a flagpole partition 
(drawn in the English style, with the largest part on top)
looks like a flag flying on a pole. A generalized version
of flagpole partitions is introduced later (\S\ref{subsec:def-gen-flag}).

The principal results in the first half of this paper are as follows.
\begin{itemize}
\item Theorems~\ref{thm:tail-profile},~\ref{thm:BDV-tail}, 
and~\ref{thm:find-tail} give detailed structural information 
about the $\NU_1$-tails, including the $\mind$-profile of each tail
and a precise description of the objects in each tail having a 
given value of $\mind$. Theorem~\ref{thm:big-dinv} shows that
for each $k$, all partitions of deficit $k$ with sufficiently
large dinv belong to one of these tails.

\item Theorem~\ref{thm:extend-NU} shows that the maps $\NU_1$ and $\NU_2$ 
assemble to give a bijection $\NU$ (defined on a specific subcollection
of integer partitions) that preserves deficit and increases dinv by $1$.

\item Theorem~\ref{thm:tail2} gives specific characterizations of
which partitions appear in the second-order tails and 
which partitions are second-order tail initiators.

\item Theorem~\ref{thm:NU2-chains} explicitly computes the second-order tails
 that start from objects $\TI_2(\mu)$ having a particular form. Among
 other structural facts, we show that the $\NU_1$-fragments comprising
 such tails are demarcated by the descents in the $\mind$-profile.
 
\item Theorem~\ref{thm:flag-init} characterizes flagpole partitions $\mu$
 based on the form of the Dyck vector representing $\TI_2(\mu)$.
 This leads to an exact enumeration of flagpole partitions as a sum
 of partition numbers (Theorem~\ref{thm:count-flag}).
\end{itemize}

We also did computer experiments to obtain an empirical comparison
of the sizes of $\tail(\mu)$ and $\tail_2(\mu)$. Specifically,
for $k\leq 30$, we divided the number of partitions in $\Def(k)$ 
not in any $\tail_2(\mu)$ by the number of partitions in $\Def(k)$
not in any $\tail(\mu)$. This ratio is always less than $0.38$ and
quite close to that value for $20\leq k\leq 30$. This means that 
introducing $\NU_2$ causes more than $62\%$ of the objects
in the $\NU_1$-fragments to be absorbed into the second-order tails.

\subsection{A Recursive Construction of Certain Chains.}
\label{subsec:rec-construction}

Our ultimate goal in this series of papers
is to build all global chains $\C_{\mu}$ by an elaborate
recursive construction using induction on the deficit value $k$.
Here is an outline of the construction we envisage.
For the base case, all chains $\C_{\mu}$ with (say) $|\mu|\leq 5$
have already been defined successfully. For the induction step,
we consider a particular fixed value of $k\geq 6$.
As the induction hypothesis, we assume that all chains $\C_{\lambda}$ 
with $|\lambda|<k$ are already defined and satisfy various 
technical conditions (including the opposite property of $\C_{\lambda}$ 
and $\C_{\lambda^*}$ and other requirements on the $amh$-vectors).
This information is used to build the chains $\C_{\mu}$ for all partitions
$\mu$ of size $k$ and to verify the corresponding technical conditions
for these chains. 

At this time, we cannot execute the entire recursive construction
just outlined. However, for each particular flagpole partition $\mu$, 
we identify a specific list of \emph{needed partitions} for $\mu$
(Definition~\ref{def:needed}). Assuming that $\C_{\rho}$ and $\C_{\rho^*}$
exist and have required structural properties for each needed partition
$\rho$, we can explicitly define $\mu^*$, $\C_{\mu}$, and $\C_{\mu^*}$
and prove that the new chains satisfy the same properties.
Section~\ref{sec:make-flag-chains} provides all the details of this chain
construction, and Theorem~\ref{thm:extend-mu} proves that the construction
works. Starting with a particular base collection
of known chains, we can apply this construction repeatedly
to augment the given collection with many new chains.
Theorem~\ref{thm:extend-all} describes this process precisely.
Section~\ref{sec:gen-flagpole} extends these results to generalized 
flagpole partitions. Theorem~\ref{thm:gflag-asym} gives a lower bound
on the number of generalized flagpole partitions of size $k$
and an asymptotic estimate based on this bound.

\section{Preliminaries on Dyck Classes, Deficit, $\NU_1$, and Tail Initiators}
\label{sec:background}

This section covers needed background material on quasi-Dyck vectors,
the $\dinv$ and deficit statistics, the original $\NU_1$ map, and
the tail-initiators $\TI(\mu)$. Some new ingredients not found in
earlier papers include: the representation of integer partitions by 
equivalence classes of quasi-Dyck vectors (Proposition~\ref{prop:dyck-class}); 
useful formulas for the deficit statistic (Proposition~\ref{prop:defc-pairs} 
and Lemma~\ref{lem:defc-ineq}); an explicit description of how iterations
of $\NU_1$ act on binary Dyck vectors (Proposition~\ref{prop:bin-NU}); and a 
detailed characterization of the Dyck classes belonging to each $\NU_1$-tail
(Theorems~\ref{thm:tail-profile},~\ref{thm:BDV-tail}, and~\ref{thm:find-tail}).

\subsection{Quasi-Dyck Vectors and Dyck Classes}
\label{subsec:QDV}

A \emph{quasi-Dyck vector} (abbreviated \emph{QDV}) is a sequence
of integers $(v_1,v_2,\ldots,v_n)$ such that $v_1=0$ and $v_{i+1}\leq v_i+1$
for $1\leq i<n$. A \emph{Dyck vector} is a quasi-Dyck vector where
$v_i\geq 0$ for all $i$. We often use word notation for QDVs,
writing $v_1v_2\cdots v_n$ instead of $(v_1,v_2,\ldots,v_n)$. 
The notation $i^c$ always indicates $c$ copies of the symbol $i$, 
as opposed to exponentiation.  For example, $0^312^2(-1)^301^20$ stands 
for the QDV $(0,0,0,1,2,2,-1,-1,-1,0,1,1,0)$.  

A \emph{binary Dyck vector} (abbreviated \emph{BDV}) is a Dyck
vector with all entries in $\{0,1\}$.
A \emph{ternary Dyck vector} (abbreviated \emph{TDV}) is a Dyck
vector with all entries in $\{0,1,2\}$.
For any integer $a$, write $a^-=a-1$ and $a^+=a+1$.
For any list of integers $A=a_1\cdots a_n$,
write $A^-=a_1^-\cdots a_n^-$ and $A^+=a_1^+\cdots a_n^+$.

Let $\lambda$ be an integer partition with $\ell=\ell(\lambda)$
positive parts $\lambda_1\geq\lambda_2\geq\cdots\geq\lambda_{\ell}$.
The \emph{size} of $\lambda$ is $|\lambda|=\sum_{i=1}^{\ell} \lambda_i$.
We define $\lambda_i=0$ for all $i>\ell$.  
For each integer $n>\ell$, we associate with $\lambda$ 
a quasi-Dyck vector of length $n$ by setting 
\begin{equation}\label{eq:qdvmap}
\qdvmap_n(\lambda)=(0-\lambda_n,1-\lambda_{n-1},2-\lambda_{n-2},
 \ldots,i-\lambda_{n-i},\ldots,n-1-\lambda_1). 
\end{equation}
Visually, we obtain this QDV by trying to embed the diagram of
$\lambda$ in the diagram of $\Delta_n=\ptn{n-1,\ldots,2,1,0}$
and counting the number of boxes of $\Delta_n$ in each row
(from bottom to top) that are not in the diagram of $\lambda$.
However, we allow $\lambda$ to protrude outside $\Delta_n$,
which leads to negative entries in the QDV. We define the 
\emph{minimum triangle height} $\mind(\lambda)$ to be the least 
$n>\ell(\lambda)$ such that the diagram of $\lambda$ does fit inside the
diagram of $\Delta_n$. This is also the least $n$ such that
$\qdvmap_n(\lambda)$ has all entries nonnegative.

\begin{example}\label{ex:ptn-tri}
Let $\lambda=\ptn{5441}$. Figure~\ref{fig:qdv} shows the
diagrams of $\lambda$ and $\Delta_n$ for $n=5,6,7,8$.
We have $\qdvmap_5(\lambda)=00(-2)(-1)(-1)$,
 $\qdvmap_6(\lambda)=011(-1)00$,
 $\qdvmap_7(\lambda)=0122011$, 
 $\qdvmap_8(\lambda)=01233122$, and $\mind(\lambda)=7$.  
\end{example}

\begin{figure}
\begin{center}
\epsfig{file=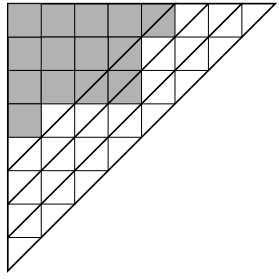,scale=0.7}
\end{center}
\caption{Embedding a partition in various triangles.}
\label{fig:qdv}
\end{figure}

Suppose $n>\ell(\lambda)$ and $\qdvmap_n(\lambda)=v_1v_2\cdots v_n$.
Then $\qdvmap_{n+1}(\lambda) =0(v_1v_2\cdots v_n)^+$. 
Define $\sim$ to be the equivalence
relation on the set of all QDVs generated by the relations
$v_1v_2\cdots v_n\sim 0(v_1v_2\cdots v_n)^+$. So, 
for all QDVs $y=y_1\cdots y_n$ and $z=z_1\cdots z_{n+k}$, 
$y\sim z$ if and only if $z=(0,1,2,\ldots,k-1,y_1+k,y_2+k,\ldots,y_n+k)$.
Each equivalence class of $\sim$ is called a \emph{Dyck class}.
Let $[v]$ denote the Dyck class containing the QDV $v$.

\begin{proposition}\label{prop:dyck-class}
The map sending each partition $\lambda$ to
the Dyck class $\{\qdvmap_n(\lambda):n>\ell(\lambda)\}$
is a bijection from the set of all integer partitions
onto the set of all Dyck classes.
\end{proposition}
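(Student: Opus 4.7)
The plan is to establish the three standard ingredients: the image $\Phi(\lambda) := \{\qdvmap_n(\lambda) : n > \ell(\lambda)\}$ genuinely is a single $\sim$-equivalence class (well-definedness), distinct $\lambda$ give distinct classes (injectivity), and every Dyck class arises (surjectivity). Two facts recorded just before the statement will do almost all the work: the recursion $\qdvmap_{n+1}(\lambda) = 0(\qdvmap_n(\lambda))^+$, and the explicit description that $y \sim z$ with $z$ longer than $y$ by $k$ entries holds iff $z = (0, 1, \ldots, k-1, y_1 + k, \ldots, y_n + k)$.

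\emph{Well-definedness.} The recursion gives $\qdvmap_n(\lambda) \sim \qdvmap_{n+1}(\lambda)$ directly, so $\Phi(\lambda)$ lies inside a single Dyck class. For the reverse inclusion I would work from the shortest member $w := \qdvmap_{\ell(\lambda) + 1}(\lambda)$. Any $u$ equivalent to $w$ is either a lengthening of $w$ or has $w$ as one of its own lengthenings. In the first case, iterating the recursion on $w$ produces $\qdvmap_{\ell(\lambda) + 1 + k}(\lambda)$, which by the description of $\sim$ must equal $u$. In the second case, $w$ would start with $0,1$, so its second coordinate would be $1$; but that coordinate equals $1 - \lambda_{\ell(\lambda)} \leq 0$, a contradiction. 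Hence $\Phi(\lambda)$ equals the full class $[w]$.

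\emph{Injectivity and surjectivity.} If $\Phi(\lambda) = \Phi(\mu)$, pick any $N$ exceeding both $\ell(\lambda)$ and $\ell(\mu)$. Two QDVs of the same length lying in the same Dyck class must be equal (the $k = 0$ case of the characterization), so $\qdvmap_N(\lambda) = \qdvmap_N(\mu)$, and reading position $i$ forces $\lambda_j = \mu_j$ for $j = N - i + 1$, hence $\lambda = \mu$. For surjectivity I would exhibit an explicit inverse on representatives: given $v = v_1 \cdots v_n$ in a Dyck class $D$, set $\lambda_i := (n - i) - v_{n - i + 1}$ for $1 \leq i \leq n$ and $\lambda_i := 0$ for $i > n$. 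Each QDV axiom on $v$ becomes a partition axiom on $\lambda$: $v_1 = 0$ yields $\lambda_n = 0$ and hence $\ell(\lambda) < n$; the slope condition $v_{j+1} \leq v_j + 1$ yields $\lambda_i \geq \lambda_{i+1}$; and the induced bound $v_j \leq j - 1$ (obtained by iterating $v_{j+1} \leq v_j + 1$ from $v_1 = 0$) yields $\lambda_i \geq 0$. By construction $\qdvmap_n(\lambda) = v$, so $\Phi(\lambda) = [v] = D$.

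The only genuinely delicate step is the exhaustion argument in the well-definedness paragraph, where one must rule out the existence of a shorter representative of $[\qdvmap_{\ell(\lambda) + 1}(\lambda)]$; the computation that its second entry is at most $0$ closes this. Everything else reduces to a direct translation between the QDV axioms and the partition axioms via the coordinate change $\lambda_i \leftrightarrow (n - i) - v_{n - i + 1}$.
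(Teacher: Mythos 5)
Your proof is correct and follows essentially the same route as the paper's: both show the shortest vector $\qdvmap_{\ell(\lambda)+1}(\lambda)$ cannot be shortened (your second-coordinate computation $1-\lambda_{\ell(\lambda)}\leq 0$ is exactly the reason behind the paper's terser remark that the first symbol of a QDV must be $0$), and both invert the map by the same coordinate change $\lambda_i=(n-i)-v_{n-i+1}$ applied to a representative. You merely spell out the verifications (partition axioms, injectivity via a common length $N$) that the paper labels routine.
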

\begin{proof}
First, the set $\{\qdvmap_n(\lambda):n>\ell(\lambda)\}$ is a Dyck class.
This holds since $\qdvmap_{m+1}(\lambda)=0\qdvmap_m(\lambda)^+$ for all
$m>\ell(\lambda)$, but the shortest QDV in this set (of length 
$\ell(\lambda)+1$) cannot be equivalent to any shorter QDV, as
the first symbol of a QDV must be $0$. So we have a function $F$ mapping 
integer partitions to Dyck classes. We must show this function is bijective.
Given the Dyck class $[v]$, where $v$ is a QDV of length $n$ representing
the equivalence class, define $\lambda_1=n-1-v_n$, $\lambda_2=n-2-v_{n-1}$, 
etc., and $\lambda_m=0$ for all $m>n$. It is routine to check that
$\lambda$ does not depend on the representative chosen, and that $\lambda$
is the unique integer partition satisfying $F(\lambda)=[v]$.
\end{proof}

Henceforth, we make no distinction between the partition 
$\lambda$ and its associated Dyck class, regarding the list of parts
$\ptn{\lambda_1,\ldots,\lambda_{\ell}}$ and the Dyck class $[v]=F(\lambda)$
as two notations for the same underlying object.
For $n=\mind(\lambda)$, the Dyck vector $\qdvmap_n(\lambda)$ is
called the \emph{reduced Dyck vector for $\lambda$}.
The \emph{reduction} of a QDV $w$ is the unique reduced Dyck vector $v$
with $[w]=[v]$. For example, the reduction of $012\cdots d$ is $0$
for any $d\geq 0$; here $[0]$ is the Dyck class representing the
zero partition $\ptn{0}$, which has $\mind(\ptn{0})=1$.

\subsection{Area, Dinv, and Deficit for Quasi-Dyck Vectors}
\label{subsec:area-dinv-defc}
 
Let $v=v_1v_2\cdots v_n$ be a quasi-Dyck vector.
Define $\len(v)=n$ (the length of the list $v$)
and $\area(v)=v_1+v_2+\cdots+v_n$. 
If $\lambda$ is a partition and $n>\ell(\lambda)$, then~\eqref{eq:qdvmap}
shows that $|\lambda|+\area(\qdvmap_n(\lambda)) =|\Delta_n|=\binom{n}{2}$.

The \emph{diagonal inversion statistic} for a Dyck vector $v$, 
written $\dinv(v)$, is the number of pairs $(i,j)$ 
with $1\leq i<j\leq n$ and $v_i-v_j\in\{0,1\}$.
To generalize this definition to all QDVs $v$, we define $v_k=k-1$
for all $k\leq 0$ and then set $\dinv(v)$ to be the number 
of pairs of integers $(i,j)$ with $i<j\leq n$ and $v_i-v_j\in\{0,1\}$.
Visually, we compute $\dinv(v)$ by looking at the infinite word
$\cdots(-3)(-2)(-1)v_1v_2\cdots v_n$ and counting all pairs of
symbols $\cdots b\cdots b\cdots$ or $\cdots(b+1)\cdots b\cdots$.  
Suppose we replace $v=v_1\cdots v_n$ by the equivalent QDV
$w=w_1w_2\cdots w_{n+1}=0v_1^+\cdots v_n^+$. The infinite word
for $w$ is obtained from the infinite word for $v$ by incrementing
every entry, and therefore $\dinv(w)=\dinv(v)$. It follows that
for all QDVs $v$ and $z$, $v\sim z$ implies $\dinv(v)=\dinv(z)$.
Thus $\dinv$ is constant on Dyck classes. 

The \emph{deficit statistic} for a QDV $v=v_1v_2\cdots v_n$ is
$\defc(v)=\binom{\len(v)}{2}-\area(v)-\dinv(v)$. 
Replacing $v$ by $w$ as above, $\len$ increases from $n$ to $n+1$,
$\binom{\len}{2}$ increases by $n$, $\area$ increases by $n$,
and $\dinv$ does not change. Therefore $\defc(w)=\defc(v)$,
so $\defc$ is constant on Dyck classes.

For a partition $\lambda$ corresponding to a Dyck class $[v]$, 
we set $\dinv(\lambda)=\dinv([v])=\dinv(v)$ and
$\defc(\lambda)=\defc([v])=\defc(v)$. 
Note that $\area$ is \emph{not} constant on Dyck classes.
For $n>\ell(\lambda)$, we define $\area_n(\lambda)=\area(\qdvmap_n(\lambda))$.
For any such $n$, $\dinv(\lambda)+\defc(\lambda)+\area_n(\lambda)
 =\binom{n}{2}=|\lambda|+\area_n(\lambda)$, and hence
$\dinv(\lambda)+\defc(\lambda)=|\lambda|$ for all $\lambda$.  
(It can be shown that $\dinv(\lambda)$ is the number of cells $c$ 
in the diagram of $\lambda$ with $\arm(c)-\leg(c)\in\{0,1\}$, 
and $\defc(\lambda)$ counts the remaining cells,
but we do not need these formulas here.)
Define $\area_{\Delta}(\lambda)=\area_n(\lambda)$ where $n=\mind(\lambda)$; so 
$\area_{\Delta}(\lambda)$ is the area of the reduced Dyck vector for $\lambda$.
In Example~\ref{ex:ptn-tri}, $|\lambda|=14$, 
$\dinv(\lambda)=\dinv(0122011)=10$, 
$\defc(\lambda)=\defc(0122011)=4$, $\mind(\lambda)=7$,
$\area_5(\lambda)=-4$, $\area_6(\lambda)=1$,
$\area_7(\lambda)=7=\area_{\Delta}(\lambda)$, and $\area_8(\lambda)=14$.

The next proposition gives a convenient alternate formula 
for computing $\defc(v)$.

\begin{proposition}\label{prop:defc-pairs}
For any Dyck vector $v=v_1v_2\cdots v_n$, $\defc(v)$ is the number of pairs
$(i,j)$ such that $1\leq i<j\leq n$ and either $v_i-v_j\geq 2$ 
or there exists $k<i$ with $v_k=v_i<v_j$. 
In other words, $\defc(v)$ is the number of pairs
of letters $\cdots b\cdots c\cdots$ in the word $v$ such that either
$b\geq c+2$, or $b<c$ and the displayed $b$ is not the leftmost occurrence
of $b$ in $v$. 
\end{proposition}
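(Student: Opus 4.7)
The plan is to compare the proposed count of ``bad'' pairs with the defining identity $\defc(v)=\binom{n}{2}-\area(v)-\dinv(v)$. Every pair $(i,j)$ with $1\leq i<j\leq n$ lies in exactly one of three classes determined by $v_i-v_j$: the \emph{dinv class} $\{v_i-v_j\in\{0,1\}\}$, the \emph{gap class} $\{v_i-v_j\geq 2\}$, and the \emph{ascent class} $\{v_i<v_j\}$. The dinv class has size $\dinv(v)$ by definition. Calling an index $i$ \emph{leftmost} if no $k<i$ satisfies $v_k=v_i$, I will further split the ascent class into a leftmost part and a non-leftmost part. The pairs the proposition asks us to count are exactly the gap class together with the non-leftmost part of the ascent class. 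Since the three classes partition the $\binom{n}{2}$ pairs, establishing the proposition reduces to showing that the leftmost part of the ascent class has cardinality $\area(v)=v_1+\cdots+v_n$.

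For this I will prove the structural lemma: for every $j\in\{1,\ldots,n\}$ and every integer $c$ with $0\leq c<v_j$, the leftmost occurrence $i_c$ of the value $c$ in $v$ satisfies $i_c<j$. Granting this, the set of leftmost indices $i<j$ with $v_i<v_j$ is in bijection with $\{0,1,\ldots,v_j-1\}$ via $i\mapsto v_i$ (injectivity is automatic on leftmost positions, and surjectivity is exactly the lemma), so this set has exactly $v_j$ elements. Summing over $j$ produces $\sum_j v_j=\area(v)$, as required.

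The lemma is a short consequence of the Dyck conditions $v_1=0$ and $v_{k+1}\leq v_k+1$. Given $j$ and $c$ with $0\leq c<v_j$, let $k$ be the largest index in $[1,j-1]$ with $v_k\leq c$; this exists because $v_1=0\leq c$. Then $v_{k+1}>c$, while the step condition forces $v_{k+1}\leq v_k+1\leq c+1$, so $v_k=c$ and hence $i_c\leq k<j$.

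I anticipate no serious obstacle here: the bulk of the work is the bookkeeping that splits $\binom{n}{2}$ pairs into four disjoint types, and the remaining Dyck-path lemma is immediate from the step condition. The only point to watch is the degenerate case $v_j=0$, where the set $\{0,\ldots,v_j-1\}$ is empty and contributes nothing to the sum, consistent with $v_j$ contributing nothing to $\area(v)$.
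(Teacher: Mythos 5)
Your proposal is correct and follows essentially the same route as the paper: both partition the $\binom{n}{2}$ pairs into the dinv pairs, the pairs $v_i-v_j\geq 2$, and the ascents split by whether $v_i$ is a leftmost occurrence, and both reduce the claim to showing that the leftmost-ascent pairs number $\area(v)$ by observing that each value $0,1,\ldots,v_j-1$ has its leftmost occurrence before position $j$. Your explicit maximal-index argument for that last structural fact is just a slightly more detailed version of the paper's appeal to the step condition $v_{k+1}\leq v_k+1$.
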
 
\begin{proof}
On one hand, we know $\binom{n}{2}=\dinv(v)+\defc(v)+\area(v)$.
On the other hand, there are $\binom{n}{2}$ pairs $(i,j)$ with
$1\leq i<j\leq n$. Each such pair satisfies exactly one of the
following conditions: (a)~$v_i-v_j\in\{0,1\}$; 
(b)~$v_i-v_j\geq 2$; (c)~for some $k<i$, $v_k=v_i<v_j$;
(d)~for all $k<i$, $v_k\neq v_i<v_j$. Pairs satisfying (a)
are counted by $\dinv(v)$, while pairs satisfying (b) or (c)
are the pairs mentioned in the lemma statement. So it suffices
to prove that the number of pairs $(i,j)$ satisfying (d) is
$\area(v)=\sum_{j=1}^n v_j$. Consider a fixed $j$ with $v_j=c\neq 0$.
The Dyck vector $v$ has nonnegative integer entries, begins with $0$,
and consecutive entries may increase by at most $1$ reading left to right.
Therefore, $c>0$, and each symbol $0,1,2,\ldots,c-1$ must occur
at least once to the left of position $j$ in $v$.
The leftmost occurrence of each symbol $0,1,\ldots,c-1$ pairs
with $v_j=c$ to give a pair $(i,j)$ of type (d).
Thus, we get exactly $c=v_j$ type (d) pairs $(i,j)$ for this fixed $j$.
The total number of type (d) pairs is $\sum_{j: v_j\neq 0} v_j=\area(v)$, 
as needed.  
%
\end{proof}


\begin{example}\label{ex:defc-calc}
For all integers $n,q\geq 0$, we claim 
$\defc(0^312^n1^q)=2(n+q+1)$. Here there are no pairs of symbols
$b\cdots c$ with $b\geq c+2$. We ignore the leftmost zero;
the next $0$ pairs with $1+n+q$ larger symbols to its right.
The same is true of the third $0$. Ignoring the leftmost $1$,
the remaining $1$s do not pair with any larger symbols to their right
(similarly for the $2$s).  So the total contribution to deficit is $2(1+n+q)$.
\end{example}


\subsection{Some Deficit Calculations}
\label{subsec:defc-ineq}

For any finite list $A$, let $\len(A)$ be the length of $A$.

\begin{lemma}\label{lem:defc-ineq}
(a)~Let $v=AB12^n$ be a Dyck vector where $n\geq 1$ and $A$ either
has at least three $0$s or has two $0$s and at least two $1$s.
Then $\defc(v)\geq 2\len(B)+\defc(A12^n)$.
\\ (b)~Let $v=00A0B12^n1^q$ be a Dyck vector where $n\geq 1$, $q\geq 0$,
 and $A,B$ are lists that might be empty.
Then $\defc(v)\geq 2\len(A)+2\len(B)+2(n+q)+1$.
\end{lemma}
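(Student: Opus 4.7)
The plan for both parts is to invoke Proposition~\ref{prop:defc-pairs} and exhibit explicit families of position-pairs that witness the claimed lower bounds on $\defc$.

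For part~(a), I would compare the pair-count for $v = AB12^n$ against the pair-count for $v' = A12^n$. The first step is a monotonicity observation: every pair contributing to $\defc(v')$ still contributes to $\defc(v)$ under the natural position-embedding, because inserting $B$ can only preserve or weakly reinforce the ``not leftmost'' clause of Proposition~\ref{prop:defc-pairs}. The second step is to produce, for each position $p$ inside $B$ with $w = v_p$, at least two new defc-pairs involving $p$. When $w = 0$, the hypothesis that $A$ contains two $0$s makes $v_p = 0$ non-leftmost, so pairing $p$ with each of the $n+1 \geq 2$ positive entries of the trailing $12^n$ furnishes two such pairs. When $w \geq 1$, the analysis splits by the two sub-cases of the hypothesis: either $A$ supplies two non-leftmost $0$s as left indices of two pairs with $p$, or $A$ supplies one non-leftmost $0$ and one non-leftmost $1$ that, in conjunction with $p$ and a tail $2$, provides the two pairs. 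Summing over $p \in B$ yields $\defc(v) - \defc(v') \geq 2\len(B)$.

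For part~(b), I would apply Proposition~\ref{prop:defc-pairs} directly to $v = 00A0B12^n1^q$. Let $z = 3 + \len(A)$ be the position of the $0$ immediately after $A$, let $p_1$ be the position of the $1$, and let $p_2$ be the position of the first $2$ (which exists since $n \geq 1$). Writing $m_A, m_B$ for the numbers of positive entries in $A, B$ and $a_0 = \len(A) - m_A$, $b_0 = \len(B) - m_B$ for the zero-counts, I would assemble four disjoint families of contributing defc-pairs: pairs $(2, k)$ with $v_k > 0$ (giving $m_A + m_B + 1 + n + q$ pairs); pairs $(z, k)$ with $v_k > 0$ (giving $m_B + 1 + n + q$ pairs); for each zero of $A$, the pairs with $p_1$ and with $p_2$ ($2 a_0$ pairs); and for each zero of $B$, the pairs with $p_1$ and with $p_2$ ($2 b_0$ pairs). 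Disjointness is automatic by tracking left and right indices. Summing gives $2\len(A) + 2\len(B) + 2(n+q) + 2 - m_A$ pairs, falling short of the target by exactly $m_A - 1$ when $m_A \geq 1$. I would close the gap by adding, for each positive entry $v_j = c$ in $A$, one more pair: $(j, z)$ when $c \geq 2$ (since then $v_j - v_z \geq 2$), or $(j, p_2)$ when $c = 1$ provided $j$ is not the leftmost $1$. The Dyck property $v_{j+1} \leq v_j + 1$ combined with $v_2 = 0$ forces any positive entry in $A$ to be preceded by a $1$ in $A$, so at most one position (the leftmost $1$ in $A$, if it exists) fails to produce a new pair, yielding exactly $m_A - 1$ extras when $m_A \geq 1$.

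The main obstacle, in my view, is the bookkeeping around ``leftmost-occurrence'' status and verifying disjointness of the pair-families so the total count is a genuine lower bound rather than an over-count. A secondary subtlety in part~(a) is that pairs contributing to $\defc(A12^n)$ which involve the leftmost $1$ or leftmost $2$ must be shown to transfer correctly to $\defc(AB12^n)$ when $B$ contains $1$s or $2$s that can shift the leftmost-status.
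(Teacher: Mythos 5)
Your proposal is correct and follows essentially the same route as the paper: both parts are proved by applying Proposition~\ref{prop:defc-pairs} and exhibiting disjoint families of contributing pairs, with part~(a) using exactly the paper's case split on the hypotheses for $A$ and the same observation that pairs contributing to $\defc(A12^n)$ transfer to $v$ (inserting $B$ can only preserve non-leftmost status). In part~(b) your bookkeeping is organized by anchor positions with a shortfall-and-extras correction rather than the paper's symbol-by-symbol attribution plus the deficit of the subword $0^312^n1^q$ from Example~\ref{ex:defc-calc}, but the two counts are the same argument in different packaging and both are sound.
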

\begin{proof}
(a)~We use the formula in Proposition~\ref{prop:defc-pairs} to justify the
stated lower bound on $\defc(v)$. We first show that each symbol in $B$
contributes at least $2$ to $\defc(v)$.
\emph{Case~1:} Assume $A$ has at least three $0$s.  Each occurrence of $0$ 
in $B$ is not the leftmost $0$ in $v$ and contributes at least $2$ (in fact,
at least $1+n$) to $\defc(v)$ by pairing with one of the symbols in the 
suffix $12^n$. Each occurrence of a symbol $c>0$ in $B$ pairs with the 
second and third $0$s in $A$ to contribute at least $2$ to $\defc(v)$.  
\emph{Case~2:} Assume $A$ has two $0$s and at least two $1$s.
Each $0$ in $B$ contributes at least $2$ to $\defc(v)$, as in Case~1.
Each $1$ in $B$ (which is not the leftmost $1$ in $v$) 
pairs with the second $0$ in $A$ and with each $2$ in
the suffix $12^n$ to contribute at least $2$ to $\defc(v)$.
Each symbol $c\geq 2$ in $B$ pairs with the second $0$ in $A$
and the second $1$ in $A$ to contribute at least $2$ to $\defc(v)$.

So far we have found at least $2\len(B)$ contributions to $\defc(v)$
coming from symbols in $B$ pairing with other symbols. On the other hand,
the subword $A12^n$ of $v$ is a Dyck vector. Any pair of symbols in
this subword contributing to $\defc(A12^n)$ also contributes to $\defc(v)$.
This proves (a).

(b)~Arguing as in Case~1 of (a), we see that each symbol in $B$ 
 and each $0$ in $A$ contributes at least $2$ to $\defc(v)$. 
 Each symbol $c\geq 2$ in $A$ pairs with the zero just before $A$ 
 and the zero just after $A$ in $v$. Finally, 
 each $1$ in $A$ (if any) pairs with the second $0$ in $v$,
 while each $1$ in $A$ except the leftmost $1$ pairs with the rightmost
 $2$ in $v$. So far, symbols in $A$ and $B$ account for at least
 $2\len(A)-1+2\len(B)$ contributions to $\defc(v)$.
 When we delete $A$ and $B$ from $v$, we get the subword $0^312^n1^q$.
 By Example~\ref{ex:defc-calc}, this subword has deficit $2(n+q+1)$,
 and all pairs contributing to this deficit also contribute to $\defc(v)$.
\end{proof}

The next two lemmas will be used later to define the \emph{antipode map},
which interchanges area and dinv for a restricted class of Dyck vectors.

\begin{lemma}\label{lem:stat-0w1}
Let $E$ be a ternary Dyck vector and $S=0E1$.  
Then $\len(S)=\len(E)+2$, $\area(S)=\area(E)+1$,
 $\dinv(S)=\dinv(E)+\len(E)$, and $\defc(S)=\defc(E)+\len(E)>\defc(E)$.  
\end{lemma}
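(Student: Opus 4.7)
The plan is to verify the four assertions in order. The length and area formulas are immediate from the definition $S=0E1$: prepending $0$ and appending $1$ increase length by $2$ and area by $0+1=1$, respectively. Granting the $\dinv$ formula, the $\defc$ formula is a routine algebraic consequence of $\defc(v)=\binom{\len(v)}{2}-\area(v)-\dinv(v)$ together with $\binom{m+2}{2}=\binom{m}{2}+2m+1$, so the real content is computing $\dinv(S)$. The strict inequality $\defc(S)>\defc(E)$ then follows because $\len(E)\ge 1$ (every Dyck vector begins with $0$).

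For the $\dinv$ computation I would index positions of $S$ as $1,\ldots,m+2$ with $m=\len(E)$, so that $S_1=0$, $S_k=E_{k-1}$ for $2\le k\le m+1$, and $S_{m+2}=1$. Writing $f_S(j)=\#\{i<j:S_i-S_j\in\{0,1\}\}$ and likewise $f_E$, the extension rule $v_k=k-1$ for $k\le 0$ contributes nothing when $v_j\ge 0$, since $v_i-v_j=(i-1)-v_j<0$ for any $i\le 0$. Hence one may restrict to positive indices throughout.

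Next I would compute each $f_S(j)$ and sum. Clearly $f_S(1)=0$. For $2\le j\le m+1$, setting $c=E_{j-1}\ge 0$, the index $i=1$ contributes exactly when $c=0$, while the indices $i\in\{2,\ldots,j-1\}$ contribute exactly the pairs counted by $f_E(j-1)$; thus $f_S(j)=f_E(j-1)+[E_{j-1}=0]$. For $j=m+2$, the contribution from $i=1$ vanishes, and an index $i\in\{2,\ldots,m+1\}$ contributes iff $E_{i-1}\in\{1,2\}$. Summing, the prepended $0$ accounts for the zeros of $E$ and the appended $1$ accounts for the nonzero entries, together contributing $\len(E)$, while $\sum_{j=2}^{m+1}f_E(j-1)=\dinv(E)$. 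This yields $\dinv(S)=\dinv(E)+\len(E)$, and the $\defc$ formula follows as noted.

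The key (and only subtle) step is the analysis at $j=m+2$: the appended $1$ pairs cleanly with every nonzero symbol of $E$ precisely because entries of $E$ lie in $\{0,1,2\}$. This is where the ternary hypothesis is used in an essential way, since an entry of $E$ equal to $3$ or greater would not pair with the final $1$ at all. Apart from this observation, the argument is essentially bookkeeping across the positions of $S$.
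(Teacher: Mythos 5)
Your proof is correct and matches the paper's argument in substance: the new contributions to $\dinv(S)$ come from the initial $0$ pairing with each $0$ of $E$ and the final $1$ pairing with each $1$ or $2$ of $E$, totalling $\len(E)$ because $E$ is ternary, and the $\defc$ formula then follows from $\area+\dinv+\defc=\binom{\len}{2}$. Your version just spells out the bookkeeping (including the harmless negative-index convention) more explicitly than the paper does.
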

\begin{proof}
The formulas for $\len(S)$ and $\area(S)$ are clear.
Each pair of symbols in $E$ that contribute to $\dinv(E)$
also contribute to $\dinv(S)$. We get additional contributions
to $\dinv(S)$ from the initial $0$ pairing with each $0$ in $E$,
and from the final $1$ pairing with each $1$ and $2$ in $E$.
Since $E$ is ternary, there are exactly $\len(E)$ such pairs.
So $\dinv(S)=\dinv(E)+\len(E)$. The formula for $\defc(S)$
follows from the previous formulas using $\area+\dinv+\defc=\binom{\len}{2}$,
or by an argument based on Proposition~\ref{prop:defc-pairs}.  
\end{proof}

\begin{lemma}\label{lem:semi-stats}
Let $v$ and $z$ be Dyck vectors such that $v=00z^+$.
Then $\len(v)=\len(z)+2$, $\area(v)=\area(z)+\len(z)$,
$\dinv(v)=\dinv(z)+1$, and $\defc(v)=\defc(z)+\len(z)>\defc(z)$.  
\end{lemma}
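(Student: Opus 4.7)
The plan is to verify the four identities in order, reading everything off from the explicit recipe $v = (0, 0, z_1+1, z_2+1, \ldots, z_n+1)$ where $z = z_1\cdots z_n$ and $n = \len(z)$. The length and area claims are then immediate: $\len(v) = n + 2 = \len(z) + 2$, and $\area(v) = 0 + 0 + \sum_{i=1}^n (z_i+1) = \area(z) + n = \area(z) + \len(z)$.

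For the dinv count, the plan is to partition the pairs $(i,j)$ with $1 \leq i < j \leq n+2$ into three groups and count contributions. First, the single pair $(1,2)$ has $v_1 - v_2 = 0$ and contributes $1$. Second, for $i \in \{1,2\}$ and $j \geq 3$, the entry $v_i = 0$ while $v_j = z_{j-2} + 1 \geq 1$ since $z$ is a Dyck vector, so $v_i - v_j \leq -1$ and these pairs contribute nothing. Third, for $3 \leq i < j \leq n+2$, the difference $v_i - v_j$ equals $z_{i-2} - z_{j-2}$, putting these pairs in bijection with the pairs counted by $\dinv(z)$. Since $v$ is itself a Dyck vector its entries are nonnegative, so the extended convention $v_k = k - 1$ for $k \leq 0$ produces no additional contributions (any such $v_k \leq -1$ fails the $\{0,1\}$ test against any $v_j \geq 0$). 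Summing gives $\dinv(v) = 1 + \dinv(z)$.

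The deficit formula is then a one-line arithmetic consequence of the master identity $\defc(w) = \binom{\len(w)}{2} - \area(w) - \dinv(w)$ applied to both $v$ and $z$:
\[
\defc(v) = \binom{n+2}{2} - (\area(z) + n) - (\dinv(z) + 1) = \defc(z) + \left[\binom{n+2}{2} - \binom{n}{2} - n - 1\right] = \defc(z) + n,
\]
since $\binom{n+2}{2} - \binom{n}{2} = 2n+1$. Because $n = \len(z) \geq 1$, the strict inequality $\defc(v) > \defc(z)$ is automatic.

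No step here is truly difficult; the only place that warrants a moment of care is the dinv bookkeeping, where one must confirm that indices $\leq 0$ do not sneak in (ruled out by nonnegativity of $v$) and that the global shift $z \mapsto z^+$ preserves all pairwise differences. One might be tempted to shortcut via the fact that $\dinv$ and $\defc$ are constant on Dyck classes, but $v$ and $z$ do \emph{not} lie in the same Dyck class under $\sim$: the generating relation is $w \sim 0w^+$, producing $0z^+$, not $00z^+$. So a direct computation as above is the right path, mirroring the style of the preceding Lemma~\ref{lem:stat-0w1} for the $0E1$ construction.
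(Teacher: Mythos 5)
Your proof is correct and follows essentially the same route as the paper: trivial verification of length and area, a pair count for dinv, and the identity $\defc=\binom{\len}{2}-\area-\dinv$ for the deficit. The only (cosmetic) difference is in the dinv step, where the paper first invokes $\dinv(0z^+)=\dinv(z)$ from the $\sim$-invariance and then observes that the one extra leading $0$ contributes exactly one new pair, while you partition all pairs of $00z^+$ from scratch; your closing remark that $v\not\sim z$, so no full shortcut is available, is accurate and matches how the paper itself only uses the invariance for the intermediate vector $0z^+$.
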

\begin{proof}
The formulas for $\len(v)$ and $\area(v)$ are clear.
We know $\dinv(0z^+)=\dinv(z)$ since $z\sim 0z^+$. Preceding $0z^+$ with
one more $0$ adds $1$ to $\dinv$, since all symbols in $z^+$ are positive
and the new $0$ only pairs with the $0$ immediately following it. This
proves $\dinv(v)=\dinv(z)+1$. As in Lemma~\ref{lem:stat-0w1}, the formula 
for $\defc(v)$ follows from the definition or via 
Proposition~\ref{prop:defc-pairs}.  
\end{proof}

\subsection{The Maps $\NU_1$ and $\ND_1$}
\label{subsec:review-NU}

We now review the definition and basic properties of the original 
{\sc next-up} map $\NU_1$ (called $\nu$ in~\cite{HLLL20,LLL18}).
For any integer partition $\gamma$, recall $\gamma_1$ is the first
(longest) part of $\gamma$, and $\ell(\gamma)$ is the length
(number of positive parts) of $\gamma$. The domain of $\NU_1$ is
the set $D_1=\{\gamma: \gamma_1\leq\ell(\gamma)+2\}$. 
For $\gamma\in D_1$ of length $\ell$, define 
$\NU_1(\gamma)=\ptn{\ell^+\gamma_1^-\gamma_2^-\cdots\gamma_\ell^-}$.
The map $\NU_1$ is not defined for partitions $\gamma$ outside the set $D_1$.
Such $\gamma$ satisfy $\gamma_1>\ell(\gamma)+2$ and are called
\emph{$\NU_1$-final} objects.

We now define the {\sc next-down} map $\ND_1$ (called $\nu^{-1}$
in~\cite{HLLL20,LLL18}). The domain of $\ND_1$ is the
set $C_1=\{\gamma:\gamma_1\geq\ell(\gamma)\}$. 
For $\gamma\in C_1$ of length $\ell$, define 
$\ND_1(\gamma)=\ptn{\gamma_2^+\gamma_3^+\cdots\gamma_\ell^+1^{\gamma_1-\ell}}$.
The map $\ND_1$ is not defined for partitions $\gamma$ outside the set $C_1$.
Such $\gamma$ satisfy $\gamma_1<\ell(\gamma)$ and are called
\emph{$\NU_1$-initial} objects.

The following proposition summarizes some known properties of the maps
$\NU_1$ and $\ND_1$; see~\cite[\S2.1]{LLL18} for more details. 
Properties (a), (b), and (c) will be used frequently hereafter.

\begin{proposition}\label{prop:NU1-ND1}
 (a) The map $\NU_1:D_1\rightarrow C_1$ is a bijection with
 inverse $\ND_1:C_1\rightarrow D_1$.
\\ (b) For $\gamma\in D_1$, $\defc(\NU_1(\gamma))=\defc(\gamma)$
 and $\dinv(\NU_1(\gamma))=\dinv(\gamma)+1$.
\\ (c) For $\gamma\in C_1$, $\defc(\ND_1(\gamma))=\defc(\gamma)$
 and $\dinv(\ND_1(\gamma))=\dinv(\gamma)-1$.
\\ (d) For $\gamma\in D_1$, $\NU_1$ acts on the Ferrers diagram of $\gamma$
by removing the leftmost column (containing $\ell(\gamma)$ boxes), then adding 
a new top row with $\ell(\gamma)+1$ boxes.
\\ (e) For $\gamma\in C_1$, $\ND_1$ acts on the Ferrers diagram of $\gamma$
by removing the top row (containing $\gamma_1$ boxes), then adding a new 
leftmost column with $\gamma_1-1$ boxes.
\\ (f) For $\gamma\in D_1$, the (finite or infinite) sequence
 $\gamma,\NU_1(\gamma),\NU_1^2(\gamma),\NU_1^3(\gamma),\ldots$
 contains no repeated entries.
\end{proposition}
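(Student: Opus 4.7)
The plan is to verify the six assertions of Proposition~\ref{prop:NU1-ND1} in an order that lets the harder items feed the easier ones; the real work is concentrated in (b), and once that is in hand both (c) and (f) follow with essentially no effort. I would begin with (d) and (e), since each is a direct rewriting of the definition. If $\gamma=\ptn{\gamma_1,\ldots,\gamma_\ell}\in D_1$, removing the leftmost column of its Ferrers diagram produces the partition $\ptn{\gamma_1^-,\ldots,\gamma_\ell^-}$ (with any trailing zero dropped), and prepending a top row of length $\ell+1$ recovers exactly $\ptn{\ell^+,\gamma_1^-,\ldots,\gamma_\ell^-}$ as in the definition of $\NU_1(\gamma)$; the argument for (e) is symmetric.

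Part (a) is then a routine consequence of (d) and (e). The first part of $\NU_1(\gamma)$ is $\ell+1$ and its length is at most $\ell+1$, so $\NU_1(D_1)\subseteq C_1$; the symmetric computation gives $\ND_1(C_1)\subseteq D_1$. That $\ND_1\circ\NU_1$ and $\NU_1\circ\ND_1$ are the respective identities is most transparent diagrammatically: the membership condition $\gamma_1\leq\ell+2$ for $D_1$ guarantees that the new top row inserted by $\NU_1$ becomes the longest row of the image, while the membership condition $\gamma_1\geq\ell(\gamma)$ for $C_1$ guarantees that the top row of $\gamma$ is tall enough to be reinstalled as a leftmost column by $\ND_1$, so each composition reverses its counterpart box for box.

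For (b), I would pass to Dyck vectors. Choosing any $n\geq\max(\mind(\gamma),\mind(\NU_1(\gamma)))$ and using the size identity $|\NU_1(\gamma)|=(\ell+1)+(|\gamma|-\ell)=|\gamma|+1$, together with $\dinv+\defc=|\cdot|$, reduces the claim $\dinv(\NU_1(\gamma))=\dinv(\gamma)+1$ to the equivalent assertion that $\defc$ is preserved. The diagrammatic action in (d) translates into an explicit rearrangement of the entries of $\qdvmap_n(\gamma)$, and careful bookkeeping of the $\dinv$-pairs created and destroyed by this rearrangement produces the net change of $+1$. Part (c) then follows at once by applying (b) to $\ND_1(\gamma)\in D_1$ and using (a) to identify $\NU_1(\ND_1(\gamma))$ with $\gamma$.

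Finally, (f) is an immediate corollary of (b): iterating the identity $\dinv(\NU_1(\cdot))=\dinv(\cdot)+1$ gives $\dinv(\NU_1^j(\gamma))=\dinv(\gamma)+j$ for every $j\geq 0$ for which $\NU_1^j(\gamma)$ is defined, so the $\dinv$-values along the orbit form a strictly increasing sequence of nonnegative integers and no two entries of the sequence can coincide, regardless of whether the orbit is finite or infinite. The only genuine obstacle in the whole proposition is the $\dinv$ count in (b); everything else is essentially formal.
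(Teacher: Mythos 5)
Your overall architecture is sound, and your argument for (f) is exactly the paper's: the paper derives (f) from (b) in one sentence (each iterate has a distinct dinv value), and for (a)--(e) it gives no proof at all, citing \cite[\S 2.1]{LLL18}. Your treatments of (d), (e), (a), and (c) are correct and at an appropriate level of detail — in particular the check that $\NU_1(\gamma)=\ptn{\ell^+\gamma_1^-\cdots\gamma_\ell^-}$ has first part $\ell+1$ and length at most $\ell+1$, hence lands in $C_1$, and the reduction of (c) to (b) via (a), are both fine.

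The gap is in (b), and you have named it yourself without closing it. The sentence ``careful bookkeeping of the $\dinv$-pairs created and destroyed by this rearrangement produces the net change of $+1$'' is a promissory note, not an argument: it is precisely the one step of the proposition that carries all the content, and nothing in your write-up identifies which pairs are created, which are destroyed, or why the difference is $1$. Concretely, on the Dyck-vector side the action of $\NU_1$ is to delete the leader symbol $d$ (the first symbol equal to the leader value) from $v=\qdvmap_n(\gamma)$ and append $d-1$ at the end (this is the content of Proposition~\ref{prop:NU-QDV}(a), which in this paper appears \emph{after} the present proposition and is itself only ``readily verified'' by citation); a complete proof of (b) must compare, for every other position $j$, the contribution of the pair (leader symbol, $v_j$) before the move with the contribution of ($v_j$, appended $d-1$) after the move, including the pairs with the virtual prefix $\cdots(-2)(-1)$ used in the definition of $\dinv$ for QDVs. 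Until that count is carried out, (b) — and with it (c) and (f), which you correctly derive from it — rests on an unproved assertion. Also note that your preliminary ``reduction'' of the $\dinv$ claim to $\defc$-preservation does no work: you end up proving the $\dinv$ statement directly anyway, so either carry out the pair count for $\dinv$ or carry out the analogous count for $\defc$ (e.g.\ via Proposition~\ref{prop:defc-pairs}), but one of the two must actually be done.
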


Note that part (f) follows from part (b), since each object in the
sequence in (f) must have a different value of dinv. Part (f) shows 
that iterating $\NU_1$ can never produce a cycle of partitions.

\begin{example}\label{ex:NU1-ND1}
Given $\gamma=\ptn{5441}$, we compute 
$\NU_1(\gamma)=\ptn{5433}$, $\NU_1^2(\gamma)=\ptn{54322}$,
 $\NU_1^3(\gamma)=\ptn{643211}$, and so on.
On the other hand, $\ND_1(\gamma)=\ptn{5521}$, $\ND_1^2(\gamma)=\ptn{6321}$,
 and $\ND_1^3(\gamma)=\ptn{43211}$, which is a $\NU_1$-initial object.
\end{example}

Since integer partitions correspond bijectively with Dyck classes
(Proposition~\ref{prop:dyck-class}), the maps $\NU_1$ and $\ND_1$
can be viewed as well-defined functions acting on certain Dyck classes
(those corresponding to the partitions in $D_1$ and $C_1$, respectively).
We now describe a convenient formula for computing $\NU_1([v])$
or $\ND_1([v])$ by acting on a representative QDV $v=v_1\cdots v_n$
for the Dyck class $[v]$.
Define the \emph{leader} of $v$ be the largest $d\geq 0$ such that
$v$ starts with the increasing sequence $012\cdots d$. 
Call this first occurrence of $d$ the \emph{leader symbol} of $v$.
With this notation, the following rule is readily verified
(Lemma~2.3 of~\cite{LLL18} proves it for Dyck vectors, and
 the proof easily extends to QDVs).

\begin{proposition}\label{prop:NU-QDV}
Let $v$ be a QDV of length $n>1$ with leader $d$ and last symbol $v_n$.
\\ (a)~Suppose $v_2\geq 0$. In the case $d>v_n+2$, $[v]$ is a $\NU_1$-final
object and $\NU_1([v])$ is not defined. In the case $d\leq v_n+2$,
 $\NU_1([v])=[z]$ where $z$ is obtained from $v$ by deleting the leader
 symbol $d$ and appending $d-1$.
\\ (b)~Suppose $v_n=s\geq -1$ and $[v]\neq [0]$. In the case $d<v_n$,
 $[v]$ is a $\NU_1$-initial object and $\ND_1([v])$ is not defined.
 In the case $d\geq v_n$, $\ND_1([v])=[z]$ where $z$ is obtained from
 $v$ by deleting $v_n$ and inserting $s+1$ immediately after the leftmost $s$
 in $v$. (When $s=-1$, this means putting a new $0$ at the front of $v$.) 
\end{proposition}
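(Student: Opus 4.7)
The plan is to reduce the proposition to the analogous statement about the reduced Dyck vector of $\lambda$ and then check that the QDV-level operations respect the equivalence relation $\sim$, so the choice of representative does not matter. Throughout, I will let $\lambda$ be the partition with $[v]=F(\lambda)$ (via Proposition~\ref{prop:dyck-class}) and set $\ell=\ell(\lambda)$.

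The first step is the key dictionary lemma: if $v=\qdvmap_n(\lambda)$ for some $n>\ell$, then the leader of $v$ equals $d=n-\ell-1$. This is immediate from~\eqref{eq:qdvmap}, because the positions $j=1,\dots,n-\ell$ have $\lambda_{n-j+1}=0$ (giving $v_j=j-1$), while position $n-\ell$ has $v_{n-\ell+1}=n-\ell-\lambda_\ell<n-\ell$ since $\lambda_\ell\geq 1$. Combining this with the trivial identity $\lambda_1=n-1-v_n$ immediately translates the partition-side thresholds $\lambda_1>\ell+2$ and $\lambda_1<\ell$ into the QDV-side thresholds $d>v_n+2$ and $d<v_n$, respectively. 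This handles the characterization of $\NU_1$-final and $\NU_1$-initial objects in both parts.

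Next, I carry out the verification in the well-defined cases. For (a), assume $d\leq v_n+2$. First check that the proposed word $z$ (delete the leader symbol, append $d-1$) is still a QDV: at the deletion site the new neighbors are $v_d=d-1$ and $v_{d+2}\leq d$, and at the appended position we need $d-1\leq v_n+1$, which is exactly the running assumption. Then use the formula $\mu_{n-j+1}=(j-1)-z_j$ to read off the partition $\mu$ corresponding to $[z]$: position $n$ yields $\mu_1=n-d=\ell+1$; positions $1,\dots,d$ yield trailing zeros; and positions $d+1,\dots,n-1$ give $\mu_{n-j+1}=(j-1)-v_{j+1}=\lambda_{n-j}-1$, i.e.\ $\mu_{i+1}=\lambda_i-1$ for $1\leq i\leq\ell$. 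This matches the definition of $\NU_1$ exactly. Part (b) is handled in parallel: assuming $d\geq v_n$, check that inserting $s+1$ after the leftmost $s$ (or prepending $0$ if $s=-1$) preserves the QDV property, then read off the partition $\mu$ associated to the resulting word $z$ and match it against the parts of $\ND_1(\gamma)$ given in the definition of $\ND_1$.

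Finally, I address well-definedness on Dyck classes. It suffices to show that if $v$ is any QDV and $v'=0v^+$, then applying the respective operation to $v'$ produces a word equivalent to the one produced from $v$. For (a) this is a short direct comparison: $v'$ has leader $d+1$ with leader symbol at position $d+2$, so the result from $v'$ is $0,1,\dots,d,v_{d+2}+1,\dots,v_n+1,d$, which is precisely $0z^+$ where $z$ is the result from $v$. An analogous check settles the $\ND_1$ rule, after noting that the last symbol of $v'$ is $v_n+1=s+1$ and that the unique leftmost $s+1$ in $v'$ sits one position to the right of the leftmost $s$ in $v$ (or at position $2$ of $v'$ in the $s=-1$ corner case). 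The main fussy point of the proof is this compatibility check — it is routine but requires tracking indices carefully through the shift — and everything else reduces to bookkeeping with~\eqref{eq:qdvmap}, once the dictionary $d=n-\ell-1$ is in hand.
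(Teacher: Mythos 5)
Your proposal is correct, and it is worth noting that the paper itself gives no proof of this proposition: it simply asserts the rule is ``readily verified,'' citing Lemma~2.3 of~\cite{LLL18} for the Dyck-vector case and claiming the extension to QDVs is easy. Your direct verification via the dictionary $d=n-\ell(\lambda)-1$, $\lambda_1=n-1-v_n$ is exactly the computation that assertion is hiding, and your translation of the thresholds $\lambda_1>\ell+2$ and $\lambda_1<\ell$ into $d>v_n+2$ and $d<v_n$ is right. Two small points to tighten. First, since every QDV of length $n$ already equals $\qdvmap_n(\lambda)$ for its associated partition, your main computation handles an arbitrary representative directly, so the final compatibility check with $v\sim 0v^+$ is harmless but essentially redundant. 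Second, and more substantively, your QDV-check for $z$ in part~(a) only covers the junction and the appended symbol; when $d=0$ the deleted leader symbol is $v_1$ itself, and you need $z_1=0$. This is exactly where the hypothesis $v_2\geq 0$ enters: $d=0$ forces $v_2\neq 1$, so $v_2\geq 0$ gives $v_2=0$. Your proof never invokes $v_2\geq 0$, so this one sub-case should be stated explicitly (a similar remark applies to the role of $v_n\geq -1$ and $[v]\neq[0]$ in part~(b), which you only sketch). These are easily patched bookkeeping omissions, not flaws in the approach.
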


It follows that no Dyck class $[v]$ is both $\NU_1$-initial and $\NU_1$-final.

\begin{example} 
We repeat Example~\ref{ex:NU1-ND1} using Dyck vectors;
here $\gamma=\ptn{5441}=[0122011]$. In the following
computation, the leader symbol of each QDV is underlined:
\[ [0\underline{1}12222] \stackrel{\ND_1}{\leftarrow}
   [01\underline{2}2220] \stackrel{\ND_1}{\leftarrow}
   [01\underline{2}2201] \stackrel{\ND_1}{\leftarrow}
   \gamma=[01\underline{2}2011] \stackrel{\NU_1}{\rightarrow}
   [01\underline{2}0111] \stackrel{\NU_1}{\rightarrow}
   [0\underline{1}01111] \stackrel{\NU_1}{\rightarrow} 
   [\underline{0}011110] \stackrel{\NU_1}{\rightarrow} \cdots. \]
\end{example}

We frequently need the fact that a reduced Dyck vector $v$ has 
$\mind([v])=\len(v)$. Using this and Proposition~\ref{prop:NU-QDV}, 
we obtain the following.

\begin{proposition}\label{prop:NU-seg2}
(a)~Let $v$ be a reduced Dyck vector.
If $v$ starts with $00$, then $\NU_1([v])$ is defined and
$\mind(\NU_1([v]))=\mind([v])+1$. If $v$ starts with $01$ and $\NU_1([v])$
is defined, then $\mind(\NU_1([v]))=\mind([v])$.
(b)~Suppose a Dyck vector $v$ starts with $0012$ and ends with a positive 
symbol.  Then $[v]$ is a $\NU_1$-initial object, $\NU_1([v])$ is defined and is
a $\NU_1$-final object, and $\mind(\NU_1([v]))=\mind([v])+1=\len(v)+1$.
\end{proposition}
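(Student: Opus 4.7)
The plan is to work with reduced QDV representatives and apply Proposition~\ref{prop:NU-QDV} to compute $\NU_1([v])$ in closed form, then identify the reduced Dyck vector of the image in order to read off $\mind$. Throughout, I will use the criterion that a Dyck vector is reduced precisely when it equals the single symbol $0$ or has a $0$ in some position $\geq 2$, so that for a reduced Dyck vector $w$ one has $\mind([w])=\len(w)$.

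For part (a), write $v=v_1v_2\cdots v_n$, so $\mind([v])=n$. In the case $v$ begins with $00$, the leader is $d=0$ and the inequality $d\le v_n+2$ is automatic, so Proposition~\ref{prop:NU-QDV}(a) gives $\NU_1([v])=[z]$ with $z=0\,v_3\cdots v_n(-1)$. I would then pass to the equivalent QDV $0z^+$ of length $n+1$, which is a Dyck vector ending in $0$ at position $n+1\geq 3$, hence reduced; so $\mind(\NU_1([v]))=n+1$. In the case $v$ begins with $01$, the leader is some $d\geq 1$, and the defined image is $z=0\,1\cdots(d-1)\,v_{d+2}\cdots v_n\,(d-1)$, already a Dyck vector of length $n$. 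The key step is showing $z$ is reduced: since $v$ is reduced, $v$ has a $0$ at some position $\geq 2$, and because $v_2,\ldots,v_{d+1}$ equal $1,2,\ldots,d$, that $0$ must occur among $v_{d+2},\ldots,v_n$ and therefore survives in $z$ at position $\geq d+1\geq 2$.

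For part (b), the hypothesis that $v$ starts with $0012$ immediately gives $v_2=0$, so $v$ is already reduced and $\mind([v])=n$. The leader of $v$ is $d=0$. Because $v_n\geq 1>d$ and $[v]\neq[0]$, Proposition~\ref{prop:NU-QDV}(b) yields that $\ND_1([v])$ is undefined, so $[v]$ is $\NU_1$-initial. The inequality $d=0\leq v_n+2$ holds, so Proposition~\ref{prop:NU-QDV}(a) gives $\NU_1([v])=[z]$ with $z=012\,v_5\cdots v_n(-1)$. Since $z$ begins with $012$ its leader is at least $2$, and together with $z_n=-1$ this satisfies $\mathrm{leader}(z)\geq 2>1=z_n+2$; therefore $[z]$ is $\NU_1$-final by Proposition~\ref{prop:NU-QDV}(a). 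Finally, $0z^+$ is a Dyck vector of length $n+1$ whose last entry is $0$ at position $n+1\geq 5$, hence reduced, so $\mind(\NU_1([v]))=n+1=\len(v)+1=\mind([v])+1$.

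The main subtlety throughout will be making sure the Dyck vector obtained after applying $\NU_1$ is displayed in reduced form, so that $\mind$ can be read off as its length rather than requiring further simplification. This hinges on correctly tracking where the pre-existing zero in $v$ (guaranteed by $v$ being reduced) reappears in $z$, and on noticing that whenever the raw output QDV ends in $-1$, the canonical shift $z\mapsto 0z^+$ automatically installs a trailing $0$ at a position $\geq 2$ that certifies reducedness of the lengthened representative.
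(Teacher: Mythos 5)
Your proof is correct and follows exactly the route the paper intends: the paper omits the argument, remarking only that the proposition follows from $\mind([w])=\len(w)$ for reduced $w$ together with Proposition~\ref{prop:NU-QDV}, and your write-up supplies precisely that computation, including the key reducedness check (a surviving $0$ in position $\geq 2$, or the trailing $0$ created by passing to $0z^+$). No gaps.
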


\begin{proposition}\label{prop:bin-NU}
Let $v=0v_2v_3\cdots v_n$ be a binary Dyck vector of length $n$.
Starting at $[v]$ and applying $\NU_1$ $n$ times leads to $[v0]$
via the following chain of Dyck classes:
\begin{align}\label{eq:bin-NU}
 [v] &=[0v_2v_3\cdots v_n]\arNU 
       [0v_3v_4\cdots v_nv_2^-] \arNU
       [0v_4\cdots v_nv_2^-v_3^-] \arNU \cdots \arNU
       [0v_nv_2^-v_3^-\cdots v_{n-1}^-] 
   \\ &\arNU [0v_2^-v_3^-\cdots v_n^-]=[01v_2v_3\cdots v_n] \arNU
       [0v_2v_3\cdots v_n0]=[v0]. 
\end{align}
\end{proposition}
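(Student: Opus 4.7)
The plan is to prove the displayed chain by induction on the step number $k$, showing that after $k$ applications of $\NU_1$, for $0\leq k\leq n-1$, we land on the Dyck class $[w_k]$, where
\[
w_k \;=\; 0\, v_{k+2}\, v_{k+3}\cdots v_n\, v_2^-\, v_3^-\cdots v_{k+1}^-,
\]
with the convention that when an index range is empty the corresponding block is absent (so $w_0=v$ and $w_{n-1}=0\,v_2^-\,v_3^-\cdots v_n^-$). The final arrow from $[w_{n-1}]$ to $[v0]$ is handled separately.

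For the inductive step $w_k\mapsto w_{k+1}$ with $0\leq k\leq n-2$, I would invoke Proposition~\ref{prop:NU-QDV}(a). The second entry of $w_k$ is $v_{k+2}\in\{0,1\}$, so the hypothesis $w_{k,2}\geq 0$ is met; moreover the initial segment of $w_k$ cannot contain a $2$, since every remaining entry lies in $\{-1,0,1\}$, so the leader of $w_k$ is exactly $d=v_{k+2}$. The last entry of $w_k$ is $v_{k+1}^-\in\{-1,0\}$, which yields $d\leq 1\leq v_{k+1}^-+2$, guaranteeing that $\NU_1$ is defined. The prescription of Proposition~\ref{prop:NU-QDV}(a) then removes the leader symbol at position $2$ and appends $d-1=v_{k+2}^-$, producing precisely $w_{k+1}$.

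For the final arrow the difficulty is that $w_{n-1}$ may have second entry $v_2^-=-1$ (when $v_2=0$), which violates the hypothesis $w_2\geq 0$ of Proposition~\ref{prop:NU-QDV}(a). I would therefore switch to an equivalent representative of the same Dyck class. Applying the defining relation $z\sim 0z^+$ with $z=0\,v_2^-\,v_3^-\cdots v_n^-$ yields $[w_{n-1}]=[0\cdot 1\cdot v_2\,v_3\cdots v_n]$, which starts with $01$ and has a nonnegative second entry. Its leader is $d=1$ (the third entry $v_2\in\{0,1\}$ prevents extension to $012$), and its last entry $v_n\geq 0$ satisfies $d\leq v_n+2$. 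A final application of Proposition~\ref{prop:NU-QDV}(a) deletes the $1$ at position $2$ and appends $0$, giving $[0\,v_2\,v_3\cdots v_n\,0]=[v0]$, as claimed.

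The main obstacle is precisely the transition at the $(n-1)$-st arrow, where the natural formula produces a QDV whose second entry drops below zero and one must first rewrite the class via $z\sim 0z^+$ before invoking Proposition~\ref{prop:NU-QDV}(a). Every other step is a direct application of the leader-deletion rule, and the bookkeeping of where each entry $v_i$ or $v_i^-$ sits after $k$ iterations is immediate from the inductive formula for $w_k$.
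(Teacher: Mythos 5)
Your proof is correct and follows essentially the same route as the paper's: verify the single-step rule $[0w_2w_3\cdots w_n]\arNU[0w_3\cdots w_nw_2^-]$ for words with entries in $\{-1,0,1\}$ and nonnegative second entry, iterate it $n-1$ times, then switch to the equivalent representative $01v_2\cdots v_n$ (via $z\sim 0z^+$) to handle the possible $-1$ in second position before the last step. The only cosmetic slips are that for $k=0$ the last entry of $w_k$ is $v_n$ rather than $v_{k+1}^-$ (the bound $d\leq v_n+2$ still holds trivially), and that when $d=0$ the leader symbol sits at position $1$ rather than position $2$ (deleting either of the two adjacent zeros yields the same word); neither affects the argument.
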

We call the intermediate vectors $0v_{k+1}\cdots v_nv_2^-\cdots v_k^-$ 
(where $1\leq k\leq n$) \emph{cycled versions of $v$}.  
\begin{proof}
Let $w=0w_2w_3\cdots w_n$ be a QDV with all $w_i$ in $\{-1,0,1\}$.
Suppose $w_2$ is $0$ or $1$. By checking the two cases, 
we see that $\NU_1([w])=[0w_3\cdots w_nw_2^-]$. This observation
justifies the links in~\eqref{eq:bin-NU} leading to
$[0v_2^-v_3^-\cdots v_n^-]$. At this last stage, the
representative QDV might have second symbol $-1$, so we change 
to the new representative $01v_2v_3\cdots v_n$ of length $n+1$.
We then apply the observation at the start of the proof once more
to reach $[v0]$.
\end{proof}


\subsection{Tail-Initiators and $\NU_1$-Tails}
\label{subsec:TI-NU-tail}

\begin{definition}\label{def:TI-tail}
Given a nonzero partition $\mu=\ptn{r^{n_r}\cdots 2^{n_2}1^{n_1}}$
with $n_r>0$, define $B_{\mu}=01^{n_1}01^{n_2}\cdots 01^{n_r}$.
Note that every binary word starting with $0$ and ending with $1$ has
the form $B_{\mu}$ for exactly one such $\mu$. When $\mu=\ptn{0}$,
define $B_{\mu}$ to be the empty word.  For any partition $\mu$,
define the \emph{tail-initiator of $\mu$} to be the Dyck class
$\TI(\mu)=[0B_{\mu}]$. Define $\tail(\mu)$ to be the sequence of
Dyck classes reachable from $\TI(\mu)$ by applying $\NU_1$
zero or more times.
\end{definition}

For example, $\mu=\ptn{33111}=\ptn{3^22^01^3}$ has $\TI(\mu)=[001110011]$,
which is the Dyck class identified with the partition $\ptn{76653211}$.
The map $\TI$ is a bijection from the set of integer partitions
onto the set of Dyck classes $[v]$ such that the reduced representative
$v$ is either $0$ or a BDV starting with $00$ and ending with $1$.

\begin{proposition}\label{prop:TI}
For every partition $\mu$, $\TI(\mu)$ is a $\NU_1$-initial object 
with the following statistics.
\\ (a)~$\mind(\TI(\mu))=\len(0B_{\mu})=\mu_1+\ell(\mu)+1$.
\\ (b)~$\defc(\TI(\mu))=\defc(0B_{\mu})=|\mu|$.
\\ (c)~$\area_{\Delta}(\TI(\mu))=\area(0B_{\mu})=\ell(\mu)$.
\\ (d)~$\dinv(\TI(\mu))=\dinv(0B_{\mu}) 
  =\binom{\mu_1+\ell(\mu)+1}{2}-\ell(\mu)-|\mu|$.
\end{proposition}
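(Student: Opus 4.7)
The plan is to carry out all four statistic calculations directly on the canonical representative $v = 0B_\mu$, and then deduce $\NU_1$-initiality from Proposition~\ref{prop:NU-QDV}(b). First I would check that $v$ is already a reduced Dyck vector: the only way a QDV can be shortened under $\sim$ is to be written as $0(w)^+$ for some QDV $w$, which forces the second entry to equal $1$. When $\mu \neq \ptn{0}$ we have $n_r > 0$, so $v$ begins $0\,0\,1^{n_1}0\cdots$; its second entry is $0$, so $v$ is reduced (the case $\mu = \ptn{0}$, where $v = 0$, is trivially reduced of length one). Counting symbols in $B_\mu$ gives $r = \mu_1$ zeros and $n_1 + \cdots + n_r = \ell(\mu)$ ones, hence $\len(v) = 1 + \mu_1 + \ell(\mu)$, yielding (a). Part (c) is then immediate: $\area(v)$ is the sum of the binary entries, which equals the number of $1$s in $B_\mu$, namely $\ell(\mu)$.

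The main calculation is (b), which I would handle via Proposition~\ref{prop:defc-pairs}. Because $v$ is binary, the condition $v_i - v_j \geq 2$ never applies, so $\defc(v)$ counts pairs $(i,j)$ with $i < j$, $v_i = 0$, $v_j = 1$, and some $k < i$ with $v_k = 0$ --- in words, pairs consisting of a non-leading zero and a later one. The zeros of $v$ other than the initial one are precisely the $r$ separator zeros beginning each block $01^{n_j}$ of $B_\mu$; the separator zero opening block $j$ has exactly $n_j + n_{j+1} + \cdots + n_r$ ones to its right. Summing and swapping the order of summation,
\[
\defc(v) \;=\; \sum_{j=1}^{r} \sum_{k=j}^{r} n_k \;=\; \sum_{k=1}^{r} k\, n_k \;=\; |\mu|,
\]
which is (b). Part (d) then follows at once from the identity $\dinv(v) + \defc(v) + \area(v) = \binom{\len(v)}{2}$ recorded in \S\ref{subsec:area-dinv-defc}, combined with (a), (b), and (c).

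For $\NU_1$-initiality in the case $\mu \neq \ptn{0}$ I would invoke Proposition~\ref{prop:NU-QDV}(b). The leader of $v = 0B_\mu$ is $d = 0$ because $v$ begins with $00$ (so the initial increasing run $012\cdots d$ terminates at $d = 0$), while the last symbol satisfies $v_n = 1$ since $n_r > 0$. Since $0 = d < v_n = 1$, the class $\TI(\mu)$ is $\NU_1$-initial. For $\mu = \ptn{0}$, $\TI(\ptn{0}) = [0]$ is initial by convention, as $\ND_1$ is not defined on the empty partition. I do not foresee any real obstacle: everything beyond the combinatorial double-sum in (b) is either a direct symbol count or an appeal to a named earlier result, so that double-sum is the only step requiring a small bit of genuine reasoning.
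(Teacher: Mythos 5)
Your proposal is correct and follows essentially the same route as the paper: verify that $0B_{\mu}$ is reduced, count symbols for (a) and (c), apply Proposition~\ref{prop:defc-pairs} for (b), deduce (d) from $\dinv+\defc+\area=\binom{\len}{2}$, and get $\NU_1$-initiality from Proposition~\ref{prop:NU-QDV}(b). The only (immaterial) difference is that in (b) you group the deficit pairs by the non-leading zeros while the paper groups them by the ones in each block $1^{n_i}$; both are the two orders of the same double sum $\sum_i i\,n_i=|\mu|$.
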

\begin{proof}
All statements are immediately verified for $\mu=\ptn{0}$
and $\TI(\mu)=[0]$. Now consider a nonzero partition
$\mu=\ptn{r^{n_r}\cdots 2^{n_2}1^{n_1}}$ and
$B_{\mu}=01^{n_1}01^{n_2}\cdots 01^{n_r}$.
We have $r=\mu_1$ and $n_1+\cdots+n_r=\ell(\mu)$.  
Now $0B_{\mu}$ is the reduced representative of the Dyck class $\TI(\mu)$,
since $0B_{\mu}$ starts with $00$. As $0B_{\mu}$ contains $1+r$ zeroes,
we have $\mind(\TI(\mu))=\len(0B_{\mu})=\mu_1+\ell(\mu)+1$.
Using Proposition~\ref{prop:defc-pairs} to find $\defc(0B_{\mu})$, 
each $1$ in $1^{n_i}$ pairs with $i$ preceding $0$s (not including
the leftmost $0$). So $\defc(0B_{\mu})=1n_1+2n_2+\cdots+rn_r=|\mu|$.
The area of $0B_{\mu}$ is $n_1+\cdots+n_r=\ell(\mu)$.
The formula for $\dinv([0B_{\mu}])$ follows since 
$\dinv+\defc+\area=\binom{\len}{2}$.
Finally, since $0B_{\mu}$ has leader $d=0$ and last symbol $1$, $\TI(\mu)$ is 
a $\NU_1$-initial object by Proposition~\ref{prop:NU-QDV}(b).
\end{proof}

Given any sequence $\C$ of partitions, the \emph{$\mind$-profile}
of $\C$ is the numerical sequence obtained from $\C$ 
by replacing each term $\gamma$ by $\mind(\gamma)$.

\begin{theorem}\label{thm:tail-profile}
For each partition $\mu$, $\tail(\mu)$ is the infinite sequence
$(\NU_1^m(\TI(\mu)):m\geq 0)$, which consists of all $[z]$ 
such that $z$ is a cycled version of $0B_{\mu}0^c$ for some $c\geq 0$. 
Letting $n=\mind(\TI(\mu))=\mu_1+\ell(\mu)+1$, the 
$\mind$-profile of $\tail(\mu)$ is $n^1(n+1)^n(n+2)^{n+1}\cdots
 (n+c+1)^{n+c}\cdots$. All objects in $\tail(\mu)$ have deficit $|\mu|$.
\end{theorem}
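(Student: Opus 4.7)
The deficit assertion is immediate: Proposition~\ref{prop:NU1-ND1}(b) says that $\NU_1$ preserves deficit, and Proposition~\ref{prop:TI}(b) gives $\defc(\TI(\mu))=|\mu|$, so every defined term $\NU_1^m(\TI(\mu))$ has deficit $|\mu|$. The remaining work is to describe the Dyck classes in the iteration, show it never terminates, and compute the $\mind$-profile.

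Set $v = 0B_\mu$, a BDV of length $n = \mu_1+\ell(\mu)+1$, so that $\TI(\mu)=[v]$. The plan is to apply Proposition~\ref{prop:bin-NU} iteratively to the BDVs $v, v0, v0^2, \ldots$. I induct on $c \ge 0$ with the hypothesis: after $T_c := cn + \binom{c}{2}$ applications of $\NU_1$, the chain has traversed the cycled versions of $v, v0, \ldots, v0^{c-1}$ in order and arrived at $[v0^c]$. The base $c=0$ is vacuous. For the step $c \to c+1$, observe that $v0^c$ is a BDV of length $n+c$; Proposition~\ref{prop:bin-NU} applied to $v0^c$ then guarantees that the next $n+c$ applications of $\NU_1$ are all defined, traverse the $n+c$ cycled versions of $v0^c$ in order, and land at $[v0^{c+1}]$. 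This simultaneously shows that the iteration never terminates and that the visited Dyck classes are precisely the $[z]$ with $z$ a cycled version of $v0^c$ for some $c \ge 0$; distinctness of these visited classes is automatic since $\dinv$ strictly increases along any $\NU_1$-chain (Proposition~\ref{prop:NU1-ND1}(b)).

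For the $\mind$-profile, invoke Proposition~\ref{prop:NU-seg2}(a): on applying $\NU_1$, $\mind$ jumps by one when the reduced DV starts with $00$ and is unchanged when it starts with $01$. The claim is that in the chain the reduced DV begins with $00$ precisely at the classes $[v0^c]$ (positions $T_0=0, T_1, T_2, \ldots$). Indeed $v0^c$ is a BDV beginning $00$ and is reduced by inspection, with the edge case $\mu=\ptn{0}, c=0$ handled directly ($\mind([0])=1$ and $\mind([00])=2$). At every other position, the cycled-version QDV from Proposition~\ref{prop:bin-NU} contains the entry $v_2^- = -1$, so it fails to be a DV; its shortest DV representative is $0(\text{QDV})^+$, a DV of length one greater that begins $01$. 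Hence $\mind$ equals $n+c$ at step $T_c$, jumps to $n+c+1$ at step $T_c+1$, and remains $n+c+1$ through step $T_{c+1}$ (where $[v0^{c+1}]$, the start of block $c+1$, is reached). Counting gives the profile $n^1(n+1)^n(n+2)^{n+1}\cdots$. The main technical care needed is in this reduced-representative bookkeeping: a single $-1$ entry in the cycled-version QDV forces the reduction to lengthen by exactly one, because prepending a $0$ and incrementing lifts the $-1$ to $0$, while any further shortening would reintroduce a negative entry.
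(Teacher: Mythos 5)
Your proposal is correct and follows essentially the same route as the paper's proof: iterate Proposition~\ref{prop:bin-NU} on the BDVs $0B_\mu 0^c$, induct on $c$, and read off the $\mind$-profile by noting that the intermediate cycled versions contain a $-1$ (so their reduced representatives have length one greater), while the classes $[0B_\mu 0^c]$ have reduced representatives starting with $00$. Your explicit bookkeeping of the reduced representatives just spells out the step the paper dismisses as "by inspection of \eqref{eq:bin-NU}."
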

\begin{proof}
The sequence $\tail(\mu)$ has first entry $\TI(\mu)=[0B_{\mu}]$, where 
the reduced representative $0B_{\mu}$ is a BDV of length $n=\mind(\TI(\mu))$.
By Proposition~\ref{prop:bin-NU}, applying $\NU_1$ $n$ times leads to
the Dyck class $[0B_{\mu}0]$. The $n$ Dyck classes following
$[0B_{\mu}]$ all have $\mind=n+1$, as we see by inspection
of~\eqref{eq:bin-NU}. We now invoke Proposition~\ref{prop:bin-NU} again,
taking $v$ there to be the BDV $0B_{\mu}0$ of length $n+1$. After $n+1$ 
applications of $\NU_1$, we reach $[0B_{\mu}00]$, where the $n+1$ Dyck classes
following $0B_{\mu}0$ all have $\mind=n+2$. We proceed similarly.
Having reached $[0B_{\mu}0^c]$ for some $c\geq 0$, 
the tail continues to $[0B_{\mu}0^{c+1}]$ in $n+c$ steps.
By Proposition~\ref{prop:bin-NU}, the Dyck classes from
$[0B_{\mu}0^c]$ (inclusive) to $[0B_{\mu}0^{c+1}]$ (exclusive)
are precisely the classes $[z]$ where $z$ is a cycled version of
$0B_{\mu}0^c$. Moreover, the Dyck classes from
$[0B_{\mu}0^c]$ (exclusive) to $[0B_{\mu}0^{c+1}]$ (inclusive)
all have $\mind=n+c+1$. The first part of the theorem follows by 
induction on $c$. All objects in $\tail(\mu)$ have deficit $|\mu|$
by Propositions~\ref{prop:TI}(b) and~\ref{prop:NU1-ND1}(b).
\end{proof}

\subsection{Further Structural Analysis of the $\NU_1$-Tails.}
\label{structure-NU1-tails}

In our later work, we need an even more detailed version
of the description of $\tail(\mu)$ in Theorem~\ref{thm:tail-profile}.
For each $j\geq 0$, let the \emph{$j$th plateau of $\tail(\mu)$} consist
of all $[z]$ in $\tail(\mu)$ with $\mind([z])=n+j$, where $n=\mind(\TI(\mu))$.
The $0$th plateau consists of $\TI(\mu)=[0B_{\mu}]$ alone. For $j>0$,
the $j$th plateau consists of $n+j-1$ objects with
consecutive dinv values, namely all objects strictly after $[0B_{\mu}0^{j-1}]$
and weakly before $[0B_{\mu}0^j]$ in $\tail(\mu)$, as we
saw in the proof of Theorem~\ref{thm:tail-profile}.
The next result explicitly lists all such objects using
reduced representatives for each Dyck class.

\begin{theorem}\label{thm:BDV-tail}
For any nonzero partition $\mu$ and $j>0$, the $j$th plateau of $\tail(\mu)$
consists of the following Dyck classes, listed in order from lowest dinv
 to highest dinv:
\begin{itemize}
\item[(a)] first, $[01Z^+1^{j-1}Y]$ where $Y$ and $Z$ are nonempty strings
 such that $B_{\mu}=YZ$, listed in order from the shortest $Y$ to the 
 longest $Y$; 
\item[(b)] second, $[01^aB_{\mu}0^b]$ where $a+b=j$, listed in order from $b=0$ 
 to $b=j$.
\end{itemize}
 For $\mu=\ptn{0}$ and $j>0$, the $j$th plateau of $\tail(\ptn{0})$
 consists of $[01^a0^b]$ where $a+b=j$ and $b>0$, listed in order 
 from $b=1$ to $b=j$.
\end{theorem}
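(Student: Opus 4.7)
The plan is to combine Theorem~\ref{thm:tail-profile} with Proposition~\ref{prop:bin-NU}, then carefully reduce the resulting quasi-Dyck vectors. By Theorem~\ref{thm:tail-profile}, the $j$th plateau consists of the $L := n+j-1$ Dyck classes obtained by applying $\NU_1$ one through $L$ times to the BDV $v = 0B_{\mu}0^{j-1}$ of length $L$. Proposition~\ref{prop:bin-NU} (with its $n$ replaced by $L$) gives explicit representatives: for $1 \le k \le L-2$, step $k$ produces the cycled version $[0\,u_{k+1}\cdots u_{L-1}\,u_1^-\cdots u_k^-]$ where $u := B_{\mu}0^{j-1}$; step $L-1$ produces $[01\,B_{\mu}\,0^{j-1}]$; step $L$ produces $[0\,B_{\mu}\,0^j]$.

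The bulk of the work is to identify the reduced Dyck vector inside each such class. I will split on whether the cut point $k$ lies inside $B_{\mu}$ or inside the suffix $0^{j-1}$ of $u$. In the range $1 \le k \le |B_{\mu}|-1$, write $B_{\mu} = YZ$ with $Y = u_1\cdots u_k$ and $Z = u_{k+1}\cdots u_{|B_{\mu}|}$, both nonempty. The cycled QDV is $0,\,Z,\,0^{j-1},\,Y^-$, which carries a $-1$ because $Y$ begins with $0$; one application of the $\sim$-move $w\sim 0w^+$ (prepend $0$, increment every entry) lifts it to $[0,\,1,\,Z^+,\,1^{j-1},\,Y]$, matching form (a). In the range $|B_{\mu}| \le k \le L$, write $k = |B_{\mu}|+s$ with $0 \le s \le j$. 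For $s \le j-2$ the cycled QDV $0,\,0^{j-1-s},\,B_{\mu}^-,\,(-1)^s$ lifts to $[0,\,1^{j-s},\,B_{\mu},\,0^s]$, while $s=j-1$ and $s=j$ correspond to the two explicit endpoints $[01\,B_{\mu}\,0^{j-1}]$ and $[0\,B_{\mu}\,0^j]$ already given by Proposition~\ref{prop:bin-NU}. Setting $a=j-s$ and $b=s$, all $j+1$ resulting classes have the form $[01^a\,B_{\mu}\,0^b]$ of~(b).

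For each candidate I still need to verify that it is indeed a reduced Dyck vector: nonnegativity and the QDV inequality $v_{i+1}\le v_i+1$ will be checked at the block junctions using that $B_{\mu}$ begins with $0$, ends with $1$, and that $Y,Z$ are a nonempty prefix/suffix of $B_{\mu}$; reducedness follows because, when the leading $0$ can be stripped at all, one $\sim$-strip decrements the leading block and returns a QDV with a $-1$. Since $\NU_1$ raises dinv by $1$ at every step, the plateau order from lowest to highest dinv is exactly the order of $k$: the first range delivers the (a)-objects in order of increasing $|Y|=k$ (shortest to longest), and the second range delivers the (b)-objects in order of increasing $b=s$ (from $0$ to $j$); the count $(|B_{\mu}|-1)+(j+1)=L$ confirms that every element of the plateau is accounted for.

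The degenerate case $\mu=\ptn{0}$ comes out of the same argument with $|B_{\mu}|=0$: the first range is vacuous, and the $b=0$ entry of the second range is $[01^j]$, which $\sim$-reduces all the way down to $[0]=\TI(\ptn{0})$ and therefore does not appear in the plateau, leaving precisely the $j$ classes with $b\ge 1$. I expect the main obstacle to be the bookkeeping of the QDV inequalities at the block junctions in the first range, especially the degenerate subcases $j=1$, $|Y|=1$, or $|Z|=1$; however each individual check reduces to a single instance of $v_{i+1}\le v_i+1$ and introduces no new ideas beyond Proposition~\ref{prop:bin-NU}.
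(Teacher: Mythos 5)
Your proposal is correct and follows essentially the same route as the paper: apply Proposition~\ref{prop:bin-NU} to $0B_{\mu}0^{j-1}$, split the cycled versions according to whether the cut falls inside $B_{\mu}$ or inside the trailing zeros, and lift the resulting QDVs by one $\sim$-move to obtain the representatives in (a) and (b). One small slip in the degenerate case: $[01^j]$ reduces to $[0^j]=[0B_{\ptn{0}}0^{j-1}]$ (the starting point of the segment, hence an element of plateau $j-1$), not to $[0]$; your conclusion that it is excluded from plateau $j$ is nevertheless correct, since it corresponds to $s=0$, i.e.\ zero applications of $\NU_1$.
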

\begin{proof}
This theorem follows from the calculation~\eqref{eq:bin-NU} applied to
$v=0B_{\mu}0^{j-1}$. Initially, the symbols in $B_{\mu}$ cycle to
the end of the list and decrement, one at a time, producing the Dyck
classes $[0Z0^{j-1}Y^-]=[01Z^+1^{j-1}Y]$ in the order listed in (a).
When all symbols in $B_{\mu}$ have cycled to the end, we have reached
$[00^{j-1}B_{\mu}^-]=[01^jB_{\mu}]$, which is the first Dyck class in (b).
Applying $\NU_1$ $j$ more times in succession gives the remaining objects
in (b) in order, ending with $[0B_{\mu}0^j]$. The special case
$\mu=\ptn{0}$ is different because the objects in (a) do not exist
and the Dyck vector $01^j$ is not reduced. Since $[01^j]=[0^j]$, 
this Dyck class belongs to plateau $j-1$, not plateau $j$.  
Applying~\eqref{eq:bin-NU} to $v=01^j$ proves the theorem in this case.
\end{proof}

We now show that every Dyck class $[w]$ represented by a binary
Dyck vector $w$ belongs to exactly one $\tail(\mu)$,
where $\mu$ can be easily deduced from $w$. This result also
holds when $w$ is a ternary Dyck vector with a particular structure.

\begin{theorem}\label{thm:find-tail} 
 (a)~For each binary Dyck vector $w$, $[w]\in\tail(\mu)$ for 
 exactly one partition $\mu$.
\\ (b)~For each non-reduced ternary Dyck vector $w$,
 $[w]\in\tail(\mu)$ for exactly one partition $\mu$.
\\ (c)~For each reduced ternary Dyck vector $w$ containing $2$,
  $[w]\in\tail(\mu)$ for some (necessarily unique) $\mu$ if and only if 
 $w_1=0$ is the only $0$ in $w$ before the last $2$ in $w$. 
\end{theorem}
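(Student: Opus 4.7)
The plan is to use Theorem~\ref{thm:BDV-tail}, which explicitly lists the reduced representative of every Dyck class in each $\tail(\mu)$, together with Proposition~\ref{prop:dyck-class}, which guarantees a unique reduced representative per Dyck class. Since $[w]\in\tail(\mu)$ if and only if the reduced rep of $[w]$ appears in Theorem~\ref{thm:BDV-tail}'s list for $\mu$, the whole theorem reduces to parsing that reduced representative and reading off $\mu$ from it.

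For parts (a) and (b), first observe that a binary Dyck vector $w$ is either already reduced or equals $01^{n-1}$ (which reduces to the binary vector $0^{n-1}$), so $[w]$ always has a binary reduced representative. The binary reduced representatives listed in Theorem~\ref{thm:BDV-tail} are $0B_{\mu}$ and $01^a B_{\mu}0^b$ (with suitable constraints), and I would show that any binary reduced Dyck vector admits a unique decomposition of this form by reading off $b$ as the length of the trailing $0$-run, $a$ as the length of the initial $1$-run after position $1$, and recovering $\mu$ from the middle substring via Definition~\ref{def:TI-tail}; this proves (a). For (b), a non-reduced ternary $w=0w_2\cdots w_n$ must satisfy $w_2=1$ and $w_i\geq 1$ for all $i\geq 2$, so one reduction step yields a vector with entries in $\{0,1\}$, i.e., binary; applying (a) to this reduction finishes the argument.

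For part (c), since $w$ is reduced ternary and contains $2$, the binary forms from Theorem~\ref{thm:BDV-tail} are excluded, so $[w]\in\tail(\mu)$ if and only if $w=01Z^+1^{j-1}Y$ with $YZ=B_{\mu}$ and $Y,Z$ nonempty. For the forward direction, in this form $Z^+$ has entries in $\{1,2\}$ and $1^{j-1}$ has no zeros, so the only $0$ preceding the last $2$ (which sits at the end of $Z^+$) is $w_1$. For the converse, given $w$ satisfying the zero-placement condition, I would directly reconstruct the decomposition: let $p$ be the position of the last $2$, set $Z=w_3^-\cdots w_p^-$, let $j-1$ be the run-length of $1$s immediately following position $p$, and set $Y=w_{p+j}\cdots w_n$; then $YZ$ is a binary string starting with $0$ (since $Y$ begins at the first $0$ after $p$) and ending with $1$ (since $Z^+$ ends at $w_p=2$), hence equals $B_{\mu}$ for a unique partition $\mu$.

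The main obstacle lies in the converse of (c): confirming the proposed decomposition is well-formed. I would need $p\geq 3$ to make $Z$ nonempty, which follows from $w_2\leq w_1+1=1<2$. I would need $w_2=1$ so the ``$01$'' prefix is genuine; the zero-placement hypothesis, together with $p\geq 3$, rules out $w_2=0$. Finally, $Y$ must be nonempty with a $0$ at position $p+j$; this is forced by reducedness of $w$, since otherwise $w_2=1$ combined with no zeros after position $p$ would yield $w_i\geq 1$ for all $i\geq 2$, making $w$ reducible and contradicting the hypothesis.
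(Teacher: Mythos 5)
Your proposal is correct and follows essentially the same route as the paper's proof: both parts reduce to the explicit listing of reduced representatives in Theorem~\ref{thm:BDV-tail}, with (a) handled by the unique decomposition of a binary vector as $01^aB_{\mu}0^b$, (b) by one reduction step to a BDV, and (c) by factoring $w=01Z^+1^{j-1}Y$ at the last $2$. Your verification of well-formedness in the converse of (c) (that $w_2=1$, that $Z$ is nonempty, and that reducedness forces $Y$ to exist and start with $0$) matches the paper's argument, just spelled out in slightly more detail.
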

\begin{proof} 
Part~(a) is true since every binary string starting with $0$ has the form
given in Theorem~\ref{thm:BDV-tail}(b)
for exactly one choice of $\mu$, $a$, and $b$.
Part~(b) follows from (a) since a non-reduced TDV $w$ has the form 
$w=0z^+$ for some BDV $z$, and $[w]=[0z^+]=[z]$.  
To prove~(c), let $w$ be a reduced TDV containing $2$.
First assume $[w]\in\tail(\mu)$. Since Theorem~\ref{thm:BDV-tail}
lists all reduced Dyck vectors representing Dyck classes in $\tail(\mu)$,
we must have $w=01Z^+1^{j-1}Y$ for some $j>0$ and some
nonempty lists $Y$ and $Z$ with $B_{\mu}=YZ$.  
Every symbol of $Z^+$ is $1$ or $2$ and the last symbol is $2$,
while every symbol of $Y$ is $0$ or $1$ and $Y$ starts with $0$.
Thus, $w$ has only one $0$ before the last $2$.  
Conversely, assume $w$ has only one $0$ before the last $2$. 
Then we can factor $w$ as $w=01Z^+1^{j-1}Y$ 
by letting the last symbol of $Z^+$ be the last $2$ in $w$, and choosing the
maximal $j>0$ to ensure $Y$ starts with $0$. This $0$ must exist, since $w$ is
reduced with only one $0$ before the last $2$.  We see that $YZ$ is a binary
vector of the form $B_{\mu}$, so that $[w]\in\tail(\mu)$ by 
Theorem~\ref{thm:BDV-tail}(a).
\end{proof}

\begin{example}\label{ex:plateau}
(a)~The BDV $w=011110101$ matches the form in Theorem~\ref{thm:BDV-tail}(b)
 with $a=4$, $b=0$, $B_{\mu}=0101$, so $\mu=\ptn{21}$. 
 Therefore $[w]$ is in plateau 4 of $\tail(\ptn{21})$.
\\ (b)~The TDV $w=01211221$ is not reduced; in fact, $[w]=[0100110]$.
 The binary representative matches Theorem~\ref{thm:BDV-tail}(b)
 with $a=b=1$, $B_{\mu}=0011$, and $\mu=\ptn{22}$.
 Therefore $[w]$ is in plateau 2 of $\tail(\ptn{22})$.
\\ (c)~The TDV $w=01122110$ is reduced with only one $0$ before the last $2$.
 This TDV matches the form in Theorem~\ref{thm:BDV-tail}(a)
 with $Z^+=122$, $j=3$, $Y=0$, $B_{\mu}=YZ=0011$, so $\mu=\ptn{22}$.
 Therefore $[w]$ is the first element in plateau 3 of $\tail(\ptn{22})$.
\end{example}

Using a hard result from~\cite{LW09}, we proved the following fact
in Remark~2.3 of~\cite{HLLL20}. 

\begin{theorem}\label{thm:defc-count}
For all $k,d\geq 0$, the number of integer partitions 
with deficit $k$ and dinv $d$ equals the number of integer partitions
of size $k$ with largest part at most $d$. Hence,
for all $d\geq k$, there are exactly $p(k)$ partitions with deficit $k$
and dinv $d$. 
\end{theorem}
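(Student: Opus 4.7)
The plan is to derive this identity from the joint symmetry $\Cat_n(q,t)=\Cat_n(t,q)$ together with a stabilization argument and a technical input from~\cite{LW09}. Write $N(k,d)$ for the count on the left and $p_{\leq d}(k)$ for the count on the right; the substantive assertion is $N(k,d)=p_{\leq d}(k)$, and the ``hence'' clause is then immediate since every partition of $k$ has largest part $\leq k\leq d$ when $d\geq k$.

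First I would establish stabilization. Any $\gamma$ contributing to $N(k,d)$ satisfies $|\gamma|=\dinv(\gamma)+\defc(\gamma)=k+d$, and a straightforward check (the single-row case $\gamma=\ptn{k+d}$ being the worst) gives $\mind(\gamma)\leq k+d+1$. Consequently, for every $n\geq k+d+1$,
\[
N(k,d)=[q^{\binom{n}{2}-k-d}\,t^d]\,\Cat_n(q,t).
\]
Applying the joint symmetry of $\Cat_n$ rewrites this as
\[
N(k,d)=[q^d\,t^{\binom{n}{2}-k-d}]\,\Cat_n(q,t)=\#\{\gamma'\subseteq\Delta_n:|\gamma'|=\tbinom{n}{2}-d,\ \defc(\gamma')=k\}.
\]
The right-hand side enumerates \emph{near-staircase} partitions: those whose codiagram (the cells of $\Delta_n$ not used by $\gamma'$) has exactly $d$ cells, subject to the additional restriction $\defc(\gamma')=k$.

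Next I would invoke the hard input from~\cite{LW09} to analyze the defc statistic on such near-staircase partitions. The idea is that for $\gamma'$ of this form (with $n$ chosen large enough that the codiagram lies safely inside $\Delta_n$), the cells contributing to $\defc(\gamma')$, namely those with $\arm(c)-\leg(c)\notin\{0,1\}$, concentrate near the boundary of $\gamma'$ and assemble into the diagram of a genuine partition $\lambda$. One then verifies from the defining inequalities that $|\lambda|=\defc(\gamma')=k$ and that the largest part of $\lambda$ is bounded by the codiagram width $d$. Running the construction backwards shows that every $\lambda\vdash k$ with $\lambda_1\leq d$ arises from a unique such $\gamma'$, yielding $N(k,d)=p_{\leq d}(k)$.

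The main obstacle is the third step, where the defc statistic on near-staircase partitions must be matched bijectively with partitions of $k$ of bounded largest part. The first two steps are essentially bookkeeping with the joint symmetry of $\Cat_n(q,t)$ and the identities $|\gamma|=\dinv(\gamma)+\defc(\gamma)$ and $\binom{n}{2}=|\gamma|+\area_n(\gamma)$; isolating the exact combinatorial structure of the defc cells of $\gamma'$ and showing that it delivers the required bijection is where the technical content from~\cite{LW09} plays its essential role.
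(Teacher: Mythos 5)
There is a genuine gap, and it sits exactly where you locate it: your third step is the entire content of the theorem, and the construction you sketch there does not work as stated. You claim that the deficit cells of a near-staircase partition $\gamma'$ ``assemble into the diagram of a genuine partition $\lambda$'' with $|\lambda|=k$ and $\lambda_1\leq d$. Already for $n=4$, $\gamma'=\ptn{2,2,1}$ has codiagram size $d=1$ and $\defc(\gamma')=2$, but its two deficit cells are the cells $(1,1)$ and $(1,2)$, forming the shape $\ptn{2}$, whose largest part exceeds $d$; the partition of $2$ with largest part $\leq 1$ that this $\gamma'$ must correspond to is $\ptn{1,1}$, which is not the shape of the deficit cells. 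So the bijection cannot be read off the deficit cells in the naive way, and no alternative construction is supplied. Moreover, you have misidentified what the ``hard input'' from~\cite{LW09} actually is: that paper does not analyze deficit cells of near-staircase shapes. Its relevant theorem is that, on partitions of a \emph{fixed size} $n$, the statistic counting cells with $\arm(c)-\leg(c)\in\{0,1\}$ (i.e.\ $\dinv$) is equidistributed with the number of parts. Your appeal to it in step 3 is therefore not an application of any statement that paper proves.

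The intended argument (this theorem is quoted from Remark~2.3 of~\cite{HLLL20}) needs neither the $q,t$-symmetry nor your reformulation. Since $\dinv(\gamma)+\defc(\gamma)=|\gamma|$, the partitions with deficit $k$ and dinv $d$ are exactly the partitions of $k+d$ with $\dinv=d$; by the Loehr--Warrington equidistribution these are equinumerous with partitions of $k+d$ having exactly $d$ parts; subtracting $1$ from each part and conjugating gives partitions of $k$ with largest part at most $d$. By contrast, your first two steps, while correct (the stabilization bound $\mind(\gamma)\leq|\gamma|+1$ does hold), import the Garsia--Haglund symmetry theorem --- a far heavier tool than needed, and one whose combinatorial proof is the very goal of this series of papers --- only to convert the problem into an equivalent one (counting fixed-deficit partitions by area rather than by dinv) that is no easier and is then left unsolved.
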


As a consequence, we now show that all but finitely many partitions
of deficit $k$ belong to one of the tail sequences $\tail(\mu)$.


\begin{theorem}\label{thm:big-dinv}
For all $k\geq 0$, there exists $d_0(k)$ such that for all $d\geq d_0(k)$,
each partition with deficit $k$ and dinv $d$ appears in exactly one of
the sequences $\tail(\mu)$ as $\mu$ ranges over partitions of size $k$.
\end{theorem}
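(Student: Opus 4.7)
The plan is to combine three earlier results to obtain the theorem essentially by counting. By Theorem~\ref{thm:tail-profile}, for each partition $\mu$ of size $k$, the sequence $\tail(\mu)$ is infinite, every member has deficit $k$, and by Proposition~\ref{prop:NU1-ND1}(b) consecutive members have consecutive dinv values starting from $\dinv(\TI(\mu))$. Thus for each integer $d \geq \dinv(\TI(\mu))$, the tail $\tail(\mu)$ contains exactly one partition of dinv $d$. Set
\[
 d_0(k) = \max\Bigl\{k,\ \max_{|\mu|=k}\dinv(\TI(\mu))\Bigr\},
\]
which is finite because there are only $p(k)$ partitions of size $k$, and the inner maximum can be computed explicitly via Proposition~\ref{prop:TI}(d). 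For $d \geq d_0(k)$, each of the $p(k)$ tails $\tail(\mu)$ (as $\mu$ ranges over partitions of $k$) contributes exactly one partition of deficit $k$ and dinv $d$.

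Next I would establish that these $p(k)$ contributions are pairwise distinct, i.e.\ the tails $\tail(\mu)$ are disjoint as $\mu$ varies over partitions of size $k$ (in fact over all partitions). By Theorem~\ref{thm:BDV-tail}, every Dyck class in $\tail(\mu)$ has a reduced representative that is either a binary Dyck vector of the form $01^aB_\mu0^b$, or a reduced ternary Dyck vector of the form $01Z^+1^{j-1}Y$ with $YZ = B_\mu$ (which contains a $2$ and has a unique $0$ preceding its last $2$). These are exactly the shapes governed by Theorem~\ref{thm:find-tail}, whose parts (a), (b), and (c) together assert that each binary, each non-reduced ternary, and each suitable reduced ternary Dyck vector lies in at most one $\tail(\mu)$. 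Hence the tails are disjoint and each Dyck class in $\bigcup_{|\mu|=k}\tail(\mu)$ belongs to a unique tail.

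With disjointness in hand, the union $\bigcup_{|\mu|=k}\tail(\mu)$ contains, for each $d \geq d_0(k)$, exactly $p(k)$ distinct partitions of deficit $k$ and dinv $d$. But Theorem~\ref{thm:defc-count}, applied with $d \geq k$ (which is ensured by our choice of $d_0(k)$), asserts that there are at most (indeed, exactly) $p(k)$ partitions with deficit $k$ and dinv $d$. So the $p(k)$ partitions coming from the tails must exhaust all partitions with deficit $k$ and dinv $d$, and each such partition lies in exactly one $\tail(\mu)$.

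The argument is really a pigeonhole/counting argument, so there is no single hard step. The main point to get right is the disjointness claim, which is why Theorem~\ref{thm:find-tail} was set up in its three-part form: one must verify that every reduced representative of a Dyck class appearing in some $\tail(\mu)$ falls into one of the cases (a), (b), or (c) of that theorem, so that no class can be shared between two tails. After this, the theorem follows immediately from the exact count $p(k)$ supplied by Theorem~\ref{thm:defc-count}.
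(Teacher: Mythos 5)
Your proposal is correct and is essentially the paper's own (first) proof: choose $d_0(k)$ to dominate $\dinv(\TI(\mu))$ over all $\mu$ of size $k$, note the $p(k)$ tails are pairwise disjoint and each contributes one partition of deficit $k$ at every dinv $d\geq d_0(k)$, and conclude by the exact count in Theorem~\ref{thm:defc-count}. Your extra care in justifying disjointness via Theorems~\ref{thm:BDV-tail} and~\ref{thm:find-tail} is sound (the paper takes this for granted, as it also follows from $\NU_1$ being a bijection started at distinct $\NU_1$-initial objects), and the only difference is that the paper additionally records a second, self-contained proof avoiding Theorem~\ref{thm:defc-count} via a direct deficit lower bound.
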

\begin{proof}
Fix $k\geq 0$. As $\mu$ ranges over all partitions of size $k$,
we obtain $p(k)$ disjoint sequences $\tail(\mu)$, where $\tail(\mu)$
starts at $\TI(\mu)$ and dinv increases by $1$ as we move along each sequence.
Let $d_0(k)$ be the maximum of $\dinv(\TI(\mu))$ over all partitions $\mu$ 
of size $k$. Fix $d\geq d_0(k)$. Then each sequence $\tail(\mu)$ contains a
partition with dinv $d$ and deficit $|\mu|=k$. By Theorem~\ref{thm:defc-count},
these sequences already account for all $p(k)$ partitions with dinv $d$
and deficit $k$. Thus each such partition must belong to one (and only one) 
of these sequences.

Here is a different proof not relying on Theorem~\ref{thm:defc-count}.
Fix $k\geq 0$ and let $d_0(k)=\binom{k+4}{2}+1$. Assume $[v]$ is a Dyck class
with deficit $k$ that belongs to none of the sequences $\tail(\mu)$.
It suffices to prove that $\dinv(v)<d_0(k)$. We may choose $v$
to be a reduced Dyck vector. By Theorem~\ref{thm:find-tail}(a), $v$ cannot be
a binary vector, so $v$ contains a $2$. As $v$ is reduced, $v$ must contain
at least two $0$s.

\emph{Case~1:} $v$ contains a $3$ to the left of the second $0$ in $v$.
Then we can write $v=0A3B0C$ where $A$ and $B$ contain no $0$s. 
We use Proposition~\ref{prop:defc-pairs} to show that $\defc(v)\geq \len(v)-4$.
Each symbol $x\geq 2$ in $A$, $B$, or $C$ pairs with the $0$ following $B$.
Each $x\leq 1$ in $B$ or $C$ pairs with the $3$ before $B$.
Each $1$ in $A$ except the leftmost $1$ pairs with the $3$ after $A$.
Thus, $k=\defc(v)\geq \len(A)-1+\len(B)+\len(C)=\len(v)-4$.

\emph{Case~2:} All symbols in $v$ before the second $0$ are at most $2$.
Here we can write $v=0A0B2C$ where every symbol in $A$ is $1$ or $2$.
Note that the displayed $2$ after $B$ must exist, either because the Dyck
vector $v$ contains a $3$ after the second $0$ or (when $v$ is ternary)
by Theorem~\ref{thm:find-tail}(c).
Here, each $x\geq 2$ in $A$ or $B$ pairs with the $0$ between $A$ and $B$.
Each $x\leq 1$ in $A$ or $B$ (except the leftmost $1$) pairs with 
the $2$ after $B$. Each $0$ in $C$ pairs with the $2$ before $C$,
while other symbols in $C$ pair with the $0$ before $B$. We again have
$k=\defc(v)\geq\len(A)+\len(B)-1+\len(C)=\len(v)-4$.

In both cases, $\dinv(v)\leq\binom{\len(v)}{2}\leq\binom{k+4}{2}<d_0(k)$.  
\end{proof}

\section{Extending the Map $\NU_1$}
\label{sec:extend-NU}

This section extends the function $\NU_1$ to act on
certain $\NU_1$-final objects. Using the inverse of this extended map, 
each infinite $\NU_1$-tail (starting at $\TI(\mu)$, say) can potentially 
be extended backward to a new starting point called $\TI_2(\mu)$.  
This leads to the concept of flagpole partitions in the next section.

\subsection{Two Rules Extending $\NU_1$}
\label{subsec:NU2}

The next definition gives two new rules that extend $\NU_1$.

\begin{definition}\label{def:NU2}
(a)~Assume $h\geq 2$ and $A=A_1\cdots A_s$ is a list of integers 
 such that $A=\emptyset$, or all $A_i\leq 2$ and $A_s\geq 0$
and $A_{i+1}\leq A_i+1$ for all $i<s$.  Define
 $\NU_2([012^hA(-1)^{h-1}])=[00^{h-1}1A1^h]$.
\\ (b)~Assume $k\geq 1$ and $B=B_1\cdots B_s$ is a list of integers such that 
$B=\emptyset$, or all $B_i\leq 2$ and $B_1\leq 1$ and $B_s\geq -1$
and $B_{i+1}\leq B_i+1$ for all $i<s$. Define 
 $\NU_2([012^kB(-1)^k])=[00^kB01^k]$.
\\ (c)~Let $D_2$ be the set of Dyck classes matching one of the input
templates $[012^hA(-1)^{h-1}]$ or $[012^kB(-1)^k]$ in (a) and (b).
Let $C_2$ be the set of Dyck classes matching one of the output
templates $[0^h1A1^h]$ or $[0^{k+1}B01^k]$ in (a) and (b).
\end{definition}

\begin{example}
$\NU_2([012222(-1)001(-1)(-1)])=[00012(-1)001111]$
by letting $h=3$ and $A=2(-1)001$ in rule (a). 
Also, $\NU_2([012211(-1)(-1)(-1)])=[00011(-1)011]$
by letting $k=2$ and $B=11(-1)$ in rule (b).
\end{example}

Recall (Proposition~\ref{prop:NU1-ND1}) that $\NU_1$ is a well-defined 
bijection from the domain $D_1$ onto the codomain $C_1$, where $D_1$
and $C_1$ are defined at the beginning of \S\ref{subsec:review-NU}.

\begin{lemma}\label{lem:NU2-ND2}
The rules in~\ref{def:NU2}(a) and (b) specify a well-defined
bijection $\NU_2:D_2\rightarrow C_2$ with an inverse called
$\ND_2:C_2\rightarrow D_2$. Moreover, $D_2$ is disjoint from $D_1$,
and $C_2$ is disjoint from $C_1$.
\end{lemma}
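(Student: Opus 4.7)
My plan is to verify the four assertions of the lemma in sequence: (i)~$\NU_2$ is well defined as a map on Dyck classes in $D_2$; (ii)~there is a two-sided inverse $\ND_2$ on $C_2$; (iii)~$D_2\cap D_1=\emptyset$; (iv)~$C_2\cap C_1=\emptyset$. Throughout I work directly with the QDV representatives in Definition~\ref{def:NU2} and invoke Proposition~\ref{prop:NU-QDV} for the disjointness statements.

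For (i), I must show each Dyck class in $D_2$ has a unique representative in template (a) or (b) form, so that the rule assigns a well-defined output. First, the templates satisfy the QDV condition $v_{i+1}\le v_i+1$ under the stated constraints on $h, A, k, B$, so by Proposition~\ref{prop:dyck-class} each yields a Dyck class. Two QDV representatives of the same Dyck class of the same length are identical, so within a fixed length the parameters $(h,A)$ or $(k,B)$ are unique. To rule out other equivalent representatives of different length, I observe that applying $w\sim 0w^+$ once sends a QDV with $v_3=2$ to one with $v_4=3$, violating both templates, which force $v_4\le 2$; the reverse direction (stripping) would produce $u_3=v_4-1\le 1\ne 2$ and likewise fails. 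To rule out the same class matching both (a) and (b), I compare the count $j$ of trailing $-1$s to the leading $2$-run length $\ell$ after the initial $01$: in template (a) the hypotheses $A_s\ge 0$ and $A_1\le 2$ give $j=h-1$ and $\ell\ge h$, hence $j<\ell$; in template (b) the hypotheses $B_1\le 1$ and $B_s\ge -1$ give $\ell=k$ and $j\ge k$, hence $j\ge\ell$. These regimes are disjoint, and the parameters $h$ and $k$ can then be read off as $j+1$ and $\ell$ respectively.

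For (ii), I define $\ND_2$ by inverting each template rule and check the inverse relations by direct substitution. Uniqueness of representatives in $C_2$ follows a parallel argument: in an (a)-output $0^h 1 A 1^h$ the separator $1$ at position $h+1$ forces the leading $0$-run of $v$ to have length exactly $h$; in a (b)-output $0^{k+1}B01^k$ the separator $0$ at position $n-k$ forces the trailing $1$-run to have length exactly $k$. If a class matched both templates, then comparison of leading $0$-runs gives $h_a\ge k_b+1$ (since in type (b) the leading $0$-run can extend into $B$ but is at least $k_b+1$), and comparison of trailing $1$-runs gives $k_b\ge h_a$ (since in type (a) the trailing $1$-run can extend into $A$ but is at least $h_a$), together impossible. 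Representatives of different length in the same class are ruled out since every $C_2$ representative has $v_2=0$, so stripping (which requires $v_2=1$) fails, while prepending produces a vector with leading $0$-run of length exactly $1$, contradicting the $\ge 2$ required by both output templates.

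For (iii) and (iv), I apply Proposition~\ref{prop:NU-QDV}. Any input $v\in D_2$ has $v_1=0$, $v_2=1$, $v_3=2$, $v_4\le 2$, so the leader is $d=2$; the last symbol is $v_n=-1$; and $v_2=1\ge 0$, so part~(a) of the proposition yields $d=2>v_n+2=1$, showing $[v]$ is $\NU_1$-final and hence outside $D_1$. Any output $v\in C_2$ satisfies $v_1=v_2=0$ (so leader $d=0$), $v_n=1\ge -1$, and $[v]\ne[0]$ since the template length exceeds $1$; part~(b) then gives $d=0<1=v_n$, showing $[v]$ is $\NU_1$-initial and hence outside $C_1$. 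I expect the main technical difficulty to lie in step (i): ruling out unexpected equivalences between template representatives. The shortcut that makes this manageable is to identify two invariants of the Dyck class---the trailing $-1$-count versus leading $2$-run length for $D_2$, and the leading $0$-run versus trailing $1$-run length for $C_2$---which uniquely determine both the template type and the parameters.
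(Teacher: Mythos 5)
Your proposal is correct and follows essentially the same route as the paper's proof: both establish that rules (a) and (b) cannot apply to the same representative by comparing the leading $2$-run with the trailing $(-1)$-count (and dually, leading $0$-runs with trailing $1$-runs for the outputs), and both derive the disjointness from $D_1$ and $C_1$ via Proposition~\ref{prop:NU-QDV}. The only cosmetic difference is that you justify uniqueness of the template representative within a Dyck class by tracking the entries $v_2,v_3,v_4$ under the relation $w\sim 0w^+$, whereas the paper uses the equivalent observations that a class has at most one representative ending in $-1$ and at most one beginning with $00$.
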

\begin{proof}
A given Dyck class has at most one representative $v$ ending in $-1$,
which is the only representative that rules (a) and (b) might apply to.
We claim rules (a) and (b) cannot both apply to such a $v$. 
On one hand, since $A$ cannot end in $-1$, the number of $2$s
at the start of the subword $2^hA$ is strictly greater than
the number of $-1$s at the end of $v$ when rule (a) applies.
On the other hand, since $B$ cannot start with $2$, the number
of $2$s at the start of $2^kB$ is not greater than
the number of $-1$s at the end of $v$ when rule (b) applies.
The conditions on $A$ and $B$ ensure that the outputs of the two
rules are valid Dyck classes. This shows that $\NU_2$ is a well-defined
function mapping the domain $D_2$ into the codomain $C_2$.

We define the inverse $\ND_2$ to $\NU_2$ by reversing the rules
in Definition~\ref{def:NU2}. For example, 
\[ \ND_2([0001(-1)001111])=[01222(-1)001(-1)(-1)]
\quad\mbox{ and }\quad\ND_2([000011])=[0122(-1)(-1)]. \]
Reasoning similar to the previous paragraph shows that $\ND_2$ is a 
well-defined function mapping $C_2$ into $D_2$. On one hand,
a Dyck class has at most one representative $v$ beginning with $00$.
On the other hand, the inverse of rule (a) applies only when
the number of $1$s at the end of $v$ weakly exceeds the number
of $0$s at the start of $v$, while the inverse of rule (b) applies
only when the number of initial $0$s strictly exceeds the number of final $1$s.
Thus, the two inverse rules can never both apply to the same object.
Since $\ND_2$ clearly inverts $\NU_2$, we conclude that
$\NU_2:D_2\rightarrow C_2$ is a well-defined bijection
with inverse $\ND_2:C_2\rightarrow D_2$.

Each input $[012^hA(-1)^{h-1}]$ to rule (a) is a $\NU_1$-final object,
since the leader $2$ exceeds the last symbol $-1$ by more than $2$
(Proposition~\ref{prop:NU-QDV}(a)). Similarly, 
each input to rule (b) is a $\NU_1$-final object. This shows that
$D_1$ and $D_2$ are disjoint. Next, each output $[0^h1A1^h]$ for rule (a)
is a $\NU_1$-initial object, since the leader $0$ is less than the
last symbol $1$ (Proposition~\ref{prop:NU-QDV}(b)).  
Likewise, each output for rule (b) is a $\NU_1$-initial object.
So $C_1$ and $C_2$ are disjoint.  
\end{proof}

The next lemma shows that $\NU_2$ has the required effect on 
the dinv and deficit statistics. 

\begin{lemma}\label{lem:NU2-stats}
Acting by $\NU_2$ increases dinv by $1$ and preserves deficit.
\end{lemma}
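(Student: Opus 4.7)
The plan is to exploit the identity $\binom{\len(v)}{2} = \area(v) + \dinv(v) + \defc(v)$, valid on any QDV representative of a Dyck class, to reduce the two claims of the lemma to a single verification. Specifically, if the input $v$ and the output $w$ of one of the rules in Definition~\ref{def:NU2} have the same length and their areas satisfy $\area(v) = \area(w) + 1$, then proving $\dinv(w) = \dinv(v) + 1$ automatically forces $\defc(w) = \defc(v)$, and vice versa.

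Verification of the length and area equalities is an immediate calculation from the definitions. For rule (a), both $v = 012^h A (-1)^{h-1}$ and $w = 0^h 1 A 1^h$ have length $2h + s + 1$ (writing $s = \len(A)$); their areas are $\area(v) = 1 + 2h + \area(A) - (h-1) = h + 2 + \area(A)$ and $\area(w) = 1 + \area(A) + h$, differing by exactly $1$. For rule (b), both $v = 012^k B (-1)^k$ and $w = 0^{k+1} B 0 1^k$ have length $2k + s + 2$ (writing $s = \len(B)$); their areas are $\area(v) = 1 + k + \area(B)$ and $\area(w) = k + \area(B)$, again differing by $1$.

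Given these preliminaries, I would verify the single claim $\dinv(w) = \dinv(v) + 1$ by directly counting dinv-contributing pairs, either through the extended definition on QDVs (counting pairs $(i,j)$ with $i < j \leq n$ in the infinite word $\cdots(-2)(-1) v_1 \cdots v_n$ satisfying $v_i - v_j \in \{0,1\}$) or by first passing to equivalent Dyck vector representatives of the same length. The latter route iterates $v \sim 0 v^+$ exactly $K$ times with $K$ chosen large enough that all shifted entries become nonnegative (e.g., $K = \max(1, -\min A)$ for rule (a)); the resulting Dyck vectors $\tilde v$ and $\tilde w$ then share a common leading triangle $0\,1\,2\cdots(K-1)$ and a common interior block $A^{+K}$ (resp.\ $B^{+K}$), so pairs with both positions in the common parts contribute identically to $\dinv(\tilde v)$ and $\dinv(\tilde w)$, and the dinv comparison reduces to a finite computation over the short differing prefix and suffix blocks whose entries lie in $\{K-1, K, K+1, K+2\}$.

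The main obstacle is this last bookkeeping step: contributions from pairs involving entries of $A$ (resp.\ $B$) interacting with the changing prefix and suffix depend on how many times each small value is taken by these entries, which requires care in the case analysis. A possibly cleaner alternative is to verify the equivalent statement $\defc(w) = \defc(v)$ using Proposition~\ref{prop:defc-pairs}, since the strictly increasing triangle prefix contributes zero deficit pairs and drops out of the comparison, leaving a shorter pair count to match between the differing prefix/suffix blocks of $\tilde v$ and $\tilde w$.
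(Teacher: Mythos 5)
Your overall strategy is exactly the paper's: use the identity $\binom{\len(v)}{2}=\area(v)+\dinv(v)+\defc(v)$ (which is the definition of $\defc$ for QDVs) to reduce the lemma to a single statistic, and your length and area computations for both rules are correct and match the paper's ($\len$ preserved, $\area$ decreasing by $1$ in each rule). However, the proof is incomplete where it matters most: the verification that $\dinv(w)=\dinv(v)+1$ is the entire content of the lemma, and you only describe two possible ways one might organize that count before stopping at what you yourself call ``the main obstacle.'' As written, this is a proof plan, not a proof.

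For the record, the paper executes your ``route 1'' directly on the QDV representatives, using the convention $v_k=k-1$ for $k\leq 0$. Writing $n_s(A)$ for the number of copies of $s$ in $A$, it isolates the contributions of the blocks $2^h$ and $(-1)^{h-1}$ in $v$ versus $0^{h-1}$ and $1^h$ in $w$, relative to the common quantity $\dinv(01A)$, obtaining for rule (a)
\begin{align*}
\dinv(v)&=\dinv(01A)+\tbinom{h}{2}+\tbinom{h-1}{2}+h\bigl(n_1(A)+n_2(A)\bigr)+(h-1)\bigl(2+n_{-1}(A)+n_0(A)\bigr),\\
\dinv(w)&=\dinv(01A)+\tbinom{h-1}{2}+\tbinom{h}{2}+(h-1)\bigl(1+n_{-1}(A)+n_0(A)\bigr)+h\bigl(1+n_1(A)+n_2(A)\bigr),
\end{align*}
whence $\dinv(w)-\dinv(v)=h-(h-1)\cdot 2+ (h-1)= 1$, and similarly for rule (b). This is precisely the bookkeeping you deferred; it is finite and routine but must actually be done (or replaced by the equally explicit $\defc$ count via Proposition~\ref{prop:defc-pairs}, which you correctly identify as an equivalent alternative). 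To complete your proof, carry out one of these two counts in full for each rule.
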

\begin{proof}
Let $v=012^hA(-1)^{h-1}$ and $v'=00^{h-1}1A1^h$ be the input and output 
representatives appearing in rule~\ref{def:NU2}(a).  For each $s$,
let $n_s(A)$ be the number of copies of $s$ in the list $A$.
We have $\len(v)=2h+1+\len(A)=\len(v')$
    and $\area(v)=h+2+\area(A)=\area(v')+1$.
Next we show $\dinv(v')=\dinv(v)+1$. 
We compute $\dinv(v)$ by starting with $\dinv(01A)$ and adding
contributions involving symbols in the subwords $2^h$ or $(-1)^{h-1}$.
Recall the convention $v_k=k-1$ for all $k\leq 0$; we must count
pairs $\cdots b\cdots b\cdots$ or $\cdots (b+1)\cdots b\cdots$ in
the extended word where one (or both) of the displayed symbols
comes from the subwords $2^h$ or $(-1)^{h-1}$. We get $\binom{h}{2}$
contributions from pairs of $2$s in $2^h$ and $\binom{h-1}{2}$
contributions from pairs of $-1$s in $(-1)^{h-1}$. Each $2$ in $2^h$
contributes nothing when compared to the earlier symbols $\cdots (-2)(-1)01$
or the later symbols $(-1)^{h-1}$. Comparing each $2$ in $2^h$ to
later symbols in $A$ gives $h$ new contributions $n_1(A)+n_2(A)$.
Next, the $h-1$ copies of $-1$ in $(-1)^{h-1}$ each contribute
$2$ (comparing to the initial $-1$ and $0$) and $n_{-1}(A)+n_0(A)$
(comparing to symbols in $A$). The total is
\[ \dinv(v)=\dinv(01A)+\binom{h}{2}+\binom{h-1}{2}
            +h(n_1(A)+n_2(A))+(h-1)(2+n_{-1}(A)+n_0(A)). \]
Similarly, isolating contributions from $0^{h-1}$ and $1^h$ in $v'$, we find 
\[ \dinv(v')=\dinv(01A)+\binom{h-1}{2}+\binom{h}{2}
            +(h-1)(1+n_{-1}(A)+n_0(A))+h(1+n_1(A)+n_2(A)). \]
Comparing the expressions, we get $\dinv(v')=\dinv(v)+1$.
It follows that
\[\defc(v')=\binom{\len(v')}{2}-\area(v')-\dinv(v')
                   =\binom{\len(v)}{2}-(\area(v)-1)-(\dinv(v)+1)=\defc(v).\]

A similar proof works for rule~\ref{def:NU2}(b).
Now $v=012^kB(-1)^k$, $v'=00^kB01^k$, $\len(v)=2k+2+\len(B)=\len(v')$,
and $\area(v)=k+1+\area(B)=\area(v')+1$. Isolating the dinv contributions
of $12^k$ and $(-1)^k$ in $v$, and the dinv contributions
of $0^k$ and $01^k$ in $v'$, we get
\[ \dinv(v')=\dinv(0B)+2\binom{k}{2}+n_0(B)+n_1(B)+k(n_1(B)+n_2(B)
 +n_0(B)+n_{-1}(B))+2k+1=\dinv(v)+1. \] 
So $\defc(v')=\defc(v)$ holds here, too.
\end{proof}

We can now combine the bijections $\NU_1$ and $\NU_2$ to obtain
the extended version of the successor map.


\begin{definition}\label{def:extend-NU}
Let $D=D_1\cup D_2$ and $C=C_1\cup C_2$.
Define the \emph{extended \textsc{next-up} map}
$\NU:D\rightarrow C$ by $\NU(\gamma)=\NU_1(\gamma)$ for $\gamma\in D_1$
and $\NU(\gamma)=\NU_2(\gamma)$ for $\gamma\in D_2$.
Define the \emph{extended \textsc{next-down} map}
$\ND:C\rightarrow D$ by $\ND(\gamma)=\ND_1(\gamma)$ for $\gamma\in C_1$
and $\ND(\gamma)=\ND_2(\gamma)$ for $\gamma\in C_2$.
\end{definition}

The next theorem summarizes the crucial properties of the extended maps.

\begin{theorem}\label{thm:extend-NU}
 (a) The map $\NU:D\rightarrow C$ is a well-defined 
bijection with inverse $\ND:C\rightarrow D$.
\\ (b) For $\gamma\in D$, $\defc(\NU(\gamma))=\defc(\gamma)$
 and $\dinv(\NU(\gamma))=\dinv(\gamma)+1$.
\\ (c) For $\gamma\in C$, $\defc(\ND(\gamma))=\defc(\gamma)$
 and $\dinv(\ND(\gamma))=\dinv(\gamma)-1$.
\\ (d) For $\gamma\in D$, the (finite or infinite) sequence
 $\gamma,\NU(\gamma),\NU^2(\gamma),\NU^3(\gamma),\ldots$
 contains no repeated entries.
\end{theorem}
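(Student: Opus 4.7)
The plan is to assemble this theorem directly from the ingredients already proved, checking that the pieces glue together without conflict.

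For part (a), I first observe that $\NU$ is well-defined on all of $D$ because $D_1 \cap D_2 = \emptyset$ (this disjointness is asserted in Lemma~\ref{lem:NU2-ND2}), so there is no ambiguity about which rule to apply to a given $\gamma \in D$. Similarly, $\ND$ is well-defined on $C$ since $C_1 \cap C_2 = \emptyset$. By Proposition~\ref{prop:NU1-ND1}(a), $\NU_1$ is a bijection from $D_1$ onto $C_1$, and by Lemma~\ref{lem:NU2-ND2}, $\NU_2$ is a bijection from $D_2$ onto $C_2$. Since the codomains $C_1$ and $C_2$ are disjoint and their union is $C$, the combined map $\NU$ is a bijection from $D$ onto $C$. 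The same argument (with roles reversed) shows $\ND: C \to D$ is a bijection. That $\NU$ and $\ND$ are mutually inverse then follows by cases: for $\gamma \in D_1$ we have $\ND(\NU(\gamma)) = \ND_1(\NU_1(\gamma)) = \gamma$ since $\NU_1(\gamma) \in C_1$; for $\gamma \in D_2$ the same holds using $\NU_2$ and $\ND_2$; and symmetrically for $\ND$ followed by $\NU$.

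For parts (b) and (c), these are case-by-case verifications. If $\gamma \in D_1$ then $\NU(\gamma) = \NU_1(\gamma)$ and Proposition~\ref{prop:NU1-ND1}(b) gives $\defc(\NU(\gamma)) = \defc(\gamma)$ and $\dinv(\NU(\gamma)) = \dinv(\gamma) + 1$. If $\gamma \in D_2$ then $\NU(\gamma) = \NU_2(\gamma)$ and Lemma~\ref{lem:NU2-stats} gives the same conclusion. Part (c) follows by applying (b) to $\gamma' = \ND(\gamma) \in D$ and noting $\NU(\gamma') = \gamma$.

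Finally, for part (d), the values $\dinv(\NU^m(\gamma)) = \dinv(\gamma) + m$ (whenever $\NU^m(\gamma)$ is defined) are strictly increasing in $m$ by iterating (b), so the terms of the sequence $\gamma, \NU(\gamma), \NU^2(\gamma), \ldots$ have pairwise distinct dinv values and hence are pairwise distinct.

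There is no real obstacle here: the substantive content has already been absorbed into Lemma~\ref{lem:NU2-ND2} (well-definedness, bijectivity, and the disjointness of $D_1, D_2$ and of $C_1, C_2$) and into Lemma~\ref{lem:NU2-stats} (the statistic behavior of $\NU_2$). The proof of Theorem~\ref{thm:extend-NU} is simply the bookkeeping needed to paste $\NU_1$ and $\NU_2$ into a single bijection and propagate the statistic identities through the case split.
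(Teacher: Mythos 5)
Your proposal is correct and follows essentially the same route as the paper: part (a) from Proposition~\ref{prop:NU1-ND1}(a), Lemma~\ref{lem:NU2-ND2}, and the disjointness of $D_1,D_2$ and of $C_1,C_2$; parts (b) and (c) from Proposition~\ref{prop:NU1-ND1}(b),(c) and Lemma~\ref{lem:NU2-stats}; and part (d) from the strict increase of dinv. The only difference is that you spell out the case analysis in slightly more detail than the paper does, which is harmless.
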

\begin{proof}
Part (a) follows from Proposition~\ref{prop:NU1-ND1}(a)
and Lemma~\ref{lem:NU2-ND2}. In particular, the combination of $\NU_1$
and $\NU_2$ (resp. $\ND_1$ and $\ND_2$) is a well-defined function
since the domains $D_1$ and $D_2$ (resp. $C_1$ and $C_2$) are disjoint.
Parts (b) and (c) follow from Proposition~\ref{prop:NU1-ND1}(b) and (c)
and Lemma~\ref{lem:NU2-stats}. Part (d) follows from part (b),
since each object in the sequence in (d) must have a different value
of dinv. 
\end{proof}

Part (d) of the theorem assures us that starting at some partition $\gamma$
and iterating $\NU$ can never produce a cycle of partitions. Instead, 
we either get a finite sequence (called a \emph{$\NU$-fragment} 
if $\gamma\not\in C$) or an infinite sequence (called a \emph{$\NU$-tail} 
if $\gamma\not\in C$). A similar remark applies to iterations of $\ND$,
but here the sequence must be finite since dinv cannot be negative.

\subsection{Second-Order Tail Initiators}
\label{subsec:TI2}

\begin{definition}
Given an integer partition $\mu$, the \emph{second-order tail initiator
of $\mu$} is the Dyck class $\TI_2(\mu)$ obtained by starting at
$\TI(\mu)$ and iterating $\ND$ as many times as possible.
Since $\ND$ decreases dinv, this iteration must terminate
 in finitely many steps. The \emph{second-order tail indexed by $\mu$}
is $\tail_2(\mu)=\{\NU^m(\TI_2(\mu)):m\geq 0\}$. All objects in this
tail have deficit $|\mu|$.
\end{definition}

\begin{example}\label{ex:TI2}
The following table shows $\mu$, $\TI(\mu)$, and $\TI_2(\mu)$ for
all partitions of size $4$ or less.
{\footnotesize
\[ \left[\begin{array}{c|c|c|c|c|c|c|c|c|c|c|c|c}
 \mu & \ptn{0} & \ptn{1} & \ptn{2} & \ptn{11}
 & \ptn{3} & \ptn{21} & \ptn{111}
 & \ptn{4} & \ptn{31} & \ptn{22} & \ptn{211} & \ptn{1111}  \\
\TI(\mu) & [0] & [001] & [0001] & [0011] 
 & [00001] & [00101] & [00111]
 & [000001] & [001001] & [00011] & [001101] & [001111] \\
\TI_2(\mu) & [0] & [001] & [0012] & [0011]
 & [01012] & [00121] & [00122]
 & [00012] & [00112] & [00011] & [001222] & [001221]
\end{array}\right] \] } 
The entry for $\mu=\ptn{211}$ is computed as follows:
\[ \TI(\ptn{211})=[001101]
\stackrel{\ND_2}{\rightarrow} [01211(-1)]
\stackrel{\ND_1}{\rightarrow} [001211]
\stackrel{\ND_2}{\rightarrow} [01222(-1)]
\stackrel{\ND_1}{\rightarrow} [001222]=\TI_2(\ptn{211}). \]
Some further values (found with a computer) are:
\begin{equation}\label{eq:TI2-ex}
 \begin{array}{lll}
\TI_2(\ptn{2111})=[0012221],
&\TI_2(\ptn{11111})=[0012222], 
&\TI_2(\ptn{321})=[0012121], \\
\TI_2(\ptn{3111})=[0012212],  
&\TI_2(\ptn{322111})=[001221222], 
&\TI_2(\ptn{4321})=[001212121].   
\end{array} 
\end{equation}
\end{example}

Although $\TI(\mu)$ is built from $\mu$ by a simple explicit formula
(see Definition~\ref{def:TI-tail}),
we do not know any analogous formula for $\TI_2(\mu)$. 
However, we can characterize the set of all Dyck classes $\TI_2(\mu)$ 
as $\mu$ ranges over all partitions. We also prove an explicit criterion
for when a Dyck class belongs to some second-order tail $\tail_2(\mu)$.

\begin{definition}\label{def:cyc-TDV}
A QDV $v$ is a \emph{cycled ternary Dyck vector} if and only if
$v$ is a ternary Dyck vector or $v=A(B^-)$ for some 
ternary Dyck vectors $A$, $B$.  Equivalently, a QDV $v$ is a cycled TDV if 
and only if every $v_i$ is in $\{-1,0,1,2\}$ and there do not exist $j<k$ with
$v_j=-1$ and $v_k=2$.
\end{definition}

\begin{theorem}\label{thm:tail2}
(a)~A Dyck class $[w]$ belongs to $\tail_2(\mu)$ for some partition $\mu$
if and only if $[w]=[v]$ for some cycled ternary Dyck vector $v$.
\\ (b)~A Dyck class $[w]$ has the form $\TI_2(\mu)$ for some partition $\mu$
if and only if $[w]=[v]$ for some ternary Dyck vector $v$ matching one of 
these forms:
\begin{itemize}
\item Type 1: $v=01^m0X2^n$ where $n\geq 1$ and $0\leq m\leq n$ and
 $X$ does not end in $2$.
\item Type 2: $v=0^nY21^m$ where $n\geq 2$ and $0<m<n$ and
 $Y$ does not begin with $0$.
\item Type 3: $v=0^n1^n$ or $v=0^n1^{n-1}$ where $n\geq 2$, or $v=0$.
\end{itemize} 
\end{theorem}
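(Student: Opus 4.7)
Proof proposal. The plan is to prove parts (a) and (b) together, relying on the following closure property: the set $T$ of Dyck classes admitting a cycled TDV representative is closed under $\NU$ and $\ND$ whenever these are defined. For $\NU_2$ and $\ND_2$ this is immediate from Definition~\ref{def:NU2}: the slope condition $v_{i+1}\le v_i+1$ forces every $-1$ appearing in $A$ or $B$ to lie strictly to the right of every $2$, so both inputs and outputs are cycled TDVs. For $\NU_1$ and $\ND_1$, I would pick a cycled TDV representative $v$ of the input class and apply the explicit formulas in Proposition~\ref{prop:NU-QDV}; a short case analysis on the leader value $d\in\{0,1,2\}$ and the last symbol $v_n\in\{-1,0,1,2\}$ verifies that either the output is already a cycled TDV, or an equivalent shift of it is.

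Given closure, the forward direction of (a) follows by induction from $\TI(\mu)=[0B_\mu]$, a BDV lying in $T$. For the ``$\Leftarrow$'' direction of (b), I would verify directly that each $v$ of Type 1, 2, or 3 is $\NU$-initial. $\NU_1$-initiality follows from Proposition~\ref{prop:NU-QDV}(b) by computing the leader $d$ and last symbol $v_n$ and checking $d<v_n$ in each case. $\NU_2$-initiality is verified by attempting to match $v$ against the two $C_2$ output templates $[0^h1A1^h]$ and $[0^{k+1}B0\,1^k]$: Type 1 ends in $2$ while both templates end in $1$, giving immediate failure; Types 2 and 3 require counting leading $0$'s versus trailing $1$'s and invoking the constraints $h\ge 2$, $k\ge 1$, and $0<m<n$ to rule out the matches.

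For the ``$\Rightarrow$'' direction of (b), given a $\NU$-initial $[v]\in T$, I would fix a cycled TDV representative $v$ of $[v]$ of minimal length. The first task is to show $v$ has no $-1$'s, so is a genuine TDV: if instead $v=A(B^-)$ with $B$ nonempty, I can exhibit an explicit decomposition of $v$ matching one of the $C_2$ output templates from Definition~\ref{def:NU2}(c), contradicting $\NU$-initiality. Once $v$ is a TDV, I case-split on whether $v$ contains a $2$. If not, $v$ is binary and $\NU$-initiality, used to rule out both $C_2$ templates (via the block-structure analysis of $0^a1^b$), forces $v\in\{0\}\cup\{0^n1^n,0^n1^{n-1}:n\ge 2\}$, which is Type 3. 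If $v$ does contain a $2$, then $\NU_1$-initiality forces $v_n\ne 0$ and so $v$ ends in $2$ (Type 1) or in $1$ (Type 2). The boundary constraints ($0\le m\le n$ for Type 1, and $n\ge 2$, $0<m<n$, and ``$Y$ not beginning with $0$'' for Type 2) are pinned down by the detailed analysis of when $v$ would otherwise match a $C_2$ template.

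The reverse direction of (a) then follows: given $[w]\in T$, iterate $\ND$ repeatedly; by closure we stay in $T$, and since dinv strictly decreases and is bounded below by $0$, the iteration terminates at a $\NU$-initial class $[w^*]\in T$. By part (b) just established, $[w^*]=\TI_2(\mu)$ for some $\mu$, so $[w]\in\tail_2(\mu)$. The main obstacle is the ``$\Rightarrow$'' direction of (b), especially the claim that a minimal cycled TDV representative of a $\NU$-initial class has no $-1$'s; this requires an explicit combinatorial identification of the cycled TDV presentation $v=A(B^-)$ with one of the $C_2$ templates, with particular care for the edge cases where $A$ or $B$ is very short or contains $2$'s near the boundary.
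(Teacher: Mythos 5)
Your overall architecture (closure of the cycled-TDV classes under the maps, then a characterization of the $\ND$-terminal elements among them) mirrors the paper's Steps 1--3, but there is a genuine gap at the point where you pass from ``$\NU$-initial'' to ``equals $\TI_2(\mu)$.'' By definition, $\TI_2(\mu)$ is the endpoint of iterating $\ND$ from $\TI(\mu)$; so to show a Type 1/2/3 class $[v]$ is some $\TI_2(\mu)$ you must show not only that $\ND([v])$ is undefined but also that the forward $\NU$-orbit of $[v]$ eventually reaches some $\TI(\mu)$ (equivalently, merges into some $\tail(\mu)$). A priori $[v]$ could instead generate a finite $\NU$-fragment disjoint from every tail, and then it would be no $\TI_2(\mu)$ at all. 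Your proposal never addresses this, and your closure statement --- ``closed under $\NU$ and $\ND$ \emph{whenever these are defined}'' --- is too weak to repair it. The paper's proof needs the stronger fact that $\NU$ is \emph{total} on cycled TDVs (its Step 1 produces a $z\in\calT$ with $\NU([v])=[z]$ for \emph{every} $v\in\calT$), so the forward orbit is infinite, and then invokes Theorem~\ref{thm:big-dinv} to conclude an infinite constant-deficit orbit must overlap some $\tail(\mu)$, forcing the orbit to be a subsequence of $\tail_2(\mu)$ and the $\ND$-terminal starting class to be $\TI_2(\mu)$. Without totality plus this merging argument, both your ``$\Leftarrow$'' of (b) and your reverse direction of (a) do not close.

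A secondary error: your first task in the ``$\Rightarrow$'' direction of (b) --- that a minimal-length cycled-TDV representative of a $\NU$-initial class has no $-1$'s --- is false. For example, a class whose shortest representative is $0(-1)R1$ can be $\NU$-initial; it equals $[010R^+2]$, a Type 1 vector, but only via a representative one symbol \emph{longer} than the minimal one, which does contain $-1$. (The same happens for $0^aR1^b$ with $R$ starting in $-1$.) The paper's Step 3 handles exactly these cases by rewriting to the longer TDV representative rather than deriving a contradiction. So your plan needs both the totality/merging ingredient above and a reformulation of the ``no $-1$'s'' step before it can work.
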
 
\begin{proof} 
Let $\calT$ be the set of cycled ternary Dyck vectors,
and let $\calS$ be the set of vectors of type 1, 2, 3 described above.
Note that $\calS\subseteq\calT$.

\emph{Step~1:} We show that for all $v\in\calT$, there exists $z\in\calT$
 with $\NU([v])=[z]$. Fix $v\in\calT$, and consider cases based on the 
 initial symbols in $v$. In the case $v=00R$, Proposition~\ref{prop:NU-QDV}(a)
 gives $\NU([v])=[z]$, where $z=0R(-1)$ is in $\calT$. In the case $v=01R$
 where $R$ does not start with $2$, $\NU([v])=[z]$ where $z=0R0$ is in $\calT$.
 In the case $v=0(-1)R$, we must have every entry of $R$ in $\{-1,0,1\}$.
 Then $[v]=[010R^+]$ where the new input representative is a TDV satsifying 
 the previous case, so the result holds. In the case $v=012R$ where the 
 last symbol of $R$ is at least $0$, $\NU([v])=[z]$ where $z=01R1$ 
 is in $\calT$. The final case is that $v=012^aR(-1)^b$ for some $a,b>0$,
 where we can choose $a$ and $b$ so $R$ does not begin with $2$ and does
 not end with $-1$. If $a>b$, then rule~\ref{def:NU2}(a) applies with
 $h=b+1\leq a$ and $A=2^{a-b-1}R$. We get $\NU([v])=[z]$ for
 $z=0^{b+1}12^{a-b-1}R1^{b+1}$, which is easily seen to be in $\calT$.
 If $a\leq b$, then rule~\ref{def:NU2}(b) applies with
 $k=a$ and $B=R(-1)^{b-a}$. Here we get $\NU([v])=[z]$ for
 $z=0^{a+1}R(-1)^{b-a}01^a$, which is also in $\calT$. 
 
\emph{Step 2:} We show that for all $v\in\calS$, $\ND([v])$ is not defined.  
 Fix $v\in\calS$. Since $\ND([0])$ is undefined, we may assume $v\neq 0$.
 Checking each type, we see that the leader of $v$
 is always less than the last symbol, so $\ND_1([v])$ is not defined
 (Proposition~\ref{prop:NU-QDV}(b)). Next consider the $\ND_2$ rules.
 If $v$ is type 1 with $m=0$, neither rule in~\ref{def:NU2} applies 
 because no representative of $[v]$ starts with $00$ and ends with $1$.
 If $v$ is type 1 with $m>0$, note that $[v]=[0^m(-1)X^-1^n]$. Rule~(a)
 does not apply since $0^m$ is not followed by $1$, while rule~(b) does 
 not apply since $m\leq n$. If $v$ is type 2, rule~(a) does not apply 
 since $n>m$, while rule~(b) does not apply since the final $1$s in $v$
 are preceded by $2$, not $0$.  If $v$ is type 3 with $v\neq 0$, rule~(a)
 does not apply because $v$ starts with too many $0$s, while rule~(b)
 does not apply because $v$ starts with too few $0$s. (Observe that
 when $A$ or $B$ is empty, the inputs to the two $\ND_2$ rules 
 are $[0^h1^{h+1}]$ and $[0^{k+2}1^k]$.)
 
\emph{Step 3:} We show that for all $v\in\calT$, either $[v]=[v']$
 for some $v'\in\calS$ or $\ND([v])=[z]$ for some $z\in\calT$. 
 Fix $v\in\calT$ and consider cases based on the last symbol of $v$.  
 The conclusion holds if $[v]=[0]$ since $0\in\calS$, so assume $[v]\neq [0]$.  
 In the case $v=0R(-1)$, $\ND_1([v])=[z]$ where $z=00R$ is in $\calT$.
 In the case $v=0R0$, $\ND_1([v])=[z]$ where $z=01R$ is in $\calT$.
 In the case $v=01R1$, $\ND_1([v])=[z]$ where $z=012R$ is in $\calT$.
 In the case $v=0(-1)R1$, $[v]=[v']$ where $v'=010R^+2$ 
 is a type $1$ vector in $\calS$ with $m=1$ (note $R$ cannot contain $2$ here).
 In the case $v=00\cdots 1$, we can write $v=0^aR1^b$ where $a\geq 2$, 
$b\geq 1$, $R$ does not start with $0$ or $2$, 
       and $R$ does not end with $1$ or $-1$.  
If $a\leq b$ and $R$ starts with $1$, then rule~\ref{def:NU2}(a) 
 for $\ND_2$ applies and yields an output representative in $\calT$.
If $a>b$ and $R$ ends with $0$, then the same outcome holds using 
 rule~\ref{def:NU2}(b). 
If $a\leq b$ and $R$ starts with $-1$, then $[v]=[v']$ where
 $v'=01^aR^+2^b$ is a type 1 vector in $\calS$ with $m=a$
 (note $2$ cannot appear in $R$).
If $a>b$ and $R$ ends with $2$, then $v$ is a type 2 TDV in $\calS$
 (note $-1$ cannot appear in $R$).
If $R$ is empty, then rule~\ref{def:NU2}(a) applies if $a<b$,
 rule~\ref{def:NU2}(b) applies if $a>b+1$, and $v$ is type 3
 if $a=b$ or $a=b+1$.
In the final case where $v$ ends in $2$, $v$ must be a TDV.
 If $v=01R2$, then we reduce to a previous case by noting
 $[v]=[w]$ where $w=0R^-1\in\calT$.
 If $v=00R2$, then $v$ is a type 1 vector in $\calS$ with $m=0$.

\emph{Step 4:} We prove the ``if'' parts of Theorem~\ref{thm:tail2}.
 Fix arbitrary $v\in\calT$.
 By iteration of Step~1, the $\NU$-segment $U=\{\NU^m([v]):m\geq 0\}$ 
 is infinite, and every Dyck class in $U$ is represented by something 
 in $\calT$. Because $U$ is infinite, it contains Dyck classes
 with arbitrarily large dinv. By Theorem~\ref{thm:big-dinv},
 $U$ must overlap one of the original tails $\tail(\mu)$ for some $\mu$.
 This forces $U$ to be a subsequence of the new tail $\tail_2(\mu)$.
 If the $v$ we started with is in $\calS$, then Step~2 forces $[v]$ to be 
 the initial object in $\tail_2(\mu)$, namely $\TI_2(\mu)$. 
 
\emph{Step 5:} We prove the ``only if'' parts of Theorem~\ref{thm:tail2}.
 Fix a partition $\mu$. Note that $\TI(\mu)$ is a Dyck class represented
 by a binary Dyck vector $v$, which belongs to $\calT$. Applying
 $\ND$ to $[v]$ repeatedly, we get a finite sequence ending
 at $\TI_2(\mu)$. By Steps~2 and 3, we must have $\TI_2(\mu)=[u]$
 for some $u\in\calS$. Now by Step~1, every Dyck class in $\tail_2(\mu)$
 is represented by something in $\calT$.  
\end{proof}

\subsection{Computation of Some $\NU$-Chains}
\label{subsec:compute-NU}

In this section, we compute detailed information about the
$\NU$-chain obtained by iterating $\NU$ starting at a Dyck class
with reduced representative of the form $v=00A^+B$ for some binary 
vectors $A$ and $B$. Since this $\NU$-chain is already understood
if $v$ itself is binary (see Theorem~\ref{thm:find-tail}(a)
and Theorem~\ref{thm:BDV-tail}), we assume here that $A^+$ ends in $2$.
In particular, the results of this section let us explicitly
compute the $\NU$-chain leading from $\TI_2(\mu)$ to $\TI(\mu)$ in the case
where $\TI_2(\mu)$ has reduced representative $v$ of the indicated form.
Referring to the classification in Theorem~\ref{thm:tail2}(b), 
our analysis applies to 
type $1$ vectors $v$ such that $m=0$ and $X$ does not contain $0$, as well as
to type $2$ vectors $v$ such that $n=2$, $m=1$, and $Y$ does not contain $0$.
(Type $3$ vectors are binary and thus present no problems.)
Theorem~\ref{thm:tail2}(b) also shows that not every Dyck class $[00A^+B]$ 
has the form $\TI_2(\mu)$; for simple counterexamples, consider
$v=00Y21^m$ where $Y$ contains only $1$s and $2$s and $m\geq 2$.

\begin{definition}\label{def:Sj}
Let $v=00A^+B$ be a Dyck vector where $A$ and $B$ are binary vectors
with $A^+$ ending in $2$. Define $S_0(v)=v$. Let $S_1(v),S_2(v),\ldots,
S_J(v)$ be the reduced Dyck vectors for all the $\NU_1$-initial objects 
appearing in the $\NU$-chain $(\NU^i([v]):i>0)$, listed in the order
they appear in the chain (in increasing order of dinv). 
Note that $J$ must be finite, since the $\NU$-chain either terminates
or enters some infinite $\NU_1$-tail $\tail(\mu)$
(Theorem~\ref{thm:big-dinv}).
Let $L_j(v)=\len(S_j(v))=\mind([S_j(v)])$ for $0\leq j\leq J$.
We write $L_j$ for $L_j(v)$ when $v$ is understood from context.
\end{definition}

We are most interested in the case where $[v]$ itself is $\NU_1$-initial,
which occurs if and only if $B$ is empty or ends in $1$
(Proposition~\ref{prop:NU-QDV}(b)). In this case, the $\NU$-chain
starting from $[v]$ consists of $\NU_1$-chains starting at each $[S_j(v)]$,
linked together by $\NU_2$-steps arriving at each $[S_j(v)]$ with $j>0$.
The next lemmas show how to compute $S_j(v)$, $L_j(v)$, and the 
$\mind$-profile of the $\NU$-chain starting at $[v]$.


\begin{lemma}\label{lem:m1=012}
Let $v=0012^{m_1}12^{m_2}\cdots 12^{m_s}B$ be a Dyck vector of length 
$L=\mind(v)$ where $s\geq 1$, $m_i\geq 0$ for all $i$, $m_s>0$, 
and $B$ is a binary vector.
\begin{itemize}
\item[(a)] If $m_1=0$, then $\NU^L([v])=[0012^{m_2}\cdots 12^{m_s}1B0]$,
which is reached by applying $\NU_1$ $L$ times.
The $\mind$-profile of $[v],\NU^1([v]),\ldots,\NU^L([v])$ is $L(L+1)^L$.
\item[(b)] If $m_1=1$, then $\NU^2([v])=[0012^{m_2}\cdots 12^{m_s}B01]$,
which is reached by applying $\NU_1$ and then $\NU_2$.
The $\mind$-profile of $[v],\NU([v]),\NU^2([v])$ is $L,L+1,L$.
\item[(c)] If $m_1\geq 2$, then $\NU^2([v])
=[0012^{m_1-2}12^{m_2}\cdots 12^{m_s}B11]$,
which is reached by applying $\NU_1$ and then $\NU_2$.
The $\mind$-profile of $[v],\NU([v]),\NU^2([v])$ is $L,L+1,L$.
\end{itemize}
\end{lemma}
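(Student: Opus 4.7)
The plan is to verify each case by a direct application of the rules for $\NU_1$ (Proposition~\ref{prop:NU-QDV}) and $\NU_2$ (Definition~\ref{def:NU2}), with the changes in $\mind$ tracked via Proposition~\ref{prop:NU-seg2}. Cases (b) and (c) each involve only two applications of $\NU$, so I would dispatch them first by direct calculation; case (a) requires genuine iteration and is the main challenge.

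For case (b): since $v$ starts with $00$ and has leader $d = 0$, the first application of $\NU_1$ deletes the initial $0$ and appends $-1$, producing the QDV $0\cdot 1\cdot 2\cdot 1\cdot 2^{m_2}\cdots 1\cdot 2^{m_s}\cdot B\cdot (-1)$ with $\mind = L+1$ by Proposition~\ref{prop:NU-seg2}(a). Because the new leader is $2$ (as $v_3 = 2$ but $v_4 = 1$) while the final symbol is $-1$, this Dyck class is $\NU_1$-final; but it matches Definition~\ref{def:NU2}(b) with $k = 1$ and interior list $1\cdot 2^{m_2}\cdots 1\cdot 2^{m_s}\cdot B$, so $\NU_2$ yields $[0012^{m_2}\cdots 12^{m_s}B01]$, a reduced representative of length $L$ starting with $00$. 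This gives the $\mind$-profile $L, L+1, L$. Case (c) is entirely analogous: the first $\NU_1$ step sends $\mind$ to $L+1$, and since $m_1 \geq 2$ exceeds the single trailing $-1$, Definition~\ref{def:NU2}(a) applies with $h = 2$ and $A = 2^{m_1-2}\cdot 1\cdot 2^{m_2}\cdots B$, producing $[0012^{m_1-2}12^{m_2}\cdots 12^{m_s}B11]$ of length $L$.

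Case (a) requires $L$ consecutive applications of $\NU_1$, with no $\NU_2$ involved. The first step (leader $d = 0$) yields a QDV ending in $-1$ whose reduced representative is the length-$(L+1)$ Dyck vector $w_1 = 0\cdot 1\cdot 2\cdot 2\cdot 3^{m_2}\cdot 2\cdot 3^{m_3}\cdots 2\cdot 3^{m_s}\cdot B^+\cdot 0$, which starts with $01$ and so has $\mind = L+1$ by Proposition~\ref{prop:NU-seg2}(a). I would then argue that the remaining $L-1$ applications of $\NU_1$ cycle through $w_1$ in a manner closely analogous to Proposition~\ref{prop:bin-NU}: each step removes the current leader (always $2$ or $3$) from near the front and appends one less to the tail, preserving length $L+1$ and keeping the reduced representative starting with $01$. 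At the very last step, the leading structure collapses so that the new representative begins with $00$ and is exactly $[0012^{m_2}\cdots 12^{m_s}1B0]$, matching the claimed target and giving the full profile $L\,(L{+}1)^L$.

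The main obstacle in case (a) is the detailed bookkeeping of the intermediate reduced representatives: I must verify that $\NU_1$ remains defined at every step (i.e., that the leader $d$ never exceeds $v_n + 2$), that the reduced representative starts with $01$ at steps $1,\ldots,L-1$, and that the final step lands precisely on the claimed Dyck vector. I expect to handle this by induction on the number of remaining steps, exploiting that each $\NU_1$ application is a purely local rotation depending only on the current leader and the symbol immediately after it. A convenient parameterization is a cyclic index tracking where inside the original subword $1\cdot 2^{m_2}\cdots 1\cdot 2^{m_s}\cdot B$ the current leader sits after the initial incrementation; the induction step then reduces to verifying a single rotation, and the $L$ total rotations cycle the word exactly once, landing on the target.
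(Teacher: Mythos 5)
Your proposal is correct and follows essentially the same route as the paper: parts (b) and (c) are the identical two-step computations ($\NU_1$ followed by rule~\ref{def:NU2}(b) with $k=1$, resp.\ rule~\ref{def:NU2}(a) with $h=2$), and part (a) is the same $L$-fold rotation argument, which the paper carries out on the length-$L$ QDV representatives (tracking how each block $12^{m_j}$ and then $B$ cycles to the back, in the spirit of Proposition~\ref{prop:bin-NU}) rather than on the reduced length-$(L+1)$ vectors. One small inaccuracy in your sketch of (a): the leader of the intermediate reduced representatives is not always $2$ or $3$ --- it drops to $1$ once the rotation reaches the separating $1$s or the binary suffix $B$ (for $v=001120$ the chain passes through $[0110121]$ and $[0101210]$, both with leader $1$) --- but this is harmless, since the $\NU_1$ rule and the key invariant that the reduced representative begins with $01$ at steps $1,\dots,L-1$ do not depend on the leader's exact value.
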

\begin{proof}
(a) We can apply $\NU_1$ to $[v]=[00112^{m_2}\cdots 12^{m_s}B]$ 
repeatedly, using Proposition~\ref{prop:NU-QDV}(a). The first two steps give
 $\NU_1([v])=[0112^{m_2}\cdots 12^{m_s}B(-1)]$ and
 $\NU_1^2([v])=[012^{m_2}\cdots 12^{m_s}B(-1)0]$.
The next $m_2$ applications of $\NU_1$ remove the $m_2$ copies of $2$
from $12^{m_2}$, one at a time, and put $m_2$ copies of $1$ at the end.
Next, the $1$ from $12^{m_2}$ is removed and a $0$ is added to the end.
At this point, $\NU_1^{3+m_2}([v])=[012^{m_3}\cdots 12^{m_s}B(-1)01^{m_2}0]$.
This pattern now continues: in the next $m_3+1$ steps, $\NU_1$ 
gradually removes $12^{m_3}$ from the front and adds $1^{m_3}0$ to the end.
Eventually, we reach $[0B(-1)01^{m_2}01^{m_3}0\cdots 1^{m_s}0]$.
Next, $\NU_1$ removes each symbol of $B$ and puts the corresponding
decremented symbol at the end. 
Since $L-1$ symbols of $v$ have now cycled to the end, we have reached
$\NU^{L-1}([v])=[0(-1)01^{m_2}01^{m_3}0\cdots 1^{m_s}0B^-]$.
All powers $\NU^i([v])$ computed so far have representatives of length $L$ 
with smallest entry $-1$, which implies $\mind(\NU^i([v]))=L+1$ 
for $1\leq i<L$. The reduced representatives for $\NU^i([v])$ 
all begin with $01$ and have length $L+1$.
In particular, $\NU^{L-1}([v])=[01012^{m_2}12^{m_3}1\cdots 2^{m_s}1B]$.
Using this representative, we can do $\NU_1$ one more time to reach 
$\NU^L([v])=[0012^{m_2}12^{m_3}\cdots 12^{m_s}1B0]$,
which also has $\mind$ equal to $L+1$.

(b) Given $m_1=1$, $\NU([v])=\NU_1([v])=[01212^{m_2}\cdots 12^{m_s}B(-1)]$.
As in (a), this Dyck class has $\mind$ equal to $L+1$. 
We continue by applying $\NU_2$ (namely, rule~\ref{def:NU2}(b) with $k=1$) 
to get $\NU^2([v])=[0012^{m_2}\cdots 12^{m_s}B01]$, which has length
and $\mind$ equal to $L$. 

(c) Given $m_1\geq 2$, 
$\NU([v])=\NU_1([v])=[012^{m_1}12^{m_2}\cdots 12^{m_s}B(-1)]$,
which has $\mind$ equal to $L+1$. 
We continue by applying $\NU_2$ (rule~\ref{def:NU2}(a) with $h=2$) to get
$\NU^2([v])=[0012^{m_1-2}12^{m_2}\cdots 12^{m_s}B1^2]$, which has
length and $\mind$ equal to $L$. 
\end{proof}

\begin{remark}\label{rem:mind-descents}
For each application of $\NU$ in Lemma~\ref{lem:m1=012},
doing $\NU_1$ weakly increases the value of $\mind$, 
while doing $\NU_2$ strictly decreases the value of $\mind$.
This remark allows us to identify the $\NU_1$-initial objects
in $\NU$-chains built from iteration of Lemma~\ref{lem:m1=012},
simply by finding descents in the $\mind$-profile of such a $\NU$-chain.
More precisely, for all objects $\NU^i([v])$ mentioned in the lemma
with $i>0$, $\NU^i([v])$ is $\NU_1$-initial
if and only if $\NU^i([v])$ is reached by a $\NU_2$-step
if and only if $\mind(\NU^{i-1}([v]))>\mind(\NU^i([v]))$.
\end{remark}

\begin{example}\label{ex:alg-v*}
Consider the input $v=0012221122$, which matches the template
in Lemma~\ref{lem:m1=012} with $m_1=3$, $m_2=0$, $m_3=2$, $B=\emptyset$,
and $L=10$.  By Lemma~\ref{lem:m1=012}(c), $\NU^2([v])=[0012112211]$.
Now Lemma~\ref{lem:m1=012}(b) applies to input $0012112211$,
giving $\NU^4([v])=[0011221101]$.
Now Lemma~\ref{lem:m1=012}(a) applies to input $0011221101$ of length $10$,
giving $\NU^{14}([v])=[00122111010]$.
Finally, one more application of Lemma~\ref{lem:m1=012}(c)
gives $\NU^{16}([v])=[00111101011]=[0B_{\mu}]=\TI(\mu)$ for $\mu=\ptn{3321^4}$.
The $\mind$-profile of the chain from $[v]$ to $\TI(\mu)$ is
$10,11,10,11,10,11^{10},12,11$. Based on the descents in this profile,
the $\NU_1$-initial objects in this chain are $\NU^i([v])$ for $i=0,2,4,16$.
So $S_0(v)=v$, $S_1(v)=0012112211$, $S_2(v)=0011221101$,
$S_3(v)=00111101011$, $L_0(v)=L_1(v)=L_2(v)=10$, and $L_3(v)=11$.
\end{example}

Iterating Lemma~\ref{lem:m1=012} gives the following result.

\begin{lemma}\label{lem:cycle-m1}
Let $v=0012^{m_1}12^{m_2}\cdots 12^{m_s}B$ be a Dyck vector 
of length $L=\mind(v)$
where $s\geq 1$, $m_i\geq 0$ for all $i$, $m_s>0$, and $B$ is a binary vector.
\begin{itemize}
\item[(a)] If $m_1$ is odd, then $\NU^{m_1+1}([v])
 =[0012^{m_2}\cdots 12^{m_s}B1^{m_1-1}01]$. 
The $\mind$-profile of $(\NU^i([v]):0\leq i\leq m_1+1)$
is $L$ followed by $(m_1+1)/2$ copies of $L+1,L$.
\item[(b)] If $m_1$ is even and $s=1$, then
 $\NU^{m_1}([v])=[001B1^{m_1}]$, which is $\TI(\mu)$ for some $\mu$.
The $\mind$-profile of $(\NU^i([v]):0\leq i\leq m_1)$
is $L$ followed by $m_1/2$ copies of $L+1,L$.
\item[(c)] If $m_1$ is even and $s>1$, then
 $\NU^{m_1+L}([v])=[0012^{m_2}\cdots 12^{m_s}1B1^{m_1}0]$.
The $\mind$-profile of $(\NU^i([v]):0\leq i\leq m_1+L)$ is
$L$ followed by $m_1/2$ copies of $L+1,L$, followed by $(L+1)^L$.
\item[(d)] For all $\NU^i([v])$ mentioned in (a) through (c) 
 with $i>0$, $\NU^i([v])$ is $\NU_1$-initial if and only if
 $\mind(\NU^{i-1}([v]))>\mind(\NU^i([v]))$, which occurs 
 precisely for the even $i\leq m_1+1$. 
\item[(e)] The $\NU_1$-initial objects in (d) are the powers
 $\NU^i([v])=[0012^{m_1-i}12^{m_2}\cdots 12^{m_s}B1^i]$ for 
 even $i\leq m_1$ in cases (a) through (c), along with 
 $\NU^{m_1+1}([v])$ in case (a).
\end{itemize}
\end{lemma}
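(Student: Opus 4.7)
The plan is to apply Lemma~\ref{lem:m1=012} iteratively, with case~(c) serving as a recursive reduction step. The key self-similarity is that case~(c), given an input matching the template of Lemma~\ref{lem:cycle-m1} with parameters $(m_1,B)$ and $m_1\geq 2$, produces in two $\NU$-steps an output with reduced representative $0012^{m_1-2}12^{m_2}\cdots 12^{m_s}B11$ of the same length $L$, which again matches the template with parameters $(m_1-2,\,B11)$ (still binary). Each such round contributes $L+1,L$ to the mind-profile. Starting from $v$, I would iterate $\lfloor m_1/2\rfloor$ rounds of case~(c), after which the first exponent has been reduced to $0$ or $1$ depending on the parity of $m_1$.

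The terminal round splits into the three stated conclusions. If $m_1$ is odd, the final template has $m_1'=1$, so case~(b) of Lemma~\ref{lem:m1=012} applies, contributing $L+1,L$ to the profile and producing $[0012^{m_2}\cdots 12^{m_s}B1^{m_1-1}01]$; this establishes (a). If $m_1$ is even and $s>1$, the final template has $m_1'=0$ with $m_s>0$ still holding, so case~(a) of Lemma~\ref{lem:m1=012} applies, contributing $(L+1)^L$ and producing $[0012^{m_2}\cdots 12^{m_s}1B1^{m_1}0]$; this establishes (c). If $m_1$ is even and $s=1$, then after $m_1/2$ rounds the intermediate blocks $12^{m_2}\cdots 12^{m_s}$ are empty, so the reduced representative of the terminal state is $001B1^{m_1}$; since this is a binary Dyck vector starting with $00$ and ending with $1$ (as $m_1\geq 2$), it equals $\TI(\mu)$ for some partition $\mu$ by Definition~\ref{def:TI-tail}, establishing (b).

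For parts (d) and (e), I would invoke Remark~\ref{rem:mind-descents}: $\NU^i([v])$ with $i>0$ is $\NU_1$-initial if and only if $\mind$ strictly descends from step $i-1$ to step $i$. Reading off the profiles above, each round of case~(c) contributes exactly one descent at its second (even-indexed) step, the terminal application of case~(b) in conclusion (a) contributes one further descent at $i=m_1+1$, and the trailing block $(L+1)^L$ in conclusion (c) contributes no descents at all. Hence descents occur precisely at the even integers $i$ with $2\leq i\leq m_1+1$. For (e), the explicit form at even $i\leq m_1$ is read off the template parameters after $i/2$ rounds, giving $\NU^i([v])=[0012^{m_1-i}12^{m_2}\cdots 12^{m_s}B1^i]$; the outlier $\NU^{m_1+1}([v])$ in conclusion (a) is read directly from the output of Lemma~\ref{lem:m1=012}(b).

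The only point requiring real care is the boundary in case (b): the terminal state $[001B1^{m_1}]$ no longer matches the input template of Lemma~\ref{lem:m1=012} (it violates $m_s>0$), so the iteration must terminate exactly there, but this is precisely the tail initiator we want, so no genuine obstacle arises. Beyond this bookkeeping and the tripartite case split on the parity of $m_1$ and the value of $s$, the argument is routine.
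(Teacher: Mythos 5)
Your proposal is correct and follows essentially the same route as the paper: iterate Lemma~\ref{lem:m1=012}, using its case~(c) as the reduction step that lowers $m_1$ by $2$ and appends $11$, then finish with case~(b) or~(a) of that lemma according to the parity of $m_1$ and the value of $s$, and deduce (d), (e) from Remark~\ref{rem:mind-descents} and the explicit intermediate representatives. No gaps.
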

\begin{proof}
To prove~(a), first apply Lemma~\ref{lem:m1=012}(c) a total of $(m_1-1)/2$
times. The net effect is to remove $m_1-1$ copies of $2$ from $12^{m_1}$ 
and add $1^{m_1-1}$ to the end. Now $m_1$ has been reduced to $1$, 
so Lemma~\ref{lem:m1=012}(b) applies. We do $\NU$ twice more,
removing $12$ from the front and adding $01$ to the end. 
The claims about $\mind$ also follow from Lemma~\ref{lem:m1=012}. 
Part~(b) follows similarly, by applying Lemma~\ref{lem:m1=012}(c) $m_1/2$
times. At this point, all $2$s have been removed (since $s=1$), 
so we have reached a binary Dyck vector representing some $\TI(\mu)$.  
In part~(c), we find $\NU([v]),\ldots,\NU^{m_1}([v])$ using 
Lemma~\ref{lem:m1=012}(c). Since $m_1$ has now been reduced to $0$
but another $2$ still remains,
we can find the next $L$ powers using Lemma~\ref{lem:m1=012}(a).  
Part~(d) follows from Remark~\ref{rem:mind-descents}, since all $\NU$-powers
computed in parts (a) through (c) were found using Lemma~\ref{lem:m1=012}.
Part~(e) follows from part (d) and iteration of Lemma~\ref{lem:m1=012}(c).
\end{proof}

\begin{example}
Given $v=001222212211121221$, let us compute the $\NU$-chain starting
at $[v]$, the $\mind$-profile of this chain, and the reduced vectors $S_j(v)$.
To start, apply Lemma~\ref{lem:cycle-m1} to input $v$, so $L=18$, $s=6$,
$m_1=4$, $m_2=2$, $m_3=m_4=0$, $m_5=1$, $m_6=2$, and $B=1$.
Part (c) of the lemma says $\NU^{22}([v])=[0012211121221^60]$,
and the $\mind$-profile of the $\NU$-chain from $[v]$ to $\NU^{22}([v])$
is $18,19,18,19,18,19^{18}$.  
To continue, apply the lemma to input $0012211121221^60$, so now $L=19$, $s=5$, 
$m_1=2$, $m_2=m_3=0$, $m_4=1$, $m_5=2$, and $B=1^60$.
Part (c) applies again, giving $\NU^{22+21}([v])=[0011121221^70110]$,
and the $\mind$-profile from $\NU^{23}([v])$ through $\NU^{43}([v])$ is 
$20,19,20^{19}$.
At the next stage, the input to the lemma is $0011121221^70110$,
with $L=20$, $s=4$, $m_1=m_2=0$, $m_3=1$, $m_4=2$, and $B=1^70110$.
We reach $\NU^{63}([v])=[001121221^801100]$ and append
$21^{20}$ to the $\mind$-profile.
Next we continue to $\NU^{84}([v])=[00121221^9011000]$ and
append $22^{21}$ to the $\mind$-profile.
At the next step, Lemma~\ref{lem:cycle-m1}(a) applies, taking us
to $\NU^{86}([v])=[001221^901100001]$ and appending $23,22$ to the 
$\mind$-profile. Finally, we use part (b) of the lemma, reaching
$\NU^{88}([v])=[0011^{9}0110000111]$ and appending $23,22$ to the 
$\mind$-profile. We have reached the Dyck class $[0B_{\mu}]=\TI(\mu)$
where $\mu=\ptn{6^32^21^{10}}$.

The full $\mind$-profile from $[v]$ to $\TI(\mu)$ is
$\underline{18},19,\underline{18},19,\underline{18},19^{18},20,
 \underline{19},20^{19},21^{20},22^{21},23,\underline{22},23,\underline{22}$, 
where the underlined values are descents in the $\mind$-profile
corresponding to the $S_j(v)$.  Theorem~\ref{thm:tail-profile} gives the 
rest of the $\mind$-profile as $23^{22},24^{23},25^{24},\ldots$.  
We find the $S_j(v)$ by applying Lemma~\ref{lem:cycle-m1}(e) 
to each of the input vectors used in the previous paragraph.
We obtain $S_0(v)=v$, $S_1(v)=001221221112122111$, $S_2(v)=001122111212211111$, 
$S_3(v)=00111121221^6011$, $S_4(v)=001221^901100001$,
 and $S_5(v)=0011^{9}0110000111$.
\end{example}

The next theorem uses the following notation for sublists of a list.
Let $1^{2\lfloor n/2\rfloor}01^{n\bmod 2}$ denote the
list $1^n0$ for $n$ even and $1^{n-1}01$ for $n$ odd.
For any list $A$, let $(A)^m$ be the concatenation of $m$ copies
of $A$, which is the empty list if $m=0$.

\begin{theorem}\label{thm:NU2-chains}
Let $v=0012^{n_0}12^{n_1}\cdots 12^{n_r}C$ be a Dyck vector 
where $r\geq 0$, $n_i\geq 0$ for all $i$, $n_r>0$, and $C$ is a binary vector.
For $0\leq i\leq r+1$, let $p_i$ be the number of even integers in the list 
$n_0,\ldots,n_{i-1}$.
\begin{itemize}
\item[(a)] The $\NU$-chain starting at $[v]$ contains all Dyck classes
 $[v^{(0)}]=[v],[v^{(1)}],\ldots,[v^{(r+1)}]$, where
\begin{equation}\label{eq:loop-iter}
 v^{(i)}=0012^{n_i}\cdots 12^{n_r}1^{p_i}C
 1^{2\lfloor n_0/2\rfloor} 0 1^{n_0\bmod 2}\cdots
 1^{2\lfloor n_{i-1}/2\rfloor} 0 1^{n_{i-1}\bmod 2}
\quad\mbox{ for $0\leq i\leq r$, and}
\end{equation}
\begin{equation}\label{eq:loop-end}
 v^{(r+1)}=001^{p_{r+1}}C
  1^{2\lfloor n_0/2\rfloor}01^{n_0\bmod 2} \cdots
  1^{2\lfloor n_{r-1}/2\rfloor}01^{n_{r-1}\bmod 2}
  1^{2\lfloor n_r/2\rfloor}(01)^{n_r\bmod 2}.
\end{equation}
We have $[v^{(r+1)}]=\TI(\mu)$ for some partition $\mu$.
\item[(b)] Put $L=\len(v^{(i)})=\mind(v^{(i)})$.
For $i\leq r$, the $\mind$-profile of the part of the $\NU$-chain 
from $[v^{(i)}]$ through $[v^{(i+1)}]$ always starts
$L,(L+1,L)^{\lceil n_i/2\rceil}$; this is followed by $(L+1)^L$ 
if $n_i$ is even and $i<r$.  For $i=r+1$, the $\mind$-profile of the 
$\NU$-chain from $\TI(\mu)$ onward is $L(L+1)^L(L+2)^{L+1}\cdots$.
\item[(c)] For all $i>0$, $\NU^i([v])$ is a $\NU_1$-initial object
 if and only if $\mind(\NU^{i-1}([v]))>\mind(\NU^i([v]))$.
\item[(d)] The reduced vectors $S_j(v)$ representing the $\NU_1$-initial
 objects in the $\NU$-chain starting at $[v]$ are: 
 $v$ itself, if $C$ is empty or ends in $1$;
 $v^{(i)}$, for each $i>0$ such that $n_{i-1}$ is odd; and all vectors 
\begin{equation}\label{eq:tail2-init}
 0012^{n_i-2c}12^{n_{i+1}}\cdots 12^{n_r}1^{p_i}C
 1^{2\lfloor n_0/2\rfloor} 0 1^{n_0\bmod 2}\cdots
 1^{2\lfloor n_{i-1}/2\rfloor} 0 1^{n_{i-1}\bmod 2} 1^{2c}. 
\end{equation}
where $0\leq i\leq r$ and $0<2c\leq n_i$.
The final vector~\eqref{eq:loop-end} is $S_J(v)$.
\item[(e)] Each $S_j(v)$ except $S_J(v)$ contains a $2$.
 For $j>0$, each $S_j(v)$ is a reduced Dyck vector of the form
 $00X1$ with $X$ ternary. No other reduced representatives 
 of classes in the chain from $\NU([v])$ to $[S_J(v)]$ have this form.
 For any Dyck class $\delta$ in the $\NU$-chain starting at $[v]$,
 $\mind(\delta)<\mind(\NU(\delta))$ if and only if the
 reduced Dyck vector for $\delta$ begins with $00$. 
\item[(f)] The $\mind$-profile from $[S_j(v)]$ to just before
 $[S_{j+1}(v)]$ is a prefix of  
 $L_j(L_j+1)^{L_j}(L_j+2)^{L_j+1}\cdots$. Each $[S_j(v)]$ is immediately
 preceded (if $j>0$) and followed by an object with $\mind$ equal to $L_j+1$.
 Hence, $L_0\leq L_1\leq\cdots\leq L_J$.
\item[(g)] For $0\leq j<J$, $\area(S_{j+1}(v))=\area(S_j(v))-2$.
\end{itemize}
\end{theorem}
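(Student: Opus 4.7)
The plan is to derive all seven parts as corollaries of Lemma~\ref{lem:cycle-m1}, iterated once for each block $12^{n_i}$ of the input. For parts (a) and (b) I would proceed by induction on $i$ from $0$ to $r+1$. The base case $v^{(0)}=v$ is immediate since $p_0=0$ and the sum indexing the accumulated suffix in~\eqref{eq:loop-iter} is empty. For the inductive step, I parse $v^{(i)}$ against the template of Lemma~\ref{lem:cycle-m1} with $m_j=n_{i+j-1}$ and $B$ equal to the accumulated suffix of $v^{(i)}$, then apply the appropriate case (case (a) for odd $n_i$; case (c) for even $n_i$ with $i<r$; case (b) for even $n_i$ with $i=r$; case (a) with $s=1$ for odd $n_i$ with $i=r$) and identify the output with $v^{(i+1)}$ or $v^{(r+1)}$, using the update $p_{i+1}=p_i+1$ when $n_i$ is even and $p_{i+1}=p_i$ otherwise. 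The $\mind$-profile assertions in (b) are immediate from the profiles already supplied by Lemma~\ref{lem:cycle-m1} once one notes that $\lceil n_i/2\rceil$ unifies the two parities; the profile from $[\TI(\mu)]$ onward is Theorem~\ref{thm:tail-profile}.

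For parts (c)--(e), the central observation is that $\NU_1$ weakly increases $\mind$ by Proposition~\ref{prop:NU-seg2}(a), while every $\NU_2$-step arising in our chain strictly decreases $\mind$ by Remark~\ref{rem:mind-descents}, and the codomains $C_1$ and $C_2$ are disjoint; so descents in the $\mind$-profile occur exactly at $\NU_2$-arrivals, which are precisely the $\NU_1$-initial objects, proving (c). Part (d) then follows by locating the descents in the inductive analysis of (a): within each block, every application of Lemma~\ref{lem:m1=012}(b) or (c) produces exactly one new $\NU_1$-initial object---an intermediate of the form~\eqref{eq:tail2-init} from (c) or $v^{(i+1)}$ from (b)---while Lemma~\ref{lem:m1=012}(a) produces none. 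Part (e) is read off directly: each listed $S_j$ for $j>0$ has the form $00X1$ with $X$ ternary, and conversely any reduced representative $00X1$ has leader $0$ less than its final symbol $1$, hence is $\NU_1$-initial by Proposition~\ref{prop:NU-QDV}(b) and so coincides with some $S_j$ by (c); the final characterization of when $\mind$ strictly increases under $\NU$ combines Proposition~\ref{prop:NU-seg2}(a) with the strict $\mind$-decrease of $\NU_2$.

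For part (f), between $[S_j(v)]$ and $[S_{j+1}(v)]$ the chain consists of some (possibly zero) applications of Lemma~\ref{lem:m1=012}(a)---each raising $\mind$ by one at its first step and then plateauing---followed by exactly one application of (b) or (c), whose first $\NU_1$-step raises $\mind$ by one and whose concluding $\NU_2$-step drops $\mind$ to the new value $L_{j+1}$; this profile is a prefix of $L_j(L_j+1)^{L_j}(L_j+2)^{L_j+1}\cdots$, yielding both the flanking of each $[S_j(v)]$ by classes of $\mind = L_j+1$ and the monotonicity $L_j\leq L_{j+1}$. For part (g), a direct computation from the explicit templates shows that each application of Lemma~\ref{lem:m1=012}(a) preserves the area of the reduced representative, while each application of (b) or (c) decreases it by exactly $2$; since consecutive $S_j$ and $S_{j+1}$ are separated by a (possibly empty) run of (a)-steps followed by exactly one (b)- or (c)-step, $\area(S_{j+1})=\area(S_j)-2$.

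The main obstacle is the bookkeeping in the inductive step of (a)--(b): four sub-cases (odd/even $n_i$ combined with $i<r$ vs.\ $i=r$) each require carefully aligning the accumulated suffix of $v^{(i+1)}$---built from the $1^{2\lfloor n_j/2\rfloor}01^{n_j\bmod 2}$ segments and the evolving counter $p_i$---against the output of Lemma~\ref{lem:cycle-m1}. Once these identifications are in hand, the remaining parts follow essentially automatically from the $\mind$-descent dichotomy and straightforward area counting.
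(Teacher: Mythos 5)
Your proposal is correct and follows essentially the same route as the paper: induction on the block index $i$ via iterated application of Lemma~\ref{lem:cycle-m1} for (a)--(b), the $\mind$-descent dichotomy of Remark~\ref{rem:mind-descents} for (c)--(e), and tracking of the reduced representatives for (f)--(g). Your two local shortcuts are both valid and arguably cleaner than the paper's: for the converse in (e) you note that any reduced vector $00X1$ is automatically $\NU_1$-initial by Proposition~\ref{prop:NU-QDV}(b) and hence already one of the $S_j$ (the paper instead inspects the outputs of each $\NU_1$-step), and for (g) you verify directly that Lemma~\ref{lem:m1=012}(a)-steps preserve the area of the reduced representative while (b)/(c)-steps drop it by $2$ (the paper handles the intervening (a)-steps via the identity $\area+\dinv+\defc=\binom{\len}{2}$).
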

\begin{proof}
We prove (a) and (b) by induction on $i$, by iterating
Lemma~\ref{lem:cycle-m1}. The $\NU$-chain starts at $[v]=[v^{(0)}]$.
For the induction step, fix $i$ with $0\leq i\leq r$, and assume
the $\NU$-chain has reached $[v^{(i)}]$. Lemma~\ref{lem:cycle-m1}
applies taking $v$ there to be $v^{(i)}$, taking $m_1,\ldots,m_s$
there to be $n_i,\ldots,n_r$, and taking $B$ there to be the part
of $v^{(i)}$ starting with $1^{p_i}$. The lemma
describes how repeated action by $\NU$ removes $12^{n_i}$ from the
front of $v^{(i)}$ and adds new symbols further right. Assume $i<r$ first.
If $n_i$ is odd, then we add $1^{n_i-1}01$ on the right end
(Lemma~\ref{lem:cycle-m1}(a)).
If $n_i$ is even (possibly zero), then we add a $1$
immediately after $2^{n_r}$ and add $1^{n_i}0$ on the right end
(Lemma~\ref{lem:cycle-m1}(c)). Since $p_{i+1}=p_i$ for $n_i$ odd
and $p_{i+1}=p_i+1$ for $n_i$ even, we obtain the correct vector
$v^{(i+1)}$ in both cases. The analysis for $i=r$ is similar,
but now we need Lemma~\ref{lem:cycle-m1}(b) if $n_r$ is even.
In that case, we still add a new $1$ before $1^{p_r}$ 
(in agreement with $p_{r+1}=p_r+1$), but we only add $1^{n_r}$
(not $1^{n_r}0$) at the right end. This explains the term
$(01)^{n_r\bmod 2}$ in~\eqref{eq:loop-end}.
Part (b) of the theorem follows (in all cases just discussed)
from the descriptions of the $\mind$-profiles in Lemma~\ref{lem:cycle-m1}.
Since $v^{(r+1)}$ is a binary vector starting with $00$ and ending with $1$,
$[v^{(r+1)}]=\TI(\mu)$ for some partition $\mu$. The $\mind$-profile
of the $\NU$-tail starting here is given by Theorem~\ref{thm:tail-profile}.

Parts~(c) and~(d) of the theorem follow from parts~(d) and~(e)
of Lemma~\ref{lem:cycle-m1}. The first two statements in part~(e)
of the theorem follow from the explicit description in~\ref{thm:NU2-chains}(d).
The next claim in~(e) follows by checking that the output of
each $\NU_1$-step in the proof of Lemma~\ref{lem:m1=012}
never has reduced representative $00X1$ with $X$ ternary.
The last assertion in~(e) is verified similarly, also using
Theorem~\ref{thm:BDV-tail} to check those $\delta$ in $\tail(\mu)$.
Part~(f) follows from parts~(b) and~(c), recalling that
$L_j=\len(S_j(v))=\mind([S_j(v)])$ since $S_j(v)$ is reduced.

Part~(g) also follows from the computations in Lemma~\ref{lem:m1=012}.
If part~(b) or part~(c) of that lemma applies to input $S_j(v)$,
then $S_{j+1}(v)$ is the reduced representative of $\NU^2([S_j(v)])$.
By that lemma and Theorem~\ref{thm:extend-NU}, $L_{j+1}=L_j$, 
$\dinv(S_{j+1}(v))=\dinv(S_j(v))+2$, $\defc(S_{j+1}(v))=\defc(S_j(v))$,
and $\area(S_{j+1}(v))=\area(S_j(v))-2$.
On the other hand, suppose Lemma~\ref{lem:m1=012}(a) 
applies to input $[S_j(v)]$ for $c>0$ successive times,
which happens when an even $m_i$ has been reduced to zero
 and is followed by $m_{i+1}=\cdots=m_{i+c-1}=0<m_{i+c}$.
In this situation, the $\mind$-profile mentioned in (f)
is the prefix $L_j(L_j+1)^{L_j}\cdots (L_j+c)^{L_j+c-1}(L_j+c+1)$, 
and the next object has reduced representative $S_{j+1}(v)$ 
with length $L_{j+1}=L_j+c$. 
Going from $S_j(v)$ to $S_{j+1}(v)$, we see that the deficit has not changed,
the length has increased from $L_j$ to $L_j+c$, and dinv has increased by 
$L_j+(L_j+1)+\cdots+(L_j+c-1)+2=\binom{L_j+c}{2}-\binom{L_j}{2}+2$. 
Using $\area+\dinv+\defc=\binom{\len}{2}$, 
it follows that $\area(S_{j+1}(v))=\area(S_j(v))-2$.
\end{proof}

We work through a detailed example illustrating Theorem~\ref{thm:NU2-chains}
in~\S\ref{subsec:init-tail2}.

\section{Flagpole Partitions}
\label{sec:flagpole}

The following strange-looking definition will be explained
by Lemma~\ref{lem:flag-init}.

\begin{definition}\label{def:flag}
A \emph{flagpole partition} is an integer partition $\mu$ such that
$|\mu|+8\leq 2\mind(\TI_2(\mu))$.  
\end{definition}

For example, $\mu=\ptn{322111}$ is a flagpole partition since
(from~\eqref{eq:TI2-ex}) $\TI_2(\mu)=[001221222]$, $|\mu|+8=18$,
and $\mind(\TI_2(\mu))=9$. But $\mu=\ptn{22}$ is not a flagpole
partition since $|\mu|+8=12$ while $\TI_2(\mu)=[00011]$ has 
$\mind(\TI_2(\mu))=5$.
The flagpole partitions of size at most $7$ are $\ptn{211}$, $\ptn{1^4}$,
$\ptn{2111}$, $\ptn{1^5}$, $\ptn{321}$, $\ptn{3111}$, $\ptn{21^4}$, 
$\ptn{1^6}$, $\ptn{3211}$, $\ptn{31^4}$, $\ptn{21^5}$, and $\ptn{1^7}$
(cf. Example~\ref{ex:TI2}). 

\subsection{Flagpole Initiators}
\label{subsec:flag-init}

We are going to characterize the second-order tail initiators of
flagpole partitions. This requires the following notation.
Recall that for $\lambda=\ptn{r^{n_r}\cdots 2^{n_2}1^{n_1}}$ with $n_r>0$,
$B_{\lambda}=01^{n_1}01^{n_2}\cdots 01^{n_r}$
and $\len(B_{\lambda})=\lambda_1+\ell(\lambda)$.
Let $a_0(\lambda)=|\lambda|-\lambda_1-\ell(\lambda)+3$.
Note that $a_0(\lambda)\geq 2$ for all partitions $\lambda$,
and equality holds if and only if $\lambda=\ptn{b,1^c}$ is a nonzero hook.
Define $v(\lambda,a,0)=0012^aB_{\lambda}^+$
 and $v(\lambda,a,1)=0012^{a-1}B_{\lambda}^+1$ for all partitions $\lambda$
and integers $a\geq 2$. For example,
\[ v(\ptn{33111},3,0)=00122212221122\quad\mbox{ and }\quad
   v(\ptn{4421},3,1)=00122121211221. \]

\begin{lemma}\label{lem:flag-init}
For all Dyck vectors $v$ listed in Theorem~\ref{thm:tail2}(b),
the following conditions are equivalent:
\\(a)~$\defc(v)+8\leq 2\len(v)$;
\\(b)~there exists a partition $\lambda$ and an integer $a\geq a_0(\lambda)$
 with $v=v(\lambda,a,0)$ or $v=v(\lambda,a,1)$.
\end{lemma}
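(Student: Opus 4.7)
The plan is to prove the two implications separately. For (b)$\Rightarrow$(a), the strategy is a direct computation. Both $v(\lambda,a,0)$ and $v(\lambda,a,1)$ have length $\len(v)=3+a+\lambda_1+\ell(\lambda)$ (since $\len(B_\lambda)=\lambda_1+\ell(\lambda)$). Applying Proposition~\ref{prop:defc-pairs}, no $2$ precedes any $0$ in either vector, so pairs $(b,c)$ with $b\geq c+2$ contribute nothing; the non-leftmost $0$ at position~$2$ contributes $a+\lambda_1+\ell(\lambda)+1$ (one for each later non-zero symbol); and the non-leftmost $1$'s contribute $|\lambda|$ in total (each $2$ in the $n_i$-th trailing block pairs with $i$ non-leftmost $1$'s to its left, giving $\sum_i i\cdot n_i=|\lambda|$; the trailing $1$ in the $\varepsilon=1$ case contributes $0$, since it has no $2$ on its right). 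Hence $\defc(v)=|\lambda|+\lambda_1+\ell(\lambda)+a+1$ in both cases, and the inequality $\defc(v)+8\leq 2\len(v)$ rearranges to $a\geq |\lambda|-\lambda_1-\ell(\lambda)+3=a_0(\lambda)$.

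For (a)$\Rightarrow$(b), the plan is a case analysis that eliminates every vector listed in Theorem~\ref{thm:tail2}(b) not of the claimed form. Type~3 vectors are immediate: for $v=0^n 1^n$ or $v=0^n 1^{n-1}$ with $n\geq 2$, the expression $\defc(v)+8-2\len(v)$ equals $n^2-5n+8$ or $n^2-6n+11$, each a quadratic in $n$ with negative discriminant and hence always strictly positive, so (a) fails; and $v=0$ gives $6>0$. If $v$ is Type~1 or Type~2 with three or more $0$'s, the Type~1/Type~2 shape forces $v$ to end in $12^n 1^q$ with $n\geq 1$, $q\geq 0$, and the decomposition $v=00A0B12^n1^q$ satisfies Lemma~\ref{lem:defc-ineq}(b), yielding $\defc(v)\geq 2\len(v)-7$ in contradiction to (a).

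The main obstacle is to rule out Type~1 or Type~2 vectors with exactly two $0$'s that are not of the template form. For Type~1 with $m\geq 2$, Lemma~\ref{lem:defc-ineq}(a) applied with $A=01^m 0$ reduces (a) to $m(n-2)\leq n-3$, which has no solution in the range $2\leq m\leq n$. For Type~1 with $m=1$, I would compute the deficit directly via Proposition~\ref{prop:defc-pairs}: if $X\in\{1,2\}^*$ has $s$ ones and $t$ twos, the second $0$ contributes $s+t+n$ and the non-leftmost $1$'s in $X$ contribute $I(X)+sn$, where $I(X)$ is the number of $(1,2)$-pairs of $X$ with the $1$ before the $2$; condition (a) then reduces to $I(X)+s(n-1)\leq t+n-2$. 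The key estimate $I(X)\geq t$ (valid because $X$ begins with $1$, so every $2$ in $X$ is preceded by at least one $1$) combined with $s\geq 1$ yields the contradiction $t+n-1\leq t+n-2$. The same deficit-counting technique dispatches the residual sub-cases in which $v$ has two $0$'s at the start but does not start with $001\cdot 2^{a}$ for suitable $a$: namely prefixes $0011\ldots$, the vector $0012$ of length $4$, and (for Type~1) prefixes $00121\ldots$; in each of these $v$ fails to match the template with $a\geq 2$, and the corresponding estimate reduces (a) to a manifestly positive lower bound. The bookkeeping of these final sub-cases is what I expect to consume the most effort.

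After these exclusions, $v$ has exactly two $0$'s both at the start and begins $001\cdot 2^{a'}$ with $a'\geq 2$ if $v$ ends in $2$ (Type~1) or with $a'\geq 1$ if $v$ ends in $21$ (Type~2). The remainder of $v$—after stripping off the trailing $1$ in the Type~2 case—is either empty or a word in $\{1,2\}^*$ starting with $1$ and ending with $2$, which parses uniquely as $B_\lambda^+$ for a partition $\lambda$. Thus $v=v(\lambda,a',0)$ or $v=v(\lambda,a'+1,1)$, with $a\geq 2$ in either case. The equivalence established in the first paragraph then upgrades (a) to $a\geq a_0(\lambda)$, completing the implication.
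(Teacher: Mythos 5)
Your overall strategy matches the paper's (a case analysis over the types in Theorem~\ref{thm:tail2}(b), using Proposition~\ref{prop:defc-pairs} and Lemma~\ref{lem:defc-ineq} to kill the non-template cases, plus a direct deficit computation showing that on template vectors condition (a) rearranges to $a\geq a_0(\lambda)$), and your computations for Type~3, for the template vectors, and for Type~1 with $m\geq 2$ or with $m=1$ and $X\in\{1,2\}^*$ all check out. However, there is a genuine hole in your coverage of Type~1. A Type~1 vector $v=01^m0X2^n$ with $m\geq 1$ begins with $01$, not $00$, so when $X$ contains a $0$ (hence $v$ has three or more $0$'s) the decomposition $v=00A0B12^n1^q$ that you invoke for the ``three or more $0$'s'' case does not exist, and Lemma~\ref{lem:defc-ineq}(b) cannot be applied. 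Your remaining Type~1 cases do not catch these vectors either: the $m=1$ computation explicitly assumes $X\in\{1,2\}^*$, so a vector such as $v=0101012$ (here $m=n=1$, $X=101$) or $v=01\,0^r12^n$ with $r\geq 2$ falls through every case. (Your $m\geq 2$ argument happens to survive, because Lemma~\ref{lem:defc-ineq}(a) with $A=01^m0$ and $B=X$ minus its final $1$ never uses the assumption that $X$ is $0$-free --- but you should say so, since as written you only invoke it when $v$ has exactly two $0$'s.) The paper closes exactly this gap by splitting $m\geq 1$ according to the number of $1$'s in $X$: if $X$ has at least two $1$'s, it takes $A=01^m0^r1$, which has two $0$'s and at least two $1$'s, so Lemma~\ref{lem:defc-ineq}(a) applies even when the remaining block $B$ contains $0$'s; if $X$ has exactly one $1$, then $v=01^m0^r12^n$ and a short direct computation gives $\defc(v)+8-2\len(v)=(m-2)(n-2)+(n-1)r>0$. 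You need an argument of this kind for $m=1$ (and, to be safe, for $m\geq 2$ with zeros in $X$) before the implication (a)$\Rightarrow$(b) is complete.

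A secondary, sketch-level issue: the ``residual sub-cases'' with prefix $0011$, i.e.\ $v=001^c2^{a}\cdots$ with $c\geq 2$, are asserted rather than carried out; this is where the paper needs the inequalities $(c-2)a>c-4$ and $(a-2)(c-2)\geq 0$ in \eqref{eq:flag-case4} and \eqref{eq:flag-case6}, so it is not a one-line verification. The rest of your argument --- the formula $\defc(v(\lambda,a,\epsilon))=|\lambda|+\lambda_1+\ell(\lambda)+a+1$, its rearrangement to $a\geq a_0(\lambda)$, and the unique parsing of the tail of $v$ as $B_\lambda^+$ --- agrees with the paper's Cases~4 and~6 in the subcase $c=1$.
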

\begin{proof}
We look at seven cases based on the possible forms of $v$.

\emph{Case~1.} $v=01^m0X2^n$ is type $1$ where $0<m\leq n$
and $X$ has at least two $1$s. Then $v$ has the form $AB12^n$,
where $A=01^m0^r1$ has at least two $0$s and at least two $1$s.
By Lemma~\ref{lem:defc-ineq}(a), $\defc(v)\geq 2\len(B)+\defc(A12^n)$.
By Proposition~\ref{prop:defc-pairs}, $\defc(A12^n)=\defc(01^m0^r112^n)
 =r(2+n)+(m+1)n\geq 2r+2n+mn$. Since $m\leq n$, we get
\[ \defc(v)+8>2\len(B)+2r+2n+6+(m^2+1)\geq 2[\len(B)+r+n+3+m]=2\len(v). \]
Thus condition~\ref{lem:flag-init}(a) is false for $v$, 
and condition~\ref{lem:flag-init}(b) is also false since $v$ 
does not begin with $00$.

\emph{Case~2.} $v=01^m0X2^n$ is type $1$ where $0<m\leq n$
and $X$ has only one $1$. Then $v$ has the form $01^m0^r12^n$,
$\len(v)=m+r+n+2$, and $\defc(v)=r(1+n)+mn$. Here $\defc(v)+8-2\len(v)$
simplifies to $(m-2)(n-2)+(n-1)r$.  If $m\geq 2$, then (using
$n\geq m\geq 2$) we get $(m-2)(n-2)+(n-1)r>0$. If $m=1$, this expression 
becomes $(n-1)(r-1)+1>0$, which is also positive. Thus (a) is false for $v$,
and (b) is false since $v$ does not begin with $00$.

\emph{Case~3.} $v=00X2^n$ is type $1$ where $m=0$ and $X$ has at least one $0$.
Then $v$ has the form $00A0B12^n$, $\len(v)=\len(A)+\len(B)+n+4$,
and Lemma~\ref{lem:defc-ineq}(b) gives $\defc(v)\geq 2\len(A)+2\len(B)+2n+1$.
So $\defc(v)+8>2\len(v)$, and (a) is false for $v$.
Condition~(b) is also false since $v$ has too many $0$s.

\emph{Case~4.} $v=00X2^n$ is type $1$ where $m=0$ and $X$ contains no $0$.
Then there exist a partition $\lambda$ and positive integers $c,a$
such that $v=001^c2^aB_{\lambda}^+$. We compute $\len(v)=2+c+a+\lambda_1
 +\ell(\lambda)$. Using Proposition~\ref{prop:defc-pairs} to find $\defc(v)$,
the second $0$ in $v$ contributes $\len(v)-2$,
each $1$ in $1^c$ except the first contributes $a+\ell(\lambda)$,
and the $1$s in $B_{\lambda}^+$ pair with later $2$s to
contribute $n_1+2n_2+\cdots+rn_r=|\lambda|$.  In total, we get 
\begin{equation}\label{eq:flag-case4}
 8+\defc(v) = \len(v)+6+(c-1)(a+\ell(\lambda))+|\lambda|. 
\end{equation} 
Consider the subcase $c\geq 2$. Here, condition (b) is false for $v$ since $v$
begins with $0011$. On the other hand, because $a\geq 1$, we have $(c-2)a>c-4$
and hence $6+(c-1)a>2+c+a$. By~\eqref{eq:flag-case4},
\begin{equation}\label{eq:flag4a}
 8+\defc(v)>\len(v)+2+c+a+\ell(\lambda)+|\lambda|\geq 2\len(v), 
\end{equation}
so condition (a) is false for $v$.  
In the subcase $c=1$, (b) is true for $v$ iff $a\geq a_0(\lambda)$.
On the other hand, using~\eqref{eq:flag-case4} with $c=1$, (a) is true for
$v$ iff $6+|\lambda|\leq\len(v)$ iff 
$6+|\lambda|\leq 3+a+\lambda_1+\ell(\lambda)$ iff $a\geq a_0(\lambda)$.
Thus, (a) and (b) are equivalent in this subcase.
   
\emph{Case~5.} $v=0^nY21^m$ is type $2$ and contains at least three $0$s.
Then $v=00A0B12^p1^m$ where $p,m>0$, and $\len(v)=\len(A)+\len(B)+4+p+m$.
Lemma~\ref{lem:defc-ineq}(b) gives 
$\defc(v)+8\geq 2\len(A)+2\len(B)+2(p+m)+9>2\len(v)$. So conditions (a)
and (b) are both false for $v$.

\emph{Case~6.} $v=0^nY21^m$ is type $2$ with exactly two $0$s, which
forces $n=2$ and $m=1$. So there exist a partition $\lambda$ and
integers $c\geq 1$, $a\geq 2$ with $v=001^c2^{a-1}B_{\lambda}^+1$.
Similarly to Case~4, we compute $\len(v)=2+c+a+\lambda_1+\ell(\lambda)$ and
\begin{equation}\label{eq:flag-case6}
 8+\defc(v) = \len(v)+6+(c-1)(a-1+\ell(\lambda))+|\lambda|. 
\end{equation} 
In the subcase $c\geq 2$, (b) is false for $v$ since $v$ begins with $0011$.
On the other hand, $(a-2)(c-2)\geq 0$ in this subcase, so
$6+(c-1)(a-1)>2+c+a$.  Using this in~\eqref{eq:flag-case6} 
yields~\eqref{eq:flag4a}, so (a) is false for $v$.  
In the subcase $c=1$, (b) is true for $v$ iff $a\geq a_0(\lambda)$.
By~\eqref{eq:flag-case6} with $c=1$, (a) is true for $v$ iff 
$6+|\lambda|\leq \len(v)$ iff $a\geq a_0(\lambda)$ (as in Case~4).
So (a) and (b) are equivalent in this subcase.

\emph{Case~7.} $v$ is a type 3 vector. Then condition (b) is false for $v$
 since $v$ contains no $2$.  If $v=0^n1^n$ with $n\geq 2$, then
 $\defc(v)=(n-1)n$, $\len(v)=2n$, and it is routine to check $(n-1)n+8>2n$.
 If $v=0^n1^{n-1}$ with $n\geq 2$, then $\defc(v)=(n-1)^2$,
 $\len(v)=2n-1$, and $(n-1)^2+8>2n-1$ holds. If $v=0$, then
 $\defc(v)=0$, $\len(v)=1$, and $8>2$ holds. So condition (a) is false
 for all type 3 vectors $v$.  
\end{proof}

\begin{remark}\label{rem:flag-init}
As seen in the proof, $v(\lambda,a,0)$ and $v(\lambda,a,1)$ both have
length $L=a+3+\lambda_1+\ell(\lambda)$ and deficit 
$a+1+\lambda_1+\ell(\lambda)+|\lambda|=L+|\lambda|-2$.
We also saw that $a\geq a_0(\lambda)$ if and only if $L\geq |\lambda|+6$.
Since $\area(v(\lambda,a,1))=\area(v(\lambda,a,0))-1$, it follows that
      $\dinv(v(\lambda,a,1))=\dinv(v(\lambda,a,0))+1$.
Thus, $v(\lambda,a,0)$ and $v(\lambda,a,1)$ have dinv of opposite parity.
More precisely, one readily checks that 
\begin{equation}\label{eq:TI2-stats}
 \area(v(\lambda,a,\epsilon))=2L-\lambda_1-5-\epsilon\quad\mbox{ and }\quad
\dinv(v(\lambda,a,\epsilon))=\binom{L}{2}-3L-|\lambda|+\lambda_1+7+\epsilon.
\end{equation}
\end{remark}

\begin{theorem}\label{thm:flag-init}
A partition $\mu$ is a flagpole partition if and only if there exist
a partition $\lambda$ and an integer $a\geq a_0(\lambda)$ such that
$\TI_2(\mu)=[v(\lambda,a,0)]$ or $\TI_2(\mu)=[v(\lambda,a,1)]$.
\end{theorem}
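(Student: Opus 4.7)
The plan is to reduce the statement directly to Lemma~\ref{lem:flag-init} by rewriting the flagpole condition in terms of the statistics of a reduced representative of $\TI_2(\mu)$. The key identifications are $\mind(\TI_2(\mu)) = \len(v)$ and $\defc(\TI_2(\mu)) = |\mu|$ for an appropriate representative $v$.

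First I would unwind the definitions. By Definition~\ref{def:flag}, $\mu$ is a flagpole partition iff $|\mu|+8 \leq 2\mind(\TI_2(\mu))$. By Theorem~\ref{thm:tail2}(b), there exists a ternary Dyck vector $v$ of Type~1, 2, or 3 such that $\TI_2(\mu)=[v]$, and these are precisely the representatives appearing in Lemma~\ref{lem:flag-init}. I would then verify that every such $v$ is already a reduced Dyck vector: Type~2 and Type~3 vectors (other than $v=0$) begin with $00$, hence are reduced; for Type~1 vectors $v=01^m0X2^n$ with $0\le m\le n$, the second $0$ appears in position $m+2$, so $v$ cannot be written as $0z^+$, and $v$ is reduced. (The degenerate case $v=0$ is handled directly, as $\TI_2(\ptn{0})=[0]$ has $\mind=1$, $|\ptn{0}|=0$, and the flagpole inequality fails, consistent with Lemma~\ref{lem:flag-init}.) Consequently $\mind(\TI_2(\mu))=\mind([v])=\len(v)$.

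Next I would identify $\defc(v)$ with $|\mu|$. Since $\TI_2(\mu)$ lies in $\tail_2(\mu)$ and every object of $\tail_2(\mu)$ has deficit $|\mu|$ (as stated when $\tail_2(\mu)$ was defined, following from Theorem~\ref{thm:extend-NU}(b) and Proposition~\ref{prop:TI}(b)), we have $\defc([v])=|\mu|$; and because $\defc$ is constant on Dyck classes, $\defc(v)=|\mu|$. Substituting these two equalities into the flagpole inequality, the condition $|\mu|+8\le 2\mind(\TI_2(\mu))$ becomes exactly $\defc(v)+8\le 2\len(v)$.

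With this rewriting complete, Lemma~\ref{lem:flag-init} applies verbatim: $\defc(v)+8\le 2\len(v)$ is equivalent to the existence of a partition $\lambda$ and an integer $a\ge a_0(\lambda)$ with $v=v(\lambda,a,0)$ or $v=v(\lambda,a,1)$, which in turn is equivalent to $\TI_2(\mu)=[v(\lambda,a,0)]$ or $\TI_2(\mu)=[v(\lambda,a,1)]$. I do not foresee a serious obstacle here; the only subtlety is the bookkeeping step of checking that the type~1/2/3 vectors are reduced, so that $\mind$ of the Dyck class equals the length of the listed representative and the translation to Lemma~\ref{lem:flag-init} is legitimate.
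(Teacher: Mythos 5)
Your proposal is correct and follows essentially the same route as the paper: reduce to Lemma~\ref{lem:flag-init} by noting that the vectors of Theorem~\ref{thm:tail2}(b) are reduced (so $\mind(\TI_2(\mu))=\len(v)$) and that $\defc(v)=|\mu|$. Your extra verification that the type~1/2/3 vectors are reduced is a detail the paper asserts without proof, and your check is accurate.
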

\begin{proof}
Given any partition $\mu$, we know $\TI_2(\mu)=[v]$ for some vector
$v$ listed in Theorem~\ref{thm:tail2}(b). 
Since these $v$ are all reduced, $\mind(\TI_2(\mu))=\len(v)$.
Also, $\defc(v) =\defc(\TI_2(\mu))=\defc(\TI(\mu))=|\mu|$.
The theorem now follows from Definition~\ref{def:flag}
 and Lemma~\ref{lem:flag-init}.  
\end{proof}

\begin{definition}\label{def:flag-type}
For any integer partition $\mu$ such that $\TI_2(\mu)=[v(\lambda,a,\epsilon)]$,
we call $\lambda$ the \emph{flag type} of $\mu$ and write
$\lambda=\ftype(\mu)$.
\end{definition}

\subsection{Representations of Flagpole Partitions}
\label{subsec:rep-flagpole}

Theorem~\ref{thm:flag-init} leads to some useful representations
of flagpole partitions involving the flag type and other data.

\begin{lemma}\label{lem:flag-bij1}
Let $F$ be the set of flagpole partitions, and
let $G$ be the set of triples $(\lambda,a,\epsilon)$,
where $\lambda$ is any integer partition, $a$ is an integer
with $a\geq a_0(\lambda)$, and $\epsilon$ is $0$ or $1$.
There is a bijection $\Phi:F\rightarrow G$ such that
$\Phi(\mu)=(\lambda,a,\epsilon)$ if and only if 
$\TI_2(\mu)=[v(\lambda,a,\epsilon)]$.
\end{lemma}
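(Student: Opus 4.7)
The plan is to split the argument into three parts: well-definedness of $\Phi$, injectivity, and surjectivity. For well-definedness, given $\mu \in F$, Theorem~\ref{thm:flag-init} already supplies at least one triple $(\lambda, a, \epsilon) \in G$ with $\TI_2(\mu) = [v(\lambda, a, \epsilon)]$. Uniqueness of this triple is immediate: every $v(\lambda, a, \epsilon)$ begins with $00$ and is therefore the unique reduced Dyck vector representing its Dyck class, and from this reduced vector one reads off $\epsilon$ (final symbol $2$ gives $\epsilon = 0$, final symbol $1$ gives $\epsilon = 1$), then $a$ (from the length of the initial maximal run of $2$s, possibly shifted by one), and finally $\lambda$ by parsing the remaining suffix as $B_{\lambda}^{+}$.

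For surjectivity, I would take an arbitrary $(\lambda, a, \epsilon) \in G$ and first verify by inspection that $v(\lambda, a, \epsilon)$ matches one of the templates of Theorem~\ref{thm:tail2}(b), namely a type~1 vector with $m = 0$ when $\epsilon = 0$, and a type~2 vector with $n = 2$, $m = 1$ when $\epsilon = 1$. Theorem~\ref{thm:tail2}(b) then produces a partition $\mu$ with $\TI_2(\mu) = [v(\lambda, a, \epsilon)]$. Because $a \geq a_0(\lambda)$, condition~(b) of Lemma~\ref{lem:flag-init} holds, and hence so does condition~(a): $\defc(v) + 8 \leq 2\len(v)$. Since $v$ is reduced and since deficit is preserved along $\NU$/$\ND$, this is exactly $|\mu| + 8 \leq 2 \mind(\TI_2(\mu))$, so $\mu \in F$ and by construction $\Phi(\mu) = (\lambda, a, \epsilon)$.

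The hard part is injectivity. Suppose $\Phi(\mu_1) = \Phi(\mu_2)$, and set $\alpha := \TI_2(\mu_1) = \TI_2(\mu_2)$. By definition of $\TI_2$, both $\TI(\mu_1)$ and $\TI(\mu_2)$ lie in the forward $\NU$-orbit $\{\NU^m(\alpha) : m \geq 0\}$. The key observation is that every element of a first-order tail $\tail(\mu')$ lies in $D_1$ (since $\NU_1$ is defined at every stage of the infinite tail), and, because $D_1 \cap D_2 = \emptyset$ by Lemma~\ref{lem:NU2-ND2}, $\NU$ coincides with $\NU_1$ throughout $\tail(\mu')$. Consequently, once the $\NU$-orbit of $\alpha$ enters $\tail(\mu')$, it never leaves; and within $\tail(\mu')$ the only $\NU_1$-initial element is $\TI(\mu')$ itself, as every subsequent element is in the image of $\NU_1$ and hence has $\ND_1$ defined. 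If $\TI(\mu_1)$ came strictly before $\TI(\mu_2)$ in the orbit, then $\TI(\mu_2)$ would be a second $\NU_1$-initial element of $\tail(\mu_1)$, a contradiction. Hence $\TI(\mu_1) = \TI(\mu_2)$, and bijectivity of $\mu \mapsto \TI(\mu)$ yields $\mu_1 = \mu_2$.

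The main obstacle is really just that injectivity observation: recognizing that $\tail(\mu')$ is closed under the \emph{extended} map $\NU$ (not merely under $\NU_1$) and harbors exactly one $\NU_1$-initial element. Once that is in hand, well-definedness and surjectivity fall out by assembling Theorems~\ref{thm:flag-init} and~\ref{thm:tail2}(b) with Lemma~\ref{lem:flag-init}, with only routine template-matching left to check.
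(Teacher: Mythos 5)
Your proposal is correct and follows essentially the same route as the paper: existence and uniqueness of the triple via Theorem~\ref{thm:flag-init} and the fact that the vectors in Theorem~\ref{thm:tail2}(b) are reduced (hence pairwise inequivalent), surjectivity via Theorems~\ref{thm:tail2} and~\ref{thm:flag-init}, and injectivity from the recoverability of $\mu$ from $\TI_2(\mu)$. Your injectivity step is simply a careful spelling-out of the paper's one-line remark that ``we can recover $\TI(\mu)$ and $\mu$ itself from $\TI_2(\mu)$,'' and the argument you give (the tail is $\NU$-closed and contains a unique $\NU_1$-initial object) is a valid justification of that claim.
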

\begin{proof}
For a given flagpole partition $\mu$, there exists
$(\lambda,a,\epsilon)\in G$ with $\TI_2(\mu)=[v(\lambda,a,\epsilon)]$
by Theorem~\ref{thm:flag-init}. This triple is uniquely determined
by $\mu$ since no two Dyck vectors in Theorem~\ref{thm:tail2}(b) are 
equivalent.  So $\Phi$ is a well-defined function from $F$ into $G$.
To see $\Phi$ is bijective, fix $(\lambda,a,\epsilon)\in G$.
Then $[v(\lambda,a,\epsilon)]=\TI_2(\mu)$ for some partition $\mu$ by 
Theorem~\ref{thm:tail2}, and $\mu$ is a flagpole partition by
Theorem~\ref{thm:flag-init}. Thus $\Phi$ is surjective.
Since we can recover $\TI(\mu)$ and $\mu$ itself from $\TI_2(\mu)$, 
$\Phi$ is injective.  
\end{proof}

\begin{remark}\label{rem:flag-parts} 
Theorem~\ref{thm:NU2-chains} provides an explicit
formula for $\mu=\Phi^{-1}(\lambda,a,\epsilon)$. Apply the theorem
to the vector $v=v(\lambda,a,\epsilon)$ representing $\TI_2(\mu)$. 
This $v$ has the required form $v=0012^{n_0}12^{n_1}\cdots 12^{n_r}C$,
where $n_0=a-\epsilon$, $n_i$ is the number of $i$s in $\lambda$ for 
$1\leq i\leq r$, and $C=1^{\epsilon}$.  From~\eqref{eq:loop-end}, 
the reduced vector $0B_{\mu}$ for $\TI(\mu)$ is given explicitly as
\begin{equation}\label{eq:flag-parts}
  001^{p}1^{\epsilon}
  1^{2\lfloor (a-\epsilon)/2\rfloor}01^{(a-\epsilon)\bmod 2} 
  1^{2\lfloor n_1/2\rfloor}01^{n_1\bmod 2}\cdots
  1^{2\lfloor n_{r-1}/2\rfloor}01^{n_{r-1}\bmod 2}
  1^{2\lfloor n_r/2\rfloor}(01)^{n_r\bmod 2},
\end{equation}
where $p$ is the number of even integers in the list $a-\epsilon,n_1,
\ldots,n_r$.  We can then recover $\mu$ from $B_{\mu}$
via Definition~\ref{def:TI-tail}. 
In particular, the number of parts in $\mu$ equal to $1$
is the length of the first block of $1$s in~\eqref{eq:flag-parts},
namely $p+\epsilon+2\lfloor (a-\epsilon)/2\rfloor$,
which is at least $a-1\geq a_0(\lambda)-1$.
Informally, this shows that a flagpole partition $\mu$ must
end in many $1$s, so that the Ferrers diagram of $\mu$
looks like a flag flying on a pole.
\end{remark}

\begin{example}
Given $\lambda=\ptn{4433111}$, let us find $\mu=\Phi^{-1}(\lambda,10,0)$. 
Here $a=10$, $\epsilon=0$, $\TI_2(\mu)=[0012^{10}12221122122]$, 
$n_0=10$, $n_1=3$, $n_2=0$, $n_3=2$, $n_4=2$, $C=\emptyset$,
and $p=4$.  Using~\eqref{eq:flag-parts},
we get $0B_{\mu}=001^4(1^{10}0)(1101)(0)(110)(11)
=001^{14}01^20101^201^2$. So $\mu=\ptn{5^24^23^12^21^{14}}$.
When computing $\Phi^{-1}(\lambda,10,1)$, we get $n_0=9$, $C=1$, $p=3$,
$0B_{\mu}=001^31(1^801)(1101)(0)(110)(11)=001^{12}01^30101^201^2$,
and the answer is $\ptn{5^24^23^12^31^{12}}$.  
More generally, for any $a\geq a_0(\lambda)=9$, we see that
$\Phi^{-1}(\lambda,a,0)$ is $\ptn{5^24^23^12^21^{4+a}}$ when $a$ is even
 and is $\ptn{5^24^23^12^31^{2+a}}$ when $a$ is odd. Also
$\Phi^{-1}(\lambda,a,1)$ is $\ptn{5^24^23^12^21^{4+a}}$ when $a$ is odd
and is $\ptn{5^24^23^12^31^{2+a}}$ when $a$ is even.
In particular, for all $b\geq 13$, $\ptn{55443221^{b}}$
 and $\ptn{554432221^{b-2}}$ are flagpole partitions of flag type $\lambda$.
A similar pattern holds for other choices of $\lambda$.
\end{example}

Next, we use the bijection $\Phi$ to enumerate flagpole partitions.

\begin{theorem}\label{thm:count-flag}
The number of flagpole partitions of size $n$ 
is $\sum_{j=0}^{\lfloor (n-4)/2\rfloor} 2p(j)$, 
where $p(j)$ is the number of integer partitions of size $j$.
\end{theorem}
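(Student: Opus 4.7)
The plan is to count flagpole partitions of size $n$ via the bijection $\Phi: F \to G$ from Lemma~\ref{lem:flag-bij1}, by expressing $|\mu|$ as a function of the data $(\lambda, a, \epsilon)$ and then counting triples.

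First, I would recall from the proof of Theorem~\ref{thm:flag-init} that $\defc(\TI_2(\mu)) = \defc(\TI(\mu)) = |\mu|$, and from Remark~\ref{rem:flag-init} that
\[
\defc(v(\lambda,a,\epsilon)) = a + 1 + \lambda_1 + \ell(\lambda) + |\lambda|
\]
for both $\epsilon \in \{0,1\}$. Combining these, if $\Phi(\mu) = (\lambda, a, \epsilon)$ then $|\mu| = a + 1 + \lambda_1 + \ell(\lambda) + |\lambda|$, and crucially this value is independent of $\epsilon$.

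Next, I would fix $n$ and count the preimage $\Phi^{-1}$ of triples giving size $n$. Given $\lambda$, solving $|\mu|=n$ forces the unique value
\[
a = n - 1 - \lambda_1 - \ell(\lambda) - |\lambda|.
\]
The constraint $a \geq a_0(\lambda) = |\lambda| - \lambda_1 - \ell(\lambda) + 3$ becomes, after the $\lambda_1$ and $\ell(\lambda)$ terms cancel, $n - 1 - |\lambda| \geq |\lambda| + 3$, equivalently $|\lambda| \leq \lfloor (n-4)/2 \rfloor$. Thus for every partition $\lambda$ with $|\lambda| \leq \lfloor (n-4)/2 \rfloor$, there is exactly one admissible $a$, and both values of $\epsilon$ are allowed.

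Finally, summing over all such $\lambda$ and grouping by $j = |\lambda|$ gives exactly $\sum_{j=0}^{\lfloor (n-4)/2 \rfloor} 2 p(j)$ triples in $G$ mapping to flagpole partitions of size $n$, which by the bijection $\Phi$ equals the desired count. There is no real obstacle here: the argument is essentially bookkeeping given the bijection and the explicit deficit formula, and the only thing to be careful about is verifying that $a \geq a_0(\lambda) \geq 2$ automatically satisfies the implicit requirement $a \geq 2$ used in the definition of $v(\lambda,a,\epsilon)$, so no triples are missed or overcounted.
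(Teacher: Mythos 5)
Your proposal is correct and follows essentially the same route as the paper: both count flagpole partitions via the bijection $\Phi$, use the deficit formula $\defc(v(\lambda,a,\epsilon))=a+1+\lambda_1+\ell(\lambda)+|\lambda|=n$ from Remark~\ref{rem:flag-init} to pin down a unique $a$ for each $\lambda$, and translate $a\geq a_0(\lambda)$ into $|\lambda|\leq\lfloor (n-4)/2\rfloor$, giving $\sum_j 2p(j)$. Your closing observation that $a_0(\lambda)\geq 2$ guarantees the implicit requirement $a\geq 2$ is a correct (and worthwhile) detail that the paper leaves tacit.
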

\begin{proof}
Suppose $\mu$ is a flagpole partition of size $n$
and $\Phi(\mu)=(\lambda,a,\epsilon)$.  Then $\TI_2(\mu)$ has deficit $n$ 
and is represented by $v(\lambda,a,\epsilon)$. 
By Remark~\ref{rem:flag-init}, $\defc(v(\lambda,a,\epsilon))
 =a+1+\lambda_1+\ell(\lambda)+|\lambda|$. Since $a\geq a_0(\lambda)$,
the smallest possible value of $\defc(v(\lambda,a,\epsilon))$
is $2|\lambda|+4$. Thus, $n\geq 2|\lambda|+4$ and $|\lambda|\leq (n-4)/2$.
By reversing this argument, we can construct each flagpole partition of $n$
by making the following choices. Pick an integer $j$ with $0\leq j\leq (n-4)/2$,
and pick $\lambda$ to be any of the $p(j)$ partitions of $j$. Pick the unique
integer $a\geq a_0(\lambda)$ such that $a+1+\lambda_1+\ell(\lambda)+|\lambda|
 =n$. Pick $\epsilon$ to be $0$ or $1$ (two choices). Finally, define
$\mu=\Phi^{-1}(\lambda,a,\epsilon)$.  The number of ways to make these
choices is $\sum_{0\leq j\leq (n-4)/2} 2p(j)$.
\end{proof}

\begin{remark}\label{rem:flag-asym}
Let $f(n)$ be the number of flagpole partitions of size $n$.
It is known~\cite[(5.26)]{odlyzko} that
$\sum_{j\leq n} p(j)=\Theta\left(n^{-1/2}\exp(\pi\sqrt{2n/3})\right)$.
Using this and Theorem~\ref{thm:count-flag}, we get 
$f(n)=\Theta\left(n^{-1/2}\exp(\pi\sqrt{n/3})\right)$.  
Hardy and Ramanujan~\cite{HR1918} proved that
$p(n)=\Theta\left(n^{-1}\exp(\pi\sqrt{2n/3})\right)$.
So $f(n)=\Theta\left(p(n)^{1/\sqrt{2}}n^{(\sqrt{2}-1)/2}\right)$.
\end{remark}

The following variation of the bijection $\Phi$ will help us construct
global chains indexed by flagpole partitions.

\begin{lemma}\label{lem:flag-bij2}
Let $F$ be the set of flagpole partitions, and
let $H$ be the set of triples $(\lambda,L,\eta)$,
where $\lambda$ is an integer partition, 
$L$ is an integer with $L\geq |\lambda|+6$, and $\eta$ is $0$ or $1$.
There is a bijection $\Psi:F\rightarrow H$ given by 
\begin{equation}\label{eq:psi}
 \Psi(\mu)=(\ftype(\mu),\mind(\TI_2(\mu)),\dinv(\TI_2(\mu))\bmod 2). 
\end{equation}
\end{lemma}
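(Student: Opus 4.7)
The plan is to realize $\Psi$ as a composition $\Theta\circ\Phi$, where $\Phi:F\to G$ is the bijection from Lemma~\ref{lem:flag-bij1} and $\Theta:G\to H$ is a straightforward reparametrization. Fix $\mu\in F$ and write $\Phi(\mu)=(\lambda,a,\epsilon)$, so by definition $\TI_2(\mu)=[v(\lambda,a,\epsilon)]$ and $\ftype(\mu)=\lambda$. Since $v(\lambda,a,\epsilon)$ is reduced, Remark~\ref{rem:flag-init} gives
\[ \mind(\TI_2(\mu))=\len(v(\lambda,a,\epsilon))=a+3+\lambda_1+\ell(\lambda), \]
and the formula~\eqref{eq:TI2-stats} expresses $\dinv(\TI_2(\mu))$ in terms of $L,|\lambda|,\lambda_1,\epsilon$.

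Next, I would define $\Theta:G\to H$ by $\Theta(\lambda,a,\epsilon)=(\lambda,L,\eta)$ with $L=a+3+\lambda_1+\ell(\lambda)$ and $\eta=\dinv(v(\lambda,a,\epsilon))\bmod 2$. The equivalence $a\geq a_0(\lambda)\iff L\geq|\lambda|+6$ (also stated in Remark~\ref{rem:flag-init}) ensures $\Theta$ lands in $H$. To invert $\Theta$, given $(\lambda,L,\eta)\in H$, set $a=L-3-\lambda_1-\ell(\lambda)$; the same equivalence forces $a\geq a_0(\lambda)$. Because Remark~\ref{rem:flag-init} gives $\dinv(v(\lambda,a,1))=\dinv(v(\lambda,a,0))+1$, the two values of $\dinv\bmod 2$ differ, so there is a unique $\epsilon\in\{0,1\}$ with $\dinv(v(\lambda,a,\epsilon))\equiv\eta\pmod 2$. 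This defines a two-sided inverse for $\Theta$, so $\Theta$ is a bijection.

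Composing, $\Psi=\Theta\circ\Phi$ is a bijection from $F$ to $H$, and unpacking the definitions shows that $\Psi(\mu)=(\ftype(\mu),\mind(\TI_2(\mu)),\dinv(\TI_2(\mu))\bmod 2)$, matching~\eqref{eq:psi}.

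The only nontrivial point is the parity argument guaranteeing that $\epsilon$ can be recovered from $\eta$; but this is exactly the opposite-parity assertion already recorded in Remark~\ref{rem:flag-init}, so no real obstacle arises. The entire proof is essentially a change of coordinates on the parameter set $G$.
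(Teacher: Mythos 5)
Your proposal is correct and follows essentially the same route as the paper: the paper's proof also passes through the triple $(\lambda,a,\epsilon)=\Phi(\mu)$ from Lemma~\ref{lem:flag-bij1}, uses the length formula and the equivalence $a\geq a_0(\lambda)\iff L\geq|\lambda|+6$ from Remark~\ref{rem:flag-init}, and recovers $\epsilon$ from the parity via $\dinv(v(\lambda,a,1))=\dinv(v(\lambda,a,0))+1$. Your explicit factorization $\Psi=\Theta\circ\Phi$ is just a tidier packaging of the same argument.
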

\begin{proof}
Given a flagpole partition $\mu\in F$, we know
$\TI_2(\mu)=[v(\lambda,a,\epsilon)]$ for a unique partition 
$\lambda=\ftype(\mu)$, $a\geq a_0(\lambda)$, and $\epsilon\in\{0,1\}$,
namely for $(\lambda,a,\epsilon)=\Phi(\mu)$.
Since $v=v(\lambda,a,\epsilon)$ is a reduced Dyck vector, 
$\mind(\TI_2(\mu))=\len(v)$. By Remark~\ref{rem:flag-init}
and the definition of $a_0(\lambda)$,
$\len(v)=a+3+\lambda_1+\ell(\lambda)\geq |\lambda|+6$.
Thus, $\Psi$ is a well-defined function mapping into the codomain $H$.

To see that $\Psi$ is invertible, consider $(\lambda,L,\eta)\in H$.
Define $a=L-3-\lambda_1-\ell(\lambda)$, and note $L\geq |\lambda|+6$
implies $a\geq a_0(\lambda)$. Since $\dinv(v(\lambda,a,1))=
\dinv(v(\lambda,a,0))+1$, there is a unique $\epsilon\in\{0,1\}$
with $\dinv(v(\lambda,a,\epsilon))=\eta$. Now let 
$\mu=\Phi^{-1}(\lambda,a,\epsilon)$ be the unique flagpole partition 
with $\TI_2(\mu)=[v(\lambda,a,\epsilon)]$. It is routine to check
that the map $(\lambda,L,\eta)\mapsto \mu$ defined in this paragraph
is the two-sided inverse of $\Psi$.  
\end{proof}

\begin{example}\label{ex:Psi}
Let us find $\Psi(\mu)$ for $\mu=\ptn{322111})$. 
From~\eqref{eq:TI2-ex}, $\TI_2(\mu)=[001221222]=[v(\ptn{111},2,0)]$.
Since $\mind(\TI_2(\mu))=9$ and $\dinv(\TI_2(\mu))=14$ is even,
$\Psi(\mu)=(\ptn{111},9,0)$.

Next we compute $\Psi^{-1}(\ptn{0},L,0)$ for each $L\geq 6$.
For $a\geq a_0(\ptn{0})=3$, we have 
$v(\ptn{0},a,\epsilon)=0012^{a-\epsilon}1^{\epsilon}$.
By~\eqref{eq:loop-end}, $\Phi^{-1}(\ptn{0},a,\epsilon)$ is $\ptn{21^{a-1}}$ if
$a-\epsilon$ is odd and $\ptn{1^{a+1}}$ if $a-\epsilon$ is even.
Since we need $v(\ptn{0},a,\epsilon)$ to have length $L$,
we take $a=L-3$.  Thus, $\Psi^{-1}(\ptn{0},L,0)$ and $\Psi^{-1}(\ptn{0},L,1)$ 
  are $\ptn{1^{L-2}}$ and $\ptn{21^{L-4}}$ in some order.
Using~\eqref{eq:TI2-stats} to compute $\dinv(v(\ptn{0},a,\epsilon))$,
one readily checks that $\Psi^{-1}(\ptn{0},L,0)$ is $\ptn{1^{L-2}}$
when $L\bmod 4\in\{0,1\}$ and is $\ptn{21^{L-4}}$ when $L\bmod 4\in\{2,3\}$.  
\end{example}

\section{Review of the Local Chain Method}
\label{sec:review-local}

This section reviews the local chain method from~\cite{HLLL20},
which gives a convenient way to prove that two proposed global chains
$\C_{\mu}$ and $\C_{\mu^*}$ satisfy the \emph{opposite property}
$\Cat_{n,\mu^*}(t,q)=\Cat_{n,\mu}(q,t)$ for all $n>0$.
(Recall the definition of $\Cat_{n,\mu}$ from~\eqref{eq:Cat_n,mu}.)
The idea is to distill the $\mind$-profiles of $\C_{\mu}$ and
$\C_{\mu^*}$ into three finite lists of integers called the
$amh$-vectors for $\C_{\mu}$ and $\C_{\mu^*}$. The infinitely many
conditions in the opposite property can be verified through a simple
finite computation on the $amh$-vectors. First we define more carefully
what a ``proposed global chain'' must look like.

\begin{definition}\label{def:basic-str}
Suppose we are given specific partitions $\mu$ and $\mu^*$ of the same 
size $k>0$ (with $\mu^*=\mu$ allowed) and two sequences of partitions
$\C_{\mu}$ and $\C_{\mu^*}$. We say that $\C_{\mu}$ and $\C_{\mu^*}$
have \emph{basic required structure} iff the following
conditions hold:
\\ (a)~$\C_{\mu}$ is a sequence $(c_{\mu}(i):i\geq i_0(\mu))$
where $c_{\mu}(i)$ is a partition with deficit $k(=|\mu|)$ and dinv $i$.
\\ (b)~$\C_{\mu^*}$ is a sequence $(c_{\mu^*}(i):i\geq i_0(\mu^*))$
where $c_{\mu^*}(i)$ is a partition with deficit $k(=|\mu^*|)$ and dinv $i$.
\\ (c) $\C_{\mu}$ starts at dinv value $i_0(\mu)=\ell(\mu^*)$,
 and $\C_{\mu^*}$ starts at dinv value $i_0(\mu^*)=\ell(\mu)$.
\\ (d) $\C_{\mu}$ ends with the sequence $\tail(\mu)$, 
 and $\C_{\mu^*}$ ends with the sequence $\tail(\mu^*)$.
\\ (e) If $\mu\neq\mu^*$, then $\C_{\mu}$ and $\C_{\mu^*}$ are disjoint.
 If $\mu=\mu^*$, then $\C_{\mu}=\C_{\mu^*}$.
\end{definition}

Assume $\C_{\mu}$ and $\C_{\mu^*}$ have basic required structure.
We now review the definition of the $amh$-vectors for these chains.
Recall that the $\mind$-profile of $\C_{\mu}$ is the sequence of integers
$(p_i:i\geq i_0(\mu))$ where $p_i=\mind(c_{\mu}(i))$ for each $i$.
Define the \emph{descent set} $\Des(\mu)$ to be the set consisting of 
$i_0(\mu)$ and all $i>i_0(\mu)$ with $p_{i-1}>p_i$.
Since the $\mind$-values in $\tail(\mu)$ form
a weakly increasing sequence (Theorem~\ref{thm:tail-profile}),
$\Des(\mu)$ is a finite set. The \emph{$a$-vector for $\mu$}
is the list $(a_1,a_2,\ldots,a_N)$ of members of $\Des(\mu)$
written in increasing order.  The \emph{$h$-vector for $\mu$}
is $(h_1,h_2,\ldots,h_N)$, where $h_i=p_{a_i}$ for $1\leq i\leq N$.
The \emph{$m$-vector for $\mu$} is $(m_1,m_2,\ldots,m_N)$, 
where $m_i\geq 0$ is the largest integer such that 
$p_{a_i}=p_{a_i+1}=\cdots=p_{a_i+m_i}$. This definition
means that the $i$th ascending run of the $\mind$-profile of $\C_{\mu}$
starts at dinv index $a_i$ with $m_i+1$ copies of $h_i$ followed by
a different value. 
We similarly define the $a$-vector $(a_1^*,\ldots,a_{N^*}^*)$ for $\mu^*$,
the $h$-vector $(h_1^*,\ldots,h_{N^*}^*)$ for $\mu^*$, and
the $m$-vector $(m_1^*,\ldots,m_{N^*}^*)$ for $\mu^*$.
The next definition lists the conditions needed to decompose
$\C_{\mu}$ into local chains.

\begin{definition}\label{def:local-str}
Assume $\C_{\mu}$ and $\C_{\mu^*}$ have basic required structure.
We say that chain $\C_{\mu}$ has \emph{local required structure} iff
the following conditions hold:
\\ (a) $a_N=\dinv(\TI(\mu))$.
\\ (b) For $1\leq i<N$, the $i$th ascending run of the $\mind$-profile 
  of $\C_{\mu}$ is some prefix of the \emph{staircase sequence}
  $h_i^{m_i+1}(h_i+1)^{h_i}(h_i+2)^{h_i+1}(h_i+3)^{h_i+2}\cdots$
  that includes at least one copy of $h_i+1$.
\end{definition}

We use analogous conditions to define the local required structure for 
$\C_{\mu^*}$.  Condition~(a) means that the last ascending run of the 
$\mind$-profile for $\C_{\mu}$ corresponds to $\tail(\mu)$.  
In other words, $\TI(\mu)$ must have a smaller $\mind$ value than the 
preceding object (if any) in $\C_{\mu}$.
By Theorem~\ref{thm:tail-profile} and~\ref{def:basic-str}(d),
it is guaranteed that $m_N=0$ and condition (b) holds for $i=N$
 (with the prefix being the entire infinite staircase sequence).

\begin{definition}\label{def:amh-hyp}
Assume $\C_{\mu}$ and $\C_{\mu^*}$ have basic and local required structure.
We say $\C_{\mu}$ and $\C_{\mu^*}$ satisfy the \emph{amh-hypotheses} iff
the following conditions hold:
\\ (a) The $h$-vector for $\C_{\mu^*}$ is the reverse
of the $h$-vector for $\C_{\mu}$ (forcing $N^*=N$).
\\ (b) The $m$-vector for $\C_{\mu^*}$ is the reverse of 
the $m$-vector for $\C_{\mu}$.
\\ (c) For $1\leq i\leq N$, $a_i+m_i+k+a^*_{N+1-i}=\binom{h_i}{2}$,
 where $k=|\mu|=|\mu^*|$.
\end{definition}


\begin{theorem}\cite[Thm. 3.10 and Sec. 4]{HLLL20}\label{thm:local-opp}
Assume $\C_{\mu}$ and $\C_{\mu^*}$ have basic and local required structure
and satisfy the amh-hypotheses. Then for all $n>0$,
$\Cat_{n,\mu^*}(t,q)=\Cat_{n,\mu}(q,t)$.
\end{theorem}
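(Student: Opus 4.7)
The plan is to reduce the polynomial identity $\Cat_{n,\mu^*}(t,q)=\Cat_{n,\mu}(q,t)$ to a set-theoretic reversal that the $amh$-hypotheses can verify directly. Since $|\gamma|=\dinv(\gamma)+k$ whenever $\gamma$ has deficit $k$, every monomial in $\Cat_{n,\mu}(q,t)$ has total degree $\binom{n}{2}-k$ and coefficient $1$ (partitions in $\C_\mu$ having distinct dinv by Definition~\ref{def:basic-str}(a)). Writing
\[ S_\mu(n)=\{j\ge i_0(\mu):\mind(c_\mu(j))\le n\}, \]
the opposite property becomes the set identity
\[ S_\mu(n)=\bigl\{\tbinom{n}{2}-k-j^*:j^*\in S_{\mu^*}(n)\bigr\}\qquad\text{for every }n>0. \]

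Next I would use Definition~\ref{def:local-str}(b) to decompose the $\mind$-profile of $\C_\mu$ into $N$ staircase prefixes: the $i$-th prefix occupies the dinv range $[a_i,a_{i+1}-1]$ (with $a_{N+1}=\infty$), begins with $m_i+1$ elements at $\mind$-height $h_i$, and then continues with plateau lengths $h_i,h_i+1,\ldots$ at heights $h_i+1,h_i+2,\ldots$. Counting plateau elements with $\mind\le n$ in the $i$-th prefix yields an interval $[a_i,a_i+M_i(n)-1]$ in $S_\mu(n)$, where
\[ M_i(n)=\min\!\Bigl(a_{i+1}-a_i,\;m_i+1+\tbinom{n}{2}-\tbinom{h_i}{2}\Bigr) \]
when $h_i\le n$ and $M_i(n)=0$ otherwise. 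Thus $S_\mu(n)=\bigsqcup_{i\,:\,h_i\le n}[a_i,a_i+M_i(n)-1]$, with the analogous formula for $S_{\mu^*}(n)$ using starred data.

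To verify the reversal identity, I would start with the base case $n=h_i$ for each height $h_i$ appearing in the $h$-vector. Here amh-(a) and amh-(b) force $h^*_{N+1-i}=h_i$ and $m^*_{N+1-i}=m_i$, so the paired intervals in $S_\mu(n)$ and $S_{\mu^*}(n)$ both have length $m_i+1$, and amh-(c) rewrites as $a_i+(a^*_{N+1-i}+m_i)=\binom{h_i}{2}-k=\binom{n}{2}-k$, which is exactly the condition for the order-reversing involution $j\leftrightarrow\binom{n}{2}-k-j$ to carry $[a_i,a_i+m_i]$ onto $[a^*_{N+1-i},a^*_{N+1-i}+m_i]$. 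For general $n$, I would analyze how moving from height $h_i$ up to heights $h_i+1,h_i+2,\ldots$ extends each interval by successive plateau lengths; the amh-arithmetic propagates because $\binom{n+1}{2}-\binom{h_i}{2}$ grows by $n$ as $n\to n+1$, matching the new plateau length added at height $n+1$.

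The main obstacle is that in general the paired intervals indexed by $i$ and $N+1-i$ do \emph{not} have equal length once $n>h_i$: the $i$-th prefix (for $i<N$) may terminate at an intermediate height while its dual prefix, often the infinite tail, keeps extending, so interval-by-interval matching breaks down. The resolution is that the ``overflow'' contributed by the longer prefix on one side is absorbed, in the reversed dinv coordinate, by the starting portion of a different local chain on the other side. Making this redistribution precise requires applying amh-(c) simultaneously across all heights and tracking which prefix is active at each $\mind$-level; this tiling argument is the bookkeeping carried out in Section~4 of~\cite{HLLL20}. Once completed, the reversal identity $S_\mu(n)=\binom{n}{2}-k-S_{\mu^*}(n)$ holds for every $n$, yielding the theorem.
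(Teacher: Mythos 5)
First, note that the paper does not prove this theorem itself: it is imported verbatim from \cite{HLLL20} (Thm.~3.10 and Sec.~4), so the only proof to compare against is the local-chain argument in that reference. Your reduction of the opposite property to the set-reversal identity $S_{\mu}(n)=\{\binom{n}{2}-k-j^*:j^*\in S_{\mu^*}(n)\}$ is correct (all coefficients are $1$ because dinv values in a chain are distinct, and $|\gamma|=\dinv(\gamma)+k$), and your count $M_i(n)=\min(a_{i+1}-a_i,\ m_i+1+\binom{n}{2}-\binom{h_i}{2})$ of the contribution of the $i$th ascending run is the right consequence of Definition~\ref{def:local-str}(b). This is indeed the same framework as the local-chain method.

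However, there is a genuine gap, and you have located it yourself: the entire content of the theorem is the step you defer to ``the bookkeeping carried out in Section~4 of \cite{HLLL20}.'' Your ``base case'' $n=h_i$ does not actually verify the set identity for any single $n$, because for a fixed $n$ \emph{all} runs with $h_j\le n$ contribute simultaneously, and runs with $h_j<h_i$ contribute intervals strictly longer than $m_j+1$; so the identity cannot be checked one pair $(i,N+1-i)$ at a time. For general $n$ the paired intervals $[a_i,a_i+M_i(n)-1]$ and $[a^*_{N+1-i},a^*_{N+1-i}+M^*_{N+1-i}(n)-1]$ have different lengths whenever one run terminates before height $n$ and its partner does not, and the claimed absorption of the overflow by adjacent local chains is precisely what must be proved. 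Carrying this out requires ingredients your sketch never uses: condition~\ref{def:local-str}(a) (the last run is the tail, so exactly one run is infinite) and the requirement in~\ref{def:local-str}(b) that each finite run reaches at least one copy of $h_i+1$, which is what guarantees consecutive local chains overlap and tile the dinv axis without gaps under the reversal. As written, the proposal is a correct setup plus an appeal to the cited proof for its decisive step, so it does not constitute an independent proof of the statement.
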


All chains we have constructed previously 
 (see~\cite[Appendix]{HLLL20} and~\cite{HLLL20ext})
satisfy some additional conditions that we need for the recursive construction
in~\S\ref{sec:make-flag-chains}. We list these conditions next.

\begin{definition}\label{def:extra-str}
Assume $\C_{\mu}$ and $\C_{\mu^*}$ have basic required structure.
We say $\C_{\mu}$ has \emph{extra required structure} iff the 
following conditions hold:
\\ (a) The $h$-vector $(h_1,\ldots,h_N)$ is a weakly decreasing
 sequence followed by a weakly increasing sequence.
\\ (b) For $i<N$, all values in the $i$th ascending run 
of the $\mind$-profile for $\C_{\mu}$ are at most $1+\max(h_i,h_{i+1})$.
\\ (c) For $i\geq i_0(\mu)$, $\mind(c_{\mu}(i))<\mind(c_{\mu}(i+1))$
 iff the reduced Dyck vector for $c_{\mu}(i)$ is $0$ or begins with $00$.
\\ (d) $\C_{\mu}$ contains $\tail_2(\mu)$ (not just $\tail(\mu)$).
\end{definition}
We make an analogous definition for $\C_{\mu^*}$.
Condition~(c) is guaranteed for objects $c_{\mu}(i)$
in $\tail(\mu)$ by Theorem~\ref{thm:BDV-tail} (note $\mu\neq\ptn{0}$ here).
If $\TI_2(\mu)=[v]$ where $v$ has the form studied in
Theorem~\ref{thm:NU2-chains}, then condition~(c) is also guaranteed for objects
$c_{\mu}(i)$ in $\tail_2(\mu)$ by part~(e) of that theorem.

\begin{example}\label{ex:make1^5}
Let $\mu=\ptn{1^5}=\mu^*$, and define $\C_{\mu}=\C_{\mu^*}$
as the union of the $\NU_1$-segments starting at 
$[0012332]$, $[0012222]=\TI_2(\mu)$, $[0012211]$, and $[0011111]=\TI(\mu)$. 
Each Dyck class listed here is a $\NU_1$-initial object
with deficit $5=|\mu|$. By computing the $\NU_1$-segments,
we see that they assemble to give a chain of partitions
with consecutive dinv values starting at $5=\ell(\mu^*)=\ell(\mu)$, namely
\[ \C_{\ptn{1^5}}:\quad [0012332],\ [01234430],\ [0012222],\ [01233330],
 \ [0012211],\ [01233220],\ \mbox{ followed by }\tail(\ptn{1^5}). \]
Thus $\C_{\mu}$ has basic required structure.
$\C_{\mu}$ has $\mind$-profile $78787878^79^810^9\cdots$,
$a$-vector $(5,7,9,11)$, $m$-vector $(0,0,0,0)$, and $h$-vector $(7,7,7,7)$.  
We see that $\C_{\mu}$ has local required structure by inspection
of the $\mind$-profile (in particular, \ref{def:local-str}(a) holds 
since $a_N=11=\dinv(\TI(\mu))$). The first two $amh$-hypotheses are
true since the $m$-vector and $h$-vector are palindromes (equal to their 
own reversals). We check $amh$-hypothesis (c) for $i=1,2,3,4$ by computing
\[ 5+0+5+11=7+0+5+9=9+0+5+7=11+0+5+5=21=\binom{7}{2}. \]
By Theorem~\ref{thm:local-opp}, $\Cat_{n,\mu}(t,q)=\Cat_{n,\mu}(q,t)$
for all $n>0$. It is also routine to check that $\C_{\mu}$ has the
extra required structure.
\end{example}

\begin{example}
In Section~\ref{sec:make-flag-chains}, we perform an elaborate construction
to build proposed chains for $\mu=\ptn{531^4}$ and $\mu^*=\ptn{3321^4}$,
which are partitions of size $k=12$. As described in detail later,
$\C_{\mu}$ contains $\tail_2(\mu)$ and has $\mind$-profile given by
\[ 11,12,(10,11)^7,10,11^{10},12,11,12^{11},13,12,13^{12},14^{13},15^{14},
 \ldots, \]
which is the concatenation of~\eqref{eq:prof-A1},~\eqref{eq:prof-B1},
 and~\eqref{eq:prof-C1}.
$\C_{\mu^*}$ contains $\tail_2(\mu^*)$ and has $\mind$-profile given by
\[ 12,13,11,12,(10,11)^7,10,11^{10},12,11,12^{11},13^{12},14^{13},\ldots, \]
which is the concatenation of~\eqref{eq:prof-A2},~\eqref{eq:prof-B2}, 
and~\eqref{eq:prof-C2}.
$\C_{\mu}$ starts at dinv index $7=\ell(\mu^*)$,
and $\C_{\mu^*}$ starts at dinv index $6=\ell(\mu)$. 
The specific $\NU_1$-initial objects used to make $\C_{\mu}$ 
are different from the $\NU_1$-initial objects used to make $\C_{\mu^*}$,
so $\C_{\mu}$ and $\C_{\mu^*}$ are disjoint chains.
All the $\NU_1$-initial objects used have deficit $k=12$.
Thus, $\C_{\mu}$ and $\C_{\mu^*}$ have basic required structure and
extra structure condition~(d).

The $m$-vectors for $\C_{\mu}$ and $\C_{\mu^*}$ are identically $0$.
The $a$-vector and $h$-vector for $\C_{\mu}$ are
\[ a=(7,9,11,13,15,17,19,21,23,35,48), \quad
   h=(11,10,10,10,10,10,10,10,10,11,12). \]
The $a$-vector and $h$-vector for $\C_{\mu^*}$ are
\[ a^*=(6,8,10,12,14,16,18,20,22,24,36), \quad
   h^*=(12,11,10,10,10,10,10,10,10,10,11). \]
It is now routine to verify that $\C_{\mu}$ and $\C_{\mu^*}$
have local required structure and satisfy the $amh$-hypotheses. 
For instance, we check $amh$-hypothesis~(c) for $i=1,2,3$ and $i=11$ as follows:
\[ 7+0+12+36=55=\binom{11}{2};\quad
   9+0+12+24=45=\binom{10}{2}; \]
\[ 11+0+12+22=45=\binom{10}{2};\quad
   48+0+12+6=66=\binom{12}{2}. \]
By Theorem~\ref{thm:local-opp}, $\Cat_{n,\mu^*}(t,q)=\Cat_{n,\mu}(q,t)$
for all $n>0$. We can also verify the extra required structure for these
chains, using the specific objects constructed below to 
verify~\ref{def:extra-str}(c) for objects $c_{\mu}(i)$ preceding
the second-order tails. All of these verifications are special cases
of general results to be proved later.
\end{example}

\section{Constructing Global Chains Indexed by Certain Flagpole Partitions}
\label{sec:make-flag-chains}

\subsection{Statement of Results}
\label{subsec:state-result}

Our ultimate goal (not yet achieved in this paper)
is to define $\mu^*$ and chains $\C_{\mu}$ and $\C_{\mu^*}$ 
for \emph{every} integer partition $\mu$ and to prove that all 
structural conditions and $amh$-hypotheses in~\S\ref{sec:review-local} are true.
We hope to reach this goal recursively, constructing chains indexed
by partitions $\mu$ of a given size $k$ by referring to previously-built 
chains $\C_{\lambda}$ for various partitions $\lambda$ of size less than $k$.
This section achieves a limited version of this construction,
building $\C_{\mu}$ and $\C_{\mu^*}$ for all \emph{flagpole} partitions
$\mu$ of one particular size $k$, assuming that all chains $\C_{\lambda}$
indexed by \emph{all} partitions $\lambda$ of size less than $k$ are
already available. In fact, we prove a much sharper conditional
result by keeping track of exactly which smaller chains $\C_{\lambda}$
are needed to build $\C_{\mu}$ and $\C_{\mu^*}$, for each specific
flagpole partition $\mu$. To state our findings, we first define precisely
what we mean by a ``collection of previously-built chains.''

\begin{definition}\label{def:chain-coll}
A \emph{chain collection} is a triple $\CC=(\mcP,I,\C)$ satisfying
these conditions:
\\ (a)~$\mcP$ is a collection of integer partitions containing
      (at a minimum) all partitions of size at most $5$.
\\ (b)~$I:\mcP\rightarrow\mcP$ is a size-preserving involution 
 with domain $\mcP$, written $I(\lambda)=\lambda^*$ for $\lambda\in\mcP$.
\\ (c)~$\C$ is a function with domain $\mcP$ that maps each
 $\lambda$ in $\mcP$ to a sequence of partitions 
 $\C_{\lambda}=(c_{\lambda}(i):i\geq i_0(\lambda))$.
\\ (d)~For each $\lambda\in\mcP$, $\C_{\lambda}$ and $\C_{\lambda^*}$
 have basic required structure, local required structure, extra required
 structure, and satisfy the $amh$-hypotheses 
 (as defined in~\S\ref{sec:review-local}). 
\\ (e)~The chains $\C_{\lambda}$ (for $\lambda\in\mcP$) are pairwise
 disjoint.
\end{definition}

For example, our prior work~\cite{HLLL20,HLLL20ext} defines
$I(\lambda)=\lambda^*$ and constructs chains $\C_{\lambda}$ and 
$\C_{\lambda^*}$ (with all required properties) for $\lambda$ 
in the set $\mcP$ of all integer partitions of size at most $11$.
Thus we have a chain collection $(\mcP,I,\C)$ for this choice of $\mcP$,
which can be taken as a ``base case'' for the entire recursive construction.
The minimalist base case takes $\mcP$ to be the collection of all partitions
$\lambda$ of size at most $5$.  The chains for these $\lambda$
 are not hard to construct and verify, even without a computer.
 Example~\ref{ex:make1^5} illustrates this process for $\lambda=\ptn{1^5}$.

Let $\CC=(\mcP,I,\C)$ be a fixed chain collection. We can use this
collection to define $\mu^*$, $\C_{\mu}$, and $\C_{\mu^*}$ 
(with all required properties) for certain flagpole partitions $\mu$.
The construction for a specific $\mu$ only requires us to know
$\rho^*$, $\C_{\rho}$, and $\C_{\rho^*}$ for a specific 
list of smaller partitions $\rho$ (depending on $\mu$).
One of these partitions is the flag type $\lambda$ of $\mu$ 
(Definition~\ref{def:flag-type}), but there could be others.
Before we can describe these others, we must discuss how to compute $\mu^*$.

\begin{definition}\label{def:extend-I}
Let $\CC=(\mcP,I,\C)$ be a chain collection, 
and let $\mu$ be a flagpole partition.
We say \emph{$I$ extends to $\mu$} iff $|\mu|>|\rho|$ for all $\rho\in\mcP$,
and $\lambda=\ftype(\mu)$ belongs to $\mcP$.
In this case, define $\mu^*$ as follows.
Let the reduced Dyck vector for $\TI_2(\mu)$ have length $L$ and area $A$. 
Define $\mu^*=\Psi^{-1}(\lambda^*,L,A\bmod 2)$,
where $\Psi$ is the bijection~\eqref{eq:psi}.
\end{definition}

In~\S\ref{subsec:flag-invo}, we show this definition does give
a well-defined flagpole partition $\mu^*$ of the same size as $\mu$
with $\ftype(\mu^*)=\ftype(\mu)^*$ and $\mu^{**}=\mu$.

\begin{definition}\label{def:needed}
Let $\CC=(\mcP,I,\C)$ be a chain collection, and
let $\mu$ be a flagpole partition such that $I$ extends to $\mu$
(so $\mu^*$ is defined).
Let $\TI_2(\mu)=[V]$ and $\TI_2(\mu^*)=[V^*]$ where $V$ and $V^*$
are reduced. Recall the vectors $S_j(V)$ and $S_j(V^*)$
from Definition~\ref{def:Sj}, which (for $j>0$) are reduced TDVs of the
form $S_j(V)=0E_j1$ and $S_j(V^*)=0E_j^*1$ 
(Theorem~\ref{thm:NU2-chains}(e)).
The \emph{needed partitions} for $\mu$ are:
(a)~the flag type $\lambda$ of $\mu$; and
(b)~each partition $\rho$ such that $\tail(\rho)$ contains
 $[E_j]$ or $[E_j^*]$ for some $j>0$.
\end{definition}

Since $\lambda\in\mcP$, all objects mentioned in this definition
are explicitly computable. In particular, because each $E_j$ and $E_j^*$ 
is a TDV, $[E_j]$ and $[E_j^*]$ belong to some $\tail_2(\rho)$
(Theorem~\ref{thm:tail2}(a)),
and we know these tails for every partition $\rho$. In fact, 
Remark~\ref{rem:E-in-tail1} proves that each $[E_j]$ and $[E_j^*]$
is in some $\tail(\rho)$, so that $\rho$ itself is easily found
from Theorem~\ref{thm:BDV-tail}, as in Example~\ref{ex:plateau}.

We can now state the main results of this section.

\begin{theorem}\label{thm:extend-mu}
Let $\CC=(\mcP,I,\C)$ be a chain collection, and
let $\mu$ be a flagpole partition such that $I$ extends to $\mu$.
Assume all needed partitions for $\mu$ belong to $\mcP$.
Then we can explicitly construct chains $\C_{\mu}$ and $\C_{\mu^*}$
having basic required structure, local required structure, extra 
required structure, and satisfying the $amh$-hypotheses.
So, $\Cat_{n,\mu^*}(t,q)=\Cat_{n,\mu}(q,t)$ for all $n>0$.
\end{theorem}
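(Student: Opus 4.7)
The plan is to give an explicit recipe assembling $\C_\mu$ and $\C_{\mu^*}$ from their \emph{tail blocks} (determined directly by $\mu$ and $\mu^*$) together with \emph{prefix blocks} constructed by transplanting lower-deficit fragments borrowed from the chain collection $\CC$. First I would show that $\mu^*$ in Definition~\ref{def:extend-I} is a well-defined flagpole partition of the same size as $\mu$, satisfying $\ftype(\mu^*)=\lambda^*$ and $\mu^{**}=\mu$. This follows from the bijectivity of $\Psi$ (Lemma~\ref{lem:flag-bij2}) together with the fact that $\mind$ and $\area\bmod 2$ are constants on Dyck classes, so the recipe depends only on the triple $(\ftype(\mu),\mind(\TI_2(\mu)),\dinv(\TI_2(\mu))\bmod 2)$ and the size-preserving involution $I$ on $\mcP$.

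The tail block of $\C_\mu$ begins at $\TI_2(\mu)$ and extends forward. Writing $\TI_2(\mu)=[v(\lambda,a,\epsilon)]$ via Theorem~\ref{thm:flag-init}, Theorem~\ref{thm:NU2-chains} gives an explicit description of the $\NU$-chain from $\TI_2(\mu)$ through $\TI(\mu)$ and into $\tail(\mu)$: the $\mind$-profile, the locations of the $\NU_1$-initial objects $S_j(V)=0E_j1$, and the area/dinv/deficit relations are all read off from $(\lambda,a,\epsilon)$. This block already contributes $\tail_2(\mu)$ and enforces the descent characterization~\ref{def:extra-str}(c) and extra structure condition~\ref{def:extra-str}(d) on this portion.

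The prefix block must supply partitions of deficit $k$ with dinv values from $\ell(\mu^*)$ up to $\dinv(\TI_2(\mu))-1$. The plan is to splice in $\NU_1$-fragments drawn from chains $\C_\rho$ or $\C_{\rho^*}$ for the needed partitions, shifted from deficit $|\rho|<k$ to deficit $k=|\mu|$ via the antipode-style maps $[z]\mapsto[00z^+]$ and $[z]\mapsto[0z1]$ from Lemmas~\ref{lem:semi-stats} and~\ref{lem:stat-0w1}, whose effect on $(\len,\area,\dinv,\defc)$ is explicit. The hypothesis that $[E_j]\in\tail(\rho)$ for a needed $\rho\in\mcP$ tells us precisely which chain furnishes the fragment adjacent to each stitching point $S_j(V)$, and the hypothesis $\lambda\in\mcP$ supplies the very first fragment through $\C_\lambda$. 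The analogous recipe for $\C_{\mu^*}$ replaces $\lambda,\rho$ by $\lambda^*,\rho^*$ throughout, which is why all the ``starred'' chains must also be available in $\CC$.

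Verification of basic, local, and extra required structure is then a bookkeeping exercise using Theorem~\ref{thm:NU2-chains}(b,e), Theorem~\ref{thm:tail-profile}, and the inherited data from each $\C_\rho$ via the shift lemmas; in particular condition~\ref{def:local-str}(a) holds because each transplanted fragment sits strictly before $\TI_2(\mu)$ and the tail itself comes from Theorem~\ref{thm:NU2-chains}. The main obstacle will be $amh$-hypothesis~\ref{def:amh-hyp}(c): the identity $a_i+m_i+k+a^*_{N+1-i}=\binom{h_i}{2}$ must hold for every $i$. My plan here is to decompose both $a$-vectors according to which transplanted fragment contributes each entry and, via the explicit dinv offsets introduced by $[z]\mapsto[00z^+]$ and $[z]\mapsto[0z1]$, reduce the identity to the corresponding identities already known for the pairs $(\C_\lambda,\C_{\lambda^*})$ and $(\C_\rho,\C_{\rho^*})$ inside $\CC$, while the contribution coming from the tail block is computed directly from the profile in Theorem~\ref{thm:NU2-chains}(b). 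The reversal conditions~\ref{def:amh-hyp}(a,b) on the $h$- and $m$-vectors will follow from the central symmetry baked into the construction by $\Psi$, which swaps the roles of $\lambda$ and $\lambda^*$ and of the two $\NU$-chains issuing from the symmetric data. Once all of~\ref{def:basic-str},~\ref{def:local-str},~\ref{def:amh-hyp}, and~\ref{def:extra-str} are in hand, the opposite property $\Cat_{n,\mu^*}(t,q)=\Cat_{n,\mu}(q,t)$ follows immediately from Theorem~\ref{thm:local-opp}.
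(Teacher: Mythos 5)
Your outline has the right global shape (a tail block $\tail_2(\mu)$ preceded by a prefix of deficit-$k$ objects manufactured from lower-deficit chains via the lift $[z]\mapsto[00z^+]$), but the central mechanism of the construction is missing or garbled, and it is exactly the piece that makes $amh$-hypothesis~\ref{def:amh-hyp}(c) work. In the paper, the prefix of $\C_{\mu}$ is \emph{not} attached ``adjacent to each stitching point $S_j(V)$'': the $S_j(V)$ all sit inside the contiguous block $\tail_2(\mu)$ at dinv $\geq D$, whereas the prefix occupies dinv values $\ell(\mu^*),\ldots,D-1$. The prefix splits into a bridge part, whose generators $M_i(\mu)=00z^+$ come from objects of $\C_{\lambda}$ at dinv values dictated by $A^*=\area(V^*)$, and an antipodal part whose generators are $\Ant(S_j(V^*))$ --- antipodes of the $\NU_1$-initial objects of the \emph{opposite} tail $\tail_2(\mu^*)$ --- placed at dinv $=\area(S_j(V^*))$. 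This cross-over is what produces the pairing $a_i\leftrightarrow a^*_{N+1-i}$ in~\ref{def:amh-hyp}(c): the identity then falls out of $\dinv+\area+\defc=\binom{\len}{2}$ applied to a single vector $S_j(V^*)$ (or $V$, or $M_i$), because the antipode swaps area and dinv. Your plan to ``reduce the identity to the corresponding identities already known for the pairs $(\C_\lambda,\C_{\lambda^*})$ and $(\C_\rho,\C_{\rho^*})$'' is not how the verification goes and would not obviously produce the required linkage between the low-dinv end of $\C_{\mu}$ and the high-dinv end of $\C_{\mu^*}$; the opposite property of the smaller chains is used only to \emph{define} the antipode (to locate the object of $\C_{\rho^*}$ with the prescribed dinv), not to verify (c).

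Beyond that, several verifications your sketch treats as bookkeeping are genuinely nontrivial and are where most of the paper's work lies: (i) each lifted generator $00z^+$ must be shown to be $\NU_1$-initial with a $\NU_1$-final successor and no further continuation, which requires proving $z$ starts with $01$ and contains a $2$ (Lemma~\ref{lem:pre-bridge} for the bridge, and the technical argument of \S\ref{subsec:prove-antip} showing each $A_j$ starts with $0012$ and contains a $3$); (ii) the dinv values of the two-element fragments must exactly tile $[\ell(\mu^*),D-1]$ with the correct parities, which requires $D\geq A^*$ (Lemma~\ref{lem:D>=A*}) and the area formula $\area(S_j(V^*))=A^*-2j$; and (iii) condition~\ref{def:basic-str}(e), disjointness of $\C_{\mu}$ and $\C_{\mu^*}$, is not addressed at all in your proposal and occupies the entire argument of \S\ref{subsec:prove-disj} (Steps 1--7, including the auxiliary involution $\Ant'$). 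As written, the proposal would not compile into a proof without supplying the antipodal cross-over and these three verifications.
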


By applying the construction to flagpole partitions in increasing
order of size, we can pass from a given initial chain collection 
$\CC$ to a larger one that, intuitively, consists of all chains 
that can be built from $\CC$ using the methods described here.
The next theorem makes this precise.

\begin{theorem}\label{thm:extend-all}
Let $\CC=(\mcP,I,\C)$ be a chain collection such
that $k_0=\max_{\rho\in\mcP} |\rho|$ is finite.
Recursively define $\CC^k=(\mcP^k,I^k,\C^k)$ for all $k\geq k_0$, as follows.
Let $\CC^{k_0}=\CC$. Fix $k>k_0$ and assume $\CC_{k-1}$ has been defined.
Enlarge $\mcP^{k-1}$ to $\mcP^k$ by adding all $\mu$ satisfying:
(i)~$\mu$ is a flagpole partition of size $k$;
(ii)~$I^{k-1}$ extends to $\mu$; and
(iii)~all partitions needed for $\mu$ are in $\mcP^{k-1}$. 
Extend $I^{k-1}$ to $I^k$ and $\C^{k-1}$ to $\C^k$
by using the construction in Theorem~\ref{thm:extend-mu} to define
$\mu^*$, $\C_{\mu}$, and $\C_{\mu^*}$ for each newly added $\mu$.
Let $\CC'=(\mcP',I',\C')$ be the union of the increasing sequence $(\CC^k)$.
Then every $\CC^k$ and $\CC'$ is a chain collection.
\end{theorem}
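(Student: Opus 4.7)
The plan is to proceed by induction on $k \ge k_0$, showing that each $\CC^k$ satisfies all five conditions of Definition~\ref{def:chain-coll}. The base case $k = k_0$ holds by hypothesis. For the induction step, I assume $\CC^{k-1}$ is a chain collection and verify each condition for $\CC^k$. Condition (a) is inherited from the base case since $\mcP^{k_0} \subseteq \mcP^k$. Condition (c) is automatic because $\C^k$ is defined to extend $\C^{k-1}$ by the chain assignments given by Theorem~\ref{thm:extend-mu}.

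For condition (b), the key point is that whenever a flagpole partition $\mu$ of size $k$ is newly added to $\mcP^k$, its companion $\mu^*$ (Definition~\ref{def:extend-I}) also satisfies the admission criteria (i)--(iii). Criterion (i) follows from Lemma~\ref{lem:flag-bij2} applied to the formula $\mu^* = \Psi^{-1}(\lambda^*, L, A \bmod 2)$, which also gives $|\mu^*| = |\mu| = k$ and $\ftype(\mu^*) = \lambda^*$. Criterion (ii) holds because $\ftype(\mu^*) = \lambda^* \in \mcP^{k-1}$, using the inductive hypothesis that $I^{k-1}$ restricts to an involution on $\mcP^{k-1}$. Criterion (iii) will follow from the observation that Definition~\ref{def:needed} is symmetric in $\mu$ and $\mu^*$: the two collections of needed partitions agree except for swapping $\lambda$ with $\lambda^*$, since each set contains partitions indexing $\NU_1$-tails that include both the $[E_j]$ from $\TI_2(\mu)$ and the $[E_j^*]$ from $\TI_2(\mu^*)$. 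A direct check using the bijectivity in Lemma~\ref{lem:flag-bij2} yields $\mu^{**} = \mu$, completing the verification of (b). Condition (d) is then an immediate consequence of Theorem~\ref{thm:extend-mu} applied to each newly added $\mu$, combined with the inductive hypothesis for partitions already in $\mcP^{k-1}$.

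The main obstacle will be condition (e), the disjointness of all chains $\C_\lambda$ for $\lambda \in \mcP^k$. Chains indexed by partitions of different sizes are automatically disjoint, since every partition in $\C_\rho$ has deficit $|\rho|$ by Definition~\ref{def:basic-str}(a); hence the newly added chains (of deficit $k$) cannot intersect any chain in $\CC^{k-1}$. To handle two distinct newly added flagpole partitions $\mu_1, \mu_2$ of size $k$, I would argue that each $\C_{\mu_i}$ terminates in $\tail_2(\mu_i)$, and distinct flagpole partitions have distinct second-order tail initiators by Lemma~\ref{lem:flag-bij1}, so the second-order tails are disjoint. The preceding finite $\NU_1$-fragments comprising $\C_{\mu_i}$ are built explicitly in Section~\ref{sec:make-flag-chains} from the $\NU_1$-initial objects $S_j(V_i)$ associated with $\TI_2(\mu_i) = [V_i]$, and since these sequences are uniquely determined by $V_i$ (hence by $\mu_i$), different $\mu_i$ produce disjoint fragment collections. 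Finally, $\CC'$ is a chain collection because each of conditions (a)--(e) of Definition~\ref{def:chain-coll} is preserved under increasing unions of chain collections, which is a routine verification that completes the proof.
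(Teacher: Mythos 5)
Your overall strategy (induction on $k$, reading conditions (a)--(d) off Theorem~\ref{thm:extend-mu} and the inductive hypothesis, and separating chains of different sizes by the deficit statistic) matches the paper, and your handling of condition (b) --- that $\mu^*$ also passes criteria (i)--(iii) because Definition~\ref{def:needed} is symmetric in $\mu$ and $\mu^*$ and $\ftype(\mu^*)=\lambda^*\in\mcP^{k-1}$, with $\mu^{**}=\mu$ from Lemma~\ref{lem:check-mu*} --- is fine. You also correctly identify condition (e) as the crux. But your argument for (e) at a fixed size $k$ has a genuine gap.

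The claim that ``the preceding finite $\NU_1$-fragments \ldots are uniquely determined by $V_i$ (hence by $\mu_i$), [so] different $\mu_i$ produce disjoint fragment collections'' is a non-sequitur: a deterministic construction with distinct inputs can still produce overlapping outputs. What must actually be shown is the reverse implication --- that $\mu$ can be \emph{recovered} from each $\NU_1$-initial object $[M_i(\mu)]$ and $[A_j^*]$ --- and, separately, that a bridge generator of one chain can never coincide with an antipodal generator of another, and that neither kind of generator lies in any $\tail_2(\xi)$. This is exactly the content of \S\ref{subsec:prove-disj}: Step~1 rules out overlap with second-order tails (the generators are reduced vectors starting with $00$ and containing a $3$, which cycled ternary Dyck vectors cannot reduce to); Steps~2 and~3 invert the bridge and antipode constructions to recover $\mu$; and Steps~4--7 --- the hardest part, entirely absent from your proposal --- introduce the auxiliary involution $\Ant'$ and show that $\Ant'$ carries bridge generators of $\C_{\mu}$ to bridge generators of $\C_{\mu^*}$ while carrying antipodal generators back into second-order tails, so the bridge part of one chain cannot meet the antipodal part of another. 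Without an argument of this kind (or some substitute), disjointness of the new chains of size $k$ is unproved. A smaller slip: the antipodal part of $\C_{\mu}$ is generated by $\Ant(S_j(V^*))$ with $V^*$ the reduced vector for $\TI_2(\mu^*)$, not by the $S_j(V_i)$ themselves.
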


Section~\ref{sec:gen-flagpole} extends these theorems 
to generalized flagpole partitions.  

\subsection{Outline of Construction}
\label{subsec:outline}

In the rest of Section~\ref{sec:make-flag-chains}, 
we give the constructive proof of Theorem~\ref{thm:extend-mu}.
Some technical proofs are delayed until Section~\ref{sec:proofs}.
The construction is quite intricate, so we illustrate each step
with a running example where $\mu=\ptn{531^4}$.
Here is an outline of the main ingredients in the construction.
Throughout the rest of Section~\ref{sec:make-flag-chains},
we fix a chain collection $\CC=(\mcP,I,\C)$ and a flagpole
partition $\mu$ of size $k$ such that $I$ extends to $\mu$. 
Thus, $|\rho|<k$ for all $\rho\in\mcP$,
and the flag type $\lambda$ of $\mu$ is in $\mcP$.
Because $\lambda^*$ is known, we can compute $\mu^*$ 
(Definition~\ref{def:extend-I}); 
Section~\ref{subsec:flag-invo} supplies the required details.
At this point, we can explicitly compute the needed partitions 
for $\mu$ (Definition~\ref{def:needed}).
If they all belong to $\mcP$, we may proceed with the following construction.

The chain $\C_{\mu}$ consists of three parts, 
called the \emph{antipodal part}, the \emph{bridge part},
and the \emph{tail part}. We construct each part of the chain by
building specific $\NU_1$-initial objects that generate $\NU_1$-segments
comprising the chain.  The tail part of $\C_{\mu}$ is $\tail_2(\mu)$, 
which we already built in~\S\ref{subsec:TI2}.
The bridge part of $\C_{\mu}$ (introduced in~\S\ref{subsec:bridge})
consists of two-element $\NU_1$-segments
starting at partitions $[M_i(\mu)]$ made by adding a new leftmost 
column to particular partitions in the known chain $\C_{\lambda}$, 
where $\lambda=\ftype(\mu)$. The tail part and bridge part
of $\C_{\mu^*}$ are defined similarly, using $\lambda^*=\ftype(\mu^*)$.

The antipodal part of each chain is the trickiest piece to build. 
We must first identify the $\NU_1$-initial objects in $\tail_2(\mu)$
and $\tail_2(\mu^*)$ (Theorem~\ref{thm:NU2-chains} and 
\S\ref{subsec:init-tail2}), which have reduced Dyck vectors called $S_j(V)$  
and $S_j(V^*)$, respectively.
In~\S\ref{subsec:Ant}, we introduce the \emph{antipode map} $\Ant$;
this map interchanges area and dinv but only acts on a restricted class
of Dyck vectors. Applying $\Ant$ to the vectors $S_j(V^*)$ from the tail 
part of $\C_{\mu^*}$ produces $\NU_1$-initial objects $[A_j^*]$ that 
generate the antipodal part of $\C_{\mu}$ (see~\S\ref{subsec:antip}).
Similarly, the Dyck classes $[\Ant(S_j(V))]$ generate the
antipodal part of $\C_{\mu^*}$. Figure~\ref{fig:antip} may help
visualize the overall construction.

\begin{figure}[h]
\begin{center}
\epsfig{file=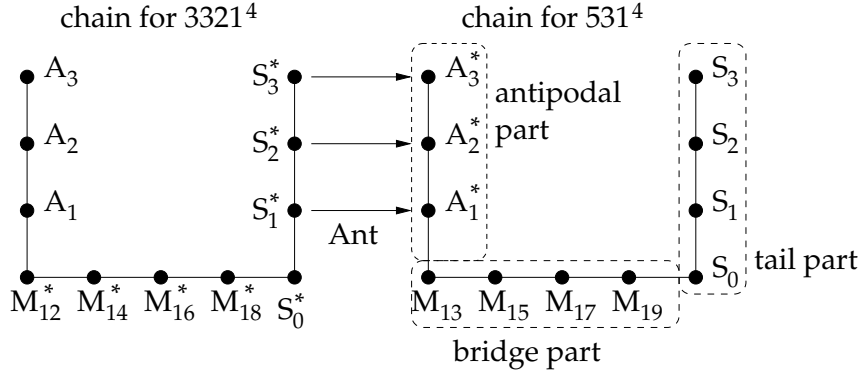,scale=0.7}
\end{center}
\caption{Structure of the chains $\C_{\mu}$ (right side) 
 and $\C_{\mu^*}$ (left side).}
\label{fig:antip}
\end{figure}
 
To finish, we must assemble all the $\NU_1$-segments and compute the
$\mind$-profiles and $amh$-vectors for the new chains $\C_{\mu}$ and
$\C_{\mu^*}$. Then we verify all the required structural properties
and $amh$-hypotheses from~\S\ref{sec:review-local}. The opposite
property stated in Theorem~\ref{thm:extend-mu} then follows from
Theorem~\ref{thm:local-opp}.

Our running example $\mu=\ptn{531^4}$ has size $k=12$, flag type
$\lambda=\ptn{31}$, $\lambda^*=\ptn{22}$, and $\mu^*=\ptn{3321^4}$
(see~\S\ref{subsec:flag-invo} for details). The needed
partitions for $\mu$ are the flag type $\lambda=\ptn{31}$
along with the following partitions (computed in Example~\ref{ex:Ant}): 
$\ptn{22}$, $\ptn{21}$, $\ptn{3}$, and $\ptn{2}$.
All needed partitions $\rho$ have size at most $4$ and are therefore in $\mcP$.
For later reference, we list $\rho$, $\rho^*$, $\C_{\rho}$, and
$\C_{\rho^*}$ for each needed $\rho$ here:

\begin{equation}\label{eq:small-chains}
\begin{array}{lll}
\rho=\ptn{31}: & \C_{\rho}=\NU_1^*(\ptn{2211})\cup\NU_1^*(\ptn{44311}),
 & \rho^*=\ptn{22}; \\
\rho=\ptn{22}: & \C_{\rho}=\NU_1^*(\ptn{21^4})\cup\NU_1^*(\ptn{3221}),
 & \rho^*=\ptn{31}; \\
\rho=\ptn{21}: & \C_{\rho}=\NU_1^*(\ptn{3111})\cup\NU_1^*(\ptn{3311}),
 & \rho^*=\ptn{111}; \\
\rho=\ptn{111}: & \C_{\rho}=\NU_1^*(\ptn{2111})\cup\NU_1^*(\ptn{3211}),
 & \rho^*=\ptn{21}; \\
\rho=\ptn{3}: & \C_{\rho}=\NU_1^*(\ptn{1111})\cup\NU_1^*(\ptn{222})
  \cup\NU_1^*(\ptn{3321}), & \rho^*=\ptn{3}; \\
\rho=\ptn{2}: & \C_{\rho}=\NU_1^*(\ptn{111})\cup\NU_1^*(\ptn{221}),
 & \rho^*=\ptn{2}.
\end{array} 
\end{equation}

We explain later how information from these existing chains is used
to help construct the new chains $\C_{\mu}$ and $\C_{\mu^*}$.

\subsection{Defining $\mu^*$ for Flagpole Partitions}
\label{subsec:flag-invo}

For the discussion of $\mu^*$ in this subsection, 
we assume $\CC=(\mcP,I,\C)$ is a fixed chain collection; $\mu$ is
a fixed flagpole partition of size $k$, where $k>|\rho|$ for all $\rho\in\mcP$
(hence $k\geq 6$ by~\ref{def:chain-coll}(a)); and the flag type
of $\mu$ is in $\mcP$.  We introduce notation that is used 
throughout Sections~\ref{sec:make-flag-chains} and~\ref{sec:proofs}.
Let $\lambda=\ftype(\mu)$, and
let $V=v(\lambda,a,\epsilon)$ be the reduced Dyck vector for $\TI_2(\mu)$.
Let $L=\len(V)=\mind(\TI_2(\mu))$, $D=\dinv(V)$, and $A=\area(V)$.
Recall (\S\ref{subsec:rep-flagpole}) that $\Psi(\mu)=(\lambda,L,D\bmod 2)$.
Note $\defc(V)=\defc(\TI_2(\mu))=\defc(\TI(\mu))=|\mu|=k$.
Remark~\ref{rem:flag-init} gives
\[ \defc(V)=k=|\lambda|+L-2, \] 
which is frequently used later.  Our proposed definition of $\mu^*$ is 
$\mu^*=\Psi^{-1}(\lambda^*,L,A\bmod 2)$. Lemma~\ref{lem:check-mu*} shows 
$\mu^*$ is well-defined. Let $V^*$ be the reduced Dyck vector for 
$\TI_2(\mu^*)$.  Then $\len(V^*)=\mind(\TI_2(\mu^*))=L$, and we let 
$D^*=\dinv(V^*)$ and $A^*=\area(V^*)$.

For our running example $\mu=\ptn{531^4}$, we compute
$V=0012212112=v(\lambda,2,0)$ where $\lambda=\ptn{31}$, 
$L=10$, $D=21$, $A=12$, and $\Psi(\mu)=(\ptn{31},10,1)$. 
From~\eqref{eq:small-chains}, we know $\lambda^*=\ptn{22}$.
(This is the only fact about $\lambda$ needed to compute $\mu^*$.)
Since $A\bmod 2=0$, $\mu^*=\Psi^{-1}(\ptn{22},10,0)=\ptn{3321^4}$.
Then $V^*=0012221122=v(\lambda^*,3,0)$, $D^*=20$, and $A^*=13$.

\begin{example}
We know $\ptn{0}^*=\ptn{0}$.  By Example~\ref{ex:Psi}, 
$\ptn{1^k}^*$ is either $\ptn{1^k}$ or $\ptn{21^{k-2}}$.
\end{example}

\begin{lemma}\label{lem:check-mu*}
Assume $\mu$ satisfies the hypothesis in Definition~\ref{def:extend-I}. 
Then $\mu^*$ is a well-defined flagpole partition with $|\mu^*|=|\mu|$
and $\mu^{**}=\mu$. Moreover, $D^*\equiv A\pmod{2}$ and $A^*\equiv D\pmod{2}$. 
\end{lemma}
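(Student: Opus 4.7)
The plan is to verify the four claims in the order they depend on each other: (i) well-definedness of $\mu^*$; (ii) $|\mu^*|=|\mu|$; (iii) the parity identity $D^*\equiv A\pmod 2$; (iv) the parity identity $A^*\equiv D\pmod 2$; and finally (v) $\mu^{**}=\mu$.

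For (i), I would use the explicit formula $L=\len(V)=a+3+\lambda_1+\ell(\lambda)$ together with $\defc(V)=|\lambda|+L-2=k$ from Remark~\ref{rem:flag-init}. The flagpole condition $k+8\leq 2L$ on $\mu$ rearranges to $L\geq |\lambda|+6=|\lambda^*|+6$ (using that $I$ preserves size). This is exactly the condition that the triple $(\lambda^*,L,A\bmod 2)$ lies in the domain $H$ of $\Psi^{-1}$ in Lemma~\ref{lem:flag-bij2}, so $\mu^*=\Psi^{-1}(\lambda^*,L,A\bmod 2)$ is a well-defined flagpole partition. For (ii), observe that the constructed $\mu^*$ has $\ftype(\mu^*)=\lambda^*$ and $\mind(\TI_2(\mu^*))=L$, so by Remark~\ref{rem:flag-init} again, $|\mu^*|=\defc(V^*)=|\lambda^*|+L-2=|\lambda|+L-2=k=|\mu|$.

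For (iii), I would note that by the very definition of $\Psi$ in~\eqref{eq:psi}, the third component of $\Psi(\mu^*)$ equals $\dinv(\TI_2(\mu^*))\bmod 2=D^*\bmod 2$. But by construction of $\mu^*$, this third component is prescribed to equal $A\bmod 2$. Hence $D^*\equiv A\pmod 2$. For (iv), I would use the parity formulas from~\eqref{eq:TI2-stats}:
\begin{equation*}
A\equiv \lambda_1+1+\epsilon,\quad D\equiv\tbinom{L}{2}+L+|\lambda|+\lambda_1+1+\epsilon\pmod 2,
\end{equation*}
and the analogous identities for $A^*,D^*$ with $\lambda^*,\epsilon^*$. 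Subtracting $D^*\equiv A$ from $A^*\equiv D$ (what we want to prove) yields an equivalent identity in the variables $\epsilon,\epsilon^*,\lambda_1,\lambda_1^*,L,|\lambda|$; chasing the algebra mod $2$, the $\binom{L}{2}+L+|\lambda|$ terms cancel and the desired congruence reduces to a tautology.

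Finally, (v) follows by applying the definition a second time: $\ftype(\mu^{**})=(\lambda^*)^*=\lambda$ (since $I$ is an involution on $\mcP$), $\mind(\TI_2(\mu^{**}))=\mind(\TI_2(\mu^*))=L$, and the parity coordinate is $A^*\bmod 2\equiv D\bmod 2$ by (iv). Hence $\mu^{**}=\Psi^{-1}(\lambda,L,D\bmod 2)=\Psi^{-1}(\Psi(\mu))=\mu$. The main subtlety is making sure the flagpole inequality on $\mu$ translates into the domain condition $L\geq |\lambda^*|+6$, which is what allows $\Psi^{-1}$ to be applied at $\lambda^*$ rather than $\lambda$; everything else is a bookkeeping exercise with the formulas in Remark~\ref{rem:flag-init}.
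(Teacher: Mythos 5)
Your proof is correct and follows essentially the same route as the paper: well-definedness via the domain condition $L\geq|\lambda^*|+6$ for $\Psi^{-1}$, the congruence $D^*\equiv A\pmod 2$ directly from the definition of $\Psi$, and $\mu^{**}=\mu$ by injectivity of $\Psi$. The only cosmetic difference is in step (iv): the paper derives $A^*\equiv D\pmod 2$ from the exact identity $D+A=\binom{L}{2}-k=D^*+A^*$ (a consequence of $\defc(V)=\defc(V^*)=k$), whereas you recover the same conservation law mod $2$ by expanding the explicit formulas of~\eqref{eq:TI2-stats}; both computations are valid.
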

\begin{proof}
From Lemma~\ref{lem:flag-bij2},
$\Psi(\mu)=(\lambda,L,D\bmod 2)$ where $L\geq |\lambda|+6\geq 6$.
Since $\lambda\in\mcP$, we know that
$\lambda^*$ is already defined, $\lambda^*\in\mcP$, $|\lambda^*|=|\lambda|$,
and $L\geq |\lambda^*|+6$. Thus, $(\lambda^*,L,A\bmod 2)$ does belong
to the codomain of the bijection $\Psi$, and so $\mu^*$ is a well-defined
flagpole partition. Since $V^*$ and $V$ both have length $L$, 
\[ |\mu^*|=\defc(V^*)=|\lambda^*|+L-2=|\lambda|+L-2=\defc(V)=|\mu|=k. \]
Next we check $D^*\equiv A\pmod 2$, $A^*\equiv D\pmod 2$, and $\mu^{**}=\mu$.
By definition of $\mu^*$, $\Psi(\mu^*)=(\lambda^*,L,A\bmod 2)$,
 so $D^*\equiv A\pmod 2$ by definition of $\Psi$. 
Now, since $\defc(V^*)=\defc(V)=k$ and $\dinv+\area+\defc=\binom{\len}{2}$, 
\begin{equation}\label{eq:DA-sum}
 D+A=\binom{L}{2}-k=D^*+A^*. 
\end{equation}
Because $D^*\equiv A\pmod{2}$, we also have $A^*\equiv D\pmod{2}$.
Finally, the definition of the involution 
gives $\Psi(\mu^{**})=(\lambda^{**},L,A^*\bmod 2)$.
Since $\lambda^{**}=\lambda$ (by~\ref{def:chain-coll}(b)) and 
$A^*\equiv D\pmod{2}$, $\Psi(\mu^{**})=(\lambda,L,D\bmod 2)=\Psi(\mu)$. 
Since $\Psi$ is one-to-one, $\mu^{**}=\mu$ follows.
\end{proof}

In the situation of the lemma, we really can extend the involution $I$
by setting $I(\mu)=\mu^*$ and $I(\mu^*)=\mu$ without conflicting with
any previously defined values $I(\rho)$, since $\mu$ and $\mu^*$ have
size strictly larger than $\rho$ and $\rho^*$ by hypothesis.

\begin{lemma}\label{lem:D>=A*}
Assume $I$ extends to a flagpole partition $\mu$ (necessarily of size
at least $6$). Then $D\geq A^*$.
\end{lemma}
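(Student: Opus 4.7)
The plan is to prove the lemma by a direct computation of $D-A^*$ using the explicit formulas in Remark~\ref{rem:flag-init}. Writing $V=v(\lambda,a,\epsilon)$ and $V^*=v(\lambda^*,a^*,\epsilon^*)$ and substituting the area and dinv formulas, a short calculation yields
\[
D-A^*=\binom{L}{2}-5L+12-|\lambda|+\lambda_1+\lambda^*_1+\epsilon+\epsilon^*.
\]
Since $|\lambda^*|=|\lambda|$ and the flagpole condition is equivalent to $L\geq|\lambda|+6$ (Remark~\ref{rem:flag-init}), the task is to show that the above expression is nonnegative under these constraints, together with the fact that $k=|\lambda|+L-2\geq 6$.

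I would then split into three cases according to $|\lambda|$. First, if $|\lambda|=0$, then $\lambda^*=\emptyset$, so $\lambda_1=\lambda^*_1=0$, and the hypothesis $k\geq 6$ forces $L=k+2\geq 8$; since $\binom{L}{2}-5L+12=(L-3)(L-8)/2+\text{constant}\geq 0$ for $L\geq 8$ (check $L=8$ gives $0$), we get $D-A^*\geq\epsilon+\epsilon^*\geq 0$. Second, if $|\lambda|\geq 2$, then $\lambda_1,\lambda^*_1\geq 1$ and $|\lambda|-\lambda_1\leq|\lambda|-1\leq L-7$, so
\[
D-A^*\geq \binom{L}{2}-6L+20+\epsilon+\epsilon^*=\frac{(L-5)(L-8)}{2}+\epsilon+\epsilon^*;
\]
and $|\lambda|\geq 2$ combined with $L\geq|\lambda|+6$ forces $L\geq 8$, so the bound is nonnegative. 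Third, if $|\lambda|=1$, then $\lambda=\ptn{1}$ and $\lambda^*=\ptn{1}$ (the only partition of size $1$), so $\lambda_1=\lambda^*_1=1$, giving $D-A^*=\binom{L}{2}-5L+13+\epsilon+\epsilon^*$. For $L\geq 8$ this is clearly nonnegative.

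The main obstacle is the residual subcase $|\lambda|=1$, $L=7$, where direct bounds only yield $D-A^*=-1+\epsilon+\epsilon^*$, which could be negative. To handle this, I would invoke the parity identity $D^*\equiv A\pmod{2}$ from Lemma~\ref{lem:check-mu*}. Substituting the explicit formulas and cancelling, this congruence reduces to
\[
\lambda_1+\lambda^*_1+\epsilon+\epsilon^*\equiv\binom{L}{2}-k\pmod{2},
\]
and for $L=7$, $k=6$, $\lambda_1=\lambda^*_1=1$ the right side is $15\equiv 1\pmod 2$, forcing $\epsilon+\epsilon^*\equiv 1\pmod 2$. Since $\epsilon,\epsilon^*\in\{0,1\}$, this gives $\epsilon+\epsilon^*\geq 1$, whence $D-A^*\geq 0$, completing the case analysis.
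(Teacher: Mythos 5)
Your proof is correct and follows essentially the same route as the paper: both start from the explicit area/dinv formulas of Remark~\ref{rem:flag-init} to obtain the same expression for $D-A^*$, lower-bound it using $L\geq|\lambda|+6$, and dispatch the borderline cases using the evenness of $D-A^*$ (from Lemma~\ref{lem:check-mu*}) together with $|\mu|\geq 6$. The only difference is bookkeeping: the paper reduces to $\lambda=\ptn{0}$ with $L\in\{6,7,8\}$ and finishes with a six-row table of explicit vectors, whereas your finer split on $|\lambda|$ leaves only the single subcase $\lambda=\ptn{1}$, $L=7$, which you correctly close with the parity argument.
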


This lemma is proved in~\S\ref{subsec:exceptional}.

\subsection{The Bridge Part of $\C_{\mu}$}
\label{subsec:bridge}

The \emph{bridge part} of $\C_{\mu}$ consists of two-element
$\NU_1$-segments $[M_i(\mu)]$, $\NU([M_i(\mu)])$ with
$\dinv(M_i(\mu))=i$, for all $i\in\{A^*,A^*+2,A^*+4,\ldots,D-4,D-2\}$.
The bridge part is empty if $D=A^*$. To define $M_i(\mu)$, we require
the following lemma, proved in~\S\ref{subsec:prove-bridge}.

\begin{lemma}\label{lem:pre-bridge}
Assume $I$ extends to $\mu$, so $\lambda=\ftype(\mu)$ is in $\mcP$.
For each $i\in\{A^*,A^*+2,\ldots,D-4,D-2\}$, there exists a unique
object $\gamma=c_{\lambda}(i-1)$ in the known chain $\C_{\lambda}$
such that $\dinv(\gamma)=i-1$ and $\defc(\gamma)=|\lambda|$. Moreover,
$\mind(\gamma)\leq L-2$, and $z=\qdvmap_{L-2}(\gamma)$ starts with $01$
and contains a $2$. 
\end{lemma}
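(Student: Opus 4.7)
The plan is to establish the three assertions in turn, using the explicit formulas from Remark~\ref{rem:flag-init}, the basic and extra required structure on $\C_\lambda$, and the identity $\dinv+\area_n+\defc=\binom{n}{2}$. For existence and uniqueness, the chain $\C_\lambda$ has basic required structure, so it consists of partitions of deficit $|\lambda|$ with consecutive dinv values starting at $i_0(\lambda)=\ell(\lambda^*)$; hence $c_\lambda(i-1)$ exists and has the stated dinv and deficit provided $i-1\geq \ell(\lambda^*)$. The binding constraint is the smallest value $i=A^*$. Using Remark~\ref{rem:flag-init} to write $A^*=2L-\lambda^*_1-5-\epsilon^*$, combining with $L\geq |\lambda^*|+6$ (from Lemma~\ref{lem:check-mu*}) and the elementary inequality $|\lambda^*|\geq \lambda^*_1+\ell(\lambda^*)-1$, a short calculation yields $A^*-1\geq \ell(\lambda^*)$ with considerable margin.

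For the bound $\mind(\gamma)\leq L-2$, I would combine feasibility with structural constraints. The identity $\dinv(\gamma)+\area_{L-2}(\gamma)+|\lambda|=\binom{L-2}{2}$ makes $\mind(\gamma)\leq L-2$ equivalent to $\dinv(\gamma)\leq \binom{L-2}{2}-|\lambda|$. Substituting $D=\binom{L}{2}-3L+\lambda_1+7+\epsilon-|\lambda|$, the desired inequality $D-2\leq \binom{L-2}{2}-|\lambda|$ reduces to $L\geq \lambda_1+2+\epsilon$, which follows easily from $L\geq |\lambda|+6$. To promote this feasibility into the actual bound for the specific partition $c_\lambda(d)$, I would invoke the extra required structure on $\C_\lambda$: property~(a) gives a unimodal $h$-vector, so the $\mind$-profile has a valley shape, and property~(d) places $\tail_2(\lambda)$ inside $\C_\lambda$, pinning down where the profile finally ascends to infinity. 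Together these show that every $d$ in the extended range $[A^*-1,D-2]$ lies in the valley with $\mind\leq L-2$.

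For the structural properties of $z=\qdvmap_{L-2}(\gamma)$, observe first that ``$z$ starts with $01$'' is equivalent to $\ell(\gamma)\leq L-4$, by the definition of $\qdvmap_{L-2}$. If $\mind(\gamma)<L-2$ this is automatic since $\ell(\gamma)<\mind(\gamma)\leq L-3$; if $\mind(\gamma)=L-2$, property~\ref{def:extra-str}(c) identifies ``$\gamma$'s reduced vector starts with $00$'' with ``$\mind$ ascends at $\gamma$,'' and the extended bound $\mind(c_\lambda(d+1))\leq L-2$ rules this out. For ``$z$ contains a $2$,'' I would argue by contradiction: if $z$ were binary, then $[z]\in\tail(\rho)$ for a unique partition $\rho$ of size $|\lambda|$ by Theorem~\ref{thm:find-tail}(a); since chains are disjoint (Definition~\ref{def:chain-coll}(e)) and $\tail(\lambda)\subseteq\C_\lambda$ by extra structure, this forces $\rho=\lambda$ and $\gamma\in\tail(\lambda)$. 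Moreover, $[z]$ must lie in the binary ``tail-end'' of some plateau (Theorem~\ref{thm:BDV-tail}(b)), placing strong constraints on $\dinv(\gamma)$; comparing these explicit dinv values to the bound $\dinv(\gamma)\leq D-3$ via the flagpole inequality $L\geq |\lambda|+6$ produces the desired contradiction.

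The main obstacle is the structural part of the $\mind$ bound: rigorously identifying the position within $\C_\lambda$ where the $\mind$-profile strictly exceeds $L-2$ for the first time. Although the unimodality of the $h$-vector and the known structure of the ascending tail provide the necessary tools, the careful bookkeeping to ensure $D-2$ lies strictly before this threshold will require combining Theorem~\ref{thm:NU2-chains} with Theorem~\ref{thm:tail-profile} in a delicate way.
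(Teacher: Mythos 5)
Your proposal has a genuine gap at precisely the point you flag as ``the main obstacle,'' and the missing idea is not just bookkeeping. The step you never supply is the explicit identification of $c_{\lambda}(D-1)$. The paper's proof hinges on observing that $V=v(\lambda,a,\epsilon)=00w^+$ for $w=01^{a-\epsilon}B_{\lambda}0^{\epsilon}$, so by Lemma~\ref{lem:semi-stats} the Dyck class $[w]$ has deficit $|\lambda|$, dinv $D-1$, and length $L-2$, and by Theorem~\ref{thm:BDV-tail}(b) it is an explicit object in plateau $a$ of $\tail(\lambda)\subseteq\C_{\lambda}$ with $\mind([w])=L-2$. This single observation anchors everything: every $\gamma=c_{\lambda}(i-1)$ in the lemma lies at least two steps before $[w]$ in $\C_{\lambda}$, so the whole conclusion reduces to a short case analysis (the paper's Lemma~\ref{lem:mind-before-w}) — objects before $\tail(\lambda)$ have $\mind\leq 1+\max(h_1,h_N)\leq|\lambda|+3\leq L-3$ by the extra structure conditions, and objects in $\tail(\lambda)$ before $[w]$ either sit in an earlier plateau ($\mind\leq L-3$) or are among the classes of Theorem~\ref{thm:BDV-tail}(a) in plateau $a$, whose reduced vectors start with $01$ and contain a $2$.

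Your substitute arguments do not close this. The ``feasibility'' inequality $\dinv(\gamma)\leq\binom{L-2}{2}-|\lambda|$ is only a necessary condition for $\mind(\gamma)\leq L-2$, never sufficient, and unimodality of the $h$-vector tells you the profile is valley-shaped without telling you at which dinv index the tail's profile first exceeds $L-2$; locating that threshold requires knowing which object of $\tail(\lambda)$ has dinv $D-1$, which is exactly the identification above. Your argument for ``$z$ contains a $2$'' has the same hidden dependency: to derive a contradiction from $\gamma$ landing among the binary classes $[01^{a-b}B_{\lambda}0^b]$ of Theorem~\ref{thm:BDV-tail}(b), you must compare their dinv values to $D-3$, and those dinv values are $\geq D-2$ precisely because $[w]$ is the first or second of them — again the same fact. (Your existence argument $A^*-1\geq\ell(\lambda^*)$ and your treatment of ``$z$ starts with $01$'' are fine and match the paper.) So the proposal is a reasonable outline with the correct supporting tools named, but the load-bearing step is absent rather than merely deferred.
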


For $i$ in $\{A^*,A^*+2,\ldots,D-2\}$, define $M_i(\mu)$ as follows.
Take $\gamma=c_{\lambda}(i-1)$ and $z=\qdvmap_{L-2}(\gamma)$ as in the lemma,
and let $M_i(\mu)=00z^+$. Visually, the Ferrers diagram for the 
partition $[M_i(\mu)]$ is obtained from the diagram for $\gamma$ by adding 
a new leftmost column containing $L-1$ boxes.
See Figure \ref{fig:Ferrers} for an example.

\begin{figure}[h]
\begin{tikzpicture}[scale=0.4, line width=1pt]
\fill[gray!50] (0,0) rectangle (6,-1);
\fill[gray!50](0,0) rectangle (4,-3);
\fill[gray!50] (0,0) rectangle (1,-5);
  \draw[thin] (0,0) grid (7,-1);
  \draw[thin] (0,-1) grid (6,-2);
  \draw[thin] (0,-2) grid (5,-3);
  \draw[thin] (0,-3) grid (4,-4);
  \draw[thin] (0,-4) grid (3,-5);
  \draw[thin] (0,-5) grid (2,-6);
  \draw[thin] (0,-6) grid (1,-7);
   \draw[thin] (0,0)--(8,0)--(0,-8)--(0,0);
\begin{scope}[shift={(15,0)}]
\fill[gray!50] (0,0) rectangle (7,-1);
\fill[gray!50](0,0) rectangle (5,-3);
\fill[gray!50] (0,0) rectangle (2,-5);
\fill[gray!100] (0,0) rectangle (1,-9);
  \draw[thin] (0,0) grid (9,-1);
  \draw[thin] (0,-1) grid (8,-2);
  \draw[thin] (0,-2) grid (7,-3);
  \draw[thin] (0,-3) grid (6,-4);
  \draw[thin] (0,-4) grid (5,-5);
  \draw[thin] (0,-5) grid (4,-6);
  \draw[thin] (0,-6) grid (3,-7);
  \draw[thin] (0,-7) grid (2,-8);
  \draw[thin] (0,-8) grid (1,-9);
   \draw[thin] (0,0)--(10,0)--(0,-10)--(0,0);
 \end{scope}  
   \end{tikzpicture}
\caption{Left: the gray region is the Ferrers diagram for $\gamma=[0112010]$. Right: the Ferrers diagram for $[M_i(\mu)]=[0012334232]$, which is obtained from the diagram for $\gamma$ (in light gray) by adding a leftmost column (in dark gray) containing $L-1=9$ boxes.}
\label{fig:Ferrers}
\end{figure}
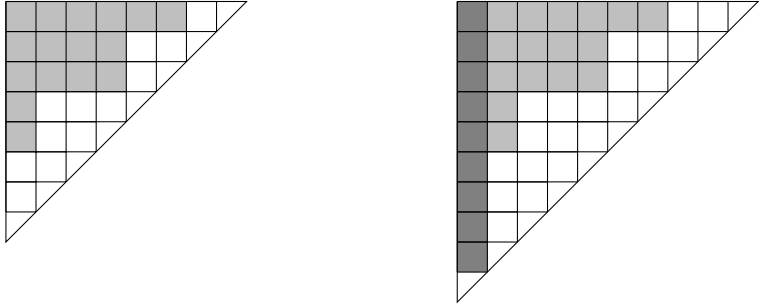

\begin{lemma}\label{lem:bridge-vec}
Assume $I$ extends to $\mu$.
Each $M_i(\mu)$ is a reduced Dyck vector of length $L$ 
starting with $0012$, ending with a positive symbol, and containing a $3$.  
$M_i(\mu)$ has dinv $i$ and deficit $k=|\mu|$.
$[M_i(\mu)]$ is a $\NU_1$-initial object with $\mind$ equal to $L$,
while $\NU([M_i(\mu)])$ is a $\NU_1$-final object with $\mind$ equal to $L+1$.  
\end{lemma}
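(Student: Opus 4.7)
My plan is to work directly from the definition $M_i(\mu) = 00 z^+$, where $z = \qdvmap_{L-2}(\gamma)$ and $\gamma = c_\lambda(i-1)$, and extract every claim in the lemma from the three pieces of information already supplied by Lemma~\ref{lem:pre-bridge}: $\mind(\gamma) \le L-2$, $z$ starts with $01$, and $z$ contains a $2$. Since $\mind(\gamma)\le L-2$, the vector $z$ is an honest Dyck vector of length $L-2$ (all entries nonnegative), so $z^+$ has all entries $\ge 1$, and in particular $M_i(\mu)$ ends in a positive symbol. Because $z$ starts with $01$, $z^+$ starts with $12$, so $M_i(\mu)$ starts with $0012$. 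Because $z$ contains a $2$, $z^+$ contains a $3$, so $M_i(\mu)$ contains a $3$. Checking the Dyck inequalities $v_{j+1}\le v_j+1$ between $0,0$, between $0$ and $z^+_1=1$, and within $z^+$ (inherited from $z$) confirms $M_i(\mu)$ is a Dyck vector, and its length is $L$.

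Next I would argue that $M_i(\mu)$ is reduced: by the definition of $\sim$, any equivalent shorter QDV would have to come from stripping a leading $0$ and decrementing, but the second symbol of $M_i(\mu)$ is $0$, so decrementing would create a $-1$, contradicting that the result represents the same Dyck class. Hence $\mind([M_i(\mu)]) = \len(M_i(\mu)) = L$. For the dinv and deficit statistics I would invoke Lemma~\ref{lem:semi-stats} with $z$ playing the role of $z$ there and $v = M_i(\mu) = 00z^+$: it gives $\dinv(M_i(\mu)) = \dinv(z) + 1 = \dinv(\gamma) + 1 = i$ and $\defc(M_i(\mu)) = \defc(z) + \len(z) = |\lambda| + (L-2) = k$, using the identity $k = |\lambda| + L - 2$ recorded in \S\ref{subsec:flag-invo}.

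Finally, for the statements about $\NU$, I would simply apply Proposition~\ref{prop:NU-seg2}(b) to the reduced Dyck vector $M_i(\mu)$, whose hypotheses (starts with $0012$, ends in a positive symbol) were already verified above. That proposition directly yields: $[M_i(\mu)]$ is a $\NU_1$-initial object, $\NU_1([M_i(\mu)])$ is defined, $\NU_1([M_i(\mu)])$ is a $\NU_1$-final object, and $\mind(\NU_1([M_i(\mu)])) = L + 1$. Since $[M_i(\mu)] \in D_1$, we have $\NU([M_i(\mu)]) = \NU_1([M_i(\mu)])$ by Definition~\ref{def:extend-NU}, and every conclusion of the lemma is in hand. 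There is no real obstacle here beyond careful bookkeeping; the one small subtlety worth spelling out is why $z$ is genuinely a Dyck vector (nonnegative entries), which is exactly what $\mind(\gamma) \le L-2$ buys us and which is essential both for $M_i(\mu)$ ending in a positive symbol and for the stat identities in Lemma~\ref{lem:semi-stats} to apply.
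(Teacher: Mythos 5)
Your proof is correct and follows essentially the same route as the paper's: it reads off the form of $M_i(\mu)=00z^+$ from Lemma~\ref{lem:pre-bridge}, gets the dinv and deficit from Lemma~\ref{lem:semi-stats} together with $k=|\lambda|+L-2$, and obtains the $\NU_1$-initial/final claims from Proposition~\ref{prop:NU-seg2}(b). The extra detail you supply (why $z$ is a genuine Dyck vector and why $M_i(\mu)$ is reduced) is exactly the bookkeeping the paper leaves implicit.
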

\begin{proof}
By Lemma~\ref{lem:pre-bridge}, $z$ is a Dyck vector of length $L-2$ 
starting with $01$ and containing a $2$. Since $M_i(\mu)=00z^+$,
the first assertion in~\ref{lem:bridge-vec} holds.
Lemma~\ref{lem:semi-stats} shows that $\dinv(M_i(\mu))=\dinv(z)+1=i$ and
$\defc(M_i(\mu))=\defc(z)+\len(z)=|\lambda|+L-2=k$. 
The last claim in~\ref{lem:bridge-vec}
follows from Proposition~\ref{prop:NU-seg2}(b).
\end{proof}

For our example $\mu=\ptn{531^4}$, let us compute $M_i(\mu)$
for $i=13,15,17,19$ (this range comes from $A^*=13$ and $D=21$).
Here, $\lambda=\ftype(\mu)=\ptn{31}$.
We look up each $c_{\lambda}(i-1)$ from~\eqref{eq:small-chains},
find the representative $z$ of length $L-2=8$, and then form
$M_i(\mu)=00z^+$. The results appear in the following table.
\[ \begin{array}{l|l|l|l|l}
 i & 13 & 15 & 17 & 19 \\\hline
\gamma=c_{\ptn{31}}(i-1) & [0112010] & [0101001] & [01211210] & [01121010] \\ 
z=\qdvmap_8(\gamma) & 01223121 & 01212112 & 01211210 & 01121010 \\
M_i(\mu) & 0012334232 & 0012323223 & 0012322321 & 0012232121 
\end{array} \] 
The $\mind$-profile for the bridge part of $\C_{\mu}$ is 
\begin{equation}\label{eq:prof-B1}
 \left[\begin{array}{rrr|rr|rr|rr}
\dinv: & 13 & 14 & 15 & 16 & 17 & 18 & 19 & 20 \\
\mind: & 10 & 11 & 10 & 11 & 10 & 11 & 10 & 11 
\end{array}\right].  
\end{equation}
This part ends just before dinv index $D=21$, which is where
$\tail_2(\mu)$ begins at $[V]$. In fact, if we take $i=D$ in the definition
of $M_i(\mu)$, we find that $M_D(\mu)=V$ (see Remark~\ref{rem:MD-is-V}
for a proof).

For $\mu^*=\ptn{3321^4}$, we perform a similar calculation
using $\lambda^*=\ptn{22}$ and $i=12,14,16,18$ 
(since $A^{**}=A=12$ and $D^*=20$). The results are shown here:
\begin{equation}
 \begin{array}{l|l|l|l|l} 
 i & 12 & 14 & 16 & 18 \\\hline
\gamma=c_{\ptn{22}}(i-1) & [0122100] & [0110011] & [0001100] & [01221100] \\ 
z=\qdvmap_8(\gamma) & 01233211 & 01221122 & 01112211 & 01221100 \\
M_i(\mu^*) & 0012344322 & 0012332233 & 0012223322 & 0012332211 
\end{array} 
\end{equation}
The $\mind$-profile for the bridge part of $\C_{\mu^*}$ is
\begin{equation}\label{eq:prof-B2}
 \left[\begin{array}{rrr|rr|rr|rr}
\dinv: & 12 & 13 & 14 & 15 & 16 & 17 & 18 & 19 \\
\mind: & 10 & 11 & 10 & 11 & 10 & 11 & 10 & 11 
\end{array}\right].  
\end{equation}

The next proposition summarizes the key properties of the bridge part.

\begin{proposition}\label{prop:bridge}
Assume $I$ extends to $\mu$.
The bridge part of $\C_{\mu}$ (resp. $\C_{\mu^*}$) is a sequence of 
partitions in $\Def(k)$ indexed by consecutive dinv values from
$A^*$ to $D-1$ (resp. $A$ to $D^*-1$). The $\mind$-profile of the
bridge part of $\C_{\mu}$ (and $\C_{\mu^*}$) consists of
$(D-A^*)/2=(D^*-A)/2$ copies of $L,L+1$.
\end{proposition}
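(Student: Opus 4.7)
The plan is to assemble the bridge part by concatenating the two-element $\NU_1$-segments $[M_i(\mu)], \NU([M_i(\mu)])$ in order of increasing $i$, then read off the dinv and $\mind$ statistics from the construction. The heavy lifting is already done by Lemma~\ref{lem:bridge-vec}, Lemma~\ref{lem:check-mu*}, and Lemma~\ref{lem:D>=A*}, so the proof is essentially a bookkeeping argument.

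First, I would verify that the index set $\{A^*, A^*+2, \ldots, D-2\}$ is well defined with $(D-A^*)/2$ elements. By Lemma~\ref{lem:check-mu*} we have $A^* \equiv D \pmod 2$, so $D - A^*$ is even; by Lemma~\ref{lem:D>=A*} we have $D \geq A^*$, so the index set is either empty (when $D = A^*$) or contains exactly $(D-A^*)/2$ integers. Equation~\eqref{eq:DA-sum}, which says $D + A = D^* + A^*$, then gives $D - A^* = D^* - A$, so the corresponding index set $\{A, A+2, \ldots, D^*-2\}$ for $\C_{\mu^*}$ has the same cardinality.

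Next, for each $i$ in the index set, Lemma~\ref{lem:bridge-vec} tells us that $[M_i(\mu)]$ lies in $\Def(k)$ with $\dinv = i$ and $\mind = L$, and that $\NU([M_i(\mu)])$ is defined (as $[M_i(\mu)]$ is $\NU_1$-initial with $\NU_1$ applicable). By Theorem~\ref{thm:extend-NU}(b), applying $\NU$ preserves deficit and raises dinv by one, so $\NU([M_i(\mu)])$ lies in $\Def(k)$ with $\dinv = i+1$; Lemma~\ref{lem:bridge-vec} further gives that its $\mind$ equals $L+1$. Concatenating these two-element segments in order of $i = A^*, A^*+2, \ldots, D-2$ produces a finite sequence of partitions with dinv values $A^*, A^*+1, A^*+2, A^*+3, \ldots, D-2, D-1$, which are consecutive from $A^*$ to $D-1$. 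The $\mind$-profile is exactly $(D-A^*)/2$ repetitions of the pair $L, L+1$. Since distinct dinv values imply distinct partitions, the resulting sequence has no repeats.

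The argument for $\C_{\mu^*}$ is identical after swapping the roles of $\mu$ and $\mu^*$ (using $L = \mind(\TI_2(\mu^*))$ by Lemma~\ref{lem:check-mu*}), giving consecutive dinv values from $A$ to $D^*-1$ with the same $\mind$-profile $(L,L+1)^{(D^*-A)/2}$. I do not expect any real obstacle: all inequalities, parities, and statistic values have been established in the cited lemmas, so the proof is just a verification that stringing the two-element segments together produces the claimed indexing and profile.
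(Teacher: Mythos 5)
Your proposal is correct and matches the paper's (largely implicit) justification: the proposition is a summary of the bridge construction, with the dinv/deficit/$\mind$ statistics supplied by Lemma~\ref{lem:bridge-vec} and Theorem~\ref{thm:extend-NU}(b), the well-definedness of the index set by Lemmas~\ref{lem:check-mu*} and~\ref{lem:D>=A*}, and the equality $(D-A^*)/2=(D^*-A)/2$ by~\eqref{eq:DA-sum}, exactly as you argue. No gaps.
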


Note that $(D-A^*)/2=(D^*-A)/2$ follows from~\eqref{eq:DA-sum}.

\subsection{$\NU_1$-Initial Objects in the Tail Part}
\label{subsec:init-tail2}

The construction of the antipodal part of $\C_{\mu^*}$ 
begins with the $\NU_1$-initial objects in $\tail_2(\mu)$
(and similarly for $\C_{\mu}$ and $\tail_2(\mu^*)$,
 as shown in Figure~\ref{fig:antip}). Since $\mu$ is a flagpole
partition, Theorem~\ref{thm:NU2-chains} applies to $V=v(\lambda,a,\epsilon)$.
That theorem explicitly describes the $\NU$-chain from $[V]=\TI_2(\mu)$
to $\TI(\mu)$, including the $\mind$-profile of this chain and the
$\NU_1$-initial objects along the way. Recall that 
the reduced Dyck vectors for these $\NU_1$-initial Dyck classes,
listed in increasing order of dinv, are $S_0(V),S_1(V),\ldots,S_J(V)$. 
Here, $J$ depends on $\mu$. The corresponding vectors for $\mu^*$
are $S_0(V^*),S_1(V^*),\ldots,S_{J^*}(V^*)$.

Consider our running example $\mu=\ptn{531^4}$. We start at the reduced
Dyck vector for $\TI_2(\mu)$, which is $V=0012212112$ with
$\dinv(V)=D=21$ and $\len(V)=\mind([V])=L=10$. 
Apply Theorem~\ref{thm:NU2-chains} with $n_0=2$, $n_1=1$, $n_2=0$,
$n_3=1$, and $C=\emptyset$. The vectors $v^{(i)}$ in part (a) of the theorem
are $v^{(0)}=v$, $v^{(1)}=00121121110$, $v^{(2)}=00112111001$,
$v^{(3)}=001211110010$, and $v^{(4)}=001111001001=0B_{\mu}$.
These vectors have dinv values $21,33,35,46,48$ (in order).
Using part (b) of the theorem for $0\leq i\leq 4$, the complete
$\mind$-profile of $\tail_2(\mu)$ is
\begin{equation}\label{eq:prof-C1}
\left[\begin{array}{lll|lll|lll|llll}
\dinv: & 21 & 22 & 23 & \cdots & 34 & 35 & \cdots & 47 & 48 & \cdots 
 & \cdots & \cdots \\
\mind: & \underline{10} & 11 & \underline{10} & 11^{10} & 12 
  & \underline{11} & 12^{11} & 13 & \underline{12} & 13^{12}& 14^{13}& \cdots
\end{array}\right].  
\end{equation}
By part~(c) of the theorem, the underlined values correspond to the 
$\NU_1$-initial objects $[S_j(V)]$.  By part~(d) of the theorem, we have
$S_0(V)=V=0012212112$, $S_1(V)=0011211211$, $S_2(V)=v^{(2)}=00112111001$,
and $S_3(V)=v^{(4)}=001111001001$. We can see that the structural properties 
promised in Theorem~\ref{thm:NU2-chains}(e), (f), and (g) do hold for
the specific vectors we have computed. In particular,
$\area(S_0(V))=12$, $\area(S_1(V))=10$, $\area(S_2(V))=8$, and 
$\area(S_3(V))=6$.

Following the same procedure for $\mu^*=\ptn{3321^4}$
(using Theorem~\ref{thm:NU2-chains} or Example~\ref{ex:alg-v*}), we obtain:
\begin{equation}\label{eq:Sj*}
   S_0(V^*)=V^*=0012221122,\  
   S_1(V^*)=0012112211,\  
   S_2(V^*)=0011221101,\  
   S_3(V^*)=00111101011=0B_{\mu^*}. 
\end{equation}
Here, $\area(S_j(V^*))=A^*-2j$, which is $13,11,9,7$ for $j=0,1,2,3$.
The $\mind$-profile of $\tail_2(V^*)$ is 
\begin{equation}\label{eq:prof-C2}
\left[\begin{array}{lll|ll|lll|llll}
\dinv:& 20 & 21 & 22 & 23 & 24 & \cdots & 35 & 36 & \cdots & \cdots & \cdots \\
\mind: & \underline{10} & 11 & \underline{10} & 11 & \underline{10} & 11^{10}
 & 12 & \underline{11} & 12^{11} & 13^{12} & \cdots
\end{array}\right].  
\end{equation}

The next proposition summarizes the fundamental properties of the tail
part of $\C_{\mu}$, which follow from Theorem~\ref{thm:NU2-chains}. 
Here and below, let $L_j=\len(S_j(V))=\mind([S_j(V)])$ and $D_j=\dinv(S_j(V))$
for $0\leq j\leq J$. The analogous quantities for $\mu^*$ are
$L_j^*$ and $D_j^*$ for $0\leq j\leq J^*$. Note that we have
$L_0=L_1=L$ because $S_0(V)=V=v(\lambda,a,\epsilon)$ starts with $0012$.

\begin{proposition}\label{prop:tail2}
Assume $\mu$ is any flagpole partition.
\\ (a)~The tail part of $\C_{\mu}$, namely $\tail_2(\mu)$, is an infinite 
sequence of partitions in $\Def(k)$ indexed by consecutive dinv values 
starting at $D$.
\\ (b)~The $\mind$-profile for $\tail_2(\mu)$ consists
of weakly ascending runs starting at dinv indices $D_j$ (for $0\leq j\leq J$)
corresponding to the $\NU_1$-initial objects $[S_j(V)]$.  The $\mind$-profile
of the $j$th ascending run is a prefix of 
$L_j^1(L_j+1)^{L_j}(L_j+2)^{L_j+1}\cdots$,
where the prefix length is at least $2$ and (for $j<J$) the prefix
ends with one copy of $L_{j+1}+1$. 
The $L_j$ weakly increase from $L_0=L_1=L$ to $L_J=\mind(\TI(\mu))$. 
\\ (c)~We have $S_0(V)=V$ and $S_J(V)=0B_{\mu}$;
each $S_j(V)$ has the form $00X1$ with $X$ ternary;
and $\area(S_j(V))=A-2j$ for $0\leq j\leq J$.
\end{proposition}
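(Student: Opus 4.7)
The plan is to deduce the proposition as an essentially formal consequence of Theorem~\ref{thm:NU2-chains} applied to $V$, combined with Theorem~\ref{thm:tail-profile} and Theorem~\ref{thm:extend-NU}. The preliminary observation is that, writing $\lambda = \ptn{r^{n_r}\cdots 2^{n_2}1^{n_1}}$ with $n_r > 0$ (or $\lambda = \ptn{0}$ and $r = 0$), we have $V = v(\lambda,a,\epsilon) = 0012^{a-\epsilon} B_\lambda^+ 1^\epsilon$ where $B_\lambda^+ = 12^{n_1}12^{n_2}\cdots 12^{n_r}$. Thus $V$ matches the template $0012^{n_0}12^{n_1}\cdots 12^{n_r}C$ of Theorem~\ref{thm:NU2-chains} with $n_0 = a - \epsilon$ and $C = 1^\epsilon$, and since $a \geq a_0(\lambda) \geq 2$ we have $n_0 \geq 1$. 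Hence Theorem~\ref{thm:NU2-chains} applies to $V$, and the sequence $S_0(V),S_1(V),\ldots,S_J(V)$ is described explicitly by its conclusions.

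Part (a) is immediate: by definition, $\tail_2(\mu)$ is the forward $\NU$-orbit of $\TI_2(\mu) = [V]$. Theorem~\ref{thm:extend-NU}(b) preserves deficit and increments dinv by one at each step, so every member lies in $\Def(k)$ and the dinv indices are consecutive starting at $D = \dinv(V)$. The chain reaches $\TI(\mu)$ after finitely many $\NU$-steps by Theorem~\ref{thm:NU2-chains}(a) and then continues forever inside $\tail(\mu)$ by Theorem~\ref{thm:tail-profile}.

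For part (b), concatenating the $\mind$-profile from $[V]$ to $\TI(\mu)$ supplied by Theorem~\ref{thm:NU2-chains}(b) with the $\mind$-profile of $\tail(\mu)$ from Theorem~\ref{thm:tail-profile} yields the full profile of $\tail_2(\mu)$ as a concatenation of weakly ascending runs. By Theorem~\ref{thm:NU2-chains}(c), the $\NU_1$-initial objects are precisely the descents of this profile, which are exactly the starting positions of the ascending runs, so the $[S_j(V)]$ are the run-initial objects. Theorem~\ref{thm:NU2-chains}(f) supplies the staircase-prefix description of each run and the monotonicity $L_0 \leq L_1 \leq \cdots \leq L_J$. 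The equalities $L_0 = L_1 = L$ come from $S_0(V) = V$ having length $L$, together with the first two $\NU$-steps acting on $[V]$: a $\NU_1$-step raises $\mind$ from $L$ to $L+1$, and a subsequent $\NU_2$-step restores $\mind$ to $L$ (this is Lemma~\ref{lem:cycle-m1}(b) or (c) depending on the parity of $n_0 \geq 1$), landing in a $\NU_1$-initial Dyck class, which must be $[S_1(V)]$.

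Part (c) is assembled from the remaining pieces of Theorem~\ref{thm:NU2-chains}. The equality $S_0(V) = V$ is the definition. The vector $v^{(r+1)}$ appearing in Theorem~\ref{thm:NU2-chains}(a) is binary, begins with $00$, and ends in $1$, so it is reduced and equals $0B_\mu$, the unique reduced representative of $\TI(\mu)$; it is also the last $\NU_1$-initial object in the chain, hence equals $S_J(V)$. For $0 < j \leq J$ the form $00X1$ with $X$ ternary is Theorem~\ref{thm:NU2-chains}(e). The area identity $\area(S_j(V)) = A - 2j$ follows by induction on $j$ from the one-step recursion in Theorem~\ref{thm:NU2-chains}(g), with base case $\area(S_0(V)) = A$. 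I do not anticipate any single step being a main obstacle; the care required is in checking that $V$ fits the template of Theorem~\ref{thm:NU2-chains} (including the degenerate case $\lambda = \ptn{0}$) and in assembling the three structural inputs consistently, after which the proposition reduces to a clean packaging of already-established results.
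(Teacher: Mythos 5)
Your proposal is correct and matches the paper's approach: the paper states this proposition without a separate proof, noting only that it "follows from Theorem~\ref{thm:NU2-chains}" (plus Theorem~\ref{thm:tail-profile} for the final $\NU_1$-tail and Theorem~\ref{thm:extend-NU} for the dinv/deficit bookkeeping), and your assembly of parts (a)--(c) from those results is exactly the intended derivation. One small citation slip: the fact that the first two $\NU$-steps give $\mind$-profile $L,L+1,L$ (hence $L_0=L_1=L$) is Lemma~\ref{lem:m1=012}(b) or (c) --- the dichotomy being $n_0=1$ versus $n_0\geq 2$ rather than the parity of $n_0$ --- but the conclusion you draw from it is right.
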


When $\mu^*$ is defined, analogous results hold for the tail part of
$\C_{\mu^*}$ (using $D^*$, $L_j^*$, $D_j^*$, $J^*$, and $V^*$).

\subsection{The Antipode Map}
\label{subsec:Ant}

We now define the \emph{antipode map} $\Ant$, which acts on certain
ternary Dyck vectors that begin with $00$ and end with $1$. 
Intuitively, this map plays the following role in the overall construction.
As shown in Figure~\ref{fig:antip},
we will apply $\Ant$ to the reduced representatives $S_j(V^*)$ of
the $\NU_1$-initial objects in $\tail_2(\mu^*)$ to determine
the $\NU_1$-initial objects in the antipodal part of $\C_{\mu}$.
To compute $\Ant(S)$, we must already know two
chains $\C_{\rho}$ and $\C_{\rho^*}$ with the opposite property,
where $\rho$ depends on $S$.

Let $S$ be a TDV of deficit $k$ of the form $S=00X1$. 
Then $S=0E1$ where $E=0X$ is a ternary Dyck vector. 
Let $L'=\len(S)=\mind([S])$, $D'=\dinv(S)$, and $A'=\area(S)$.
We compute $\Ant(S)$ as follows. Since $E$ is a TDV,
$[E]$ belongs to $\tail_2(\rho)$ for a unique partition $\rho$
(Theorem~\ref{thm:tail2}(a)).
If $\rho\not\in\mcP$, then $\Ant(S)$ is not defined.
If $\rho\in\mcP$, look up $\gamma=c_{\rho^*}(A'-1)$ in $\C_{\rho^*}$ 
($\gamma$ is the unique object in $\C_{\rho^*}$ with 
 $\dinv(\gamma)=A'-1$). Let $z=\qdvmap_{L'-2}(\gamma)$, 
and define $\Ant(S)=00z^+$.  Lemma~\ref{lem:Ant-map}(a) shows this 
construction makes sense, but first we look at some examples.

\begin{example}\label{ex:Ant}
Continuing our running example, we now compute $\Ant(S_j(V^*))$ and 
$\Ant(S_j(V))$ for $j=1,2,3$. First, consider $S=S_1(V^*)=0012112211$.
We have $S=0E1$ with $E=01211221$.
The Dyck class $[E]=[01211221]=[0100110]$ belongs to plateau 2 of 
$\tail(\ptn{22})\subseteq \C_{\ptn{22}}$ by Example~\ref{ex:plateau}(b).  
The Dyck class $[E]$ has $\mind\leq 8=\len(E)$, $\area_8=10$, 
and $\dinv=14$. Here, $\rho=\ptn{22}$, and the induction 
hypothesis~\eqref{eq:small-chains} provides us with $\rho^*=\ptn{31}$ 
and the chain $\C_{\ptn{31}}$. To continue,
we find the unique object $\gamma=c_{\ptn{31}}(10)$
in $\C_{\ptn{31}}$ with $\dinv=10$, which 
(by the opposite property) is guaranteed to have
$\mind\leq 8$ and $\area_8=14$. From~\eqref{eq:small-chains}, we find 
$\gamma=\ptn{6332}=[0121120]$. Then $z=\qdvmap_8(\gamma)=01232231$,
and $\Ant(S)=00z^+=0012343342$.  $\Ant(S)$ is a reduced Dyck vector with
$\len=10$, $\area=22=\dinv(S)$, and $\dinv=11=\area(S)$.

Second, consider $S=S_2(V^*)=0011221101$. 
Here $E=01122110$, $[E]$ belongs to plateau 3 of
$\tail(\ptn{22})\subseteq\C_{\ptn{22}}$ (Example~\ref{ex:plateau}(c)),
and $[E]$ has $\area_8=8$, $\dinv=16$, and $\mind\leq 8$.
For this $S$, we again have $\rho=\ptn{22}$ and $\rho^*=\ptn{31}$.
From~\eqref{eq:small-chains},
we look up $\gamma=c_{\ptn{31}}(8)=\ptn{642}=[0123210]$,
so $z=\qdvmap_8(\gamma)=[01234321]$.
Note that $\mind(\gamma)\leq 8$, $\dinv(\gamma)=8$, and $\area_8(\gamma)=16$.
Here, $\Ant(S)=0012345432$, which is a reduced Dyck vector
with $\mind=10$, $\area=24=\dinv(S)$, and $\dinv=9=\area(S)$.

Third, consider $S=S_3(V^*)=00111101011$.
Here $E=011110101$, $[E]$ belongs to plateau 4
of $\tail(\ptn{21})\subseteq \C_{\ptn{21}}$ (Example~\ref{ex:plateau}(a)),
and $[E]$ has $\area_9=6$, $\dinv=27$, and $\mind\leq 9$. 
For this $S$, we have $\rho=\ptn{21}$ 
and $\rho^*=\ptn{111}$. From~\eqref{eq:small-chains}, we find
$\gamma=c_{\ptn{111}}(6)=\ptn{441}=[012201]$ and 
$z=\qdvmap_{9}(\gamma)=012345534$.
So $\Ant(S)=00123456645$, which is a reduced Dyck vector with
$\mind=11$, $\area=36=\dinv(S)$, and $\dinv=7=\area(S)$.

We compute each $\Ant(S_j(V))$ similarly.
For $S=S_1(V)=0011211211$, we have $E=01121121$,
$[E]=[0010010]\in\tail(\ptn{31})$, $\rho=\ptn{31}$, $\rho^*=\ptn{22}$,
$\gamma=c_{\ptn{22}}(9)=\ptn{53221}=[000110]$, $z=01222332$,
and $\Ant(S)=0012333443$.
For $S=S_2(V)=00112111001$, we have $E=011211100$,
$[E]\in\tail(\ptn{3})$, $\rho=\ptn{3}=\rho^*$,
$\gamma=c_{\ptn{3}}(7)=\ptn{5221}=[011120]$, $z=012344453$,
and $\Ant(S)=00123455564$.
For $S=S_3(V)=001111001001$, we have $E=0111100100$,
$[E]\in\tail(\ptn{2})$, $\rho=\ptn{2}=\rho^*$,
$\gamma=c_{\ptn{2}}(5)=\ptn{43}=[01200]$, $z=0123456755$,
and $\Ant(S)=001234567866$.
\end{example}

\begin{lemma}\label{lem:Ant-map}
Let $S=00X1=0E1$ be a TDV of deficit $k$,
 length $L'$, area $A'$, and dinv $D'$, 
 such that $[E]\in\tail_2(\rho)$ and $\rho\in\mcP$.
\\ (a)~There exists a unique partition
$\gamma\in\C_{\rho^*}$ with $\dinv(\gamma)=A'-1$
and $\mind(\gamma)\leq L'-2$. So $\Ant(S)$ is well-defined.
\\ (b)~$\Ant(S)$ is a reduced Dyck vector with deficit $k$,
 length $L'$, area $D'$, and dinv $A'$.
\\ (c) The Dyck class $[\Ant(S)]$ is a $\NU_1$-initial object
 with $\mind([\Ant(S)])=L'$.
\end{lemma}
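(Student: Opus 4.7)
The plan is: prove (a) by matching monomials in the opposite-property identity $\Cat_{n,\rho^*}(t,q)=\Cat_{n,\rho}(q,t)$; then derive (b) by computing the statistics of $z=\qdvmap_{L'-2}(\gamma)$ and of $\Ant(S)=00z^+$ via Lemma~\ref{lem:semi-stats}; and finally derive (c) by reading off the leader and last symbol of the reduced representative $\Ant(S)$ and invoking Proposition~\ref{prop:NU-QDV}(b).

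For (a), I would first translate the given data on $S=0E1$ down to $E$. Since $E$ is ternary, Lemma~\ref{lem:stat-0w1} gives $\defc(E)=k-(L'-2)$, $\area(E)=A'-1$, and $\dinv(E)=D'-(L'-2)$; in particular $|\rho|=\defc(E)=k-L'+2$ because $[E]\in\tail_2(\rho)$ and all objects in $\tail_2(\rho)$ have deficit $|\rho|$. As $E$ is itself a Dyck vector of length $L'-2$ starting with $0$, it is exactly $\qdvmap_{L'-2}([E])$, so $\mind([E])\leq L'-2$ and $\area_{L'-2}([E])=A'-1$. The extra required structure~\ref{def:extra-str}(d) places $\tail_2(\rho)$ inside $\C_{\rho}$, and hence $[E]$ contributes the monomial $q^{A'-1}t^{D'-L'+2}$ to $\Cat_{L'-2,\rho}(q,t)$. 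Because $\rho\in\mcP$, the pair $\C_{\rho},\C_{\rho^*}$ satisfies the $amh$-hypotheses, so Theorem~\ref{thm:local-opp} yields $\Cat_{L'-2,\rho^*}(t,q)=\Cat_{L'-2,\rho}(q,t)$. Matching the same monomial on the left, and using that dinv values along $\C_{\rho^*}$ are distinct consecutive integers (basic required structure~\ref{def:basic-str}(b)), there is a unique $\gamma=c_{\rho^*}(A'-1)\in\C_{\rho^*}$ with $\dinv(\gamma)=A'-1$, $\area_{L'-2}(\gamma)=D'-L'+2$, and $\mind(\gamma)\leq L'-2$, which proves (a).

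For (b), set $z=\qdvmap_{L'-2}(\gamma)$: this is a Dyck vector of length $L'-2$ starting with $0$ with $\area(z)=D'-L'+2$, $\dinv(z)=A'-1$, and $\defc(z)=|\rho^*|=|\rho|=k-L'+2$. Then $\Ant(S)=00z^+$ is a Dyck vector, since $z^+$ consists of positive integers satisfying the step condition and the prefix $00$ extends to $z_1^+=1$ cleanly. Lemma~\ref{lem:semi-stats} applied to $v=00z^+$ gives
\[
\len(\Ant(S))=L',\quad \area(\Ant(S))=D',\quad \dinv(\Ant(S))=A',\quad \defc(\Ant(S))=k.
\]
For reducedness: the second symbol of $\Ant(S)$ is $0$, so if $\Ant(S)=0u^+$ for a shorter Dyck vector $u$, then $u_1=\Ant(S)_2-1=-1$, contradicting $u_1\geq 0$. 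Hence $\mind([\Ant(S)])=\len(\Ant(S))=L'$, giving (b) and the $\mind$-part of (c). For the remaining claim in (c), the leader of $\Ant(S)$ is $d=0$ (since $\Ant(S)_2=0$ prevents the prefix $01\cdots$ from extending past position $1$), while the last symbol is $z_{L'-2}^+\geq 1$; thus $d<$ last symbol, and Proposition~\ref{prop:NU-QDV}(b) confirms that $[\Ant(S)]$ is $\NU_1$-initial. I expect the main difficulty to lie in (a), where one must identify the correct monomial in $\Cat_{L'-2,\rho}(q,t)$ and match it to a unique member of $\C_{\rho^*}$ at the right dinv index; that entire step hinges on the assumption $\rho\in\mcP$ so that the opposite property is already available by induction. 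Parts (b) and (c) are then a direct bookkeeping exercise built on Lemmas~\ref{lem:semi-stats} and~\ref{lem:stat-0w1} and Proposition~\ref{prop:NU-QDV}(b).
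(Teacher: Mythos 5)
Your proposal is correct and follows essentially the same route as the paper: compute the statistics of $E$ from $S=0E1$, invoke the opposite property for $\C_{\rho},\C_{\rho^*}$ (available since $\rho\in\mcP$) to locate the unique $\gamma=c_{\rho^*}(A'-1)$ with $\mind(\gamma)\leq L'-2$, then transfer statistics to $\Ant(S)=00z^+$ via Lemma~\ref{lem:semi-stats} and conclude with Proposition~\ref{prop:NU-QDV}(b). Your explicit monomial-matching in $\Cat_{L'-2,\rho^*}(t,q)=\Cat_{L'-2,\rho}(q,t)$ and the reducedness check for $00z^+$ just spell out steps the paper leaves implicit.
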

\begin{proof}
We have $L'\geq 3$. By Lemma~\ref{lem:semi-stats}, 
$|\rho|=\defc(E)=k-(L'-2)<k$, $\dinv(E)=D'-(L'-2)$, and $\area(E)=A'-1$.
Because $\rho\in\mcP$, $\rho^*$ is defined, and we know
$\C_{\rho}$ and $\C_{\rho^*}$ satisfy the opposite property.
Now $[E]$ is an object in $\C_{\rho}$ with $\mind([E])\leq\len(E)=L'-2$,
$\dinv([E])=D'-(L'-2)$, and $\area_{L'-2}([E])=A'-1$.
So the opposite property guarantees the existence of a unique
$\gamma\in\C_{\rho^*}$, namely $\gamma=c_{\rho^*}(A'-1)$,
such that $\mind(\gamma)\leq L'-2$, $\dinv(\gamma)=A'-1=\area(E)$,
and $\area_{L'-2}(\gamma)=D'-(L'-2)=\dinv(E)$. 
So $z=\qdvmap_{L'-2}(\gamma)$ is a Dyck vector (not just a QDV)
with length $L'-2$, area $D'-(L'-2)$, and dinv $A'-1$.
Thus, $\Ant(S)=00z^+$ is a well-defined reduced Dyck vector
beginning with $00$. By Lemma~\ref{lem:semi-stats},
$\len(\Ant(S))=L'$, $\area(\Ant(S))=D'$, $\dinv(\Ant(S))=A'$,
and hence $\defc(\Ant(S))=k$.  Since $\Ant(S)=00z^+$ has leader $0$ 
and a positive final symbol, $[\Ant(S)]$ is a $\NU_1$-initial object by 
Proposition~\ref{prop:NU-QDV}(b).  
\end{proof}

In each computation from Example~\ref{ex:Ant},
the Dyck class $[E]$ always appeared in some $\tail(\rho)$,
not just in $\tail_2(\rho)$. Remark~\ref{rem:E-in-tail1} proves this
always happens for $S=S_j(V)$ or $S=S_j(V^*)$, which are the only
cases of interest below.  This fact lets us quickly compute $\rho$ 
from $S_j(V)$ using Theorem~\ref{thm:BDV-tail}, as illustrated
in Example~\ref{ex:Ant}. Each such $\rho$ is one of the needed
partitions for $\mu$.

\subsection{The Antipodal Parts of $\C_{\mu}$ and $\C_{\mu^*}$}
\label{subsec:antip}

Assume $I$ extends to $\mu$ and all needed partitions for $\mu$ are in $\mcP$.
Then we can define \emph{antipodal vectors} $A_j=\Ant(S_j(V))$ for 
$1\leq j\leq J$ and $A_j^*=\Ant(S_j(V^*))$ for $1\leq j\leq J^*$.
The \emph{antipodal part} of $\C_{\mu}$ consists of two-element
$\NU_1$-segments $[A_j^*],\NU_1([A_j^*])$, taken in order 
 from $j=J^*$ down to $j=1$ (see Figure~\ref{fig:antip}).
Similarly, the antipodal part of $\C_{\mu^*}$ consists of $\NU_1$-segments 
$[A_j],\NU_1([A_j])$, taken in order from $j=J$ down to $j=1$.
We check that everything works in Proposition~\ref{prop:antip-part}
after considering some examples.

\begin{example}\label{ex:antip-part}
Take $\mu=\ptn{531^4}$ and $\mu^*=\ptn{3321^4}$, so $J=J^*=3$.
In Example~\ref{ex:Ant}, we computed 
$A_1^*=0012343342$, $A_2^*=0012345432$, and $A_3^*=00123456645$.
The antipodal part of $\C_{\mu}$ is
\[ [A_3^*],\NU_1([A_3^*]),[A_2^*],\NU_1([A_2^*]),[A_1^*],\NU_1([A_1^*]), \]
where each $[A_j^*]$ is $\NU_1$-initial and
      each $\NU_1([A_j^*])$ is $\NU_1$-final.
The $\mind$-profile for the antipodal part of $\C_{\mu}$ is
\begin{equation}\label{eq:prof-A1}
\left[\begin{array}{rrr|rr|rr}
\dinv: & 7 & 8 & 9 & 10 & 11 & 12 \\
\mind: & 11 & 12 & 10 & 11 & 10 & 11
\end{array}\right].  
\end{equation}
This antipodal part ends at dinv index $12$,
while the bridge part of $\C_{\mu}$ starts at dinv index $A^*=13$.

From Example~\ref{ex:Ant}, we also have
$A_1=0012333443$, $A_2=00123455564$, and $A_3=001234567866$.
The antipodal part of $\C_{\mu^*}$ is
\[ [A_3],\NU_1([A_3]),[A_2],\NU_1([A_2]),[A_1],\NU_1([A_1]), \]
which consists of three two-element $\NU_1$-segments.
The $\mind$-profile for the antipodal part of $\C_{\mu^*}$ is
\begin{equation}\label{eq:prof-A2}
 \left[\begin{array}{rrr|rr|rr}
\dinv: & 6 & 7 & 8 & 9 & 10 & 11 \\
\mind: & 12 & 13 & 11 & 12 & 10 & 11
\end{array}\right].  
\end{equation}
This antipodal part ends at dinv index $11$,
while the bridge part of $\C_{\mu^*}$ starts at dinv index $A^{**}=A=12$.
\end{example}

\begin{proposition}\label{prop:antip-part}
Assume $I$ extends to $\mu$ and all needed partitions for $\mu$ are in $\mcP$.
\\ (a)~Each $A_j$ is a well-defined reduced Dyck vector
 with length $L_j$ that starts with $00$ and ends with a positive symbol.
 In fact, $A_j$ starts with $0012$ and contains a $3$.
\\ (b) Each $[A_j]$ is a $\NU_1$-initial object with $\mind=L_j$. 
Each $\NU_1([A_j])$ is a $\NU_1$-final object with $\mind=L_j+1$.
\\ (c) $\defc([A_j])=k$ and $\dinv([A_j])=A-2j$.
\\ (d) Parts (a), (b), and (c) are true replacing
 $A_j$ by $A_j^*$, $L_j$ by $L_j^*$, and $A$ by $A^*$.
\\ (e) The antipodal part of $\C_{\mu}$ is a sequence of
partitions in $\Def(k)$ indexed by consecutive dinv values from
$\ell(\mu^*)$ to $A^*-1$. The $\mind$-profile for the antipodal part of 
$\C_{\mu}$ is $L_{J^*}^*,L_{J^*}^*+1,\ldots,L_2^*,L_2^*+1,L_1^*,L_1^*+1$.
\\ (f) The antipodal part of $\C_{\mu^*}$ is a sequence of
partitions in $\Def(k)$ indexed by consecutive dinv values from
$\ell(\mu)$ to $A-1$.  The $\mind$-profile for the antipodal part of 
$\C_{\mu^*}$ is $L_J,L_J+1,\ldots,L_2,L_2+1,L_1,L_1+1$.  
\end{proposition}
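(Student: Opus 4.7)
The plan is to derive Proposition~\ref{prop:antip-part} from Lemma~\ref{lem:Ant-map} together with the structural facts about $S_j(V)$ and $S_j(V^*)$ already assembled in Proposition~\ref{prop:tail2}, plus the $\NU_1$-final criterion of Proposition~\ref{prop:NU-seg2}(b).

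First I would verify that Lemma~\ref{lem:Ant-map} applies to each $S_j(V)$ for $1\le j\le J$. By Proposition~\ref{prop:tail2}(c), $S_j(V)=00X1$ is a TDV of deficit $k$, and writing $S_j(V)=0E1$ with $E=0X$ ternary, the Dyck class $[E]$ belongs to $\tail_2(\rho)$ for some $\rho$; by Definition~\ref{def:needed} and the (claimed) Remark~\ref{rem:E-in-tail1} each such $\rho$ is a needed partition and therefore lies in $\mcP$ by hypothesis. Lemma~\ref{lem:Ant-map} then yields that $A_j$ is a well-defined reduced Dyck vector of length $L_j$ beginning with $00$, with deficit $k$ and $\dinv(A_j)=\area(S_j(V))=A-2j$ by Proposition~\ref{prop:tail2}(c); moreover, $[A_j]$ is $\NU_1$-initial with $\mind([A_j])=L_j$. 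This already establishes most of (a), all of (c), and the first sentence of (b). Once we know that $A_j$ additionally begins $0012$ and ends in a positive symbol, Proposition~\ref{prop:NU-seg2}(b) immediately delivers the second sentence of (b): $\NU_1([A_j])$ is $\NU_1$-final with $\mind=L_j+1$. Part (d) is the identical argument with $V$ replaced by $V^*$.

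The refined claim in (a) that $A_j$ starts with $0012$ and contains a $3$ is the main obstacle. Writing $A_j=00z^+$ where $z=\qdvmap_{L_j-2}(\gamma)$ and $\gamma=c_{\rho^*}(A-2j-1)$, this reduces to showing that the Dyck vector $z$ begins $01$ and contains a $2$. I would prove this in parallel with Lemma~\ref{lem:pre-bridge} by a quantitative argument on $\gamma$: the constraints $\dinv(\gamma)=A-2j-1$, $\defc(\gamma)=|\rho^*|<k$, and $\mind(\gamma)\le L_j-2$ preclude $\gamma$ from being so short that $z$ begins with $00$, and also force $\gamma_1$ to be large enough that some entry of $z$ equals $2$. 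Since these inequalities mirror those already used for the bridge part, I would defer the detailed verification to Section~\ref{sec:proofs} alongside the proof of Lemma~\ref{lem:pre-bridge}.

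Finally, for (e) and (f) I would assemble the two-element $\NU_1$-segments $[A_j^*],\NU_1([A_j^*])$ listed in decreasing order of $j$ from $j=J^*$ down to $j=1$. By (c) and (d) the dinv values form the consecutive pairs $(A^*-2j,A^*-2j+1)$, and concatenating over $j$ yields the consecutive range from $A^*-2J^*$ to $A^*-1$. Since $S_{J^*}(V^*)=0B_{\mu^*}$ by Proposition~\ref{prop:tail2}(c) and $\area(0B_{\mu^*})=\ell(\mu^*)$ by Proposition~\ref{prop:TI}(c), evaluating $\area(S_j(V^*))=A^*-2j$ at $j=J^*$ gives $A^*-2J^*=\ell(\mu^*)$, matching the claimed lower endpoint. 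Reading off the $\mind$-values from (b) in this same order produces the claimed profile $L_{J^*}^*,L_{J^*}^*+1,\ldots,L_1^*,L_1^*+1$, and part (f) follows from the symmetric computation with the roles of $\mu$ and $\mu^*$ swapped.
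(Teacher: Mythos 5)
Your treatment of parts (b)--(f), of part (c), and of the well-definedness portion of (a) matches the paper's proof exactly: Proposition~\ref{prop:tail2}(c) certifies $S_j(V)$ as valid input to $\Ant$, Lemma~\ref{lem:Ant-map} supplies length, deficit, dinv, and $\NU_1$-initiality, Proposition~\ref{prop:NU-seg2}(b) gives the $\NU_1$-final claim, and the endpoint computation $A^*-2J^*=\area(0B_{\mu^*})=\ell(\mu^*)$ via Proposition~\ref{prop:TI}(c) is precisely how the paper derives (e) and (f). The paper likewise postpones the claim that $A_j$ starts with $0012$ and contains a $3$ to \S\ref{subsec:prove-antip}, so the deferral itself is not the issue.

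The gap is in your sketch of how that deferred step would go. You propose a ``quantitative argument on $\gamma$'' from the three constraints $\dinv(\gamma)=A-2j-1$, $\defc(\gamma)=|\rho^*|<k$, and $\mind(\gamma)\le L_j-2$, mirroring Lemma~\ref{lem:pre-bridge}. These numerical constraints do not determine the shape of $z=\qdvmap_{L_j-2}(\gamma)$: one must actually locate $\gamma$ within the known chain $\C_{\rho^*}$, and the paper's proof does this by a mechanism that has no analogue in the bridge lemma. Concretely, for $\lfloor n_0/2\rfloor<j\le J$ the paper identifies which plateau of $\tail(\rho)$ contains $[E]$, counts the $s$ objects weakly following $[E]$ in that plateau, and then applies the opposite property $\Cat_{n,\rho^*}(t,q)=\Cat_{n,\rho}(q,t)$ \emph{twice}, at triangle heights $n=L_j-2$ and $n-1$, to pin $\gamma$ down as the $s$th smallest object of $\C_{\rho^*}^{\le n-1}$ and conclude $\mind(\gamma)\le L_j-3$; it must separately rule out $\gamma=[0^{L_j-3}]$ (which would make $z=01^{L_j-3}$, containing no $2$). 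Only the range $1\le j\le\lfloor n_0/2\rfloor$ genuinely reduces to the bridge machinery (Lemma~\ref{lem:mind-before-w} applied to $\mu^*$), and even that reduction needs $A\le D^*$, i.e.\ Lemma~\ref{lem:D>=A*} for $\mu^*$. As written, your plan for the central claim of (a) would not go through.
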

\begin{proof}
Proposition~\ref{prop:tail2}(c) shows that each $S_j(V)$ is a valid
input to $\Ant$. The first sentence of part~(a) follows from the definition
of $\Ant$. The proof that $A_j$ must start with $0012$ and contain a $3$
is rather technical and is postponed to~\S\ref{subsec:prove-antip}.
Part~(b) follows from part~(a) and Proposition~\ref{prop:NU-seg2}(b).
For part~(c), Lemma~\ref{lem:Ant-map}(b) and Proposition~\ref{prop:tail2}(c) 
imply $\defc(A_j)=k$ and $\dinv(A_j)=\area(S_j)=A-2j$.
Part~(d) follows by replacing $\mu$ (and associated quantities) by $\mu^*$. 
By Proposition~\ref{prop:TI}(c), $S_{J^*}(V^*)=0B_{\mu^*}$ 
has area $\ell(\mu^*)$. Part~(e) follows from this observation and 
part (c). Similarly, part~(f) follows from~(d) and the formula
$\area(S_J(V))=\area(0B_{\mu})=\ell(\mu)$.
\end{proof}

\subsection{Proofs of the Main Theorems}
\label{subsec:prove-main-thm}

We can now complete the proof of Theorem~\ref{thm:extend-mu}.
Assume $\mu$ satisfies the hypotheses of that theorem.
Then we can build the chains $\C_{\mu}$ and $\C_{\mu^*}$
by combining the tail parts, bridge parts, and antipodal parts
described in the previous subsections. We need only verify that
these chains have basic required structure, local required structure,
extra required structure, and satisfy the $amh$-hypotheses.

The antipodal part of $\C_{\mu}$ consists of partitions with
deficit $k=|\mu|$ and consecutive dinv values from $\ell(\mu^*)$
to $A^*-1$ (Proposition~\ref{prop:antip-part}(c) and (e)).
The bridge part of $\C_{\mu}$ consists of partitions with deficit $k$ and 
consecutive dinv values from $A^*$ to $D-1$ (Proposition~\ref{prop:bridge}).
The tail part of $\C_{\mu}$ is $\tail_2(\mu)$, which consists of
partitions with deficit $k$ and dinv values from $D=\dinv(\TI_2(\mu))$ onward.
Analogous statements hold for $\C_{\mu^*}$. These comments prove
the basic structure conditions~\ref{def:basic-str}(a) through (d)
and extra condition~\ref{def:extra-str}(d). In~\S\ref{subsec:prove-disj},
we prove that $\C_{\mu}$ and $\C_{\mu^*}$ are disjoint (have no terms in 
common) when $\mu\neq\mu^*$.

Next, we show that the $h$-vector for $\C_{\mu}$ is
\begin{equation}\label{eq:hvec}
 (L^*_{J^*},\ldots,L_2^*,L_1^*,L^{(D-A^*)/2},L_0,L_1,L_2,\ldots,L_J). 
\end{equation}
The ascending runs for the $\mind$-profile within the three parts of
$\C_{\mu}$ are given in Propositions~\ref{prop:bridge},~\ref{prop:tail2}, 
and~\ref{prop:antip-part},
but we must still check that a new ascending run begins at the start
of the bridge part and the tail part. Since $L_1^*=L=L_0$, the last 
objects in the antipodal part and the bridge part (if nonempty)
both have $\mind=L+1$, while the first objects in the bridge part
(if nonempty) and the tail part have $\mind=L$. So~\eqref{eq:hvec} holds.
The last entry in the $h$-vector comes from $[S_J(V)]=\TI(\mu)$,
so the last entry in the $a$-vector is $a_N=\dinv(\TI(\mu))$, verifying 
local condition~\ref{def:local-str}(a). Condition~\ref{def:local-str}(b)
follows from the $\mind$-profiles computed in
Propositions~\ref{prop:bridge},~\ref{prop:tail2}, and~\ref{prop:antip-part},
which also show that the $m$-vector for $\C_{\mu}$ has all entries $0$.

By analogous reasoning, the $h$-vector for $\C_{\mu^*}$ is
\[ (L_J,\ldots,L_2,L_1,L^{(D^*-A)/2},L_0^*,L_1^*,L_2^*,\ldots,L^*_{J^*}). \]
Since $(D-A^*)/2=(D^*-A)/2$ and $L_0=L=L_0^*$, the two $h$-vectors
are reversals of each other and $amh$-hypothesis~\ref{def:amh-hyp}(a) holds.
Now that we know the $amh$-vectors for $\C_{\mu}$ and $\C_{\mu^*}$ all
have the same length $N$, \ref{def:amh-hyp}(b) follows 
since both $m$-vectors are $0^N$.

To check~\ref{def:amh-hyp}(c),
we look at cases based on the Dyck class $[v]$ in $\C_{\mu}$
with $\dinv(v)=a_i$. By our determination of the $h$-vector of $\C_{\mu}$,
$[v]$ is one of the $\NU_1$-initial objects in $\C_{\mu}$. 
Recall that $\dinv(v)+\defc(v)+\area(v)=\binom{\len(v)}{2}$ 
holds for all Dyck vectors $v$.  
First consider the tail case where $v=S_j(\mu)$ for some $j$ between
$1$ and $J$.  Then $a_i=\dinv(v)$, $m_i=0$, $k=\defc(v)$,
$a^*_{N+1-i}=\dinv(\Ant(S_j(\mu)))=\area(S_j(\mu))=\area(v)$,
and $h_i=\mind(v)=\len(v)$. So~\ref{def:amh-hyp}(c) holds.
In the special case where $v=S_0(\mu)=V$, 
we have $a_i=\dinv(V)=D$, $m_i=0$, $k=\defc(V)$, and $h_i=L=\len(V)$.
If the bridge parts are nonempty, then $a_{N+1-i}^*$ is the dinv index
of the first object in the bridge of $\C_{\mu^*}$, namely $A=\area(V)$.
If the bridge parts are empty, then $a_{N+1-i}^*$ is the dinv index
of $S_0(\mu^*)=V^*$, namely $D^*$, but $D^*=A$ since the bridge is empty.
In all these situations, \ref{def:amh-hyp}(c) holds since $D+k+A=\binom{L}{2}$
by definition of $\defc(V)$.

Next consider the bridge case where $v=M_{D-2j}(\mu)$ for some $j>0$.
Then $a_i=\dinv(v)=D-2j$, $m_i=0$, $k=\defc(v)$, and $h_i=L$.
We find $a^*_{N+1-i}=A+2j$ by counting up from the beginning
of the bridge part of $\C_{\mu^*}$ (Proposition~\ref{prop:bridge}).
Again,~\ref{def:amh-hyp}(c) holds since $(D-2j)+0+k+(A+2j)=D+k+A=\binom{L}{2}$.

Next consider the antipodal case where 
$v=\Ant(S_j(\mu^*))$ for some $j$ between $1$ and $J^*$.
Then $a_i=\dinv(v)=\area(S_j(\mu^*))$, $m_i=0$, $k=\defc(v)=\defc(S_j(\mu^*))$, 
$a_{N+1-i}^*=\dinv(S_j(\mu^*))$, and $h_i=\mind(v)=L_j^*=\len(S_j(\mu^*))$.  
So~\ref{def:amh-hyp}(c) holds here, as well.
The opposite property for $\C_{\mu}$ and $\C_{\mu^*}$
now follows from Theorem~\ref{thm:local-opp}.

To finish the proof, 
we check the first three extra structural conditions for $\C_{\mu}$.
Condition~\ref{def:extra-str}(a) follows from~\eqref{eq:hvec} and the 
fact (Proposition~\ref{prop:tail2}(b)) that the sequences $(L_j)$
and $(L_j^*)$ are weakly increasing with $L_0=L_0^*=L$.
Similarly, Proposition~\ref{prop:tail2}(b) implies
condition~\ref{def:extra-str}(b) for the ascending runs in $\tail_2(\mu)$.
Condition~(b) is immediate for the runs in the
antipodal and bridge parts, which all have length $2$ and $\mind$-profiles
of the form $L',L'+1$. Finally, condition~\ref{def:extra-str}(c) 
follows from Theorem~\ref{thm:BDV-tail} for $\tail(\mu)$,
Theorem~\ref{thm:NU2-chains}(e) for the rest of $\tail_2(\mu)$,
and Proposition~\ref{prop:NU-seg2}(b) for the antipodal and bridge parts.  

Now, Theorem~\ref{thm:extend-all} follows readily from
Theorem~\ref{thm:extend-mu} using induction on $k$. 
The only condition not already checked is~\ref{def:chain-coll}(e),
which requires the chains in all the new chain collections
to be pairwise disjoint. We prove this (for each value of $k$
in the inductive construction) in~\S\ref{subsec:prove-disj}.
Since objects in chains for different values of $k$ have different deficits,
the final collection~$\CC'$ also satisfies~\ref{def:chain-coll}(e).

\section{The Remaining Proofs}
\label{sec:proofs}

\subsection{Proof of Lemma~\ref{lem:D>=A*}.}
\label{subsec:exceptional}

We use the notation $\mu$, $\lambda$, $L$, $D$, $A^*$ introduced 
in~\S\ref{subsec:flag-invo}.  
Assume $D<A^*$, so $D-A^*$ is a negative even integer by 
Lemma~\ref{lem:check-mu*}.
We first show that $L\in\{6,7,8\}$ and $\lambda=\ptn{0}$.
Using~\eqref{eq:TI2-stats}, we have
\[ D-A^*\geq \binom{L}{2}-3L-|\lambda|+\lambda_1+7-(2L-\lambda_1^*-5). \]
Now $L\geq |\lambda|+6\geq 6$ since $\mu$ is a flagpole partition. 
Using this inequality to eliminate $-|\lambda|$, we find
\[ D-A^*\geq \binom{L}{2}-5L-L+6+\lambda_1+12+\lambda_1^*
          =p(L)+\lambda_1+\lambda_1^*, \]
where $p(L)=(L^2-13L+36)/2$. Now $p(6)=p(7)=-3$, $p(8)=-2$,
and $p(L)\geq 0$ for all $L\geq 9$. So $L$ must be $6$, $7$, or $8$.
Suppose, to get a contradiction, that $\lambda\neq\ptn{0}$.
Then $\lambda^*\neq\ptn{0}$, so $\lambda_1+\lambda_1^*\geq 2$,
so $D-A^*\geq -3+2=-1$. This is impossible, since $D-A^*$ is negative and even.
Thus, $\lambda=\ptn{0}$.  

We now know that the assumption $D<A^*$ is possible only if
$\TI_2(\mu)=[V]$, where $V$ is one of the six vectors
$v(\ptn{0},a,\epsilon)=0012^{a-\epsilon}1^{\epsilon}$ 
with $a\in\{3,4,5\}$ and $\epsilon\in\{0,1\}$. The following
table computes $\mu$, $D$, $A$, $\mu^*$, and $A^*$ 
for each such $V$.

\begin{center}
\begin{tabular}{l|c|c|c|c|c}
$V$ & $\mu$ & $D$ & $A$ & $\mu^*$ & $A^*$ \\\hline
$001222$ & $\ptn{21^2}$   & 4 & 7 & $\ptn{1^4}$ & 6 \\
$001221$ & $\ptn{1^4}$    & 5 & 6 & $\ptn{21^2}$& 7 \\\hline
$0012222$ & $\ptn{1^5}$   & 7 & 9 & $\ptn{1^5}$ & 9 \\
$0012221$ & $\ptn{21^3}$  & 8 & 8 & $\ptn{21^3}$ & 8 \\\hline
$00122222$ & $\ptn{21^4}$ & 11& 11& $\ptn{21^4}$ & 11 \\
$00122221$ & $\ptn{1^6}$  & 12& 10& $\ptn{1^6}$ & 10 \\
\end{tabular}
\end{center}

We see that $D<A^*$ occurs in the first three rows only.
But these cases are ruled out because $|\mu|\geq 6$
by Definition~\ref{def:chain-coll}(a).

\subsection{Proof of Lemma~\ref{lem:pre-bridge}.}
\label{subsec:prove-bridge}

We must prove that for all $i\in\{A^*,A^*+2,\ldots,D-4,D-2\}$, 
$\gamma=c_{\lambda}(i-1)$ exists, $\mind(\gamma)\leq L-2$,
and $z=\qdvmap_{L-2}(\gamma)$ starts with $01$ and contains a $2$. 
When $\mind(\gamma)\leq L-3$, the conclusion about $z$ follows
if $\qdvmap_{L-3}(\gamma)$ contains a $1$, 
or equivalently $\gamma\neq [0^{L-3}]$.

Since $\lambda\in\mcP$,
the chain $\C_{\lambda}$ already exists and starts at dinv index
$\ell(\lambda^*)$. To show that $c_{\lambda}(i-1)$ exists
for all $i$ in the given range, it suffices to prove 
$A^*-1\geq\ell(\lambda^*)$. Using~\eqref{eq:TI2-stats}
and the bounds $L\geq |\lambda|+6=|\lambda^*|+6$,
$|\lambda^*|\geq \lambda_1^*$, and $|\lambda^*|\geq\ell(\lambda^*)$, 
we have the stronger bound
\[ A^*-1\geq 2L-\lambda_1^*-7\geq 2|\lambda^*|+5-\lambda_1^*
 \geq \ell(\lambda^*)+5. \]

Recall that $\TI_2(\mu)=[v(\lambda,a,\epsilon)]$
where $v(\lambda,a,\epsilon)=0012^{a-\epsilon}B_{\lambda}^+1^{\epsilon}$
has length $L$ and dinv $D$. By Lemma~\ref{lem:semi-stats} and
Theorem~\ref{thm:BDV-tail}(b), $w=01^{a-\epsilon}B_{\lambda}0^{\epsilon}$
has dinv $D-1$, has length $L-2$, and belongs to plateau $a$
of $\tail(\lambda)\subseteq\C_{\lambda}$. 
This means that $c_{\lambda}(D-1)=[w]$.
Every object $c_{\lambda}(i-1)$ considered here is a
partition appearing in the chain $\C_{\lambda}$ 
an even positive number of steps before $[w]$. 
So the needed conclusion follows from the next lemma.

\begin{lemma}\label{lem:mind-before-w}
Assume $I$ extends to $\mu$ and $\lambda=\ftype(\mu)$.
Let $\gamma$ be any partition in the chain $\C_{\lambda}$ at least
two steps before $[w]=c_{\lambda}(D-1)$, 
where $w=01^{a-\epsilon}B_{\lambda}0^{\epsilon}$.
Then $\gamma$ satisfies one of these conditions: 
(a)~$\mind(\gamma)\leq L-3$ and $\gamma\neq[0^{L-3}]$; 
(b)~$\mind(\gamma)=L-2$ and $z=\qdvmap_{L-2}(\gamma)$ starts with $01$
 and contains a $2$.
\end{lemma}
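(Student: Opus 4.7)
The plan is to partition the positions at least two steps before $[w]$ in $\C_\lambda$ into three regions and handle each separately. Since $[w]=c_\lambda(D-1)$ lies in plateau $a$ of $\tail(\lambda)$ with $\mind([w])=L-2$---this follows from Lemma~\ref{lem:semi-stats}, Theorem~\ref{thm:BDV-tail}(b), and the identity $L=\lambda_1+\ell(\lambda)+a+3$ noted in Remark~\ref{rem:flag-init}---the three regions are: (I) $\gamma$ in some plateau $b<a$ of $\tail(\lambda)$; (II) $\gamma$ in plateau $a$ itself, at least two steps before $[w]$; and (III) $\gamma\in\C_\lambda\setminus\tail(\lambda)$.

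Region~(I) will follow immediately from Theorem~\ref{thm:tail-profile}: $\mind(\gamma)=\lambda_1+\ell(\lambda)+1+b\leq\lambda_1+\ell(\lambda)+a=L-3$, so condition~(a) applies. In Region~(II), I will first observe that Theorem~\ref{thm:BDV-tail} lists the type-(b) objects of plateau $a$ as $[01^{a-b}B_\lambda 0^b]$, $b=0,\ldots,a$, in order of increasing dinv; among these at most one precedes $[w]=[01^{a-\epsilon}B_\lambda 0^\epsilon]$ (namely $[01^aB_\lambda]$, and only when $\epsilon=1$), and that single element lies only one step before $[w]$. Hence every $\gamma$ in Region~(II) must be a type-(a) object with reduced representative $01Z^+1^{a-1}Y$ for a nontrivial split $YZ=B_\lambda$. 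Because $B_\lambda$ ends in $1$ by Definition~\ref{def:TI-tail}, so does its suffix $Z$, and therefore $Z^+$ ends in $2$; the reduced vector has length $L-2$, begins with $01$, and contains a $2$, verifying condition~(b).

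For Region~(III), I will invoke the extra required structure of $\C_\lambda$. Condition~\ref{def:extra-str}(a) forces the $h$-vector to be weakly decreasing then weakly increasing, so $\max(h_i,h_{i+1})\leq\max(h_1,h_N)$ for all $i<N$, and condition~\ref{def:extra-str}(b) then yields $\mind(\gamma)\leq 1+\max(h_1,h_N)$ for $\gamma$ in any ascending run other than the tail. Local condition~\ref{def:local-str}(a) identifies $h_N=\mind(\TI(\lambda))=\lambda_1+\ell(\lambda)+1$, and applying the $amh$-reversal~\ref{def:amh-hyp}(a) together with the analogous local condition for $\C_{\lambda^*}$ gives $h_1=\mind(\TI(\lambda^*))=\lambda_1^*+\ell(\lambda^*)+1$. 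The elementary inequality $\rho_1+\ell(\rho)\leq|\rho|+1$ for any nonzero partition $\rho$, combined with $|\lambda^*|=|\lambda|$ and the flagpole bound $L\geq|\lambda|+6$, then produces $\mind(\gamma)\leq|\lambda|+3\leq L-3$, again putting us in case~(a).

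In each branch where condition~(a) is invoked, the exclusion $\gamma\neq[0^{L-3}]$ will be automatic whenever $\lambda\neq\ptn{0}$, since $\defc(\gamma)=|\lambda|>0$ blocks $\gamma=\ptn{0}=[0^{L-3}]$. The main obstacle will be the edge case $\lambda=\ptn{0}$, in which the leading entry $\ptn{0}$ of $\C_{\ptn{0}}$ genuinely satisfies neither (a) nor (b) as stated. I would dispose of this by noting that Lemma~\ref{lem:mind-before-w} is used only through Lemma~\ref{lem:pre-bridge}, where the dinv index is bounded below by $A^*-1$; the estimate $A^*=2L-\lambda_1^*-5-\epsilon^*\geq L\geq 6$ from Remark~\ref{rem:flag-init} and the flagpole inequality then force $\dinv(\gamma)\geq 5$, ruling out $\gamma=\ptn{0}$ outright.
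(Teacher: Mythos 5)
Your three-region decomposition (earlier plateaus of $\tail(\lambda)$, plateau of $[w]$ itself, and the non-tail part of $\C_{\lambda}$) is the same as the paper's, and your handling of Regions I--III for $\lambda\neq\ptn{0}$ is correct and matches the paper's argument, including the use of~\ref{def:extra-str}(a),(b), \ref{def:local-str}(a), and~\ref{def:amh-hyp}(a) to bound $\mind(\gamma)$ by $|\lambda|+3$ in Region III, and the observation that at most one type-(b) object of plateau $a$ precedes $[w]$ and only by one step.

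The genuine gap is in your last paragraph, and it stems from misidentifying the Dyck class $[0^{L-3}]$. The vector $0^{L-3}$ begins with $00$, so it is already reduced; the class $[0^{L-3}]$ is \emph{not} the zero partition but the staircase $\Delta_{L-3}=\ptn{L-4,L-5,\ldots,1}$, which has $\defc=0$, $\mind=L-3$, and $\dinv=\binom{L-3}{2}$. Consequently: (i) your claim that the leading entry $\ptn{0}$ of $\C_{\ptn{0}}$ ``satisfies neither (a) nor (b)'' is false --- $\ptn{0}=[0]$ has $\mind=1\leq L-3$ and is distinct from $[0^{L-3}]$, so it satisfies (a); and (ii) the object that genuinely fails both (a) and (b) when $\lambda=\ptn{0}$ is $[0^{L-3}]=\Delta_{L-3}$, which your dinv lower bound $\dinv(\gamma)\geq 5$ does not exclude, since $\dinv(\Delta_{L-3})=\binom{L-3}{2}$ is large. (Your deficit argument for $\lambda\neq\ptn{0}$ is fine, because $\defc([0^{L-3}])=0<|\lambda|$, even though the identification with $\ptn{0}$ is wrong.) The correct way to dispose of $\lambda=\ptn{0}$ --- which is what the paper does --- is to locate $[0^{L-3}]=[0^a]=[01^a]$ inside $\tail(\ptn{0})$ using the special case of Theorem~\ref{thm:BDV-tail}: when $\epsilon=0$ it equals $[w]$ itself, and when $\epsilon=1$ it is the \emph{immediate predecessor} of $[w]=[01^{a-1}0]$, hence only one step before $[w]$. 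In both cases the hypothesis ``at least two steps before $[w]$'' excludes it, and every earlier $\gamma$ in $\tail(\ptn{0})$ then satisfies (a). Note also that your Region II analysis via the type-(a)/type-(b) split of Theorem~\ref{thm:BDV-tail} does not apply verbatim to $\lambda=\ptn{0}$ (that theorem has a separate clause for $\mu=\ptn{0}$ with no type-(a) objects), so the $\lambda=\ptn{0}$ case really does need the separate treatment just described rather than a dinv bound.
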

\begin{proof}
First consider the case where $\gamma$ is not in $\tail(\lambda)$.
Let $\lambda$ have $h$-vector $(h_1,\ldots,h_N)$.
For some $j<N$, $\mind(\gamma)$ is a value in the $j$th ascending run
of the $\mind$-profile for $\C_{\lambda}$.
By extra structure conditions~\ref{def:extra-str}(a) and~(b), 
we deduce $\mind(\gamma)\leq 1+\max(h_1,h_N)$. 
Since the last ascending run of the $\mind$-profile corresponds
to $\tail(\lambda)$, we have $h_N=\mind(\TI(\lambda))$.
By hypothesis~\ref{def:amh-hyp}(a), $h_1$ is the last entry in the $h$-vector
for $\lambda^*$, so that $h_1=\mind(\TI(\lambda^*))$.
Proposition~\ref{prop:TI}(a) shows $\mind(\TI(\lambda))
 =\lambda_1+\ell(\lambda)+1\leq |\lambda|+2$, and similarly 
 $\mind(\TI(\lambda^*))\leq |\lambda^*|+2=|\lambda|+2$.
Since we know $|\lambda|+6\leq L$, we finally get
\begin{equation}\label{eq:mind-est}
 \mind(\gamma)\leq 1+\max(\mind(\TI(\lambda^*)),\mind(\TI(\lambda)))
 \leq |\lambda|+3\leq L-3. 
\end{equation}
Here, $\gamma$ cannot be $[0^{L-3}]$, since $[0^{L-3}]$ appears only
in $\tail(\ptn{0})$ by Theorem~\ref{thm:BDV-tail},
and $\gamma$ is in $\C_{\lambda}$ outside $\tail(\lambda)$.
So (a) holds.
 
Next consider the case where $\gamma$ is in $\tail(\lambda)$ 
and $\lambda\neq\ptn{0}$ 
(so that $\gamma\neq [0^{L-3}]$ and $w$ must be reduced).
Since $\mind$ values weakly increase as we move forward through the tail
(Theorem~\ref{thm:tail-profile}), we have
$\mind(\gamma)\leq \mind([w])=\len(w)=L-2$.  
If $\gamma$ appears in the tail before plateau $a$, 
then $\mind(\gamma)\leq L-3$, so (a) holds. Suppose $\gamma$ appears
in plateau $a$ before $[w]$, so that $\mind(\gamma)=L-2$. 
Because $\epsilon$ is $0$ or $1$, $[w]$ is the first or second Dyck
class listed in Theorem~\ref{thm:BDV-tail}(b). Since $\gamma$
precedes $[w]$ in the tail by at least $2$, $\gamma$ must be
one of the Dyck classes listed in Theorem~\ref{thm:BDV-tail}(a).
Then $z$ is one of the reduced vectors listed there,
which all begin with $01$ and contain a $2$ since
$B_{\lambda}$ begins with $0$ and ends with $1$. So (b) holds.

Finally, consider the two special cases where $\gamma\in\tail(\ptn{0})$.
If $\epsilon=0$, then $[w]=[01^a]=[0^a]=[0^{L-3}]$. Since $\gamma$
appears before $[w]$ in the tail, $\mind(\gamma)\leq L-3$ and
$\gamma$ is not $[0^{L-3}]=[w]$. So (a) holds.
If $\epsilon=1$, then $w=01^{a-1}0$ has length $L-2$, 
$[w]$ is the first object in plateau $a$ of $\tail(\ptn{0})$,
and the immediate predecessor of $[w]$ is $[01^a]=[0^a]=[0^{L-3}]$
(Theorem~\ref{thm:BDV-tail}). 
Because $\gamma$ precedes $[w]$ by at least $2$, $\mind(\gamma)\leq L-3$
and $\gamma\neq [0^{L-3}]$. So (a) holds.
\end{proof}

\begin{remark}\label{rem:MD-is-V}
We now show that $M_D(\mu)=V$ using the definition of $M_i(\mu)$
from~\S\ref{subsec:bridge}. We saw above that $c_{\lambda}(D-1)=[w]$
where $w=01^{a-\epsilon}B_{\lambda}0^{\epsilon}$ has length $L-2$.
So $M_D(\mu)=00w^+=v(\lambda,a,\epsilon)=V$.  
\end{remark}

\subsection{Proof of Proposition~\ref{prop:antip-part}(a).}
\label{subsec:prove-antip}

We must prove that each $A_j=\Ant(S_j(V))$ 
starts with $0012$ and contains a $3$. Fix $j$ between $1$ and $J$,
and write $S_j(V)=0E1$. As in~\S\ref{subsec:Ant},
let $\rho$ be the unique partition with $[E]$ in $\tail_2(\rho)$, 
let $\gamma$ be the unique object in $\C_{\rho^*}$ with
 $\dinv(\gamma)=\area(E)=A-2j-1$ (see Propostion~\ref{prop:tail2}(c)),
and let $z=\qdvmap_{L_j-2}(\gamma)$. Since $A_j=00z^+$, it suffices to prove: 
either $\mind(\gamma)\leq L_j-3$ and $\gamma\neq [0^{L_j-3}]$; 
or else $\mind(\gamma)=L_j-2$, $z$ starts with $01$, and $z$ contains a $2$.
Recall $S_0(\mu)=V=0012^{n_0}B_{\lambda}^+1^{\epsilon}$
where $n_0=a-\epsilon\geq 1$, $\lambda=\ftype(\mu)$, 
and $\epsilon$ is $0$ or $1$.

\emph{Case~1:} Assume $1\leq j\leq \lfloor n_0/2\rfloor$.
Here $S_j(V)=0012^{n_0-2j}B_{\lambda}^+1^{\epsilon+2j}$ 
by~\eqref{eq:tail2-init}, so $L_j=\len(S_j(V))=\len(V)=L$.
Also, $E=012^{n_0-2j}B_{\lambda}^+1^{\epsilon+2j-1}$ 
has length $L-2$ and area $A-2j-1$.
We see that $[E]=[01^{n_0-2j}B_{\lambda}0^{\epsilon+2j-1}]$
belongs to $\tail(\lambda)$ by Theorem~\ref{thm:BDV-tail}(b).
Now, $\gamma=c_{\lambda^*}(A-2j-1)$ is an object in $\C_{\lambda^*}$
that is at least two steps before $c_{\lambda^*}(D^*-1)$,
since $j>0$ and $A\leq D^*$ (by Lemma~\ref{lem:D>=A*} for $\mu^*$).
The required conclusions now follow
from Lemma~\ref{lem:mind-before-w} (applied to $\mu^*$
and $\lambda^*=\ftype(\mu^*)$), recalling from~\S\ref{subsec:flag-invo}
that $V$ and $V^*$ both have length $L$.  

\emph{Case~2:}
Assume $\lfloor n_0/2\rfloor < j < J$. 
The description of $S_j(\mu)$ in Theorem~\ref{thm:NU2-chains}(d) shows
that $E=01X^+2W$ for some binary vectors $X$ and $W$.  
$W$ must contain a $0$ since for these $j$, the value of $i$
in~\eqref{eq:loop-iter} and~\eqref{eq:tail2-init} must be at least $1$.
So $E$ is reduced. We further claim that $W$ starts with $1^{c-1}0$
for some $c\geq 2$. Formulas~\eqref{eq:loop-iter} and~\eqref{eq:tail2-init}
show that the last $2$ in $E$ is followed by 
$1^{p_i+\epsilon+2\lfloor n_0/2\rfloor}$.
If $\epsilon=1$, then this string of $1$s is nonempty.
If $\epsilon=0$, then $n_0/2=a/2\geq 1$ (since $a\geq 2$),
and again the string of $1$s is nonempty. Theorem~\ref{thm:BDV-tail}(a)
now shows that $[E]$ belongs to the $c$th plateau of $\tail(\rho)$ 
and that $\rho\neq\ptn{0}$. Let $s$ be the number of objects
in this plateau weakly following $[E]$, and let $n_0=\mind(\TI(\rho))
=\rho_1+\ell(\rho)+1$. By Theorem~\ref{thm:tail-profile}, $s\leq n_0+c-1$.
Since $E$ is reduced, $L_j-2=\len(E)=\mind([E])=n_0+c$.

Because $\rho$ is a needed partition for $\mu$, we know $\rho\in\mcP$,
so $\C_{\rho}$ and $\C_{\rho^*}$ satisfy the opposite property. 
For each $n>0$, let $\C_{\rho}^{\leq n}$ be the finite set of $\gamma$ in 
$\C_{\rho}$ with $\mind(\gamma)\leq n$; define $\C_{\rho^*}^{\leq n}$ 
similarly. We use dinv to order these sets, so ``the second largest
object in $\C_{\rho}^{\leq n}$'' refers to the object with the
second largest dinv value.

Take $n=n_0+c$. The $c$th plateau of $\tail(\rho)$ consists 
of objects with $\mind=n$, while objects in all later plateaus
have $\mind>n$. So $[E]$ is the $s$th largest object in
$\C_{\rho}^{\leq n}$, and the $s$ largest objects have consecutive
dinv values. Recall from the proof of Lemma~\ref{lem:Ant-map}
that $\gamma$ is obtained from $[E]$ by invoking the opposite property 
$\Cat_{n,\rho^*}(t,q)=\Cat_{n,\rho}(q,t)$ for this value of $n$
(namely $n=L_j-2=n_0+c$).
Thus $\gamma$ must be the $s$th smallest object in $\C_{\rho^*}^{\leq n}$,
where the $s$ smallest objects have consecutive dinv values.
Hypothesis~\ref{def:amh-hyp}(a) shows
that the smallest object in $\C_{\rho^*}$, namely $\delta
=c_{\rho^*}(\ell(\rho))$, has $\mind(\delta)=\mind(\TI(\rho))=n_0\leq n$. 
Therefore $\gamma$ must be $c_{\rho^*}(\ell(\rho)+s-1)$.

Now we prove $\mind(\gamma)\leq L_j-3$.
Apply the opposite property again, with $n-1$ instead of $n$. 
The largest objects in $\C_{\rho}^{\leq n-1}$ are the objects in plateaus $0$ 
through $c-1$ of $\tail(\rho)$, which have consecutive dinv values. 
Because $c\geq 2$ and $s\leq n_0+c-1$, there are at least $s$ such objects 
(plateau $0$ contributes $1$ and plateau $c-1$ contributes $n_0+c-2$).
So the $s$ smallest objects in $\C_{\rho^*}^{\leq n-1}$ have
consecutive dinv values. Once again, the smallest object in $\C_{\rho^*}$,
namely $\delta$, has $\mind(\delta)=n_0\leq n-1$. So the $s$th smallest object 
in $\C_{\rho^*}^{\leq n-1}$ is $c_{\rho^*}(\ell(\rho)+s-1)=\gamma$. 
Thus $\mind(\gamma)\leq n-1=L_j-3$, as needed.
Now $\rho^*\neq\ptn{0}$ since $\rho\neq\ptn{0}$,
and $[0^{L_j-3}]\in\tail(\ptn{0})\subseteq \C_{\ptn{0}}$.
So $\gamma\neq [0^{L_j-3}]$ since these objects belong to different 
$\NU_1$-tails, which are pairwise disjoint.

\emph{Case~3:}
Assume $j=J$, so $E$ is $B_{\mu}$ with its final $1$ removed.  
First assume $\mu\neq\ptn{1^k}$, so $B_{\mu}$ contains two $0$s and
$E$ is reduced. By~\eqref{eq:loop-end}, $E$ is a binary Dyck
vector beginning with $01^c0$, where 
$c=p_{r+1}+\epsilon+2\lfloor n_0/2\rfloor\geq 1$.
We claim that either $c\geq 2$, or $c=1$ and $E$ ends in $0$. 
This holds because $a\geq 2$ and $c=1$ imply $n_0=a-\epsilon<2$,
$a=2$, $\epsilon=1$, $p_{r+1}=0$, $n_r$ is odd, $0B_{\mu}$
given in~\eqref{eq:loop-end} ends in $01$, and so $E$ ends in $0$. 
By the claim and Theorem~\ref{thm:BDV-tail}(b), 
$[E]$ is in $\tail(\rho)$ in plateau $2$ or higher.
We can now repeat the proof from Case~2 to see that $\mind(\gamma)\leq L_J-3$.
To see $\gamma\neq [0^{L_J-3}]$, note that $E$ is a reduced BDV
of length $L_J-2$, so $\dinv(\gamma)=\area(E)\leq L_J-4$.
But $\dinv([0^{L_J-3}])=\binom{L_J-3}{2}>L_J-4$ since $L_J\geq L\geq 6$.
(To see why $L\geq 6$, note from Example~\ref{ex:TI2} that the 
only vectors $V=v(\lambda,a,\epsilon)$ with $\len(V)<6$ represent
$\TI_2(\mu)$ for $\mu=\ptn{111}$ and $\mu=\ptn{21}$, but we
are assuming $|\mu|\geq 6$.)

To finish Case~3, consider $\mu=\ptn{1^k}$. Here, $E=01^{k-1}$ is not reduced
and has length $k=L_J-2$, area $k-1$, and dinv $\binom{k-1}{2}$.
So $\gamma$ is the unique object in $\C_{\ptn{0}}=\tail(\ptn{0})$
having dinv $k-1$. 
By Theorem~\ref{thm:BDV-tail}, $[E]=[00^{k-2}]$ has $\mind=k-1$
and is the last object in plateau $k-2$ of $\tail(\ptn{0})$.
Now $k-1<\binom{k-1}{2}$ for all $k\geq 5$, so $\gamma$ appears
strictly before $[E]$ in $\tail(\ptn{0})$. 
This means $\mind(\gamma)\leq\mind([E]=k-1=L_J-3$,
and moreover $\gamma\neq [0^{L_J-3}]=[E]$.  

\begin{remark}\label{rem:E-in-tail1}
Letting $S_j(V)=0E1$, we proved in all three cases that
$[E]$ must belong to $\tail(\rho)$, not just $\tail_2(\rho)$.
\end{remark}

\subsection{Proof that Chains are Disjoint}
\label{subsec:prove-disj}

This section completes the proofs of Theorems~\ref{thm:extend-mu}
and~\ref{thm:extend-all} by verifying that conditions~\ref{def:basic-str}(e)
and~\ref{def:chain-coll}(e) hold for all newly constructed chains $\C_{\mu}$
indexed by $\mu$ of size $k$. In fact, we prove the stronger result
that all such new chains $\C_{\mu}$, along with the partial chains
$\tail_2(\xi)$ for all other $\xi$ of size $k$, are pairwise disjoint.

Because the maps $\NU$
and $\TI_2$ are one-to-one and each $\tail_2(\xi)$ is the $\NU$-segment
starting at $\TI_2(\xi)$, all second-order tails are pairwise disjoint.
We must show that the bridge parts
and antipodal parts of the various chains $\C_{\mu}$ do not overlap with 
each other or any second-order tail. Since $\NU_1$ is a bijection and
all parts are unions of $\NU_1$-segments, it suffices to analyze the
$\NU_1$-initial objects $[M_i]$ and $[A_j^*]$ in the bridge part
and antipodal part of $\C_{\mu}$.

\emph{Step~1.} We show that $[M_i]$ and $[A_j^*]$ do not
belong to any set $\tail_2(\xi)$. By Lemma~\ref{lem:bridge-vec}
and Proposition~\ref{prop:antip-part}, each $M_i$
and $A_j^*$ is a reduced Dyck vector starting with $00$ and containing
a $3$. Examining Definition~\ref{def:cyc-TDV}, we see that 
the reduction of a cycled ternary Dyck vector cannot have this form.
Step~1 now follows from Theorem~\ref{thm:tail2}(a).

\emph{Step~2.} We show that the bridge parts of the chains $\C_{\mu}$
do not overlap. It suffices to show that $\mu$ can be recovered uniquely
from any $\NU_1$-initial object $\gamma=[M_i]$ defined in the construction
of the bridge part of $\C_{\mu}$. Given such a $\gamma$, we first obtain
$M_i$ as the unique reduced Dyck vector representing $\gamma$.
By Lemma~\ref{lem:bridge-vec}, $M_i$ must have length $L$ and dinv $i$ 
for some $i\equiv D\pmod{2}$.  By definition, $M_i=00z^+$ where
$[0z^+]=[z]=c_{\lambda}(i-1)$ with $\lambda=\ftype(\mu)$.
We deduce $\lambda$ by finding the unique chain $\C_{\lambda}$ 
containing $[z]$. (This chain must already be known, given that
$\C_{\mu}$ was successfully constructed.)
Finally, since $\Psi(\mu)=(\lambda,L,D\bmod 2)$,
we recover $\mu$ by computing $\mu=\Psi^{-1}(\lambda,L,i\bmod 2)$.

\emph{Step~3.} We show that the antipodal parts of the chains $\C_{\mu}$
do not overlap. It suffices to show that $\mu$ can be recovered uniquely
from any $\NU_1$-initial object $\delta=[A_j^*]$ used in the construction
of the antipodal part of $\C_{\mu}$.  We recall the definition
of $A_j^*$ from~\S\ref{subsec:Ant}. Starting with $S=S_j(V^*)$, we write
$S=0E1$, find the unique $\rho$ with $[E]\in\C_{\rho}$, compute
$\gamma=c_{\rho^*}(\area(S)-1)$, let $z$ be the representative of $\gamma$
of length $\len(E)$, and set $A_j^*=00z^+$. We recover all these quantities
from $\delta$ as follows. First, $A_j^*$ is the unique reduced representative
of the Dyck class $\delta$. Then $L_j^*$ (the length of $S$) is
$\len(A_j^*)$. Dropping the first $0$ from $A_j^*$, we can find the
Dyck class $[0z^+]=[z]=\gamma$. We recover $\rho^*$ by finding the
unique chain $\C_{\rho^*}$ containing $\gamma$ (this chain and its
partner $\C_{\rho}$ must already be known, given that $\C_{\mu}$ 
was successfully constructed). Now $[E]$ must be the unique object
in $\C_{\rho}$ with $\dinv([E])=\area(z)$, since the proof of
Lemma~\ref{lem:Ant-map} shows both sides must equal $\dinv(S)-(L_j^*-2)$.
Next, $E$ itself is the representative of $[E]$ of length $L_j^*-2$,
and then $S_j(V^*)=0E1$. Finally, we recover $\mu^*$ (and hence $\mu$)
by finding the unique second-order tail $\tail_2(\mu^*)$ containing
$[S_j(V^*)]$. One way to do this is to follow the $\NU$-chain from $S_j(V^*)$
until the Dyck class $\TI(\mu^*)=[0B_{\mu^*}]$ is reached.
Then $\mu^*$ can be read off from the binary representative $0B_{\mu^*}$,
and $\mu$ is $\mu^{**}$.

\emph{Step~4.} We introduce a variation of the antipode map, called
$\Ant'$, that is an involution interchanging area and dinv and
preserving length, deficit, and $\mind$ (compare to Lemma~\ref{lem:Ant-map}(b)).
Inputs to $\Ant'$ are certain vectors $00z^+$ where
$z$ is a Dyck vector. Suppose $00z^+$ has length $\ell$, area $a$, dinv $d$,
and deficit $k$. By Lemma~\ref{lem:semi-stats}, $z$ must have length
$\ell-2$, area $a-(\ell-2)$, dinv $d-1$, and deficit $k-(\ell-2)<k$.
Suppose $[z]$ belongs to a known chain $\C_{\rho}$ with known opposite
chain $\C_{\rho^*}$, $\gamma=c_{\rho^*}(a-1)$ exists, and $\gamma$
is represented by a Dyck vector $w$ of length $\ell-2$. In this situation,
define $\Ant'(00z^+)=00w^+$; otherwise $\Ant'(00z^+)$ is not defined.
Note that $\len(w)=\ell-2=\len(z)$, $\defc(w)=|\rho^*|=|\rho|=\defc(z)$,
and $\dinv(w)=\dinv(\gamma)=a-1$. It follows that $\area(w)=d-(\ell-2)$
since $\area(w)+\dinv(w)=\binom{\len(w)}{2}-\defc(w)=\area(z)+\dinv(z)$.
Now Lemma~\ref{lem:semi-stats} shows $00w^+$ has length $\ell$,
area $d$, dinv $a$, and deficit $k$, as needed. Applying $\Ant'$
to input $00w^+$, we see (using $\rho^{**}=\rho$ and $[z]=c_{\rho}(d-1)$) 
that $\Ant'(00w^+)=00z^+$.  So $\Ant'$ is an involution on its domain.

\emph{Step~5.} We show that for $i\in\{A^*,A^*+2,\ldots,D-2,D\}$,
 $\Ant'$ interchanges $M_i(\mu)$ and $M_{A+D-i}(\mu^*)$. 
 From~\eqref{eq:DA-sum}, $A+D=\binom{L}{2}-k=A^*+D^*$, 
 so $i$ is in the given range if and only if
 $A+D-i\in\{A,A+2,\ldots,D^*-2,D^*\}$.  By Lemma~\ref{lem:bridge-vec} 
 and Remark~\ref{rem:MD-is-V}, $M_i(\mu)$ and $M_{A+D-i}(\mu^*)$
 are well-defined Dyck vectors of length $L$. Also,
 for $\lambda=\ftype(\mu)$, $M_i(\mu)=00z^+$ where $z$ is the length $L-2$ 
 representative of $c_{\lambda}(i-1)$; 
 and $M_{A+D-i}(\mu^*)=00w^+$ where $w$ is the length $L-2$ representative of
  $c_{\lambda^*}(A+D-i-1)$. Comparing these expressions to the
 definition of $\Ant'(00z^+)$ in Step~4, we need only check that
 $\area(M_i(\mu))=A+D-i$. This holds since Lemma~\ref{lem:bridge-vec}
 and~\eqref{eq:DA-sum} give $\area(M_i(\mu))
 =\binom{L}{2}-\defc(M_i(\mu))-\dinv(M_i(\mu)) =\binom{L}{2}-k-i=A+D-i$.

\emph{Step~6.} Let $A_j=\Ant(S_j(V))$.  We show that if $\Ant'(A_j)$ 
is defined, then $\Ant'(A_j)=S_j(V)$. Recall how $A_j$ is computed.
Let $S_j(V)$ have length $L'$, area $A'(=A-2j)$, and dinv $D'$.
Write $S_j(V)=0E1$, where $E$ is a TDV,
$\len(E)=L'-2$, $\dinv(E)=D'-(L'-2)$, and (by 
Remark~\ref{rem:E-in-tail1}) $[E]$ belongs to $\tail(\rho)$ for some $\rho$.
Then $A_j=00z^+$ where $\len(z)=L'-2$, $\dinv(z)=A'-1$,
and $[z]\in\C_{\rho^*}$.  We proved $\area(A_j)=\dinv(S_j(V))=D'$.
Assume $\Ant'(A_j)$ is defined and equals $00w^+$.
This means that $w$ is a Dyck vector of length $L'-2$ such that
$[w]\in\C_{\rho}$ and $\dinv([w])=D'-1=\dinv([E])+L'-3$.
Because $[E]$ is in $\tail(\rho)$, $[w]$ must be $\NU_1^{L'-3}([E])$.

We claim that $E$ is not reduced. Otherwise, $[E]$ belongs to a 
plateau of $\tail(\rho)$ consisting of $L'-3$ (or fewer) objects
with $\mind$ equal to $\mind(E)=\len(E)=L'-2$ (Theorem~\ref{thm:tail-profile}).
But then $[w]=\NU_1^{L'-3}([E])$ has $\mind([w])>L'-2=\len(w)$,
which contradicts $w$ being a Dyck vector. So the TDV $E$ is not reduced,
say $E=0Y^+$ where $Y$ is a BDV of length $L'-3$.  By 
Proposition~\ref{prop:bin-NU}, 
$[w]=\NU_1^{L'-3}([Y])=[Y0]$. Since $\len(w)=L'-2=\len(Y0)$,
we get $w=Y0$ and $\Ant'(A_j)=00w^+=00Y^+1=0E1=S_j(V)$.

\emph{Step~7.} We show that for any two flagpole partitions $\mu\neq\nu$
 such that $\C_{\mu}$ and $\C_{\nu}$ have been constructed,
 the bridge part of $\C_{\mu}$ does not overlap the antipodal part of 
$\C_{\nu}$. To get a contradiction, assume there exist $i\in\{A^*,A^*+2,\ldots,
 D-2\}$ and $j>0$ with $v=M_i(\mu)=A_j'$, where 
 $A_j'=\Ant(S_j(V'))$ for $V'$ the reduced representative of $\TI_2(\nu^*)$.
 By Step~5, $\Ant'(v)=M_{A+D-i}(\mu^*)$. Since $\Ant'(v)$ is defined,
 Step~6 shows that $\Ant'(v)=S_j(V')$. We have now contradicted
 Step~1, since $S_j(V')\in\tail_2(\nu^*)$, while
 $M_{A+D-i}(\mu^*)\not\in\tail_2(\nu^*)$. The index $i=A^*$ is special.
 For this $i$,~\eqref{eq:DA-sum} and Remark~\ref{rem:MD-is-V} 
 imply $M_{A+D-i}(\mu^*)=M_{D^*}(\mu^*)=V^*$. But $V^*$ is in
 $\tail_2(\mu^*)$ and thus not in $\tail_2(\nu^*)$, since $\mu^*\neq\nu^*$.
 These contradictions prove Step~7.

\section{Generalized Flagpole Partitions}
\label{sec:gen-flagpole}

We know that $\mu$ is a flagpole partition if 
$\TI_2(\mu)=[v(\lambda,a,\epsilon)]$ for sufficiently large $a$
(namely, $a\geq a_0(\lambda)$, which is equivalent to 
$v(\lambda,a,\epsilon)$ having length $L\geq |\lambda|+6$).  Examining the 
constructions of Sections~\ref{sec:make-flag-chains} and~\ref{sec:proofs}, 
we see that the condition $L\geq |\lambda|+6$ was used only three times: 
showing that $\mu^*$ is well-defined in Lemma~\ref{lem:check-mu*}; 
proving $D\geq A^*$ in~\S\ref{subsec:exceptional};
and checking our claims about bridge generators in~\S\ref{subsec:prove-bridge}.
By making minor modifications to these three proofs, we can extend
the chain constructions for flagpole partitions to a larger class
of partitions called generalized flagpole partitions.
Informally, these new partitions arise by replacing the lower bound
$a_0(\lambda)$ by a smaller number (often as small as $2$).
We give the formal definition next, then discuss the changes needed
for the three proofs.  

\subsection{Definition of Generalized Flagpole Partitions}
\label{subsec:def-gen-flag}

\begin{definition}\label{def:gen-flag}
Let $\rho\mapsto\rho^*$ be a size-preserving involution
defined on some collection $\mcP$ of partitions.
Suppose $\mu$ is a partition of size $k$ such that
$\TI_2(\mu)=[V]$ where $V=v(\lambda,a,\epsilon)$ has length $L$.
We say $\mu$ is a \emph{generalized flagpole partition} 
(relative to the given involution) if and only if $\lambda\in\mcP$ and
\begin{equation}\label{eq:bound-L}
 L\geq 5+\lambda_1+\ell(\lambda)\quad\mbox{and}\quad
 L\geq 5+\lambda_1^*+\ell(\lambda^*).
\end{equation}
Since $L=a+3+\lambda_1+\ell(\lambda)$ (Remark~\ref{rem:flag-init}), 
$\mu$ is a generalized flagpole partition iff $\lambda\in\mcP$ and
$a\geq 2$ and $a\geq 2+\lambda_1^*+\ell(\lambda^*)
 -\lambda_1-\ell(\lambda)$. Note this condition reduces to $a\geq 2$
when $\lambda=\lambda^*$.
\end{definition}

\begin{example}
For any chain collection $(\mcP,I,\C)$,
we found that $\lambda=\ptn{3321^4}$ has $\lambda^*=\ptn{531^4}$
in~\S\ref{subsec:flag-invo}. Since $\lambda_1+\ell(\lambda)=10$
and $\lambda_1^*+\ell(\lambda^*)=11$, every $\mu$ such that
$\TI_2(\mu)=[v(\ptn{3321^4},a,\epsilon)]$ for some $a\geq 3$ 
(equivalently, $L\geq 16$) is a generalized flagpole partition. 
For $\mu$ to be a flagpole partition,
we would need $a\geq a_0(\lambda)=5$ (equivalently, $L\geq 18$). 
Similarly, every $\mu$ such that $\TI_2(\mu)=[v(\ptn{531^4},a,\epsilon)]$ 
for some $a\geq 2$ (equivalently, $L\geq 16$) is a generalized flagpole 
partition. For $\mu$ to be a flagpole partition,
we would need $a\geq 4$ (equivalently, $L\geq 18$).  
The difference in the bounds on $a$ becomes more dramatic when the diagrams
of $\lambda$ and $\lambda^*$ have many cells outside the first row 
and column.
\end{example}

For all generalized flagpole partitions $\mu$, $V$ starts with $0012$.
Thus the analysis of $\tail_2(\mu)$ in~\S\ref{subsec:init-tail2} still applies.
The next theorem is an unconditional result not relying on the
assumed existence of any chain collections.

\begin{theorem}\label{thm:gflag-asym}
Let $\lambda\mapsto\lambda^*$ be any size-preserving involution 
on the set of all integer partitions. Let $g(k)$ be the number of generalized
flagpole partitions of size $k$ (defined using this involution).
(a)~For all $k$,
\begin{equation}\label{eq:gf-bound}
 g(k)\geq \sum_{j=1}^{k-1}
  2\max\left\{0,p(j)-2\sum_{i=k-3-j}^j q_i(j)\right\}. 
\end{equation}
(b)~For any real $c<1$ and all sufficiently large $k$, $g(k)>p(k)^c$.
\end{theorem}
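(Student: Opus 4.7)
The plan is to reduce the enumeration of generalized flagpole partitions of size $k$ to a counting problem on ordinary partitions, using an extension of the bijection $\Phi$ from Lemma~\ref{lem:flag-bij1}. If $\mu$ has size $k$ and $\TI_2(\mu)=[v(\lambda,a,\epsilon)]$, then Remark~\ref{rem:flag-init} forces $a=k-1-|\lambda|-\lambda_1-\ell(\lambda)$, so the two inequalities in~\eqref{eq:bound-L} become $\lambda_1+\ell(\lambda)\leq k-3-|\lambda|$ and $\lambda_1^*+\ell(\lambda^*)\leq k-3-|\lambda|$. Thus $g(k)$ equals twice (one factor for each value of $\epsilon$) the number of integer partitions $\lambda$ satisfying both inequalities.

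For part (a), I would group these valid $\lambda$ by $j=|\lambda|$. Let $B_1(j)$ be the set of $\lambda\vdash j$ with $\lambda_1+\ell(\lambda)\geq k-2-j$, and $B_2(j)$ the set with $\lambda_1^*+\ell(\lambda^*)\geq k-2-j$. Because the involution is size-preserving, it carries $B_1(j)$ bijectively onto $B_2(j)$, so $|B_1(j)|=|B_2(j)|$, and the union bound gives that the number of valid $\lambda\vdash j$ is at least $p(j)-2|B_1(j)|$. Under the interpretation $q_i(j)=|\{\lambda\vdash j:\lambda_1+\ell(\lambda)-1=i\}|$, which makes the index range in the theorem match the bad $\lambda$, we have $|B_1(j)|=\sum_{i=k-3-j}^{j}q_i(j)$. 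Taking the positive part and including the outside factor of $2$ for $\epsilon\in\{0,1\}$ produces~\eqref{eq:gf-bound}.

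For part (b), I would extract a single well-chosen term from the sum. Fix $c<1$ and any $\delta\in(0,1/2)$, and set $j=k-\lfloor k^{1/2+\delta}\rfloor$. Expanding $\sqrt{j}=\sqrt{k}-\tfrac{1}{2}k^{\delta}+o(k^{\delta})$ in the Hardy--Ramanujan asymptotic recorded in Remark~\ref{rem:flag-asym} yields $\log p(j)-c\log p(k)=(1-c)\pi\sqrt{2k/3}\,(1+o(1))\to+\infty$, so $p(j)/p(k)^{c}\to\infty$. To finish, I need $|B_1(j)|=o(p(j))$. The threshold $k-2-j$ has order $k^{1/2+\delta}$, which dwarfs the Erd\H{o}s--Lehner typical value $\Theta(\sqrt{k}\log k)$ of $\lambda_1+\ell(\lambda)$ for a partition of size $\Theta(k)$. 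A crude bound suffices: $|B_1(j)|\leq 2\,|\{\lambda\vdash j:\lambda_1\geq(k-2-j)/2\}|\leq 2\sum_{m\geq(k-2-j)/2}p(j-m)\leq 2\,j\,p(j-\lfloor(k-2-j)/2\rfloor)$, and the Hardy--Ramanujan exponent comparison $\sqrt{j}-\sqrt{j-M/2}\sim M/(4\sqrt{j})$ with $M=k-2-j$ shows this ratio is $\exp(-\Theta(k^{\delta})+O(\log k))\to 0$ for any $\delta>0$. Hence the $j$th term of~\eqref{eq:gf-bound} is at least $p(j)\geq p(k)^{c}$ for all large $k$. The main obstacle is verifying the concentration estimate for $\lambda_1+\ell(\lambda)$ uniformly in $j$, but the elementary union bound above suffices because the chosen threshold is polynomially larger than the typical value.
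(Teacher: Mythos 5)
Your proof is correct. Part (a) is essentially the paper's argument: you group the triples $(\lambda,a,\epsilon)$ by $j=|\lambda|$, note that $a$ is forced by the deficit condition $L=k+2-j$, identify the ``bad'' sets where \eqref{eq:bound-L} fails, and use the size-preserving involution to conclude $|B_2(j)|=|B_1(j)|$; your union bound $p(j)-|B_1(j)|-|B_2(j)|$ is the same quantity as the paper's ``worst-case pairing'' count $|A|-|B|=p(j)-2|B|$, and your reading of $q_i(j)$ as counting partitions of $j$ with largest hook $\lambda_1+\ell(\lambda)-1=i$ is exactly the paper's definition. Part (b) is where you genuinely diverge. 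The paper takes $j=\lceil kc\rceil$ (after reducing to $c>1/2$), bounds $q_i(j)\leq i\,p(j-i)$ by stripping the corner hook, and controls the resulting sum by comparing $p(j)$ with $p(2j-k+3)$. You instead take $j=k-\lfloor k^{1/2+\delta}\rfloor$, so the threshold $k-2-j\asymp k^{1/2+\delta}$ is polynomially larger than the typical size of $\lambda_1+\ell(\lambda)$, and you kill the bad set with the largest-part/conjugation bound $|B_1(j)|\leq 2j\,p(j-\lfloor(k-2-j)/2\rfloor)$. Both routes are elementary and rest on the same Hardy--Ramanujan input; yours has the minor advantages of working for all $c<1$ directly (no reduction to $c>1/2$) and of making the term $p(j)$ itself asymptotic to $p(k)^{1-o(1)}$ rather than merely exceeding $p(k)^c$, at the cost of needing the second-order expansion $\sqrt{j}=\sqrt{k}-\tfrac12 k^{\delta}+o(k^{\delta})$ and the exponent comparison $\sqrt{j}-\sqrt{j-M/2}\sim M/(4\sqrt{j})$, both of which you state and which check out.
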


See~\S\ref{subsec:gflag-proof} for the proof.  Remark~\ref{rem:flag-asym}
gives the corresponding asymptotic enumeration of flagpole partitions.

\begin{theorem}\label{thm:extend-genflag}
Theorems~\ref{thm:extend-mu} and~\ref{thm:extend-all} hold with
flagpole partitions replaced by generalized flagpole partitions throughout.
\end{theorem}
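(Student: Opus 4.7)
The plan is to revisit the three places in the flagpole proof where the inequality $L \geq |\lambda|+6$ was invoked and verify that each can be replaced by the symmetric pair \eqref{eq:bound-L}. Every other step in the construction depends on $V = v(\lambda,a,\epsilon)$ only through the fact that it begins $0012$, which requires only $a \geq 2$; so the analysis of $\tail_2(\mu)$ in \S\ref{subsec:init-tail2}, the antipode construction of \S\ref{subsec:Ant}, the $amh$-hypothesis bookkeeping of \S\ref{subsec:prove-main-thm}, and the disjointness arguments of \S\ref{subsec:prove-disj} all carry over verbatim.

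For Lemma~\ref{lem:check-mu*}, the role of $L \geq |\lambda^*|+6$ was to place $(\lambda^*,L,A \bmod 2)$ in the codomain of $\Psi$. I would replace that codomain with $H' = \{(\lambda,L,\eta) : \lambda \in \mcP,\ L \geq 5 + \lambda_1 + \ell(\lambda),\ L \geq 5 + \lambda_1^* + \ell(\lambda^*),\ \eta \in \{0,1\}\}$, which is manifestly symmetric under $\lambda \leftrightarrow \lambda^*$ (using $\lambda^{**} = \lambda$). The map in Lemma~\ref{lem:flag-bij2} then restricts to a bijection between generalized flagpole partitions of the given chain collection and $H'$, and the well-definedness of $\mu^*$ together with $|\mu^*| = |\mu|$ and $\mu^{**} = \mu$ follow from the same parity argument using \eqref{eq:DA-sum}. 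For \S\ref{subsec:prove-bridge}, the adaptation is actually cleaner: the estimate in \eqref{eq:mind-est} is precisely $\mind(\gamma) \leq 2 + \max(\lambda_1 + \ell(\lambda), \lambda_1^* + \ell(\lambda^*))$, and \eqref{eq:bound-L} states exactly that this quantity is at most $L - 3$, so Lemmas~\ref{lem:mind-before-w} and~\ref{lem:pre-bridge} require no modification whatsoever.

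The main obstacle is \S\ref{subsec:exceptional}, where the proof that $D \geq A^*$ used $L \geq |\lambda|+6$ to substitute $-|\lambda| \geq 6-L$ into the identity $D - A^* = \binom{L}{2} - 5L - |\lambda| + \lambda_1 + \lambda_1^* + 12 + \epsilon + \epsilon^*$ (derived from \eqref{eq:TI2-stats}, where $\epsilon^*$ is the parameter for $V^*$) and thereby reduce to $\lambda = \ptn{0}$, $L \in \{6,7,8\}$. Under \eqref{eq:bound-L} we only have $\lambda_1 + \ell(\lambda) \leq L-5$, from which $|\lambda| \leq \lambda_1 \ell(\lambda) \leq (L-5)^2/4$ by AM-GM. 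Substituting this quadratic bound into the identity above yields $D - A^* \geq -1$ for all $L \geq 9$; and since $D \equiv A^* \pmod 2$ (which holds as soon as the generalized analog of Lemma~\ref{lem:check-mu*} is in place), this forces $D - A^* \geq 0$. For $L \in \{6,7,8\}$ there are only finitely many pairs $(\lambda,\lambda^*)$ with $\lambda_1 + \ell(\lambda) \leq L-5$ and $\lambda_1^* + \ell(\lambda^*) \leq L-5$, and $D \geq A^*$ is verified by an enlarged but bounded table analogous to the one in \S\ref{subsec:exceptional}, using $|\mu| = |\lambda|+L-2 \geq 6$ to discard the genuinely degenerate cases.

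With these three modifications in place, the constructive proof of Theorem~\ref{thm:extend-mu} goes through verbatim for generalized flagpole partitions, and Theorem~\ref{thm:extend-all} then follows by the same induction on $k$, enlarging $\mcP^{k-1}$ at each stage by all generalized flagpole partitions of size $k$ whose needed partitions (definition unchanged from Definition~\ref{def:needed}) already lie in $\mcP^{k-1}$.
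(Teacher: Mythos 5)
Your proposal follows the same strategy as the paper: isolate the three uses of $L\geq|\lambda|+6$ and re-derive each from the symmetric condition~\eqref{eq:bound-L}. The treatment of $\mu^*$ matches the paper's modified bijection $\Psi_k$, and your handling of $D\geq A^*$ is a valid variant: you use AM--GM to get $|\lambda|\leq(L-5)^2/4$, which yields $D-A^*\geq(L^2-12L+23)/4\geq -1$ for $L\geq 9$ and hence $D-A^*\geq 0$ by parity, at the cost of a larger finite check for $L\in\{6,7,8\}$ (which you assert but do not exhibit; it does check out, e.g.\ $\lambda=\ptn{1},\ptn{2},\ptn{11}$ all give $D-A^*\geq 0$). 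The paper instead splits on $\lambda=\ptn{0}$ versus $\lambda\neq\ptn{0}$, using the weaker bound $3|\lambda|\leq(L-5)^2$ together with $\lambda_1+\lambda_1^*\geq 2$ to get $D-A^*\geq(L^2-13L+34)/6>-2$ uniformly in $L$ for nonzero $\lambda$, so that only the $\lambda=\ptn{0}$ table (already computed) remains. Both routes work; the paper's avoids enlarging the table.

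There is one concrete oversight. You claim Lemma~\ref{lem:pre-bridge} ``requires no modification whatsoever,'' but its proof in \S\ref{subsec:prove-bridge} uses $L\geq|\lambda^*|+6$ in \emph{two} places, and you only patch one of them (the bound~\eqref{eq:mind-est}). The other is the existence claim: to know that $c_{\lambda}(i-1)$ exists for all $i$ in the bridge range, one must show $A^*-1\geq\ell(\lambda^*)$, and the paper's original derivation $A^*-1\geq 2L-\lambda_1^*-7\geq 2|\lambda^*|+5-\lambda_1^*\geq\ell(\lambda^*)+5$ invokes $L\geq|\lambda^*|+6$ explicitly. This step does go through under~\eqref{eq:bound-L} --- one gets $A^*-1\geq 2L-\lambda_1^*-7\geq 3+\lambda_1^*+2\ell(\lambda^*)>\ell(\lambda^*)$ --- but it is a modification that must be made, not a step that survives verbatim. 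With that one-line repair added, your argument is complete and equivalent to the paper's.
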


For Theorem~\ref{thm:extend-mu}, generalized flagpole partitions
are defined relative to the involution $I$ in the given chain collection.
For Theorem~\ref{thm:extend-all}, generalized flagpole partitions
of size $k$ are defined relative to the involution $I^{k-1}$ on $\mcP^{k-1}$.
Both theorems follow from the constructions and proofs already given,
after making the modifications in the next three subsections.

\subsection{Defining $\mu^*$ for Generalized Flagpole Partitions}
\label{subsec:gen-mu*}

We modify the bijection $\Psi$ from Lemma~\ref{lem:flag-bij2} as follows.
Let $F_k$ be the set of generalized flagpole partitions of size $k$.
Let $H_k$ be the set of triples $(\lambda,L,\eta)$ such that
$\lambda$ is an integer partition in $\mcP$ of size less than $k$,
$L=k+2-|\lambda|$, $L$ satisfies~\eqref{eq:bound-L}, and $\eta\in\{0,1\}$.
Given $\mu\in F_k$, say $\TI_2(\mu)=[V]$ where $V=v(\lambda,a,\epsilon)$
has length $L$, dinv $D$, and area $A$.
Recall (Remark~\ref{rem:flag-init}) that $k=\defc(V)=|\lambda|+L-2$.
Define $\Psi_k:F_k\rightarrow H_k$ by $\Psi_k(\mu)=(\lambda,L,D\bmod 2)$.
The proof of Lemma~\ref{lem:flag-bij2} shows that
$\Psi_k$ is a bijection; we need only replace the old condition
$L\geq |\lambda|+6$ by~\eqref{eq:bound-L}.

Furthermore,~\eqref{eq:bound-L} ensures that $(\lambda^*,L,A\bmod 2)$
also belongs to the codomain $H_k$. So we may define $\mu^*$ to
be the unique object in $F_k$ with $\Psi_k(\mu^*)=(\lambda^*,L,A\bmod 2)$.
The rest of the proof of Lemma~\ref{lem:check-mu*} goes through
with no changes.

\subsection{Proving $D\geq A^*$ for Generalized Flagpole Partitions}
\label{subsec:exceptional2}

In~\S\ref{subsec:exceptional}, we used $L\geq |\lambda|+6$ to eliminate
$|\lambda|$ in the estimate 
\begin{equation}\label{eq:est1}
D-A^*\geq \binom{L}{2}-5L+12+\lambda_1 +\lambda_1^*-|\lambda|. 
\end{equation}
We give a modified estimate here 
using~\eqref{eq:bound-L}. First consider the case $\lambda\neq\ptn{0}$.
The diagram of $\lambda$ fits in a rectangle with $\ell(\lambda)$ rows
and $\lambda_1$ columns, so $|\lambda|\leq \lambda_1\ell(\lambda)$.
Moreover, $\lambda_1\ell(\lambda)\leq\max(\lambda_1^2,\ell(\lambda)^2)
 \leq\lambda_1^2+\ell(\lambda)^2$.
Since $L\geq\lambda_1+\ell(\lambda)+5$, we get
\[ (L-5)^2\geq \lambda_1^2+2\lambda_1\ell(\lambda)+\ell(\lambda)^2
  \geq 3|\lambda|. \]
Thus, $-|\lambda|\geq -(L-5)^2/3$. We also have $\lambda_1+\lambda_1^*\geq 2$
since $\lambda\neq\ptn{0}$. Using these estimates in~\eqref{eq:est1}
and simplifying, we get $D-A^*\geq (L^2-13L+34)/6$. This polynomial in $L$
exceeds $-2$ for all $L$, so the even integer $D-A^*$ must be nonnegative.

If $\lambda=\ptn{0}$, then~\eqref{eq:est1} becomes
$D-A^*\geq\binom{L}{2}-5L+12$. Here $D-A^*\leq -2$ is possible only
for $5\leq L\leq 8$. The exceptional cases $L=6,7,8$ were already
examined in the table in~\S\ref{subsec:exceptional}. 
If $L=5$, then $|\mu|=|\lambda|+L-2=3$, but we know 
from Definition~\ref{def:chain-coll}(a) that $|\mu|\geq 6$.

\subsection{Modified Bridge Analysis}
\label{subsec:prove-bridge2}

We modify two calculations in~\S\ref{subsec:prove-bridge} where
the old assumption $L\geq |\lambda|+6$ was used. To prove 
$A^*-1\geq\ell(\lambda^*)$, use~\eqref{eq:TI2-stats}
and the second part of~\eqref{eq:bound-L} to get
\[ A^*-1\geq 2L-\lambda_1^*-7\geq 3+\lambda_1^*+2\ell(\lambda^*)
 >\ell(\lambda^*). \]
Since $\mind(\TI(\lambda))=\lambda_1+\ell(\lambda)+1$
  and $\mind(\TI(\lambda^*))=\lambda_1^*+\ell(\lambda^*)+1$,
the bound~\eqref{eq:mind-est} becomes
\[ \mind(\gamma)\leq\max(2+\lambda_1+\ell(\lambda),
  2+\lambda_1^*+\ell(\lambda^*))\leq L-3. \]

\subsection{Proof of Theorem~\ref{thm:gflag-asym}}
\label{subsec:gflag-proof}

Let $g(k)$ be the number of generalized flagpole partitions of size $k$,
 relative to a size-preserving involution $\lambda\mapsto\lambda^*$ on
 the set of all integer partitions. Fix a real $c<1$.  We prove $g(k)>p(k)^c$ 
 for all sufficiently large $k$.  It suffices to consider $c>1/2$.
For any nonzero partition $\lambda$, let 
$h(\lambda)=\lambda_1+\ell(\lambda)-1$, which is the longest hook-length
in the diagram of $\lambda$. For $0<i\leq j$, let $q_i(j)$ be the
number of partitions $\lambda$ of size $j$ with $h(\lambda)=i$. We begin by
proving the bound~\eqref{eq:gf-bound}.

Fix $j$ between $1$ and $k-1$, and consider a fixed partition 
$\lambda$ of size $j$. The Dyck vector $v(\lambda,a,\epsilon)$
has deficit $k$ iff the length $L$ of this vector (which is a constant
plus $a$) satisfies $L=k+2-|\lambda|=k+2-j$. The corresponding partition
$\mu=\TI_2^{-1}([v(\lambda,a,\epsilon)])$ is a generalized flagpole
partition iff $k+2-j\geq h(\lambda)+6$ and $k+2-j\geq h(\lambda^*)+6$
(by~\eqref{eq:bound-L}). So for each partition $\lambda$ of size $j$
such that $h(\lambda)\leq k-4-j$ and $h(\lambda^*)\leq k-4-j$,
we obtain two generalized flagpole partitions of size $k$ (since $\epsilon$
can be $0$ or $1$).

Let $P$ be the set of partitions of size $j$. $P$ is the
disjoint union of the sets $A=\{\lambda\in P:h(\lambda)\leq k-4-j\}$ and
$B=\{\lambda\in P:h(\lambda)\geq k-3-j\}$.
The partitions in $P$ are paired by the involution 
$\lambda\mapsto\lambda^*$. For each $\lambda\in A$ that
is paired with some $\lambda^*\in A$, we obtain $2$ generalized
flagpole partitions of size $k$. In the worst case, every partition
in $B$ pairs with something in $A$. Then we would still have at least
$|A|-|B|=|P|-2|B|$ partitions in $A$ that pair with something in $A$.
So we get at least $2\max(0,|P|-2|B|)$ generalized flagpole partitions
of size $k$ from this choice of $j$.
Now $|P|=p(j)$ and $|B|=\sum_{i=k-3-j}^j q_i(j)$.
Summing over $j$ gives~\eqref{eq:gf-bound}.

Next we estimate $q_i(j)$. To build the diagram of a partition
counted by $q_i(j)$, first select a corner hook of size $i$
(consisting of the first row and column of the diagram) in any 
of $i$ ways. Then fill in the remaining cells of the diagram
with some partition of $j-i$. Not every such partition fits inside
the chosen hook, but we get the bound $q_i(j)\leq ip(j-i)$. 
So~\eqref{eq:gf-bound} becomes
$ g(k)\geq \sum_{j=1}^{k-1}
  \max\left\{0,2p(j)-\sum_{i=k-3-j}^j 4ip(j-i)\right\}$. 
We prove $g(k)>p(k)^c$ (if $k$ is large enough) 
by finding a single index $j$ such that
\[ 2p(j)-\sum_{i=k-3-j}^j 4ip(j-i) > p(j) > p(k)^c. \] 
We claim $j=\lceil kc\rceil$ will work. Recall the Hardy--Ramanujan estimate
$p(k)=\Theta\left(k^{-1}\exp(\pi\sqrt{2k/3})\right)$. We have
$p(j)=\Theta\left((ck)^{-1}\exp(\pi\sqrt{2ck/3})\right)$ and
$p(k)^c=\Theta\left(k^{-c}\exp(\pi\sqrt{2c^2k/3})\right)$. 
Since $c>c^2$, $p(j) > p(k)^c$ for large enough $k$.  
Next we show $\sum_{i=k-3-j}^j 4ip(j-i)<p(j)$ for large $k$.
There are $j-(k-4-j)=2j-k+4\leq k+4$ summands, and each summand
is at most $4jp(j-(k-3-j))\leq 4kp(2j-k+3)$.  
So it suffices to show $(k+4)4kp(2j-k+3)<p(j)$ for large $k$.
Using $j=\lceil kc\rceil$, we compute
\[p(j)/p(2j-k+3) =\Theta\Big(\exp\left[\pi\sqrt{2kc/3}
  -\pi\sqrt{(2c-1)2k/3+2}\right]\Big).\]
Now, it is routine to check that for $A>B>0$, any $C$, and any polynomial
$f(k)$, $\exp(\sqrt{Ak}-\sqrt{Bk+C})>f(k)$ for large enough $k$.
This follows by taking logs, dividing by $\sqrt{k}$, and using
L'Hopital's Rule to take the limit as $k$ goes to infinity.
Since $0<2c-1<c$, we get $p(j)/p(2j-k+3)>4k(k+4)$ for large $k$, as needed.



\begin{thebibliography}{99}
\bibitem{gill-blog}
Maria Monks Gillespie, ``Two $q,t$-symmetry problems in symmetric
 function theory,'' \emph{Open Problems in Algebraic Combinatorics} blog
 (online at \verb,http://samuelfhopkins.com/OPAC/files/opacblog_master.pdf,),
 pp. 38--41.

\bibitem{hag-book}
J. Haglund, \emph{The $q,t$-Catalan Numbers and the Space of Diagonal
 Harmonics, with an Appendix on the Combinatorics of Macdonald Polynomials,}
 AMS University Lecture Series (2008).

\bibitem{HLLL20}
S. Han, K. Lee, L. Li, and N. Loehr, ``Chain decompositions of $q,t$-Catalan 
 numbers via local chains,'' \emph{Ann. Comb.} \textbf{24} (2020), 739--765.

\bibitem{HLLL20ext}
S. Han, K. Lee, L. Li, and N. Loehr, Extended Appendix of~\cite{HLLL20},
 online at \\ \texttt{sites.google.com/oakland.edu/li2345/code-and-data }

\bibitem{HR1918}
G. H. Hardy and S. Ramanujan, ``Asymptotic formulae in combinatory analysis,''
 \emph{Proc. London Math. Soc.} \textbf{17} (1918), 75--115.

\bibitem{LLL18}
K. Lee, L. Li, and N. Loehr, ``A combinatorial approach to the symmetry
 of $q,t$-Catalan numbers,'' \emph{SIAM J. Discrete Math.} \textbf{32}
 (2018), 191--232.

\bibitem{LLL13}
K. Lee, L. Li, and N. Loehr, ``Limits of modified higher $q,t$-Catalan 
numbers,'' \textsl{Electron. J. Combin.} \textbf{20(3)} (2013),
research paper P4, 23 pages (electronic).

\bibitem{LW09}
N. Loehr and G. Warrington, ``A continuous family of partition statistics 
equidistributed with length,'' \emph{J.  Combin. Theory Ser. A}
\textbf{116} (2009), 379--403. 

\bibitem{odlyzko}
A. M. Odlyzko, ``Asymptotic enumeration methods,'' in \emph{Handbook of
 Combinatorics} Vol. 2, edited by R. L. Graham, M. Groetschel, and
 L. Lovasz, Elsevier (1995), 1063--1229.

\end{thebibliography}
\end{document}